\newcommand{\subjclass}[2][2010]{%
  \let\@oldtitle\@title%
  \gdef\@title{\@oldtitle\footnotetext{#1 \emph{Mathematics subject classification.} #2}}%
}
\newcommand{\ep}{\epsilon}
\newcommand{\eps}{\epsilon}
\newcommand{\leqc}{\lesssim}
\newcommand{\geqc}{\gtrsim}
\newcommand{\grad}{\nabla}
\newcommand{\MalD}{\cD}
\newcommand{\Wbf}{\mathbf{W}}
\newcommand{\Vc}{{\mathcal V}}
\newcommand{\Dc}{{\mathcal D}}
\newcommand{\norm}[1]{\left|\left| #1 \right|\right|}
\newcommand{\abs}[1]{\left| #1 \right|}
\newcommand{\set}[1]{\left\{ #1 \right\}}
\newcommand{\brak}[1]{\left\langle #1 \right\rangle} 
\newcommand{\R}{\mathbb{R}}
\newcommand{\Real}{\R}
\newcommand{\C}{\mathbb{C}}
\newcommand{\Z}{\mathbb{Z}}
\newcommand{\T}{\mathbb{T}}
\newcommand{\tensor}{\otimes}
\newcommand{\Zc}{\mathcal{Z}}
\newcommand{\cC}{\mathcal{C}}
\newcommand{\cD}{\mathcal{D}}
\newcommand{\Hbf}{{\bf H}}
\newcommand{\dee}{\mathrm{d}}
\newcommand{\ds}{\dee s}
\newcommand{\dt}{\dee t}
\newcommand{\dx}{\dee x}
\newcommand{\dy}{\dee y}
\newcommand{\dr}{\dee r}
\DeclareMathOperator{\Div}{\mathrm{div}}
\DeclareMathOperator{\Id}{\mathrm{Id}}
\DeclareMathOperator{\curl}{\mathrm{curl}}
\newcommand{\1}{\mathbbm{1}}
\renewcommand{\P}{\mathbf{P}}
\newcommand{\E}{\mathbf{E}}
\newcommand{\EE}{\mathbf E}
\newcommand{\PP}{\mathbf P}
\newcommand{\Leb}{\operatorname{Leb}}
\newtheorem{theorem}{Theorem}[section]
\newtheorem{proposition}[theorem]{Proposition}
\newtheorem{corollary}[theorem]{Corollary}
\newtheorem{lemma}[theorem]{Lemma}
\newtheorem*{lemma*}{Lemma}
\newtheorem{assumption}{Assumption}
\newtheorem{system}{System}
\newtheorem{condition}{Condition}
\theoremstyle{definition}
\newtheorem{definition}[theorem]{Definition}
\newtheorem{remark}[theorem]{Remark}
\numberwithin{equation}{section}
\begin{document}

\title{Almost-sure exponential mixing of passive scalars by the stochastic Navier-Stokes equations}
\subjclass{Primary: 37A25, 37A30, 37N10, 76F25, 76D06, Secondary: 37H15, 37A60, 60H15}
\author{Jacob Bedrossian\thanks{\footnotesize Department of Mathematics, University of Maryland, College Park, MD 20742, USA \href{mailto:jacob@math.umd.edu}{\texttt{jacob@math.umd.edu}}. J.B. was supported by NSF CAREER grant DMS-1552826 and NSF RNMS \#1107444 (Ki-Net)} \and Alex Blumenthal\thanks{\footnotesize Department of Mathematics, University of Maryland, College Park, MD 20742, USA \href{mailto:alex123@math.umd.edu}{\texttt{alex123@math.umd.edu}}. This material was based upon work supported by the National Science Foundation under Award No. DMS-1604805. A.B. would like Dmitry Dolgopyat for useful insights and helpful discussions.} \and Samuel Punshon-Smith\thanks{\footnotesize Division of Applied Mathematics,  Brown University, Providence, RI 02906, USA \href{mailto:punshs@brown.edu}{\texttt{punshs@brown.edu}}. This material was based upon work supported by the National Science Foundation under Award No. DMS-1803481.}} 

\maketitle

\begin{abstract}
We deduce almost-sure exponentially fast mixing of passive scalars advected by solutions of the stochastically-forced 2D Navier-Stokes equations and 3D hyper-viscous Navier-Stokes equations in $\mathbb T^d$ subjected to non-denegenerate $H^\sigma$-regular noise for any $\sigma$ sufficiently large.
That is, for all $s > 0$ there is a deterministic exponential decay rate such that all mean-zero $H^s$ passive scalars decay in $H^{-s}$ at this same rate with probability one.
This is equivalent to what is known as \emph{quenched correlation decay} for the Lagrangian flow 
in the dynamical systems literature.
This is a follow-up to our previous work, which establishes a positive Lyapunov exponent
for the Lagrangian flow-- in general, almost-sure exponential mixing is much stronger than this.
Our methods also apply to velocity fields evolving according to finite-dimensional fluid models, for example Galerkin truncations of Navier-Stokes or the Stokes equations with very degenerate forcing. For all $0 \leq k < \infty $ we exhibit many examples of $C^k_t C^\infty_x$ random velocity fields that are almost-sure exponentially fast mixers. 
\end{abstract}

\setcounter{tocdepth}{2}
{\small\tableofcontents}

\section{Introduction}\label{sec:Intro}

Passive scalar mixing by fluid motion is an important aspect of many physical phenomena (see e.g. \cite{Provenzale1999,ShraimanSiggia00,W00,WF04})
and understanding mixing has been a topic of active research in mathematics recently; see e.g. \cite{Bressan03,LDT11,LLNMD12,S13,IyerXu14,MC18,TZ18,CZDT18,CRS19,GGM19,IF19} and the references therein (see below for more discussion).
In this paper, we will study the mixing of a passive scalar $g_t$ advected by an incompressible velocity field $u_t$ in the absence of diffusivity, 
\begin{align}
	\partial_t g_t + u_t \cdot \grad g_t = 0. \label{eq:gt}
\end{align}
The previous work in mathematics does not yet consider the case that arises most commonly in physics: 
velocity fields evolving under the nonlinear dynamics of an ergodic system. 
The velocity fields we consider are governed by a variety of stochastic fluid models, for example, the stochastically-forced 2D Navier-Stokes equations on $\mathbb T^2$: 
\begin{equation} 
\begin{dcases}
\partial_t u_t + u_t \cdot \grad u_t + \grad p_t = \nu \Delta u_t + Q \dot W_t \\
\Div u_t = 0 \\ 
u_0 =: u, 
\end{dcases} 
\end{equation}
where $W_t$ is a cylindrical Wiener process and $Q$ is a smoothing operator that spatially colors the noise. We will show that under suitable conditions on $Q$, the velocity field $u_t$ is \emph{almost-surely exponentially mixing}. 
Roughly speaking (see Theorem \ref{thm:-sdecay} for the rigorous statement and discussion), this means that $\forall s>0$, there is a \emph{deterministic} constant $\hat\gamma > 0$ and an \emph{almost-surely finite} random constant $D = D(u,\omega)$ (depending on the initial velocity $u$ and the Brownian path $\omega$), such that the following holds for \emph{all} initial  $g_0 = g \in H^s$ with $\int_{\mathbb T^2} g \dx = 0$,
\begin{align}
\norm{g_t}_{H^{-s}} = \sup_{\norm{f}_{H^s} = 1} \int_{\mathbb T^2} f(x) g_t(x) \dx \leq D e^{-\hat\gamma t} \norm{g}_{H^s}, \label{ineq:HmsDec}
\end{align}
where the supremum is taken over mean zero functions $f\in H^s$. 
Alternatively, \eqref{ineq:HmsDec} can be formulated in terms of the \emph{Lagrangian flow map}  $\phi^t : \mathbb T^2 \mapsto \T^2$, solving
\begin{equation}  
\frac{\dee}{\dt}\phi^t(x)  = u_t(\phi^t(x)),\quad \phi^0(x)  = x. \label{def:LagIntro} 
 \end{equation}
Indeed, by incompressibility and $g_t = g \circ (\phi^t)^{-1}$, \eqref{ineq:HmsDec} is equivalent to
\begin{align}
\int_{\mathbb T^2} f(\phi^t (x))g(x) \dx \leq D e^{-\hat\gamma t} \norm{f}_{H^s} \norm{g}_{H^s}. \label{ineq:quenched}
\end{align}
We see that \eqref{ineq:quenched} quantifies an almost sure, exponentially fast decay of correlations. 
Such estimates are known in the  dynamics literature as \emph{quenched correlation decay} \cite{bahsoun2017quenched, dragivcevic2018spectral,  baladi2002almost, ayyer2007quenched, aimino2015annealed}. 
Our study is built mostly on analyzing \eqref{def:LagIntro} using ideas from random dynamical systems and the geometric ergodicity of Markov processes. 
In our previous work \cite{BBPS18}, we showed that the flow map $\phi^t$ has a positive Lyapunov exponent, that is $D_x \phi^t(x)$ grows exponentially fast $\forall x$ with probability 1 (see \eqref{eq:lyapgrowhtIntro} below for rigorous statement). However, quenched correlation decay is a much stronger statement a priori than simply having a positive Lyapunov exponent (see e.g. \cite{sarig2002subexponential} and the discussion in Section \ref{sec:mainres}). 
In fact the positive Lyapunov exponent proved in \cite{BBPS18} plays a crucial role in the proof of Theorem \ref{thm:-sdecay} (see Section \ref{subsec:outline2PTDecay} for more information).  

Let us now set up the rigorous statement of the results. We first state the results for the infinite-dimensional evolution equations, namely 2D Navier-Stokes and 3D hyper-viscous Navier-Stokes. Then we specialize the results to finite dimensional evolutions, e.g. Galerkin truncations of Navier-Stokes or the Stokes equations subject to very degenerate noise.
In the latter case, a more general class of problems can be treated, producing families of random
 velocity fields that are $C^k_t C^\infty_x$ and almost-surely exponential mixers for any fixed $k < \infty$.

\subsection{Infinite-dimensional fluid models}\label{subsubsec:noiseProcess}

We will consider two (infinite-dimensional) stochastic fluid models on the periodic box $\T^d = (- \pi, \pi]^d$, the 2D Navier-Stokes equations and the 3D hyperviscous Navier-Stokes equations. However, each model has a different natural $L^2$-based ``energy''. For 2D Navier-Stokes it is the enstrophy $\norm{\curl u}_{L^2}^2$ and for 3D hyperviscous Navier-Stokes it is the kinetic energy $\norm{u}_{L^2}^2$. With a slight abuse of notation, we will define a natural Hilbert space on velocity fields $u:\T^d \to \R^d$ by
\begin{equation}
\Wbf := \set{u \in L^2(\mathbb T^d;\Real^d)\,: \,\int u\, \dx = 0,\quad \Div u = 0, \quad \sqrt{\brak{u,u}_{\Wbf}} = \|u\|_{\Wbf} < \infty},
\end{equation}
with the inner product defined via
\begin{equation}\label{defineWbfNorm}
\brak{u,v}_{\Wbf} := 
\begin{dcases}
\brak{\curl u, \curl v}_{L^2} \quad &\mathrm{if}\quad  d=2 \\
\brak{u,v}_{L^2} \quad & \mathrm{if} \quad d=3.
\end{dcases}
\end{equation}

We will also need to keep track of higher regularity: for $s>0$, define
\begin{align}
\Hbf^{s} = \set{u \in H^s(\T^d, \R^d)\, :\, \int u \, \dx = 0,\quad \Div u = 0} \, .
\end{align}
Following the convention used in \cite{E2001-lg,BBPS18}, we define a natural real Fourier basis on $L^2$ divergence free fields by defining for each $m = (k,i) \in \mathbb{K} := \Z^d_0 \times \{1,\ldots,d-1\}$
 \[\label{eq:Fourier-Basis}
 e_m(x) = \begin{cases}
 c_d\gamma_k^i\sin(k\cdot x), \quad& k \in \Z^d_+\\
 c_d\gamma_k^i\cos(k\cdot x),\quad& k\in \Z^d_-,
 \end{cases}
 \]
where $\Z^d_0 := \Z^d \setminus \set{0,\ldots, 0}$, $\Z_+^d = \{k\in \Z^d_0 : k^{(d)} >0\}\cup\{k\in \Z^d_0 \,:\, k^{(1)}>0, k^{(d)}=0\}$ and $\Z_-^d = - \Z_+^d$, and for each $k\in \Z_0^d$, $\{\gamma_k^i\}_{i=1}^{d-1}$ is a set of ${d-1}$ orthonormal vectors spanning the plane perpendicular to $k \in \R^d$ with the property that $\gamma_{-k}^i = - \gamma_{k}^i$. The constant $c_d = \sqrt{2}(2\pi)^{-d/2}$ is a normalization factor so that $e_m(x)$ are a complete orthonormal basis on $L^2$ divergence free vector fields. Note that in dimension $d=2$  $\mathbb{K} = \Z^d_0$, hence $\gamma_k^1 = \gamma_k$ is just a vector in $\R^2$ perpendicular to $k$ and is therefore given by $\gamma_k = \pm k^{\perp}/|k|$.

The white-in time noise that we will force our equations with is $Q\dot{W_t}$, where $W_t$ is a cylindrical Wiener process on $\Wbf$ with respect to an associated canonical stochastic basis $(\Omega,\mathscr{F},(\mathscr{F}_t),\P)$ and $Q$ is a positive Hilbert-Schmidt operator on $\Wbf$ which we assume can be diagonalized with respect to $\{e_m\}$ with eigenvalues $\{q_m\}\in \ell^2(\mathbb{K})$ defined by
\[
Qe_m = q_m e_m,\quad m = (k,i)\in \mathbb{K}.
\]
In this way $Q\dot{W}_t$ can be represented in terms of the basis$\{e_m\}$ by
\[
	Q\dot{W}_t = \sum_{m\in\mathbb{K}} q_m e_m \dot{W}^m_t 
\]
where $\{W^m_t\}_{m\in \mathbb{K}}$ are a collection of iid one-dimensional Wiener processes with respect to $(\Omega,\mathscr{F},(\mathscr{F}_t),\P)$.

We will assume that $Q$ satisfies the following regularity and non-degeneracy assumption:
\begin{assumption} \label{a:Highs}
  There exists $\alpha$ satisfying $\alpha > \frac{5d}{2}$ and a constant $C$ such that
\[
	\frac{1}{C}\|(-\Delta)^{-\alpha/2}u\|_{\Wbf} \leq \|Qu\|_{\Wbf} \leq C\|(-\Delta)^{-\alpha/2}u\|_{\Wbf}.
\]
Equivalently
\[
	|q_{m}| \approx |k|^{-\alpha}.
\]
\end{assumption}
\begin{remark}
Assumption \ref{a:Highs} essentially says that the forcing is $QW_t$ has high spatial regularity but cannot be $C^\infty$. The non-degeneracy requirement on $Q$ can be weakened to a more mild non-degeneracy at only high-frequencies (see \cite{BBPS18}), but fully non-degenerate noise simplifies some arguments. 
We are currently unable to treat noise which is degenerate at high frequencies as in \cite{HM06,HM11}.
See Remark \ref{rmk:Hypo} below for a more precise discussion of the two places we fundamentally depend on Assumption \ref{a:Highs}. 
\end{remark}

We will define our primary phase space of interest to be the following: 
\[
	\Hbf = \Hbf^\sigma , \quad \text{for some fixed}\quad  \sigma \in (\alpha-2(d-1), \alpha  - \tfrac{d}{2}). 
\]
Note that we have chosen $\alpha$ sufficiently large to ensure that $\sigma > \frac{d}{2} + 3$ so that we have the embedding $\Hbf \hookrightarrow C^3$. 
We will consider a stochastic evolution $(u_t)$ in $\Hbf$, which we refer to as the {\em velocity process},
 solving one of the two following stochastic PDEs: 
\begin{system}[2D Navier-Stokes equations]\label{sys:NSE}
\begin{equation}
\begin{dcases}
\,\partial_t u_t + u_t \cdot \grad u_t =- \grad p_t + \nu \Delta u_t + Q \dot W_t \\ 
\,\Div u_t = 0 \, , 
\end{dcases}
\end{equation}
where $u_0 = u \in \Hbf$ and the viscosity parameter $\nu >0$ is fixed.
\end{system}

\begin{system}[3D hyper-viscous Navier-Stokes]\label{sys:3DNSE}
\begin{equation}
\begin{dcases}
\,\partial_t u_t + u_t \cdot \grad u_t =- \grad p_t + \nu' \Delta u_t - \nu \Delta^{2} u_t + Q \dot W_t \\ 
\,\Div u_t = 0,
\end{dcases}
\end{equation}
where $u_0 = u \in \Hbf$. Here, the viscosity parameter $\nu' >0$, and hyperviscosity parameter $\nu>0$ are fixed.
\end{system}

The following well-posedness theorem is classical (See Section \ref{sec:Prelim}).   

\begin{proposition} \label{prop:WP}
For both Systems \ref{sys:NSE}, \ref{sys:3DNSE} and all initial data $u\in \Hbf$,
there exists a $\P$-a.s. unique, global-in-time, $\mathscr{F}_t$-adapted mild solution $(u_t)$ satisfying $u_0 = u$. 
Moreover, $(u_t)$ defines a Feller Markov process and the corresponding Markov semigroup has a unique stationary probability measure $\mu$ on $\Hbf$.
\end{proposition}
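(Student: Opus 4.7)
The plan has three parts: (i) $\P$-a.s.\ global-in-time well-posedness of mild solutions in $\Hbf$ for both systems, (ii) verification of the Feller property, and (iii) existence and uniqueness of the invariant measure $\mu$.

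For (i), I would work in the mild formulation
\begin{equation}
u_t = e^{tL}u + \int_0^t e^{(t-s)L}B(u_s,u_s)\,\dee s + Z_t, \qquad Z_t := \int_0^t e^{(t-s)L}Q\,\dee W_s,
\end{equation}
where $L$ is $\nu\Delta$ (resp.\ $\nu'\Delta-\nu\Delta^2$) and $B(u,v)=-\mathbb P(u\cdot\nabla v)$ is the Leray-projected nonlinearity. The stochastic convolution is handled by the factorization method: Assumption \ref{a:Highs} yields $Q\in L_2(\Wbf;\Hbf^{\alpha'})$ for any $\alpha'<\alpha-d/2$, so since $\sigma<\alpha-d/2$, $Z$ has a.s.\ continuous paths in $\Hbf$ with moments of every order. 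Local existence then follows by a standard contraction in $C([0,T_\star];\Hbf)$ using the smoothing of $e^{tL}$ and the bilinear estimate $\|B(u,v)\|_{\Hbf}\lesssim\|u\|_{\Hbf}\|v\|_{\Hbf}$ (available because $\Hbf\hookrightarrow W^{1,\infty}$). For globalization I would apply It\^o's formula to $\|u_t\|_{\Wbf}^2$, exploit the cancellation $\brak{B(u,u),u}_{\Wbf}=0$, and iterate to higher Sobolev norms: in $d=2$ this gives enstrophy bounds and hence control of $\|u_t\|_{\Hbf}$ via parabolic bootstrapping from the equation for $v_t:=u_t-Z_t$, while in the 3D hyperviscous case the $-\Delta^2$ dissipation absorbs $B$ in $\Hbf$ by Sobolev interpolation.

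For (ii), the Feller property reduces to $\P$-a.s.\ continuity of $u\mapsto u_t$ in $\Hbf$, which comes from a pathwise Gronwall estimate on the difference of two solutions against the same noise realization (the stochastic convolution cancels). For (iii), existence follows from Krylov--Bogoliubov: the It\^o energy estimates yield $\sup_{t\geq 0}\E\|u_t\|_{\Hbf^{\sigma'}}^2<\infty$ for some $\sigma'\in(\sigma,\alpha-d/2)$, which together with the compact embedding $\Hbf^{\sigma'}\hookrightarrow\Hbf$ makes the time-averaged laws $\frac1T\int_0^T\mathcal L(u_s)\,\dee s$ tight on $\Hbf$. For uniqueness, because Assumption \ref{a:Highs} forces every Fourier mode, the diffusion is both irreducible in $\Hbf$ (a Stroock--Varadhan-type controllability argument using the additive, non-degenerate noise) and strong Feller on $\Hbf$ (via a Bismut--Elworthy--Li / Malliavin argument in the spirit of Flandoli--Maslowski, exploiting that $Q$ is boundedly invertible on each finite Fourier shell); Doob's theorem then forces uniqueness. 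Alternatively one may invoke \cite{HM06,HM11} directly.

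The main obstacle I expect is the regularity bookkeeping: the classical ergodicity theorems are stated in $L^2$-based energy spaces, whereas here the state space $\Hbf=\Hbf^\sigma$ has $\sigma>d/2+3$. Every step---the stochastic convolution bound, the It\^o moment estimates, the bilinear estimate needed for local existence, and the strong Feller/irreducibility argument---must be run in this higher-regularity norm. Assumption \ref{a:Highs} with $\alpha>\tfrac{5d}{2}$ is precisely what provides the margin $\sigma<\alpha-d/2$ to accommodate these estimates, and keeping this margin consistent throughout the three parts is the delicate book-keeping piece of the argument.
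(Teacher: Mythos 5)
Your proposal is correct and follows the standard route: the paper itself does not prove Proposition \ref{prop:WP}, but defers to the classical references \cite{KS,DPZ96} (via Proposition \ref{prop:WPapp}), and your sketch reproduces precisely the argument one finds there (mild formulation with factorization for the stochastic convolution, contraction in $C([0,T];\Hbf)$ for local existence, It\^o energy estimates plus bootstrapping for globalization, pathwise Gronwall for Feller, Krylov--Bogoliubov plus strong Feller and irreducibility for the invariant measure). One small technical slip: the bilinear estimate $\norm{B(u,v)}_{\Hbf}\lesssim\norm{u}_{\Hbf}\norm{v}_{\Hbf}$ cannot hold as written since $B$ loses a derivative; the correct statement is $\norm{B(u,v)}_{\Hbf^{\sigma-1}}\lesssim\norm{u}_{\Hbf^\sigma}\norm{v}_{\Hbf^\sigma}$ (algebra property for $\sigma>d/2$), with the lost derivative recovered via the parabolic smoothing $\norm{e^{tL}}_{\Hbf^{\sigma-1}\to\Hbf^\sigma}\lesssim t^{-1/2}$ -- which you already invoke, so the fixed-point scheme closes, but the estimate should be recorded at the $\Hbf^{\sigma-1}$ level.
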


With the $(u_t)$ process on $\Hbf$ as in Proposition \ref{prop:WP}, we write $\phi^t = \phi^t_{u}$ for the stochastic
flow of diffeomorphisms on $\T^d$ solving \eqref{def:LagIntro} with initial velocity field $u$. 
This gives rise to an $\mathscr{F}_t$-adapted, Feller Markov process $(u_t, x_t)$ on
$\Hbf \times \T^d$ which we refer to as the \emph{Lagrangian process} defined by $x_t = \phi^t (x)$, where $x_0 = x \in \T^d$. 
Uniqueness of the stationary measure $\mu \times \Leb$ for the process $(u_t, x_t)$ for both of Systems \ref{sys:NSE} and \ref{sys:3DNSE} was proved in \cite{BBPS18}.

\subsection{Main results and discussion} \label{sec:mainres}
We are now ready to state our main results, for which we give further context and discussion afterwards. 
\begin{theorem}\label{thm:-sdecay}
Let $(u_t)$ be as in any of Systems \ref{sys:NSE} -- \ref{sys:3DNSE}, initiated at $\mu$-generic $u \in \Hbf$, with $\mu$ as in Proposition \ref{prop:WP}.
Fix $s > 0$ and $p \geq 1$. Then, there exists a (deterministic)
constant $\hat{\gamma} = \hat{\gamma}(s,p) > 0$, depending only on $s,p$ and the parameters of the system (e.g. $Q$, $\nu$, etc), and a
random constant $D = D(\omega, u) : \Omega \times {\bf H} \to [1,\infty)$ depending on the same parameters as $\hat{\gamma}$ and additionally on $(\omega,u)$, the sample path and the initial data, which satisfy the following properties:
\begin{itemize}
\item[(i)] For any $f, g \in H^s(\T^d)$, satisfying $\int f \,\dx = \int g\,\dx = 0$ the following holds for all $t \geq 0$:
\begin{align}
\left| \int f (x) g(\phi^t_{u} (x)) \, \dx\right| \leq D \| f \|_{H^s} \| g \|_{H^s} e^{- \hat{\gamma} t} \, . \label{ineq:sdecay}
\end{align}
\item[(ii)] For $u$ fixed, $D(\cdot, u)$ is $\PP$-a.e. finite and moreover satisfies the following: $\exists \beta \geq 2$ (independent of $u$, $p$, $s$) such that  $\forall \eta > 0$ there holds
\begin{align}
\EE D^p \lesssim_{p,\eta,s} \left(1 + \norm{u}^2_{\Hbf}\right)^{p\beta} \exp\left(\eta \norm{u}_{\mathbf{W}}^2 \right) < \infty \, . \label{ineq:Dtail}
\end{align}
For $s \leq 1$ and $p \geq 1$, one can take $\hat\gamma \gtrsim \frac{s}{p}$. 
\end{itemize}
\end{theorem}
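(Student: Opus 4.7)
The plan is to derive the a.s. decay (i) together with the moment bound (ii) from a fully annealed moment estimate of the form
\begin{equation*}
\EE_u\left|\int_{\T^d} f(x)\, g(\phi^t_u x)\,\dx\right|^{2p} \leq C_{p,s}\,(1+\norm{u}_{\Hbf}^2)^{p\beta} e^{\eta\norm{u}_{\Wbf}^2}\, e^{-\gamma t}\, \norm{f}_{H^s}^{2p}\,\norm{g}_{H^s}^{2p},
\end{equation*}
valid for all mean-zero $f, g \in H^s$, every $p \geq 1$, and some $\gamma = \gamma(p,s) > 0$ with $\gamma \gtrsim s$ when $s \leq 1$. The conversion from such an annealed bound to the quenched statement is standard: by Chebyshev applied at integer times $t_n = n$ and the first Borel--Cantelli lemma, one produces a random $D(\omega, u)$ satisfying the discrete-time version of (i); polynomial-in-$t$ growth of $\norm{\phi^t_u}_{C^{k+s}}$, inherited from Proposition \ref{prop:WP} together with exponential moments on the enstrophy/energy of $(u_t)$, interpolates to continuous times while inflating $D$ by only a polynomial factor. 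The same machinery yields the tail bound (ii), and the characteristic factor-of-$2p$ loss between the annealed rate $\gamma$ and the quenched rate $\hat\gamma$ explains the $\hat\gamma \gtrsim s/p$ scaling.

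For the annealed bound, I would first handle $p=1$ and smooth $f, g$. Writing
\begin{equation*}
\EE\left|\int f\cdot g\circ\phi^t\,\dx\right|^2 = \int_{\T^d \times \T^d} f(x)f(y)\,\EE\bigl[g(\phi^t x)\, g(\phi^t y)\bigr]\,\dx\,\dy,
\end{equation*}
the question reduces to exponential ergodicity of the \emph{two-point Lagrangian process} $(u_t, x_t, y_t)$ on $\Hbf \times (\T^d \times \T^d \setminus \Delta)$, where $\Delta$ is the diagonal. If this process converges to its unique stationary measure (with marginal $\mu \otimes \Leb \otimes \Leb$) in a suitable weighted total variation at an exponential rate, the mean-zero condition on $g$ forces $\EE[g(\phi^t x)g(\phi^t y)] \to 0$ exponentially, uniformly off the diagonal, while the near-diagonal contribution is absorbed by the Lyapunov weight. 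Higher moments $p \geq 2$ are reduced to the two-point case by expanding against the $2p$-point motion and iteratively pairing coordinates. The passage from smooth test functions to $H^s$ is done by a Littlewood--Paley decomposition: low-frequency pieces inherit the two-point decay, while high-frequency pieces are absorbed using polynomial-in-$t$ bounds on $\norm{\phi^t}_{C^k}$. Optimizing the frequency cutoff produces the $\gamma(p,s) \propto s/p$ scaling for $s \leq 1$.

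The principal obstacle, and the heart of the argument, is the geometric ergodicity of the two-point process off the diagonal. The diagonal $\Delta$ is dynamically invariant, so one must show that it is repelling on average; this is exactly where the positive Lyapunov exponent of \cite{BBPS18} enters crucially. I would construct a Lyapunov function of the form $\mathcal V(u, x, y) = V(u) + |x-y|^{-\eta}$ for small $\eta > 0$, possibly augmented by a projective twist tracking the orientation of $x - y$ under the linearized flow, and verify a drift condition $\mathcal L \mathcal V \leq -a \mathcal V + b$ by combining: (a) the standard dissipative drift of the Navier--Stokes semigroup for the $V(u)$ component, (b) a stochastic/projective averaging argument in the spirit of \cite{BBPS18}, which converts the positive Lyapunov exponent into a negative averaged drift of $\log|x_t - y_t|$ near the diagonal, and (c) the full nondegeneracy of $Q$ from Assumption \ref{a:Highs}, which supplies the hypoellipticity and topological irreducibility needed to prove minorization on sublevel sets of $\mathcal V$. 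A Harris-type theorem in the spirit of Meyn--Tweedie and Hairer--Mattingly then produces exponential convergence of the two-point semigroup in the weighted norm induced by $\mathcal V$, closing the chain of reductions. The technical core of the paper, I expect, is exactly this Lyapunov construction near the diagonal: the projective geometry is intrinsically infinite-dimensional because it is coupled to $(u_t)$, and the averaging argument must be quantitative enough to produce a bound independent of the height of the projective fiber.
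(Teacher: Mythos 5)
Your high-level architecture---reduce to geometric ergodicity of the two-point process $(u_t,x_t,y_t)$ via Harris, then Borel--Cantelli and interpolation to pass from annealed $L^2$ moments to quenched decay of $H^s$ observables---matches the paper. However, the step you describe as ``(b) a stochastic/projective averaging argument... which converts the positive Lyapunov exponent into a negative averaged drift of $\log|x_t - y_t|$'' is a genuine gap, and it hides the true technical core.

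A positive Lyapunov exponent $\lambda_1 > 0$ gives $\E \log |D\phi^t v| \approx \lambda_1 t$, i.e.\ a negative additive drift of $\log|x_t-y_t|^{-1}$ near the diagonal. But a Harris drift condition for a Lyapunov function $|x-y|^{-\eta}$ requires $\E |x_t-y_t|^{-\eta} \leq \kappa |x-y|^{-\eta} + C$ with $\kappa<1$, and by Jensen $\E e^{-\eta \log|x_t-y_t|} \geq e^{-\eta \E\log|x_t-y_t|}$ goes the \emph{wrong} way. What is actually needed is positivity of the \emph{moment} Lyapunov exponent $\Lambda(p) = -\lim_t \frac{1}{t}\log\E|D\phi^t v|^{-p}$ for small $p>0$. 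This is strictly stronger than $\lambda_1>0$; indeed the paper explicitly flags Pomeau--Manneville maps, which have a positive Lyapunov exponent yet only polynomial (or slower) correlation decay because the analogous $\Lambda(p)$ degenerates. Jensen only gives $\Lambda(p) \leq p\lambda_1$, not $\Lambda(p) > 0$, and convexity alone does not rescue you: one needs to prove $\Lambda$ is \emph{differentiable} at $p=0$ with $\Lambda'(0) = \lambda_1$, which is a spectral perturbation statement, not an averaging one. The ``projective twist'' you float as optional is in fact essential for two independent reasons: (i) the instantaneous hyperbolicity $H(u,x,v)=\langle v, Du(x)v\rangle$ is not sign-definite, so $|w|^{-p}$ alone satisfies no pointwise drift inequality --- $\psi_p$ must be the exact dominant eigenfunction of the twisted semigroup $\hat P^p_t$ so that $f_p = |w|^{-p}\psi_p$ solves $TP_t f_p = e^{-\Lambda(p) t} f_p$; and (ii) the linearization error when passing from the linearized ($w^* = D\phi^t w$) to the nonlinear ($w_t = w(x_t,y_t)$) displacement is controlled only because $\psi_p \in \mathring C^1_V$, so $D_v\psi_p$ is bounded against a Lyapunov weight. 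Establishing that $\hat P^p_{T_0}$ has a spectral gap on $\mathring C^1_V$ (via a Lasota--Yorke gradient estimate and a Malliavin-matrix nondegeneracy argument adapted to $\Hbf \times P\T^d$), and that $\hat P^p_t$ is a $C_0$ semigroup on $\mathring C_V$ so that the eigenrelation propagates to all $t\geq 0$, is where the bulk of the work lies and cannot be replaced by the drift-of-$\log$ averaging you propose.

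Two smaller remarks: first, the paper only ever uses the two-point process with an $L^2$ estimate and obtains $\EE D^p$ by moment bounds on the tails of the random constants (not by passing to $2p$-point motion, which would be substantially harder to control off a higher-dimensional exceptional set). Second, making the infinitesimal-generator inequality $\mathcal L_{(2)}\Vc \leq -\Lambda(p)\Vc + C$ rigorous in infinite dimensions is itself nontrivial, since $V$ does not belong to $\mathring C_{\widehat V}$; the paper has to work with $h_p$ alone in the domain of the generator, then couple with a ``super-Lyapunov'' inequality for the $(u_t)$ marginal to absorb the linearization error, a point your sketch does not address.
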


As discussed above \eqref{ineq:quenched}, an immediate corollary of Theorem \ref{thm:-sdecay} is $H^{-s}$ decay for passively advected scalars in the absence of 
dissipation and with no sources as in \eqref{eq:gt} above.

\begin{corollary} \label{cor:mixscalar}
In the setting of Theorem \ref{thm:-sdecay}, fix $s > 0$, $p \geq 1$, and let $\hat\gamma =\hat \gamma(s,p)$ as in Theorem \ref{thm:-sdecay}. Then, 
for any mean-zero initial $g \in H^s$, the solution $g_t$ to the passive scalar problem \eqref{eq:gt} 
satisfies the estimate
\begin{align}
\| g_t \|_{H^{-s}} \leq D e^{-\hat \gamma t} \| g \|_{H^s}  \label{ineq:gdec}
\end{align}
where $D$ is the random constant appearing in Theorem \ref{thm:-sdecay}.
\end{corollary}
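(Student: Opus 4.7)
The plan is to reduce the claim directly to Theorem \ref{thm:-sdecay}(i) by using the representation of $g_t$ as a pullback along the inverse Lagrangian flow, combined with the incompressibility of $u_t$. Since this is a corollary, no new estimates are needed; the work is just in checking that the duality is set up correctly.

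First, I would recall that the unique solution to \eqref{eq:gt} with $g_0 = g$ is given by the method of characteristics: $\frac{\dee}{\dt} g_t(\phi^t_u(x)) = 0$, so $g_t = g \circ (\phi^t_u)^{-1}$. Because $\Div u_t = 0$, the flow $\phi^t_u$ preserves Lebesgue measure on $\T^d$, hence $g_t$ is again mean-zero and $\|g_t\|_{L^2} = \|g\|_{L^2}$. Next, for any mean-zero $f \in H^s(\T^d)$, perform the change of variables $x = \phi^t_u(y)$ (whose Jacobian is $1$) to obtain
\begin{equation}
\int_{\T^d} f(x)\, g_t(x)\, \dx \;=\; \int_{\T^d} f(x)\, g\bigl((\phi^t_u)^{-1}(x)\bigr)\, \dx \;=\; \int_{\T^d} f(\phi^t_u(y))\, g(y)\, \dy.
\end{equation}
Applying Theorem \ref{thm:-sdecay}(i) with this $f$ and $g$ (both of which are mean-zero and $H^s$) gives
\begin{equation}
\Bigl| \int_{\T^d} f(x)\, g_t(x)\, \dx \Bigr| \;\le\; D\, \|f\|_{H^s}\, \|g\|_{H^s}\, e^{-\hat\gamma t},
\end{equation}
with the same random constant $D$ as in the theorem.

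Finally, I would take the supremum over $f \in H^s$ with $\|f\|_{H^s} \le 1$ and $\int f \, \dx = 0$. By the convention introduced around \eqref{ineq:HmsDec}, this supremum equals $\|g_t\|_{H^{-s}}$; alternatively, since $g_t$ is mean-zero, testing against all $f \in H^s$ versus only mean-zero $f$ yields the same dual norm (up to harmless constants) because $\int f\, g_t\, \dx = \int (f - \overline{f})\, g_t\, \dx$ where $\overline f$ denotes the mean of $f$, and $\|f - \overline{f}\|_{H^s} \le \|f\|_{H^s}$. This yields \eqref{ineq:gdec} with the same $\hat\gamma(s,p)$ and $D$.

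The only subtlety is bookkeeping: one must verify that the transport equation is well-defined for the random velocity field (this is built into the existence of the stochastic flow $\phi^t_u$ established earlier in the paper, since $u_t \in \Hbf \hookrightarrow C^3$) and that the change-of-variables step is justified pathwise. Neither is a genuine obstacle --- both are consequences of the smoothness and incompressibility of $\phi^t_u$ already used in Theorem \ref{thm:-sdecay}.
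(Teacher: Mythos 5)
Your proof is correct and follows the same route the paper indicates: represent $g_t = g\circ(\phi^t_u)^{-1}$, change variables using incompressibility, apply Theorem \ref{thm:-sdecay}(i), and take the supremum over mean-zero $f$ with $\|f\|_{H^s}\le 1$. The paper treats this as immediate (the equivalence is laid out between \eqref{ineq:HmsDec} and \eqref{ineq:quenched} in the introduction) and does not write out a separate proof, so your slightly more explicit bookkeeping of the mean-zero reduction and the pathwise justification is consistent with, and no different in substance from, the intended argument.
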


\begin{remark} 
By time reversibility of the advection equation, no point-wise in time decay, such as in \eqref{ineq:gdec}, can hold without some regularity assumption on $g$. On the other hand, by a density argument, for $\mu$-generic $u$ and all mean-zero $g \in L^2$, we have $g_t \rightharpoonup 0$ in $L^2$ as $t \to \infty$ $\PP$-a.e. 
at a rate which is uniform over compact sets in $L^2$. See \cite{TZ18} for more discussion.
\end{remark}

\begin{remark}
One always has $\norm{g_t}_{H^{-s}} \leq \norm{g}_{L^2}$, hence $D$ quantifies an upper bound on 
the random time-scale one will have to wait in order to see the exponential decay $\hat\gamma$. Decay occurs for times $t \gtrsim \tau = \frac{1}{\hat\gamma} \log \left(D \norm{g}_{H^s}\right)$  and estimates such as \eqref{ineq:Dtail} shows that $\exists \delta >0$ such that the following exponential moment holds: $\exists \beta'$ large so that $\forall \eta > 0$,
\begin{align*}
\EE e^{\delta \tau} \lesssim_{\eta} \left(1 + \norm{u}_{\Hbf}^2 \right)^{\beta'} \exp\left( \eta \norm{u}_{\mathbf W}^2\right) \norm{g}_{H^s}.  
\end{align*}
\end{remark}

It is well-known that sufficiently regular velocity fields can mix at most exponentially fast.
Refining exactly the relation between regularity and mixing rate is the content of Bressan's conjecture \cite{Bressan03} and has been studied in several works, for example \cite{S13,IyerXu14} and the references therein. In the other direction, the construction of exponential mixers has also proved challenging.
Roughly speaking, deterministic, time-autonomous flows possess a coherent flow direction along which 
no mixing can occur, presenting significant difficulties in the construction of such mixers. 
See, e.g., \cite{ruelle1983flows, liverani2004contact}.

Exponential mixers which are H\"older continuous and smooth away from a finite set of 
hyperplanes were constructed in \cite{TZ18} (the fields studied in Theorem \ref{thm:-sdecay} are not $C^\infty_x$ but for all $k < \infty$, can be chosen $C^k_x$ by choosing $\alpha$ large enough).
 See also the work \cite{GGM19} where the velocity is chosen depending on the scalar, and \cite{YZ14} where the decay is on long but finite time-intervals. 
For stochastic flows of diffeomorphisms on compact manifolds (in particular these velocity fields are white-in-time) having a positive Lyapunov exponent and satisfying certain non-degeneracy assumptions, almost-sure exponential mixing was proved in \cite{dolgopyat2004sample}.
As we will see, the unbounded phase-space (of $(u_t)$) and infinite-dimensional dynamics of the velocity field in Theorem \ref{thm:-sdecay} both present fundamental difficulties that require many new ideas to treat.

{ Informally, the infinitesimal mechanism responsible for mixing is hyperbolicity, or 
stretching and contracting in various directions in phase space.
In our previous work \cite{BBPS18}, we proved the presence of hyperbolicity
for Lagrangian flow by showing 
 that with probability 1, $D_x \phi^t$ (eventually) grows exponentially fast for all initial $x \in \T^d$ 
and velocity fields $u$. 
However, this statement is \emph{local} in $x$, and is therefore much weaker than the 
\emph{global} statement of Theorem \ref{thm:-sdecay}. Indeed, a
 positive Lyapunov exponent is basically equivalent to an (almost-sure, eventual) exponential growth of $\norm{\grad g_t}_{L^2}$ for any non-zero $g \in H^1$ (see the discussion in \cite{BBPS18}), which by Sobolev interpolation, is much weaker than \eqref{ineq:sdecay}. 
One can construct many examples of dynamical systems with a positive Lyapunov exponent but arbitrarily slow (e.g., polynomial or logarithmic) decay of correlations, for example, Pommeau-Manneville maps (see, e.g., \cite{sarig2002subexponential}).}

\begin{remark}[The role of nonlinearity]
If one drops the nonlinearity from Systems \ref{sys:NSE} or \ref{sys:3DNSE}, one is left with the Stokes equation, and the velocity field is simply
\begin{align*}
u_t(x) = \sum_{m \in \mathbb K} \beta^m_t e_m(x) \, , 
\end{align*}
where $\set{\beta^m_t}_{m \in \mathbb K}$ is a family of independent Ornstein-Uhlenbeck processes with variances $\frac{q_m^2}{2 \nu \abs{k}^2}$ (with $m = (k,i)$). 
Heuristically, one can expect the Lagrangian flow to behave similarly to successively applying i.i.d random shear flows of all orientations, which would resemble a hyperbolic toral automorphism.
In accordance with this intuition, the Stokes equations is by far the easiest case to treat. 

In our work here and in \cite{BBPS18}, the nonlinearity in the Navier-Stokes equations is (a priori) an \emph{enemy}.
It requires the adaptation or creation of more powerful and flexible tools.
Moreover, there are also physical reasons one may worry about introducing nonlinearity. 
For example, in two dimensions, the Navier-Stokes equations form coherent vortices inside of which hyperbolicity is halted.
See e.g. \cite{BBPV94,Provenzale1999,BabianoProvenzale07,PBBPW08} and the references therein for discussions on this and its implications for passive scalar dynamics. 
The Furstenberg criteria arguments in \cite{BBPS18} rule out that these vortices can form coherent structures that permanently entrain any part of the Lagrangian flow. 
Theorem \ref{thm:-sdecay} gives exponential tail control on any slow down of hyperbolicity caused by transient coherent vortices. 
\end{remark}

\subsection{Finite dimensional evolution and $C^{k}_t C^\infty_x$ almost-sure exponential mixers}\label{subsec:finiteDimIntro}
When considering finite dimensional evolutions for the velocity fields, our methods significantly simplify 
it suffices to impose much weaker nondegeneracy conditions on the noise.
\begin{assumption}[Low mode non-degeneracy] \label{a:lowms}
Define $\mathcal{K}_0 \subset \mathbb K$ to be the set of $m \in \mathbb K$ such that $q_m \neq 0$. Assume $m \in \mathcal{K}_0$ if $\abs{m}_{\infty} \leq 2$ (for $m = (k,i)$, $k = (k_i)_{i = 1}^d \in \Z^d$ we write $|m|_{\infty} = \max_{i} |k_i|$). 
\end{assumption}

We write $\Hbf_{\mathcal K_0} \subset \Hbf$ for the subspace spanned by the Fourier modes $m \in \mathcal K_0$ and $\Hbf_N \subset \Hbf$ for the subspace spanned by the Fourier modes satisfying $\abs{m}_\infty \leq N$.
Consider the Stokes system (with very degenerate forcing) and Galerkin-Navier-Stokes systems defined as the following. 

\begin{system} \label{sys:2DStokes} 
We refer to the \emph{Stokes system} in $\mathbb T^d$ ($d = 2,3$) as the following, for $u_0 = u \in \Hbf_{\mathcal K}$: 
\begin{equation} \label{eq:stokes-2d}
\begin{cases}
\,\partial_t u_t =- \grad p_t + \Delta u_t + Q \dot W_t \\ 
\,\Div u_t = 0 
\end{cases},
\end{equation}
where $Q$ satisfies Assumption \ref{a:lowms} and $\mathcal K_0$ is finite.
\end{system}
\begin{system} \label{sys:Galerkin}
We refer to the \emph{Galerkin-Navier-Stokes system} in $\mathbb T^d$ ($d = 2,3$) as the following, for $u_0 = u \in \Hbf_N$:
\begin{equation}
\begin{cases}
\,\partial_t u_t + \Pi_N\left(u_t\cdot \grad u_t + \grad p_t \right) = \nu\Delta u_t + Q \dot W_t \\ 
\,\Div u_t = 0
\end{cases}
\end{equation}
where $Q$ satisfies Assumption \ref{a:lowms}; $N \geq 3$ is an integer; $\Pi_N$ denotes the projection to 
Fourier modes with $| \cdot|_{\infty}$ norm $\leq N$; $\Hbf_N$ denotes the span of the first $N$ Fourier modes; and $\nu > 0$ is fixed and arbitrary.
\end{system}

In addition to Systems \ref{sys:2DStokes} and \ref{sys:Galerkin}, which feature fluid models subjected to white-in-time forcing, the methods easily extend to treat 
fluid systems subjected to certain types of forcing that we refer to as `OU tower noise'.
This is basically an external forcing that is a projection of an Ornstein-Uhlenbeck process on $\Real^M$. Note that the force can be $C^k_t, k \geq 1$ (see Remark \ref{rmk:CkCinfx} below).
In particular, we consider the following set of systems (stated in a general Navier-Stokes-like setting): 
\begin{system}\label{sys:Markov}
\begin{subequations} \label{eqn:finiteD}
We refer to the \emph{(generalized) Galerkin-Navier-Stokes system with OU tower noise} in $\mathbb T^d$ ($d = 2,3$) as the following stochastic ODE for $u_0 \in \Hbf_N$:
\begin{align}
  \partial_t u_t & + X(u,u) = \nu \Delta u_t + Q Z_t \\
  \partial_t Z_t & = -\mathcal{A}Z_t + \Gamma \dot{W}_t, 
\end{align}
\end{subequations}
where $Z_t \in \Hbf_M$, the operator $\mathcal{A} : \Hbf_M \to \Hbf_M$ is diagonalizable and has a strictly positive spectrum, and the bilinear term $X(u,u):\Hbf_N \times \Hbf_N \to \Hbf_N$ satisfies $u \cdot X(u,u) = 0$ and $\forall j$,  $X(e_j,e_j) = 0$. 
Note that $(u_t)$ is not Markov, but $(u_t,Z_t)$ is Markov.
\end{system}

Theorem \ref{thm:-sdecay} extends to all of Systems \ref{sys:2DStokes}, \ref{sys:Galerkin}, and \ref{sys:Markov}. 

\begin{theorem} \label{thm:FiniteDregs}
Consider any of Systems \ref{sys:2DStokes}--\ref{sys:Markov}.
Assume that $Q$ satisfies Assumption \ref{a:lowms} and that the parabolic H\"ormander condition is satisfied for $(u_t)$ or $(u_t,Z_t)$ (see e.g. \cite{Hairer11notes}).  
Then,
\begin{itemize}
\item[(i)] the Lagrangian flow \eqref{def:LagIntro} has a strictly positive Lyapunov exponent in the same sense as described in \cite{BBPS18};
\item[(ii)] all of the results of Theorem \ref{thm:-sdecay} hold. 
\end{itemize}
\end{theorem}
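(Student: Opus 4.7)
The plan is to follow the same overall strategy as in the infinite-dimensional case (Theorem \ref{thm:-sdecay} and the positive Lyapunov exponent result of \cite{BBPS18}), but with the parabolic Hörmander condition on $(u_t)$ (or $(u_t,Z_t)$) replacing the strong nondegeneracy of Assumption \ref{a:Highs}. Throughout, write $\Ubf$ for the finite-dimensional velocity phase space ($\Hbf_{\mathcal K_0}$ for System \ref{sys:2DStokes}, $\Hbf_N$ for System \ref{sys:Galerkin}, and $\Hbf_N\times\Hbf_M$ for System \ref{sys:Markov}). Working in finite dimensions is a substantial simplification: strong Feller follows from hypoellipticity (Hörmander), Lyapunov tightness bounds on $\|u_t\|$ are polynomial rather than exponential, and subsets of $\Ubf$ with bounded norm are compact.

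For (i), I would repeat the framework of \cite{BBPS18}, which reduces the question of a positive top Lyapunov exponent to (a) unique ergodicity of the projective process $(u_t,x_t,v_t)$ on $\Ubf \times \T^d \times \S^{d-1}$ (with $v_t$ the normalized tangent vector along the flow), and (b) a Furstenberg-type irreducibility check that rules out deterministic invariant structures for the projective process. The parabolic Hörmander condition gives smooth transition densities for $(u_t)$ via standard Malliavin calculus, and lifts to a Hörmander condition on the projective bundle provided $Q$ activates enough low Fourier modes---this is the role of Assumption \ref{a:lowms}. The Furstenberg irreducibility argument is a direct transcription of \cite{BBPS18}: the key algebraic computation involves commutators of low-frequency shears, which are exactly what Assumption \ref{a:lowms} supplies.

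For (ii), the quenched exponential mixing bound \eqref{ineq:sdecay} reduces, via the martingale/cocycle argument of Theorem \ref{thm:-sdecay}, to establishing geometric ergodicity with a spectral gap for the two-point Markov process $(u_t,x_t,y_t)$ on $\Ubf \times (\T^d\times \T^d \setminus \mathrm{diag})$. This in turn requires (A) a Lyapunov function of the form $V(u,x,y) = (1+\|u\|^2)^\beta d(x,y)^{-\eta}$ (with $\eta>0$ small) whose generator satisfies a drift inequality $\mathcal L V \leq -c V + K \mathbf 1_C$ on the complement of a sublevel set $C$, and (B) a minorization on $C$. Once (A) and (B) are in place, the proof of Theorem \ref{thm:-sdecay}---including the exponential moment bound \eqref{ineq:Dtail} and Corollary \ref{cor:mixscalar}---transfers essentially without change, with various infinite-dimensional regularizations replaced by trivial finite-dimensional ones.

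The main obstacle is the near-diagonal part of (A): one must show that $\mathbf E \log d(x_t,y_t)$ grows at a rate bounded below by (say) $\lambda/2$ uniformly for $y$ close to $x$, where $\lambda>0$ is the Lyapunov exponent from (i). The standard strategy, as in the infinite-dimensional proof, is to compare the near-diagonal two-point process to the projective process from (i) and transfer ergodicity of the latter to an exponential growth estimate for $\log d$; this comparison requires uniform control of $D^2_x\phi^t$ and polynomial moments of $\|u_t\|$, both easier in finite dimensions. The minorization (B) follows from the Hörmander condition applied to the two-point generator on the compact set $C$, together with a controllability argument (using Assumption \ref{a:lowms}) showing that a fixed reference configuration $(u_*, x_*, y_*)$ is accessible from every point of $C$. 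With (A) and (B) established, the remainder of the deduction is a direct transcription of the infinite-dimensional proof.
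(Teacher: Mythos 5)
Your plan is broadly aligned with the paper's strategy for strong Feller, irreducibility, minorization, and the super-Lyapunov bounds on $\|u_t\|$, all of which are indeed much easier in finite dimensions (Hörmander's theorem replaces the entire Malliavin/Lasota--Yorke apparatus). However, there is a genuine gap in your treatment of the near-diagonal drift condition, and it is precisely at the point the paper identifies as its main new technical contribution.

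You propose the Lyapunov function $\Vc(u,x,y) = (1+\|u\|^2)^\beta d(x,y)^{-\eta}$ and then claim the drift inequality follows by "comparing the near-diagonal two-point process to the projective process and transferring ergodicity of the latter to an exponential growth estimate for $\log d$." The issue is that knowing $\E\log d(x_t,y_t)$ grows at rate $\geq \lambda_1/2 > 0$ does \emph{not} imply $\E\, d(x_t,y_t)^{-\eta} \leq e^{-c t}\, d(x,y)^{-\eta}$, which is what a drift condition on $\Vc$ requires. The negative-moment bound encodes large-deviation control on the Lyapunov exponent of the Lagrangian flow, and the infinitesimal generator $\mathcal L_{(2)}$ applied to $d(x,y)^{-\eta}$ has sign governed by the pointwise quantity $H(u,x,v) = \langle v, Du(x)v\rangle$, which is \emph{not} uniformly of the good sign: it is negative along locally contracting directions. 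A function of the form $d(x,y)^{-\eta}$ alone cannot satisfy a pointwise (or even fixed-$T$) drift condition; one must multiply by a "shape factor" that corrects for the fluctuations in instantaneous hyperbolicity. This is exactly what $\psi_p$ does: the drift condition holds because $f_p(u,x,w) = |w|^{-p}\psi_p(u,x,w/|w|)$ is an eigenfunction of the linearized two-point semigroup $TP_t$ with eigenvalue $e^{-\Lambda(p)t}$, where $-\Lambda(p)$ is the moment Lyapunov exponent (see Remark \ref{rmk:MomLyap} and Proposition \ref{prop:psiP}), not simply $-p\lambda_1$. The paper's Section \ref{sec:mainres} emphasizes exactly this distinction---a positive Lyapunov exponent alone is compatible with arbitrarily slow correlation decay (Pomeau--Manneville type examples).

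The fix, as in Remark \ref{rmk:construftLFforFD}, is that even in the finite-dimensional case one must still construct $\psi_p$ as the dominant eigenfunction of the twisted (Feynman--Kac) semigroup $\hat P^p_t$ on the projective bundle $\Ubf \times P\T^d$. What \emph{does} simplify is the regularity theory for $\psi_p$: hypoellipticity gives $\psi_p \in C^\infty$ with $D\psi_p \in C_V$ directly, so the $C^1_V$ Lasota--Yorke/spectral-gap machinery of Section \ref{sec:C1VSpecProj} is unnecessary. With $\psi_p$ in hand, the two-point Lyapunov function is $\Vc = h_p + V_{\beta+1,\eta}$ with $h_p = \chi(|w|)|w|^{-p}\psi_p(u,x,w/|w|)$ (not $d(x,y)^{-\eta}$ alone), and the generator argument of Section \ref{sec:2ptDrift} closes because of the eigenfunction relation $TP_t f_p = e^{-\Lambda(p) t} f_p$ and the absorption of linearization errors by the super-Lyapunov property. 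Your proposal as written has no replacement for the $\psi_p$ factor, so the drift condition would not close.
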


\begin{remark}
We have chosen to include Theorem \ref{thm:FiniteDregs} to emphasize that our methods do not fundamentally require non-spatially smooth velocity fields,
nor do they require velocity fields that are directly subjected to white-in-time forcing.
The difficulty in extending Theorem \ref{thm:-sdecay} to include $C^k_t C^\infty_x$ velocity fields is the lack of sufficiently strong hypoellipticity results in infinite-dimensions, i.e., the lack of a sufficiently strong replacement for H\"ormander's theorem. See Remark \ref{rmk:Hypo} below. 
\end{remark}

\begin{remark} \label{rmk:CkCinfx}
Theorem \ref{thm:FiniteDregs} contains many examples of $C^{k}_t C^\infty_x$ velocity fields for any $k \geq 0$.
For example, consider the following: 
\begin{align*}
u_t(x) & = \sum_{m \in \mathbb K: \abs{m}_{\infty} \leq 2} \hat{u}^m_t e_m(x) \, , 
\end{align*}
where the coefficients are given by the system 
\begin{align*}
\partial_t \hat{u}^m_t & = - \hat{u}^m_t + Z^{m,0}_t \\
\partial_t Z^{m,\ell} & = - (\ell+1) Z_t^{m,\ell} + Z_t^{j,\ell-1} \quad 1 \leq \ell \leq n \,\\
\partial_t Z^{m,n} & = - Z_t^{m,n} +\dot{W}^m_t
 .
\end{align*}
By indexing correctly, one can re-write this in the form stated in System \ref{sys:Markov} with $X \equiv 0$.
One can check that the parabolic H\"ormander condition is satisfied. This example also explains the terminology `OU tower'. 
\end{remark}

\section{Outline of the proof}\label{subsec:outline2PTDecay}

In this section, we give the main steps for the proof of Theorem \ref{thm:-sdecay}; details 
will be given in Sections \ref{sec:ExpProj}--\ref{sec:finishUp}. 
The exposition we give here focuses on the infinite-dimensional
Systems \ref{sys:NSE}, \ref{sys:3DNSE}, with the finite-dimensional systems
described in Section \ref{subsec:finiteDimIntro} addressed in a series of remarks as we go along. 

\subsection{Correlation decay via geometric ergodicity of the two point Lagrangian motion}\label{subsec:sufficesTwoPtMotion}

Our primary tool for investigating quenched correlation decay as in Theorem \ref{thm:-sdecay} is
to study the following Markov process.

\begin{definition} \label{def:2ptProc}
We define the {\it two-point Lagrangian process} $(u_t, x_t, y_t)$ on $ \Hbf \times \T^d \times \T^d$ 
by
\[
x_t = \phi^t_{u}(x) \, , \quad y_t = \phi^t_{u}(y)
\]
for fixed initial $(u, x, y) \in \Hbf \times \T^d \times \T^d$.
\end{definition}
The two-point process $(u_t, x_t, y_t)$ simultaneously tracks the velocity field $u_t$ as well as two separate
 Lagrangian flow trajectories $x_t, y_t$. Throughout, we assume $x \neq y$, hence $x_t \neq y_t$ for all $t >0$, since the diagonal 
 \[
 \Dc = \{ (x, x) : x \in \T^d \} \subset \T^d \times \T^d
 \]
 and its complement $\Dc^c$ are invariant.
 
We write $P^{(2)}_t$ for the corresponding Markov semigroup on $ \Hbf \times \Dc^c$.
Note that $\mu \times \Leb \times \Leb$ is automatically a stationary measure for the two-point process. 
We will deduce Theorem \ref{thm:-sdecay} from geometric ergodicity with respect to this measure.
\begin{theorem}\label{thm:2-pt-decay}
There exist $\hat\alpha >0$ and a measurable function $\mathcal V :  \Hbf \times \Dc^c \to [1,\infty)$, with $\mathcal V \in L^1(\mu \times \Leb \times \Leb)$ such that for each 
measurable bounded $\psi :  \Hbf \times \T^d \times \T^d$ with $\iiint \psi \dee\mu \dx\dy = 0$ 
and each $(u, x, y) \in  \Hbf \times \mathcal{D}^c$, we have that
\[
|P_t^{(2)}\psi(u, x, y) | \leq \mathcal V(u, x, y) e^{-\hat\alpha t}\left\|\psi\right\|_{L^\infty}.
\]
for all $t \geq 0$.
\end{theorem}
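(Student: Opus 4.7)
The plan is to deduce Theorem \ref{thm:2-pt-decay} from a Harris-type geometric ergodicity result for the Markov semigroup $P^{(2)}_t$ on $\mathbf{H} \times \mathcal{D}^c$, adapted in the spirit of Hairer--Mattingly to accommodate the infinite-dimensional and degenerate nature of the velocity dynamics. Concretely, I would aim to verify two ingredients: an exponential drift inequality
\[
P^{(2)}_t \mathcal{V}(u,x,y) \leq e^{-\gamma t}\mathcal{V}(u,x,y) + K, \qquad t \geq 0,
\]
for constants $\gamma, K > 0$ and a measurable $\mathcal{V} : \mathbf{H} \times \mathcal{D}^c \to [1,\infty)$; and a minorization or small-set condition for $P^{(2)}_T$ on sublevel sets of $\mathcal{V}$, replaced in the infinite-dimensional case by asymptotic strong Feller plus topological irreducibility on $\mathbf{H} \times \mathcal{D}^c$. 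The standard (weak) Harris machinery then yields exponential convergence $|P^{(2)}_t \psi(u,x,y) - \iiint \psi\, d(\mu \times \mathrm{Leb} \times \mathrm{Leb})| \lesssim \mathcal{V}(u,x,y)\, e^{-\hat\alpha t}\|\psi\|_{L^\infty}$, which for mean-zero $\psi$ is exactly the claim.

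The Lyapunov function $\mathcal{V}$ must diverge in two regimes: as $\|u\|_{\mathbf{H}} \to \infty$ and as $|x-y| \to 0$. I would take
\[
\mathcal{V}(u,x,y) = V_1(u) + V_2(u)\,|x-y|^{-\eta}
\]
for small $\eta > 0$, with $\eta < d$ so that $\mathcal{V} \in L^1(\mu \times \mathrm{Leb} \times \mathrm{Leb})$. Here $V_1$ controls the unbounded fluid phase space and is built from standard energy/enstrophy moments, e.g.\ $(1 + \|u\|_{\mathbf{H}}^2)^\beta \exp(\delta \|u\|_{\mathbf{W}}^2)$ for $\delta$ small and $\beta$ large; the exponential drift of $V_1$ under $P^{(2)}_t$ is a classical computation for Systems \ref{sys:NSE}--\ref{sys:3DNSE}. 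The multiplier $V_2(u)$ is a modest polynomial in $\|u\|_{\mathbf{H}}$, included so that Lipschitz and higher-order errors from the nonlinearity can be absorbed in the near-diagonal drift computation.

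The central non-routine step is the drift of $|x_t - y_t|^{-\eta}$ when $|x-y|$ is small, and this is where the main result of \cite{BBPS18} enters. Writing $y-x = \rho v$ with $\rho = |x-y|$ and $v$ a unit vector, for small $\rho$ one has $|x_t - y_t| = \rho\,|D_x \phi^t v|\,(1 + O(\rho))$ on any time interval where the accumulated stretching remains controlled. The positive Lyapunov exponent $\lambda_1 > 0$ proved in \cite{BBPS18} for the projective process $(u_t, x_t, v_t)$, combined with the geometric ergodicity of that process, yields a spectral-gap estimate for the twisted Markov operator $\psi \mapsto \mathbb{E}[|D_x \phi^t v|^{-\eta} \psi(u_t,x_t,v_t)]$; for $\eta$ small this should give
\[
\mathbb{E}\bigl[|D_x \phi^t v|^{-\eta}\bigr] \lesssim e^{-\gamma_\eta t}, \qquad \gamma_\eta = \eta \lambda_1 + O(\eta^2) > 0.
\]
Combining this with $L^p$ bounds on $D\phi^t$ to control the $O(\rho)$ linearization error should produce, for some deterministic $T_0 > 0$ and threshold $\rho_0 > 0$,
\[
\mathbb{E}\bigl[|x_{T_0} - y_{T_0}|^{-\eta}\bigr] \leq \tfrac{1}{2} V_2(u)\,|x-y|^{-\eta} \quad\text{when } |x-y| \leq \rho_0.
\]
Away from the diagonal, $|x_t - y_t|^{-\eta}$ stays bounded by a pathwise Lipschitz argument ($|x_t - y_t| \geq e^{-\int_0^t \|\nabla u_s\|_\infty ds}|x-y|$), absorbed into $K$. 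Combined with the $V_1$-drift, this yields the exponential drift inequality.

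The main obstacle is the minorization/small-set step. On each sublevel set of $\mathcal{V}$ one has $u$ bounded in $\mathbf{H}$ and $(x,y)$ bounded away from $\mathcal{D}$, so for the finite-dimensional Systems \ref{sys:2DStokes}--\ref{sys:Markov} this would follow by H\"ormander's theorem applied to the joint generator on $\mathbf{H}_N \times \mathcal{D}^c$ (checking that the two-point Lie algebra spans off the diagonal under Assumption \ref{a:lowms}) together with Stroock--Varadhan support-type controllability. For the infinite-dimensional Systems \ref{sys:NSE}--\ref{sys:3DNSE} full minorization cannot be expected, and I would replace it by the weak Harris approach: topological irreducibility of the two-point process on $\mathbf{H} \times \mathcal{D}^c$, obtained from the full nondegeneracy of $Q$ in Assumption \ref{a:Highs}, together with an asymptotic strong Feller estimate for $P^{(2)}_t$ via Malliavin calculus. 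The delicate point is that the Malliavin matrix estimates for the coupled process $(u_t, x_t, y_t)$ must remain uniform as $(x,y)$ approach the diagonal without degenerating, and this is essentially the place where the quantitative form of Assumption \ref{a:Highs} on $Q$ is used in an essential way.
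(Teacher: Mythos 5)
Your high-level plan is genuinely the same as the paper's: apply a Harris theorem to $P^{(2)}_t$, with a Lyapunov function that blows up both as $\|u\|_{\mathbf{H}} \to \infty$ and as $d(x,y) \to 0$, and extract the near-diagonal drift from the positive Lyapunov exponent of \cite{BBPS18} via a moment-Lyapunov-exponent (twisted semigroup) estimate. But there are two substantive gaps in how you propose to execute the near-diagonal drift, and these are precisely the points where the paper's construction is most delicate.

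First, the estimate $\mathbb{E}\bigl[|D_x \phi^t v|^{-\eta}\bigr] \lesssim e^{-\gamma_\eta t}$ cannot hold with an implicit constant uniform in $(u,x,v)$: the correct asymptotic is $\mathbb{E}\bigl[|D_x \phi^t v|^{-\eta}\bigr] \approx \psi_\eta(u,x,v)\,e^{-\Lambda(\eta) t}$, where $\psi_\eta$ is the dominant eigenfunction of the twisted semigroup $\hat P^\eta_t$ and $\psi_\eta(u,\cdot)$ grows like $V(u) = (1+\|u\|^2_{\mathbf H})^\beta \exp(\eta \|u\|^2_{\mathbf W})$ (see Lemma \ref{lem:formulaLambdaP}: the convergence in $\frac{1}{t}\log \E|D_x\phi^t v|^{-\eta} \to -\Lambda(\eta)$ is uniform only on bounded subsets of $\Hbf \times P\T^d$). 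Consequently, a polynomial multiplier $V_2(u)$ cannot absorb this $u$-dependence, and the drift inequality $\mathbb E[V_2(u_{T_0})|x_{T_0}-y_{T_0}|^{-\eta}] \le \frac12 V_2(u)|x-y|^{-\eta}$ fails for $u$ large. The paper's fix is to use $\psi_p$ itself as the angular part of the Lyapunov function: $h_p(u,x,y) = \chi(|w|)\,|w|^{-p}\psi_p(u,x,w/|w|)$ with $w = w(x,y)$. Then $f_p(u,x,w) = |w|^{-p}\psi_p(u,x,w/|w|)$ is an exact eigenfunction of the linearized semigroup $TP_t$ with eigenvalue $e^{-\Lambda(p)t}$, so the $u$-dependent prefactor cancels identically (Proposition \ref{prop:psiP}). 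Constructing $\psi_p$ in a suitable space ($\mathring C_V^1$) and proving $\Lambda(p) = p\lambda_1 + o(p) > 0$ is the bulk of Sections \ref{sec:C1VSpecProj}--\ref{sec:spectral-twist} and cannot be obtained for free from geometric ergodicity of the projective process; it requires a Lasota-Yorke gradient estimate, a spectral perturbation argument in $\mathring C^1_V$, and a separate $C_0$-semigroup analysis in $\mathring C_V$.

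Second, the finite-time drift at a fixed $T_0$ runs into the same non-uniformity through the linearization error. The $O(\rho)$ error in replacing $|x_t - y_t|$ by $\rho|D_x\phi^t v|$ is controlled by $\int_0^{T_0}\|\nabla u_s\|_\infty\,ds$, which is unbounded as $\|u\|\to\infty$; no fixed $T_0$ and $\rho_0$ make the linearization valid for all initial velocities. The paper explicitly flags this issue (Section \ref{subsec:outlineConstructV}) and resolves it by verifying the drift at the level of the infinitesimal generator $\mathcal L_{(2)}$, showing $\mathcal L_{(2)}h_p \leq -\Lambda(p)h_p + C' V_{\beta+1,\eta}$ (Lemma \ref{lem:approxhp}, which needs $\psi_p \in \mathring C^1_V$) and then absorbing $C' V_{\beta+1,\eta}$ with the super-Lyapunov property $\mathcal L V \leq -\kappa V + C_\kappa$ for arbitrarily large $\kappa$ (Lemma \ref{lem:TwistBd}, Remark \ref{rmk:SuperL}). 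Your proposal does not address either of these mechanisms, and without them the drift condition does not close. Finally, a smaller point: you replace minorization by weak Harris / asymptotic strong Feller, but the paper in fact proves the strong Feller property for $P^{(2)}_t$ (Proposition \ref{prop:SFscaleIntro}) using the fully non-degenerate noise of Assumption \ref{a:Highs}, and uses the strong Harris framework of Theorem \ref{thm:GM}; Remark \ref{rmk:Hypo} explains that the weak Harris route was precisely the one they could not make work here.
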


\begin{remark}
The idea of using geometric ergodicity of a two-point process to
deduce quenched correlation decay is known to experts in random dynamical systems, although it does
not appear to be generally well-known. To the best of our knowledge, this idea first appears in the literature
in \cite{dolgopyat2004sample} on quenched correlation decay for SDE on compact manifolds (see 
also \cite{ayyer2007quenched}).
\end{remark}

The bulk of the work in this paper is aimed at proving Theorem \ref{thm:2-pt-decay}.
Before proceeding to describe the proof, we first give an indication of how Theorem \ref{thm:2-pt-decay} will
be used to deduce Theorem \ref{thm:-sdecay}. Fix mean-zero $f, g \in L^\infty(\T^d)$
and $\hat\gamma \in (0, \frac{\hat\alpha}{2})$. For all $n \in \Z_{\geq 0}$, $u \in \Hbf$, 
\begin{equation}
\begin{aligned}
\P  \left\{\left| \int f\, (g\circ \phi^n_u)\,\dx  \right| > e^{- \hat\gamma n} \right\}
& \leq e^{2 \hat \gamma n} \E  \left( \int f\, (g\circ \phi^n_{u})\dx \right)^2   \\
& = e^{2 \hat \gamma n } \int \tilde f(x, y)\, (P_n^{(2)} \tilde g)(u, x,y) \, \dx\dy \, ,
\end{aligned}
\end{equation}
where $\tilde f(x,y) = f(x) f(y), \tilde g(x,y) = g(x) g(y)$. By Theorem \ref{thm:2-pt-decay}, 
the above RHS is bounded $\lesssim_{u,f,g} e^{(2 \hat \gamma - \hat\alpha) n}$.
We conclude by the Borel-Cantelli lemma that 
\[
\left| \int f (g \circ \phi^n_{u}) \dx \right| \leq \tilde D e^{- \hat \gamma n} \, , 
\]
for all $n \geq 1$, where $\tilde D = \tilde D(f, g, \omega, u)$ is a random constant depending on $f, g$ and the initial velocity field $u$. Additional work is needed to determine the dependence of the random constant $\tilde D$
on $f, g$ and $u$, as well as to pass from discrete to continuous time. These arguments are 
carried out in detail in Section \ref{sec:finishUp}.

\subsection{Conditions for geometric ergodicity} 

A prevailing strategy for proving correlation decay for Markov chains on noncompact spaces is to 
verify conditions guaranteeing that the Markov process 
visits a ``small'' subset of phase space 
with a positive asymptotic frequency.  
 Many criteria of this kind have been developed (sometimes called Harris theorems): for a detailed account, see, e.g., the reference \cite{meyn2012markov}. 

For obtaining geometric ergodicity of the 2-point process, we will implement a series of criteria
developed by Goldys and Maslowski \cite{goldys2005exponential}, particularly useful for semigroups generated by SPDE,
for checking the conditions of the abstract Harris-type theorems (c.f. \cite{meyn2012markov}; see also, e.g., 
\cite{bricmont2002exponential, kuksin2002coupling, kuksin2002coupling2, mattingly2002exponential, KNS18} for other approaches to geometric ergodicity for SPDE). 
A simplified version of their framework is as follows.
 Let $\mathcal Z$ be a Polish space with, and
 let $(Z_t)_{t \geq 0}$ be a continuous-time Markov process on $\Zc$ with transition kernels $P_t(z, K) = \P_z(Z_t \in K)$. 
As usual we define the Markov semigroup on observables $\psi:\Zc \to \R$ by
\begin{align*}
P_t \psi(z) = \EE_z\left(\psi(Z_t)\right) = \int_{\Zc} \psi(z') P_t(z,dz'). 
\end{align*}

\begin{condition}[Strong Feller] \label{defn:SF}
We say that a Markov process $(Z_t)$ is \emph{strong Feller} if for all $t > 0$	and bounded measurable $\psi : \Zc \to \R$, we have that $z \mapsto P_t \psi(z)$ is continuous
	on $\Zc$ for all $t > 0$. 
\end{condition}

\begin{condition}[Topological irreducibility] \label{defn:TopIrr}
	We say that a Markov process $(Z_t)$ is \emph{topologically irreducible} if for all open $U \subset \Zc$, 
	we have that $P_t(z, U) > 0$ for all $t > 0, z \in \Zc$.
\end{condition}

The next two conditions refer to a given measurable function $\Vc : \Zc \to [1,\infty)$.

\begin{condition}[Uniform lower bounds] \label{def:UnifBd}
For each $ r > 1$, there exists a compact set $K \subset \Zc$ and a time $t_0 = t_0(r) > 0$
such that
	\[
	\inf_{\{ \Vc(z) \leq r\}} P_{t_0} (z, K) > 0 \, .
	\]
\end{condition}

\begin{condition}[Drift condition] \label{defn:drift}
We say $\Vc: Z \to [1,\infty)$ satisfies a \emph{drift condition} (a.k.a. Lyapunov function) if there are constants $k, \varkappa, c > 0$ such that
	\[
	P_t \Vc \leq k e^{- \varkappa t} \Vc + c
	\]
	holds pointwise.
\end{condition}

Below, given $\Vc : Z \to [1,\infty)$, we write $C_\Vc$ for the Banach space of continuous observables $\psi : Z \to \R$ for which
the norm
\begin{align}
 \| \psi \|_{C_\Vc} := \sup_{z \in \mathcal Z} \frac{|\psi(z)|}{\Vc(z)} \label{def:CV}
\end{align}
 is finite.
\begin{theorem}[Follows from Theorem 3.1 and Lemma 3.2 in \cite{goldys2005exponential}] \label{thm:GM}
Suppose that the Markov process $(Z_t)$ and the function $\Vc: \Zc \to [1,\infty)$ satisfy Conditions \ref{defn:SF}, \ref{defn:TopIrr}, \ref{def:UnifBd} and \ref{defn:drift}.
Then, the Markov process $(Z_t)$ admits a unique stationary measure $m$, with respect
 to which $(Z_t)$ is geometrically ergodic in $C_{\Vc}$. That is, for all $\psi \in C_\Vc$, 
we have that
\[
\left| P_t \psi(z) - \int \psi \dee m \right| \leq C \Vc(z) e^{- \beta t} \| \psi \|_{C_\Vc}  \quad \text{ for all } t > 0 \, , 
\]
where $C > 0, \beta > 0$ are constants.
\end{theorem}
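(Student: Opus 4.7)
The plan is to treat Theorem \ref{thm:GM} as a Harris-type geometric ergodicity statement: combine the drift condition (controlling excursions to infinity) with a uniform minorization on sublevel sets $\{\Vc \leq r\}$ (providing a coupling mechanism) to deduce exponential contraction in the weighted norm $C_\Vc$. The whole scheme rests on showing that sublevel sets of $\Vc$ are \emph{small} in the sense of Meyn-Tweedie, after which one invokes standard coupling machinery.

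First I would establish the minorization. Fix $r > 1$, let $K = K_r$ and $t_0 = t_0(r)$ be as in Condition \ref{def:UnifBd}, and aim to produce $t_1 > t_0$, a constant $\delta_r > 0$, and a probability measure $\nu_r$ on $\Zc$ with
\[
P_{t_1}(z, \cdot) \geq \delta_r \, \nu_r(\cdot) \quad \text{for all } z \in \{ \Vc \leq r \}.
\]
The workhorse (which is morally Lemma 3.2 of \cite{goldys2005exponential}) is the following open-set lemma: Condition \ref{defn:SF} makes $z \mapsto P_s \mathbf{1}_U(z)$ lower semicontinuous for open $U$ and $s > 0$ via a Portmanteau-type argument, and Condition \ref{defn:TopIrr} makes it strictly positive; hence for each open $U$ and each $z_0$ there is an open neighborhood of $z_0$ on which $P_s(\cdot, U)$ is bounded below by a strictly positive constant. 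Choosing a reference point $w_0 \in \Zc$ and an open pre-compact neighborhood $U \ni w_0$, I would cover the compact set $K$ by finitely many such neighborhoods to obtain $\inf_{z' \in K} P_{s_1}(z', U) \geq \delta_1 > 0$; iterating the argument once more on $\overline{U}$ yields $\inf_{w \in \overline{U}} P_{s_2}(w, A) \geq \delta_2 \nu_r(A)$ with $\nu_r$ a suitable restriction of $P_{s_2}(w_0, \cdot)$. Chaining these via Chapman-Kolmogorov with Condition \ref{def:UnifBd} gives the minorization at time $t_1 = t_0 + s_1 + s_2$.

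With the minorization on sublevel sets and the drift condition in hand, I would conclude via the standard Hairer-Mattingly coupling argument. Choose $r$ large compared to $c/(1 - k e^{-\varkappa t_1})$ so that the discrete-time chain $(Z_{n t_1})$ hits $\{\Vc \leq r\}$ with exponentially decaying return-time tails; construct a coupling of two copies of the process which, upon simultaneous entry into $\{\Vc \leq r\}$, succeeds with probability $\delta_r$ via $\nu_r$ and otherwise restarts. The drift condition provides exponential moments for the waiting time between coupling attempts, while attempts are independent with fixed success probability, yielding an exponentially decaying tail on the coupling time. The bound
\[
\left| P_t \psi(z) - \int \psi \, \dee m \right| \leq C \, \Vc(z) \, e^{-\beta t} \, \| \psi \|_{C_\Vc}
\]
then follows by interpolating between the coupling estimate at discrete multiples of $t_1$ and the trivial bound $|P_t \psi| \leq \Vc \, \|\psi\|_{C_\Vc}$ together with the drift condition; uniqueness of the stationary measure $m$ is automatic, since any two stationary measures must both charge $\{\Vc \leq r\}$ and therefore couple.

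The main obstacle is the minorization. Strong Feller and topological irreducibility are each individually rather weak, and in the infinite-dimensional setting relevant here there is no ambient local compactness to lean on, so neither yields a useful uniform lower bound on transition kernels in isolation. Condition \ref{def:UnifBd} is the decisive compensating ingredient: by depositing mass into a compact target set $K$, it localizes the analysis to a regime where the open-set/compactness argument becomes available, effectively reducing the problem to the kind of minorization one encounters in finite dimensions. All the delicate work is in this reduction; once the sublevel sets of $\Vc$ are small, the passage from minorization plus drift to geometric ergodicity in $C_\Vc$ is standard.
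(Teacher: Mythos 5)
Your overall plan --- show sublevel sets of $\Vc$ are Doeblin small and then invoke drift-plus-minorization --- is the right shape and is in the spirit of the cited Goldys--Maslowski result, and the second half (coupling, exponential return-time tails, interpolation from discrete to continuous time, uniqueness of $m$) is standard once a minorization is in hand. The gap is in the minorization itself. Your open-set lemma gives $\inf_{z\in K} P_{s_1}(z, U) > 0$ for $K$ compact and $U$ open, which is fine; but ``iterating the argument once more on $\overline{U}$'' only ever produces lower bounds on $P_{s_2}(\cdot, U')$ for \emph{open} $U'$, not the bound $P_{s_2}(w, A) \geq \delta_2\,\nu_r(A)$ uniform over all \emph{Borel} $A$ that a Doeblin minorization requires. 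Strong Feller --- even after the standard doubling of time to upgrade to total-variation continuity of $z \mapsto P_s(z,\cdot)$ --- does not by itself manufacture a common minorizing measure: a TV-continuous family of pairwise equivalent probability measures indexed by a compact parameter set can perfectly well have trivial infimum as a measure. There is also an internal contradiction: you take an ``open pre-compact neighborhood $U \ni w_0$'' and iterate on $\overline{U}$, which requires $\overline{U}$ compact, yet --- as you yourself observe a paragraph later --- open pre-compact sets do not exist in the infinite-dimensional spaces at issue here.

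What actually closes this gap in Goldys--Maslowski (and, behind them, Meyn--Tweedie) is the petite-set/$T$-chain machinery, not a direct covering argument. Strong Feller makes the process a $T$-chain; Lemma 3.2 of Goldys--Maslowski (a Doob--Khas'minskii argument) shows that strong Feller plus topological irreducibility forces all transition kernels $P_t(z,\cdot)$, $t>0$, to be mutually equivalent, hence the chain is $\psi$-irreducible with $\psi$ the stationary measure supplied by the drift condition and Krylov--Bogoliubov; for a $\psi$-irreducible $T$-chain every compact set is petite, and aperiodicity --- automatic here for a continuous-time strong Feller irreducible process --- upgrades petite to small. That chain of implications is the substance of the minorization, it is materially more involved than the two covering steps you sketch, and it is exactly the piece the paper delegates to the citation rather than reproving.
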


\begin{remark}
Harris-type theorems typically have two sets of assumptions: a \emph{minorization condition}
satisfied by a subset $S$ of phase space, which guarantees that orbits initiated from $S$ couple 
with some uniform probability $> 0$, and a \emph{drift condition} (Condition \ref{defn:drift}) which controls
excursions from $S$. 
Conditions \ref{defn:SF} -- \ref{def:UnifBd} are sufficient for verifying a suitable minorization condition
for the sublevel sets $\{ \mathcal V \leq r\}, r > 0$.
See \cite{meyn2012markov, hairer2011yet} for more discussion on abstract Harris theorems.
\end{remark}

We will apply Theorem \ref{thm:GM} to the two-point Markov process $(u_t, x_t, y_t)$
 on the state space $\Hbf \times \Dc^c$.  
Along the way, in Section \ref{sec:DefUniBd} we also apply Theorem \ref{thm:GM} to another related Markov process.

\subsection{Strong Feller and Irreducibility} \label{subsubsec:SFoutline}

Let us now begin sketching how to verify the conditions of Theorem \ref{thm:GM} for the two-point process $(u_t, x_t, y_t)$ on $\Hbf \times \Dc^c$.
First, we will prove the strong Feller property as in Condition \ref{defn:SF}
on a scale of Sobolev spaces-- this refinement is useful in verifying Condition \ref{def:UnifBd} later on.
Note that the evolution of $(x_t,y_t)$ in Definition \ref{def:2ptProc} is not subject to noise. 
After verifying the requisite uniform H\"ormander conditions (Section \ref{sec:UHC}), the proof follows by methods used previously in \cite{BBPS18}.
A brief sketch is included in Section \ref{sec:SF} for completeness. 
\begin{proposition}\label{prop:SFscaleIntro}
For any $\sigma' \in (\alpha - 2(d-1),\alpha - \frac{d}{2})$ the two-point Markov process $(u_t, x_t, y_t)$ on $\Hbf^{\sigma'} \times \Dc^c$ is strong Feller.
\end{proposition}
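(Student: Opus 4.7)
The plan is to adapt to the two-point setting the hypoelliptic strong Feller machinery already developed in \cite{BBPS18} for the one-point Lagrangian process. The two ingredients needed are (i) a uniform parabolic H\"ormander condition for the augmented generator of $(u_t, x_t, y_t)$ on $\Hbf^{\sigma'} \times \Dc^c$, and (ii) a Bismut--Elworthy--Li-type integration-by-parts identity, obtained from Malliavin calculus, which yields a pointwise bound on $\nabla P^{(2)}_t \psi$ in terms of $\|\psi\|_{L^\infty}$ alone, hence continuity of $P^{(2)}_t \psi$ for bounded measurable $\psi$.

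First I would verify the uniform H\"ormander condition at every point $(u, x, y) \in \Hbf^{\sigma'} \times \Dc^c$. The noise enters only through the $u$-variable via the vector fields associated to $\{Q e_m\}_{m \in \mathbb K}$; by Assumption \ref{a:Highs} these are non-degenerate at every frequency, so the span in the $\Hbf$-direction is immediate. The nontrivial content is that iterated Lie brackets of the drift with these noise vector fields must reach the two Lagrangian tangent directions at $x$ and $y$. Writing the Lagrangian drift as $u(x)\partial_x + u(y)\partial_y$, bracketing with the noise produces expressions involving the pair $(e_m(x), e_m(y))$, and the hypothesis $x \neq y$ (i.e., on $\Dc^c$) is exactly what allows one to separate these two evaluations by choosing enough independent Fourier modes. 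This computation is carried out in Section \ref{sec:UHC}; it is the two-point analogue of the uniform H\"ormander estimate proved for the single-point process in \cite{BBPS18}.

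Given the H\"ormander condition, the strong Feller property follows along the usual Malliavin route: one proves that the reduced Malliavin matrix for the projection onto the $(x,y)$-factor is a.s.\ invertible at each $t>0$, with polynomial negative-moment bounds on its inverse (and the Jacobian of the stochastic flow for $u_t$ satisfies compatible positive-moment bounds), and then a Bismut--Elworthy--Li identity of the form
\[
\nabla P^{(2)}_t \psi(u, x, y)\cdot \xi \;=\; \tfrac{1}{t}\,\E\Bigl[\psi(u_t, x_t, y_t)\int_0^t v_s(\xi)^\top \dee W_s\Bigr]
\]
for a suitable Malliavin weight $v_s(\xi)$ controls $|\nabla P^{(2)}_t \psi|$ by $\|\psi\|_{L^\infty}$. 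The restriction $\sigma' \in (\alpha - 2(d-1), \alpha - \tfrac{d}{2})$ is tailored so that $Q$ remains Hilbert--Schmidt into $\Hbf^{\sigma'}$ (upper bound) while leaving enough smoothing to close the higher-derivative estimates on the stochastic flow and Malliavin matrix entries in the $u$-variable (lower bound), exactly as in \cite{BBPS18}.

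The main obstacle is the first step: producing, from white noise acting only on $u_t$, genuinely \emph{independent} infinitesimal perturbations of $x_t$ and $y_t$. This is where $x \neq y$ is indispensable and where one must exploit that the basis $\{e_m\}$ is rich enough to separate distinct points on $\T^d$ after finitely many brackets, with estimates uniform on compact subsets of $\Hbf^{\sigma'} \times \Dc^c$. Once the uniform H\"ormander condition is in hand, the remaining Malliavin and semigroup regularity steps parallel \cite{BBPS18} with only cosmetic changes, which is why only a brief sketch is needed in Section \ref{sec:SF}.
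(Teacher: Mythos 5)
Your outline captures the two main ingredients the paper uses — a uniform H\"ormander-type spanning condition for the two-point vector fields, followed by a Malliavin-calculus gradient bound — and these are indeed the ingredients in Section \ref{sec:SF}. Your description of the spanning step is accurate in spirit: the bracket of the noise directions with the Lagrangian drift produces vectors $(e_m(x), e_m(y))$, and the content of Lemma \ref{lem:UniSpan} is that the low Fourier modes already span $\R^{2d}$ with a quantitative lower bound proportional to $d(x,y)$.

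However, there is a real gap in the middle step. The Bismut--Elworthy--Li identity you write down, $\nabla P^{(2)}_t\psi\cdot\xi = \tfrac{1}{t}\E[\psi\int_0^t v_s(\xi)^\top\dee W_s]$, cannot be applied directly to the uncut-off process, and the phrase "compatible positive-moment bounds" on the Jacobian is precisely where the argument breaks down as stated. The moments of the flow Jacobian and of the inverse Malliavin matrix are \emph{not} uniform over $\Hbf^{\sigma'} \times \Dc^c$: they grow in $\|u\|_{\Hbf^{\sigma'}}$ (through the $V$-weights of Lemma \ref{lem:TwistBd}) and blow up as $d(x,y)\to 0$ (since the uniform spanning bound degenerates as $d(x,y)$). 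This is why the paper, following \cite{BBPS18,EH01}, first passes to a cut-off process $\bar z^\rho_t$, in which the Navier--Stokes drift is switched off when $\|u\|_{\Hbf}$ exceeds $\rho$ and replaced by an auxiliary nondegenerate Wiener process $Z_t$; only for this process does one obtain the derivative bound of Proposition \ref{prop:cut-off-SF}, and it still carries the polynomial prefactor $t^{-a}\,d(x,y)^{-b}\,(1+\|u\|_{\Hbf}+|Z|)^{b}$. The strong Feller property of the genuine process is then recovered by estimating $|P^{(2)}_t\varphi(z^1)-P^{(2)}_t\varphi(z^2)|$ against the cut-off semigroup plus an $\eps$-error (controlled by the moment estimates of Proposition \ref{prop:WPapp}) and then contracting along paths in the weighted Riemannian metric $d_b$, which is comparable to the $\Hbf\times\Dc^c$ topology but absorbs the $d(x,y)^{-b}(1+\|u\|_{\Hbf})^{b}$ blowup. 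Without the cutoff and the $d_b$-metric device, a na\"ive BEL formula yields a gradient bound depending unboundedly on the base point, and one cannot conclude continuity of $P^{(2)}_t\psi$ from it directly; the cut-off/limiting argument is therefore not cosmetic.
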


Next, we verify topological irreducibility as in Condition \ref{defn:TopIrr}. This follows by a relatively simple approximate controllability statement; see Section \ref{sec:Irr2pt} for the details. 
\begin{proposition}\label{prop:topIrredIntro} 
For any $\sigma' \in (\frac{d}{2} + 2, \alpha - \frac{d}{2})$,   the two-point process $(u_t, x_t, y_t)$, regarded as a process on $\Hbf^{\sigma'} \times \Dc^c$, is topologically irreducible.
\end{proposition}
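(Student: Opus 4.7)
The plan is to prove Proposition \ref{prop:topIrredIntro} via approximate controllability combined with a support-theorem argument for the stochastic Navier--Stokes equations. Fix an open set $U \subset \Hbf^{\sigma'} \times \Dc^c$, initial data $(u, x, y) \in \Hbf^{\sigma'} \times \Dc^c$, and $t > 0$. It suffices to produce a target $(\bar u, \bar x, \bar y) \in U$ with $\bar x \neq \bar y$, a number $\epsilon > 0$, and a deterministic space-time smooth divergence-free path $v : [0,t] \times \mathbb T^d \to \mathbb R^d$ with $v_0 = u$, $v_t = \bar u$, such that the ODE
\[
\dot \phi^s(z) = v_s(\phi^s(z)), \quad \phi^0(z) = z
\]
satisfies $\phi^t(x) = \bar x$ and $\phi^t(y) = \bar y$. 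The associated Cameron--Martin control $h_s := \partial_s v_s + \mathcal{P}(v_s \cdot \nabla v_s) - \nu \Delta v_s$ (with $\mathcal{P}$ the Leray projection, and the obvious hyperviscous analogue used for System \ref{sys:3DNSE}) is then a smooth deterministic forcing; since Assumption \ref{a:Highs} makes $Q$ a bounded bijection on a scale of Sobolev spaces, $Q^{-1} h \in L^2([0,t]; \Wbf)$ is well defined. A standard Girsanov argument applied to the shifted driver $W_\cdot + \int_0^\cdot Q^{-1} h_s \, \mathrm{d}s$ (equivalently, the Stroock--Varadhan support theorem for the SPDE) then yields
\[
\mathbb{P}\Big(\sup_{0 \leq s \leq t} \|u_s - v_s\|_{\Hbf^{\sigma'}} < \epsilon\Big) > 0,
\]
and since $\sigma' > d/2 + 2$ forces $\Hbf^{\sigma'} \hookrightarrow C^2$, the Lagrangian flow depends continuously (in $C^0(\mathbb T^d)$) on the velocity field, so on this event $(u_t, x_t, y_t)$ lies in $U$.

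I will construct $v$ by splitting $[0,t]$ into three slabs $I_1, I_2, I_3$ of comparable length in order to decouple velocity endpoint matching from particle transport. On $I_1$, I smoothly interpolate in $\Hbf^{\sigma'}$ from $u$ to a compactly supported smooth divergence-free field $w_1$, producing intermediate particle positions $(x_1, y_1)$ with $x_1 \neq y_1$; symmetrically on $I_3$, I interpolate from a compactly supported field $w_2$ to $\bar u$, with $(x_2, y_2)$ at the start of $I_3$ chosen so that the corresponding Lagrangian image at time $t$ is $(\bar x, \bar y)$. The remaining task on $I_2$ is to drive $(x_1, y_1) \mapsto (x_2, y_2)$ by some smooth divergence-free field. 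Since $x_1 \neq y_1$ and $x_2 \neq y_2$, I choose disjoint smooth curves $\gamma_x, \gamma_y : I_2 \to \mathbb T^d$ with $\gamma_x(s) \neq \gamma_y(s)$ throughout, and build a smooth compactly supported divergence-free velocity field using localized stream functions in thin disjoint tubes around the two curves, arranged so that its Lagrangian trajectories through $(x_1, y_1)$ trace out $(\gamma_x, \gamma_y)$. Smooth temporal gluing across the slab boundaries completes the construction.

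The main obstacle is the simultaneous controllability demanded by condition (ii): steering both the velocity endpoints and the two Lagrangian trajectories with one smooth path in $\Hbf^{\sigma'}$. The three-slab decoupling above converts this into two separable sub-problems, each relying only on the abundance of smooth compactly supported divergence-free vector fields on $\mathbb T^d$; the one subtlety is preserving smoothness at the gluing times, which can be arranged with $C^\infty_c$ cutoffs in time. Once $v$ is fixed, checking $Q^{-1} h \in L^2([0,t]; \Wbf)$ is routine from the space-time smoothness of $v$ and Assumption \ref{a:Highs}, and the final passage via the support theorem is by now standard for the stochastic Navier--Stokes equations. I anticipate that the detailed proof in Section \ref{sec:Irr2pt} will follow this outline.
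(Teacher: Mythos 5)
Your framework — reduce topological irreducibility to approximate controllability, then pass to positive probability via local positivity of the Wiener measure together with a stability argument — coincides with the paper's (Section~\ref{sec:Irr2pt}), as does your three-slab decoupling of velocity-endpoint matching from particle transport (cf.\ Lemmas~\ref{lem:scl1}--\ref{lem:scl2}). Your middle-slab construction is genuinely different: you steer the two particles with a single compactly supported divergence-free field localized in disjoint tubes around prescribed curves, whereas the paper first drives the velocity (approximately) to zero and then uses sequences of low-mode shear and cellular flows, chosen deliberately because shears are stationary 2D Euler solutions (so no nonlinear forcing is generated) and because only first- and second-harmonic modes are available in the degenerate finite-dimensional setting of Assumption~\ref{a:lowms}. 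Under full non-degeneracy (Assumption~\ref{a:Highs}) your tube construction is a perfectly valid alternative.

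However, the first slab of your argument has a genuine gap. You set $v_0 = u$ exactly, interpolate ``smoothly in $\Hbf^{\sigma'}$'' to a smooth $w_1$, and then extract the Cameron--Martin control $h_s = \partial_s v_s + \mathcal{P}(v_s\cdot\nabla v_s) - \nu\Delta v_s$, claiming $Q^{-1}h \in L^2([0,t];\Wbf)$ is ``routine from the space-time smoothness of $v$.'' But $v_0 = u$ is only $\Hbf^{\sigma'}$-regular with $\sigma' < \alpha - \tfrac{d}{2}$, so near $s=0$ the best you have is $h_s \in \Hbf^{\sigma' - 2(d-1)}$ (the dissipation loses $2(d-1)$ derivatives), and hence $Q^{-1}h_s \sim (-\Delta)^{\alpha/2}h_s \in \Hbf^{\sigma' - 2(d-1) - \alpha}$. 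For a valid Cameron--Martin shift one needs $Q^{-1}h_s \in \Wbf$ ($H^1$ in 2D, $L^2$ in 3D), i.e.\ $h_s \in H^{\alpha+1}$ (2D) or $H^{\alpha}$ (3D), which would force $\sigma' \geq \alpha+3$ (2D) or $\sigma' \geq \alpha+4$ (3D). Since in fact $\sigma' < \alpha - \tfrac{d}{2}$, the shift falls several derivatives short of $\Wbf$ and the Girsanov/support argument breaks down. You simply cannot extract an admissible control by differentiating an $\Hbf^{\sigma'}$-valued trajectory that matches $u$ exactly at time zero.

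The paper avoids this by working on the control side rather than the trajectory side: Lemma~\ref{lem:scl1} takes $g_t = -\delta^{-1}Q^{-1}\Pi_N u$ on the first slab — constant in time, finite-frequency (hence $C^\infty$ and manifestly in $L^2([0,\delta];\Wbf)$) — and the \emph{controlled} NSE trajectory, which is never required to be smoother than $\Hbf^\sigma$, is then driven approximately to $(\Id - \Pi_N)u$ while the uncontrolled high modes decay viscously. Your scheme can be repaired by adopting this projection control on the first slab, or by prepending a short uncontrolled slab ($g \equiv 0$) and exploiting instantaneous parabolic regularization to render the trajectory $C^\infty$ before your interpolation begins. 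But as written, the first-slab Cameron--Martin computation does not close.
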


\begin{remark}
	
	Checking Conditions \ref{defn:SF} -- \ref{defn:drift} for the finite-dimensional systems in
	Section \ref{subsec:finiteDimIntro} is considerably easier. 
	Conditions \ref{defn:SF} and
	\ref{defn:TopIrr} follow immediately from H\"ormander's theorem. 
	By similar arguments, Condition \ref{def:UnifBd} follows immediately as 
	long as the function $\Vc$ has the property that its sublevel sets $\{ \Vc \leq r\} \subset \Hbf \times \Dc^c$ are bounded (also away from $\mathcal{D}$), hence compact, for all $r > 1$. We return to the construction of $\Vc$ and Condition \ref{defn:drift} for these systems in Remark \ref{rmk:construftLFforFD} below.
\end{remark}

\subsection{Construction of the Lyapunov function $\Vc$}\label{subsec:outlineConstructV}

Let us now turn to the most difficult task: constructing a Lyapunov function $\Vc$ for the two-point process which satisfies Conditions \ref{def:UnifBd} and \ref{defn:drift}.

\subsubsection{Lyapunov functions for the $(u_t)$ process}

The first step is to find a suitable Lyapunov functional for the $(u_t)$ process. Unlike previous works we are aware of treating geometric ergodicity of stochastic Navier-Stokes (see, e.g., \cite{HM08, goldys2005exponential}), we need the Lyapunov functions to control $(u_t)$ in $\Hbf$ regularity.
To our knowledge, these have not previously appeared in the literature and a somewhat non-trivial additional effort is needed to deduce them (but see closely related results in Section 3.5.3 of \cite{KS}).
The resulting geometric ergodicity statements are of some independent interest. 

To simplify notation, we will denote each corresponding Lyapunov function by the same symbol $V_{\beta, \eta}$.
Lemma \ref{lem:Lyapu} is proved below in Section \ref{sec:ExpProj} (see Lemma \ref{lem:TwistBd}). 

\begin{lemma} \label{lem:Lyapu}
If $d=2$ define $\mathcal{Q} = \sup_{m = (k,i) \in \mathbb K} \abs{k}\abs{q_m}$ and if $d=3$ define $\mathcal{Q} = \sup_{m = (k,i) \in \mathbb K} \abs{q_m}$. 
Let $0 < \eta < \eta^* = \nu/ 64\mathcal{Q}$, $\beta \geq 0$, and define
\begin{align}
V_{\beta,\eta}(u) = (1 + \norm{u}_{\Hbf}^2)^{\beta}\exp\left(\eta \norm{u}_{\Wbf }^2 \right) \label{def:V}
\end{align}
where ${ \| \cdot \|_{\Wbf} }$ is as in \eqref{defineWbfNorm}. Then \eqref{def:V} satisfies Condition \ref{defn:drift} for the $(u_t)$ process.
\end{lemma}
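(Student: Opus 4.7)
The plan is to verify Condition \ref{defn:drift} by establishing the infinitesimal inequality
\[ \mathcal{L} V_{\beta,\eta} \leq -\varkappa V_{\beta,\eta} + C \]
for constants $\varkappa, C > 0$ depending on $\beta, \eta$ and the system parameters, with $\mathcal{L}$ the Markov generator of $(u_t)$; the integrated form $P_t V_{\beta,\eta} \leq k e^{-\varkappa t} V_{\beta,\eta} + c$ then follows from Dynkin's formula together with a standard stopping-time localization based on an a priori $H^\sigma$-regularity estimate for $u_t$. Writing $V_{\beta,\eta} = P \cdot W$ with $P(u) := (1+\|u\|_\Hbf^2)^\beta$ and $W(u) := \exp(\eta \|u\|_\Wbf^2)$, Itô's formula decomposes
\[ \mathcal{L} V_{\beta,\eta} = W\, \mathcal{L} P + P\, \mathcal{L} W + \mathcal{C}, \]
where $\mathcal{C}$ is the cross term produced by the joint quadratic variation of $P(u_t)$ and $W(u_t)$.

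The bound on $\mathcal{L}W$ is the classical exponential-moment computation for stochastic Navier--Stokes. Using the vorticity equation in 2D or the $L^2$ identity in 3D-hyperviscous, the incompressibility cancellation $\langle u\cdot\nabla u, u\rangle_\Wbf = 0$ gives
\[ \mathcal{L} W = \bigl(-2\eta \nu \|u\|_{D}^2 + 2\eta^2 \|Q^* u\|_\Wbf^2 + \eta\, \mathrm{Tr}_\Wbf(Q Q^*) \bigr)\, W, \]
where $\|u\|_D^2$ denotes the dissipated norm ($\|\nabla \omega\|_{L^2}^2$ in 2D, $\|\Delta u\|_{L^2}^2$ in 3D-hyper). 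Assumption \ref{a:Highs} yields $\|Q^* u\|_\Wbf \leq \mathcal{Q}\|u\|_\Wbf$, and Poincar\'e gives $\|u\|_D \geq c\|u\|_\Wbf$, so for $\eta < \eta^*$ the positive quadratic-variation contribution is absorbed by the dissipation, producing $\mathcal{L} W \leq (-c_1 \|u\|_\Wbf^2 + c_2) W$.

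The second step is an $H^\sigma$-energy estimate controlling $\mathcal{L} P$. A Kato--Ponce commutator bound gives $|\langle u\cdot \nabla u, u\rangle_{H^\sigma}| \lesssim \|\nabla u\|_{L^\infty} \|u\|_{H^\sigma}^2$, while the noise trace $\mathrm{Tr}_{H^\sigma}(QQ^*)$ is finite thanks to $\sigma < \alpha - d/2$. Interpolating $\|\nabla u\|_{L^\infty} \lesssim \|u\|_{H^{s_*}}^\lambda \|u\|_\Wbf^{1-\lambda}$ with $s_* = \sigma+1$ in 2D or $\sigma+2$ in 3D-hyper, and some $\lambda \in (0,1)$ permitted by the hypothesis $\sigma > d/2 + 3$, and then applying Young's inequality, the nonlinearity splits into a small multiple of the dissipation $\|u\|_{H^{s_*}}^2$ plus a remainder polynomial in both $\|u\|_\Hbf$ and $\|u\|_\Wbf$. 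The cross term $\mathcal{C}$ is estimated by direct computation using Assumption \ref{a:Highs}; it is bounded by $O(\eta)\, V_{\beta,\eta}$ times terms of subcritical growth.

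To close, the dissipation remaining in $\mathcal{L} P$ dominates polynomial growth in $\|u\|_\Hbf$ after using $\|u\|_{H^{s_*}}^2(1+\|u\|_\Hbf^2)^{\beta-1} \gtrsim (1+\|u\|_\Hbf^2)^\beta$; the quadratic-in-$\|u\|_\Wbf$ decay from $\mathcal{L} W$ dominates all polynomial-in-$\|u\|_\Wbf$ growth from $\mathcal{L} P$ and $\mathcal{C}$; and any remaining small terms are controlled by tightening $\eta$ slightly below $\eta^*$. The main obstacle is the tight interplay between these two ingredients: because $\sigma$ lies in the narrow window $(\alpha - 2(d-1), \alpha - d/2)$ and the commutator estimate leaves little margin, one must simultaneously exploit the $H^{s_*}$-dissipation from $\mathcal{L} P$ and the Gaussian-like quadratic damping from $\mathcal{L} W$ to dominate every positive term; matching constants of the correct sign is what pins down both the regularity window for $\sigma$ and the threshold $\eta^*$ built into the hypotheses.
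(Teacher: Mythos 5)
Your overall strategy---compute $\mathcal L V$, establish $\mathcal L V \leq -\varkappa V + C$, and integrate via Dynkin's formula with a stopping-time localization---is sound and is in fact the route the paper uses later for the two-point process (see the derivation of \eqref{eq:generator-ineq} in the proof of Proposition \ref{prop:2ptDrift}). It differs from the paper's own proof of this lemma, which applies It\^o's formula to $\log V$ and uses the exponential martingale inequality to obtain the strictly stronger super-Lyapunov estimate of Lemma \ref{lem:TwistBd}; the drift condition then follows by Jensen's inequality (Remark \ref{rmk:SuperL}). The paper's route is preferred because the super-Lyapunov bound and the exponential twisted moments $\E\exp(\kappa\int_0^T\|u_s\|_{\Hbf^r}\ds)\sup_t V^{e^{\gamma t}}(u_t)\lesssim V(u)$ are needed repeatedly elsewhere. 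Your identification of the threshold $\eta^*$ from balancing the quadratic-variation term $\sum_m q_m^2|\langle u,e_m\rangle_\Wbf|^2$ against the dissipation $\nu\|\Delta u\|_{L^2}^2$, and the cancellation $\langle B(u,u),u\rangle_\Wbf=0$, are both correct.

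There is, however, a genuine gap in the absorption of the nonlinear term from $\mathcal L P$. The commutator bound yields a contribution $\lesssim \beta\|\nabla u\|_{L^\infty}(1+\|u\|_\Hbf^2)^\beta W$, and your plan is to interpolate $\|\nabla u\|_{L^\infty}\lesssim\|u\|_{H^{s_*}}^\lambda\|u\|_\Wbf^{1-\lambda}$ and apply Young's inequality against the $\mathcal L P$-dissipation $\nu\beta(1+\|u\|_\Hbf^2)^{\beta-1}\|u\|_{H^{s_*}}^2 W$. But after Young with exponents $(2/\lambda, 2/(2-\lambda))$, the remainder is
\[
C_\epsilon\,\|u\|_\Wbf^{\frac{2(1-\lambda)}{2-\lambda}}\,(1+\|u\|_\Hbf^2)^{\beta+\frac{\lambda}{2-\lambda}}\,W \, ,
\]
which carries a \emph{supercritical} excess $(1+\|u\|_\Hbf^2)^{\lambda/(2-\lambda)}$ over $V=(1+\|u\|_\Hbf^2)^\beta W$. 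The $\|u\|_\Wbf$-factor can be absorbed by the $-c\|\Delta u\|^2 V$ term from $P\mathcal L W$ via Poincar\'e, but the excess $\|u\|_\Hbf$-power cannot: the remaining dissipations control only a $(1+\|u\|_\Hbf^2)^\beta$-weight, and one can make $\|u\|_\Hbf$ arbitrarily large while keeping $\|u\|_\Wbf$ (hence $\|\Delta u\|_{L^2}$) bounded by concentrating $u$ at high frequency. The paper closes this by proving the non-standard interpolation Lemma \ref{lem:SobTrick}, whose content is that $\|u\|_{\Hbf^r}$ (for $r<3$) is bounded by $\epsilon\|\Delta u\|_{L^2}^2 + \epsilon\,\frac{\|\nabla u\|_{\Hbf}^2}{1+\|u\|_{\Hbf}^2}+C_\epsilon$ with a \emph{bounded} constant; the proof exploits the fact that the ratio $\frac{\|\nabla u\|_\Hbf^2}{1+\|u\|_\Hbf^2}$ is actually at least $\|u\|_{\Hbf^r}^{1/(r-2)}$ in the tight regime, with exponent $1/(r-2)>1$, so the term can be absorbed into itself by Young. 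Your plan does not identify this step, and standard Gagliardo--Nirenberg plus Young will not reproduce it.
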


\begin{remark} \label{rmk:FinDimLyapFunctions}
  Naturally, for Systems \ref{sys:2DStokes} and \ref{sys:Galerkin} we need only to consider the case $\beta = 0$ in Lemma \ref{lem:Lyapu}.
For the System \ref{sys:Markov}, $V$ must also depend on $Z$ ($(u_t,Z_t)$ is now the relevant Markov process). 
Let $\mathcal{A} = \Xi D \Xi^{-1}$ be the diagonalization of $\mathcal{A}$. Then, for $\eta_1,\eta_2$ chosen sufficiently small (depending on $D$, $\Xi$, $Q$, $\Gamma$, $\nu$) it suffices to take the following for $V$: 
\begin{align}
V(u,Z) = \exp\left(\eta_1 \norm{u}^2 + \eta_2 \norm{\Xi^{-1}Z}^2 \right). 
\end{align}
Indeed, note that $Y_t = \Xi^{-1}Z_t$ solves $dY_t = -D Y_t + \Xi^{-1} \Gamma \dot {W}_t$, which is compatible with the $\Xi^{-1}Z$ factor in $V$; the $QZ_t$ term in the $(u_t)$ evolution is absorbed by $-\norm{\grad  u_t}^2$ and $-\norm{D^{1/2} Y_t}^2$ (c.f. Section \ref{sec:SuperL}). 
\end{remark}

\subsubsection{Repulsion from the diagonal $\Dc$ for the two-point process}

The families of Lyapunov functions defined in Lemma \ref{lem:Lyapu} 
for the $(u_t)$ process capture repulsion from parts of phase space
where $\| u_t\|_\Hbf$ is unboundedly large. For the two-point process, however, 
we additionally require repulsion from the diagonal $\Dc$, which is considered part of ``infinity'' for the two point motion $(x_t,y_t)$. 
For this, we will crucially use the fact that near the diagonal the positive Lyapunov exponent for the Lagrangian flow $\phi^t$ (as established in \cite{BBPS18}) 
causes $x_t$ and $y_t$ to diverge from each other at an exponential rate with probability 1. 
  
To make this more precise, suppose $x,y$ are close together and 
consider the coordinate change $(x,y) \mapsto (x,w)$, where
$w = w(x,y)$ is the minimal displacement vector from $x$ to $y$. This induces a Markov process
$(u_t, x_t, w_t)$, with $w_t = w(x_t,y_t) \in \R^d$, which is continuous in time as long as $x_t, y_t$ remain close together. Note that $\T^d \times \{ 0 \}$ plays the same role for the linearized process $(x_t, w_t^*)$ as the diagonal $\Dc$ does for the two-point process $(x_t,y_t)$. Near the diagonal, we can approximate $w_t$ by the linearized process $(w_t^*)$ on $\R^d \setminus \{ 0 \}$, 
\[
w_t \approx w_t^* := D_{x} \phi^t w, \quad w = w(x, y) \, . 
\]
In \cite{BBPS18}, we proved that there exists $\lambda_1 > 0$ (deterministic) such that  
\begin{align}\label{eq:lyapgrowhtIntro}
\lim_{t \to \infty} \frac{1}{t} \log |w_t^*| = \lambda_1 > 0,
\end{align}
with probability $1$ for all initial $(u, x) \in \Hbf \times \T^d$ and $w \in \R^d \setminus \{ 0 \}$. 
Hence, one anticipates that on average, $|w_t^*|^{-p} \approx e^{- p \lambda t} |w|^{-p}$ for $p > 0$.
Thus, to capture repulsion from the diagonal it is natural to consider candidate Lyapunov functions of the form
\begin{align}\label{eq:hpHatDefnIntro}
f_p (u,x,w) = |w|^{-p} \psi_p(u,x, w / |w|) \, , 
\end{align}
where $\psi_p : \Hbf \times \T^d \times \mathbb S^{d-1} \to \R$ is nonnegative 
(actually, it is natural to enforce $\psi_p (u,x,-w) = \psi_p(u,x,w)$, so $\psi_p$ will be 
thought of as a function on the projective bundle\footnote{Here, $P \T^d \cong \T^d \times P^{d-1}$
is the projective bundle over $\T^d$, where $P^{d-1} = P (\R^d)$ is the projective space for
$\R^d$.} $\Hbf \times P \T^d$).

Let us write $T P_t$ for the Markov semigroup on $\Hbf \times \T^d \times \R^d$ corresponding to $(u_t, x_t, w_t^*)$. Repulsion as in \eqref{eq:lyapgrowhtIntro} suggests to look for an $f_p$ that is an eigenvector of $T P_t$ with eigenvalue $< 1$. Then, it is straight forward to see that we would have the following drift condition for $TP_t$:
\begin{align}\label{eq:linearDriftCondition}
T P_t f_p = e^{- \Lambda(p) t} f_p \, , 
\end{align}
for some $\Lambda(p) > 0$.  

More precisely let $V = V_{\beta, \eta}$ be as in Lemma \ref{lem:Lyapu}. We write $C_V$ for the space of continuous functions on $\Hbf\times P\T^d$ with bounded $\| \cdot \|_{C_V}$ norm and define $C_V^1$ to be the space of 
continuously differentiable functions on $\Hbf \times P \T^d$ for which
\[
\| \psi \|_{C_V^1} := \| \psi \|_{C_V} + \sup_{(u,x,v) \in \Hbf \times P \T^d}  \frac{\| D \psi(u,x,v)\|_{\Hbf^*}}{V(u)}
\]
is finite. 
As discussed in \cite{HM08}, since $C_V$ and $C^1_V$ are not separable, it is sometimes necessary to work with the spaces $\mathring{C}_V$ and $\mathring{C}_V^1$
which are the $C_V$-closure and $C_V^1$-closure of the space of smooth `cylinder functions':
\begin{align}\label{eq:defnC0inftyOutline}
	\mathring C^\infty_0(\Hbf\times P\T^d) = \{\psi \,| \, \psi(u,x,v) = \phi(\Pi_{\mathcal K} u,x,v) \,,\, \mathcal K \subset \mathbb K, \,\, \phi \in C^\infty_0\}, 
\end{align}
where $\Pi_{\mathcal K}$ denotes orthogonal projection onto $\Hbf_{\mathcal K} \cong \R^{|\mathcal K|}$. 
These spaces are separable, closed subspaces on which finite-dimensional approximation can be made. 
It can be shown that, for example, $C_{V}$ is strictly larger than $\mathring{C}_{V}$: indeed, $V_{\beta,\eta} \notin \mathring C_{V_{\beta,\eta}}$ (see Section 5.3 of \cite{HM08}).

\begin{proposition}\label{prop:psiP}
	For all $|p| \ll 1$, there exists $\psi_p \in \mathring{C}^1_V$, with the following properties:
		\begin{itemize}
			\item[(a)] $\psi_p$ is strictly positive and is bounded uniformly from below on bounded subsets of $\Hbf \times P \T^d$.
			\item[(b)] $f_p(u,x,w) = |w|^{-p} \psi_p(u,x,w / |w|)$ is an eigenfunction of $T P_t$ with eigenvalue $e^{- \Lambda(p) t}$.
			\item[(c)] As $p \to 0$ we have $\Lambda(p) = \lambda_1 p + o(p)$, where
			$\lambda_1$ the Lyapunov exponent \eqref{eq:lyapgrowhtIntro}.
			
	\end{itemize}
\end{proposition}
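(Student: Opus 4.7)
The plan is to realize $\psi_p$ as the principal eigenfunction, for a family of twisted transfer operators on the projective bundle $\Hbf \times P\T^d$, obtained by analytic perturbation around $p = 0$. By the cocycle identity $D_x\phi^t w = |w| \cdot D_x \phi^t \hat w$ with $\hat w = w/|w|$, the proposed eigenvalue identity $TP_t f_p = e^{-\Lambda(p) t} f_p$ for $f_p(u,x,w) = |w|^{-p}\psi_p(u,x,\hat w)$ is equivalent to $\tilde Q^p_t \psi_p = e^{-\Lambda(p) t}\psi_p$ for the twisted semigroup
\[
\tilde Q^p_t \psi(u,x,v) := \EE_{(u,x,v)}\bigl[|D_x\phi^t v|^{-p}\,\psi(u_t,x_t,v_t)\bigr],
\]
acting on functions over $\Hbf \times P\T^d$, where $(u_t,x_t,v_t)$ denotes the projective process. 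At $p = 0$, $\tilde Q^0_t$ is the Markov semigroup of the projective process, which by the analysis of \cite{BBPS18} is geometrically ergodic on $\Hbf \times P\T^d$ in $\mathring C_V$ and admits a unique stationary probability measure $\mu^{\mathrm{proj}}$. Consequently $1$ is a simple, isolated eigenvalue of $\tilde Q^0_t$ with constant eigenfunction $\psi_0 \equiv 1$, with the rest of the spectrum contained in a disk of radius strictly less than $1$.

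The second step is to show that $p \mapsto \tilde Q^p_t$ is analytic near $p = 0$ as a family of bounded operators on $\mathring C^1_V$. Expanding $|D_x\phi^t v|^{-p} = \sum_k \tfrac{(-p)^k}{k!}(\log|D_x\phi^t v|)^k$ reduces this to moment bounds of the form $\EE\bigl[(\log|D_x\phi^t v|)^{2k}\, V(u_t)\bigr] \lesssim C^k k!\, V(u)$, together with analogous bounds for Fr\'echet $u$-derivatives of $\log|D_x\phi^t v|$ weighted by $V$. These follow from the super-Lyapunov drift of Lemma \ref{lem:Lyapu}, the Sobolev embedding $\Hbf \hookrightarrow C^3$, and the standard variation-of-constants representation for $D_x\phi^t$ and its sensitivities in $u$. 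Kato's theorem on analytic perturbation of isolated eigenvalues then yields, for all $|p|$ small, a simple isolated eigenvalue $\lambda(p,t)$ of $\tilde Q^p_t$ on $\mathring C^1_V$, analytic in $p$, with $\lambda(0,t) = 1$, and an eigenfunction $\psi_p \in \mathring C^1_V$ depending analytically on $p$. The semigroup property forces $\lambda(p,t) = e^{-\Lambda(p) t}$ for some analytic $\Lambda$ with $\Lambda(0) = 0$, giving (b).

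For (a), continuity $\psi_p \to \psi_0 \equiv 1$ in $C(\Hbf \times P\T^d)$ gives strict positivity and uniform positive lower bounds on any fixed bounded subset for small $|p|$; this propagates to all of $\Hbf \times P\T^d$ via the eigenfunction identity $\psi_p = e^{\Lambda(p) t}\tilde Q^p_t \psi_p$, positivity of the kernel of $\tilde Q^p_t$, and topological irreducibility of the projective process. For (c), differentiating $\tilde Q^p_t \psi_p = e^{-\Lambda(p) t}\psi_p$ at $p = 0$ and integrating against $\mu^{\mathrm{proj}}$, stationarity of $\mu^{\mathrm{proj}}$ under $\tilde Q^0_t$ cancels the contributions of $\partial_p\psi_p|_{p=0}$ on both sides, leaving
\[
\Lambda'(0)\,t = \int \EE_{(u,x,v)}\log|D_x\phi^t v|\,d\mu^{\mathrm{proj}} = \lambda_1\, t,
\]
by the Furstenberg--Khasminskii integral representation of $\lambda_1$ established in \cite{BBPS18}.

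The main obstacle will be establishing analyticity and a uniform spectral gap of $\tilde Q^p_t$ on the weighted $C^1$-space $\mathring C^1_V$ rather than merely on $\mathring C_V$; this strengthening is needed because both $f_p$ and its $u$-gradient feed into the two-point Lyapunov function constructed later, and because Kato's theory requires the perturbation to be bounded in the space where $\tilde Q^0_t$ already has a spectral gap. This forces one to upgrade the geometric ergodicity of the projective process from \cite{BBPS18} to the $C^1$-weighted setting, using the strong Feller scale of Proposition \ref{prop:SFscaleIntro} together with higher-order Malliavin regularity of the linearization $D_x\phi^t$.
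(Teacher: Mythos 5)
Your proposal follows the same high-level strategy as the paper (spectral perturbation of the twisted semigroup about $p=0$ in a weighted space), but has two genuine gaps plus a softer weakness.

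\textbf{Gap 1: boundedness and continuity on $\mathring C^1_V$.} You treat $\tilde Q^p_t$ as a bounded family on $\mathring C^1_V$ for all $t>0$ and apply Kato perturbation directly in continuous time. This fails: the gradient of $\hat P^p_t \psi$ pulls out a factor of the Jacobian, which produces polynomial corrections $\sim \norm{u}_{\Hbf}^{q'+1}$ in the resulting weight; to absorb these back into $V_{\beta,\eta}$ one must invoke the super-Lyapunov estimate, which only ``improves'' the weight after a sufficiently long time. So $\hat P^p_t : C^1_V \to C^1_V$ is bounded only for $t\geq T_0$ with $T_0$ large (Lemma \ref{lem:specPicCV1}), and in particular the family is not a $C_0$ semigroup on $\mathring C^1_V$. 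Consequently your step ``the semigroup property forces $\lambda(p,t)=e^{-\Lambda(p)t}$'' does not go through on $\mathring C^1_V$. The paper's route is precisely to separate the two problems: first get a spectral gap for the single operator $\hat P^p_{T_0}$ on $\mathring C^1_V$, and then move to the weaker space $\mathring C_V$, where $\hat P^p_t$ \emph{is} a $C_0$ semigroup (Proposition \ref{prop:C0-property-twist}), and use spectral mapping for generators of $C_0$ semigroups to upgrade the discrete eigenrelation to all $t$ (Proposition \ref{prop:specgapC1V-allt}). You need this two-space structure; it is not a technical nicety.

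\textbf{Gap 2: item (a), uniform positivity on bounded sets.} Your continuity argument gives $\|\psi_p-\mathbf 1\|_{C_V}\to 0$, hence $\inf_B\psi_p \geq 1 - (\sup_B V)\|\psi_p-\mathbf 1\|_{C_V}$ on a bounded $B$. This is positive only when $|p|$ is small \emph{relative to} $\sup_B V$; for a fixed small $p$ it does not cover arbitrary bounded subsets. Falling back on topological irreducibility, as you propose, gives pointwise positivity but not a uniform lower bound: in infinite dimensions bounded subsets of $\Hbf$ are not compact, so pointwise positivity plus continuity does not yield an infimum bound. The missing ingredient is the compactness-in-a-weaker-topology argument: the eigenfunction admits a compatible extension $\psi_p'$ to the weaker scale $\Hbf^{\sigma'}$ with $\sigma'<\sigma$ (Remark \ref{rmk:compatibleEigenfunctions}), bounded sets in $\Hbf^{\sigma}$ are precompact in $\Hbf^{\sigma'}$, and then a minimizing sequence would produce a contradiction with pointwise positivity of $\psi_p'$. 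Without this, item (a) is not established.

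\textbf{Softer point on (b) and (c).} You propose full Kato analyticity of $p\mapsto \tilde Q^p_t$ via the power series in $\log |D_x\phi^t v|$, requiring factorial moment bounds. This may be salvageable from Lemma \ref{lem:TwistBd}, but it is strictly more than needed: the paper only requires operator-norm \emph{continuity} $\hat P^p_{T_0}\to\hat P_{T_0}$ for the spectral perturbation (Lemma \ref{lem:specPerturbLF}) and Fr\'echet differentiability at one time in $C_V$ for $\Lambda'(0)$ (Lemma \ref{lem:diffy}). Your derivation of $\Lambda'(0)=\lambda_1$ by differentiating the eigenrelation and integrating against the stationary measure is a clean and valid alternative to the paper's Jensen-plus-convexity argument and is a genuine (minor) improvement, provided differentiability of $p\mapsto\psi_p$ is established in a space where the stationary measure integrates $V$ (which it does); but as written it inherits the gap above, since it implicitly differentiates the eigenrelation for all $t>0$ on a space where the semigroup is only defined for $t\geq T_0$.
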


\begin{remark} \label{rmk:MomLyap}
The value $- \Lambda(p)$ is sometimes referred to as the \emph{moment Lyapunov exponent} \cite{arnold1984formula},
and arises naturally in the study of large deviations in the convergence of Lyapunov exponents-- see, e.g., 
\cite{ arnold1986lyapunov, arnold1987large}. As we show, it satisfies the formula
\[
- \Lambda(p) = \lim_{t \to \infty} \frac{1}{t} \log \E |D_x \phi^t_u v|^{-p}
\]
for all $(u,x,v) \in \Hbf \times P \T^d$. The connection with repulsion from the diagonal $\Dc$ for the
(nonlinear) two-point process originates in the work of Baxendale and Stroock \cite{baxendale1988large},
which obtains large deviations estimates from the diagonal for the two-point process associated to a hypoelliptic SDE on a compact Riemannian manifold (see also \cite{baxendale1993kinematic}). In contrast, the unbounded and infinite-dimensional phase space in our setting introduces numerous challenges to be overcome; moreover, our techniques in handling the two-point process are notably different and more direct. 
\end{remark}

\subsubsection{Verifying the drift condition}

Returning to the nonlinear two-point motion, in light of \eqref{eq:hpHatDefnIntro}, it is natural to look for a Lyapunov function of the form
\begin{align}\label{eq:fullLyapFunctIntro}
\Vc(u,x,y) = h_p(u,x,y) + V_{\beta + 1, \eta}(u) \, , 
\end{align}
where $\beta$ is sufficiently large, $\eta \in (0,\eta^*)$, and $h_p(u,x,y) := \chi(|w(x,y)|) f_p(u, x, w(x,y))$,
where $\chi$ is a smooth cutoff satisfying $\chi|_{[0,1/10]} \equiv 1, \chi|_{[1/5,\infty)} \equiv 0$.

For $\Vc$ as above, let us sketch how to satisfy Conditions \ref{def:UnifBd} and \ref{defn:drift}. 
For Condition \ref{def:UnifBd}, by parabolic regularity and the strong Feller property 
in Condition \ref{defn:SF}, it suffices (Lemma \ref{lem:unifBoundedCompact12312}) to show that for any $r > 0$, the sublevel set $\{\Vc \leq r\}$
satisfies
\begin{align}\label{eq:containIntro}
\{ \Vc \leq r \} \subset \{ \| u \|_{\Hbf} \leq R_1 \text{ and } d(x,y) \geq R_2\} \, , 
\end{align}
where $R_i = R_i(r), i = 1,2$. We are guaranteed this as long as $\psi_p$ is uniformly bounded
from below on bounded sets as in Proposition \ref{prop:psiP}(c). 

For Condition \ref{defn:drift}, 
the validity of the linear approximation depends on the size of the velocity field. Since the latter can be arbitrarily large, the time scale on which this approximation can be used is nonuniform in $u$. For this reason, it is convenient to work at the time-infinitesimal level, i.e., with infinitesimal generators. 
Temporarily neglecting technical issues regarding the domain, let $\mathcal L_{(2)} = \lim_{t \to 0} \frac{1}{t}\left(P^{(2)}_t - \operatorname{Id}\right)$ be the infinitesimal generator of the two-point motion. 
We will essentially show that 
\begin{align}\label{desiredDriftCond}
\mathcal L_{(2)} \Vc \leq - \Lambda(p) \Vc + C
\end{align}
for some $C > 0$. 
As one may expect, \eqref{desiredDriftCond} cannot be rigorously justified 
exactly as such in infinite dimensions due to the
fact that $V \notin \mathring C_{V}$, but standard arguments are used to obtain an 
almost equivalent analogue (see Section \ref{sec:2ptDrift}). 

Let us briefly sketch this argument, ignoring technical issues. 
When $\mathcal L_{(2)}$ hits $h_p$, we obtain
\begin{align}\label{ineq:LinGenIntro}
\mathcal L_{(2)} h_p \leq - \Lambda(p) h_p + C' V_{\beta + 1, \eta} \, ; 
\end{align}
the first term is good and comes from the fact that $TP_t f_p = e^{- \Lambda(p) t} f_p$
and that $T P_t$ well-approximates $P^{(2)}_t$ near the diagonal, while the second term is an 
error coming from the linearization approximation.

\newcommand{\Lc}{\mathcal L}

Unlike \eqref{desiredDriftCond}, we are able to show that $h_p$ is in the domain of $\mathcal L_{(2)}$ and rigorously justify \eqref{ineq:LinGenIntro} (see Lemma \ref{lem:approxhp}).
In order to make this perturbation argument, we crucially need that $\psi_p \in \mathring C_V^1$,
$V = V_{\beta, \eta}$.

For $\Vc$ we effectively deduce
\[
\Lc_{(2)} \Vc \leq - \Lambda(p) h_p + C' V_{\beta + 1, \eta} + \Lc V_{\beta + 1, \eta}
\]
where $\Lc$ denotes the (formal) infinitesimal generator of the $(u_t)$ process (see Definition \ref{def:formal-gen}).
From here, we will absorb the $C' V_{\beta + 1, \eta}$ linearization error
into $\Lc V_{\beta + 1, \eta}$ by showing a ``super drift condition'': formally, we can essentially view it as $\forall \kappa > 0$,
$\exists C_\kappa > 0$ such that
\[
	\Lc V_{\beta + 1, \eta} \leq - \kappa V_{\beta + 1, \eta} + C_\kappa.
\]
In other words, the drift condition for $V$ as in Condition \ref{defn:drift} can be taken with as strong an exponential decay rate as desired. This is much stronger than a standard drift condition -- see \eqref{eq:generator-ineq} for details (also Remark \ref{rmk:SuperL}). Taking $\kappa > 0$ sufficiently large
absorbs the $C' V_{\beta + 1, \eta}$ error term, resulting in \eqref{desiredDriftCond}.

The above sketch is far from complete and there are many details to fill in make it rigorous, including an appropriate
$C_0$ semigroup framework for $P^{(2)}_t$ and issues relating to the domain of $\mathcal{L}_{(2)}$. Much of the difficulty in carrying out this argument rigorously stems from infinite dimensionality of $\Hbf \times \Dc^c$ -- see Section \ref{sec:2ptGeoErg} for a detailed discussion.  Nevertheless, we ultimately verify the following. 

\begin{proposition} \label{prop:2ptDrift}
The Lyapunov function $\Vc$ satisfies Condition \ref{defn:drift}, i.e.,  there exists $K > 0$ such that  
\begin{align}\label{eq:driftCond222}
P^{(2)}_t \Vc \leq e^{-\Lambda(p) t}\Vc + K. 
\end{align}
\end{proposition}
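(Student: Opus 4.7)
The plan is to prove \eqref{eq:driftCond222} by first establishing the infinitesimal (generator-level) analogue
\[
\Lc_{(2)} \Vc \leq -\Lambda(p) \Vc + C
\]
and then integrating via a Dynkin-type formula. The main technical issue, namely that $\Vc$ is not a bona fide element of the domain of $\Lc_{(2)}$ on $\Hbf \times \Dc^c$ (indeed $V_{\beta+1,\eta} \notin \mathring C_{V_{\beta+1,\eta}}$), will be handled by working on Galerkin truncations and passing to the limit with uniform bounds coming from Lemma \ref{lem:Lyapu}.

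The first step is to analyze $\Lc_{(2)} h_p$ where $h_p(u,x,y) = \chi(|w(x,y)|)\, f_p(u,x,w(x,y))$. In the $(x,w)$ coordinates, the nonlinear two-point generator $\Lc_{(2)}$ differs from the linearized generator $\Lc^{\mathrm{lin}}$ (the generator of $(u_t, x_t, w_t^*)$) by terms of size $O(|w|^2)$ in the spatial variables, reflecting $w_t - w_t^* = O(|w|^2)$ as $w\to 0$. Since $f_p$ is an eigenfunction of $TP_t$ by Proposition \ref{prop:psiP}(b), formally differentiating in $t$ yields $\Lc^{\mathrm{lin}} f_p = -\Lambda(p) f_p$. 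Applying the error terms to $|w|^{-p}\psi_p(u,x,w/|w|)$ on the support of $\chi$ gives contributions of size $|w|^{2-p}$ modulated by derivatives of $\psi_p$ and polynomial factors in $\|u\|_\Hbf$; the assumption $\psi_p \in \mathring C_V^1$ from Proposition \ref{prop:psiP} is exactly what is needed to bound these by $C\, V_{\beta+1,\eta}(u)$. The cutoff $\chi$ contributes only bounded commutator terms supported on $\{|w|\in [1/10,1/5]\}$, which are likewise dominated by $V_{\beta+1,\eta}$. I thus expect
\[
\Lc_{(2)} h_p \leq -\Lambda(p) h_p + C\, V_{\beta+1,\eta}.
\]

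The second step is to absorb the linearization error $C\, V_{\beta+1,\eta}$ using the \emph{super drift} for $V_{\beta+1,\eta}$ alluded to in the outline: for any $\kappa > 0$ there exists $C_\kappa$ with $\Lc V_{\beta+1,\eta} \leq -\kappa V_{\beta+1,\eta} + C_\kappa$. Adding this to the preceding inequality and choosing $\kappa \geq C + \Lambda(p)$ produces
\[
\Lc_{(2)} \Vc \leq -\Lambda(p) h_p - \Lambda(p) V_{\beta+1,\eta} + C_\kappa = -\Lambda(p)\Vc + C_\kappa,
\]
which is the desired infinitesimal drift condition. Integrating this in time (after Galerkin truncation of the $u$-component, where It\^o's formula applies to smooth cylinder functions in $\mathring C_V$, and then passing to the limit using the uniform bounds provided by Lemma \ref{lem:Lyapu} together with a standard localization/stopping argument at sublevel sets $\{\|u\|_\Hbf \leq R\}$) yields \eqref{eq:driftCond222} via Gronwall.

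The main obstacle will be the matching of the two estimates near the diagonal and at infinity in $u$: the linearization error $w_t - w_t^*$ grows in $\|u\|_\Hbf$ and forces the bound $C V_{\beta+1,\eta}$ rather than $C V_{\beta,\eta}$, while the $\psi_p$ factor in $f_p$ sits in $\mathring C_V^1$ only for $V = V_{\beta,\eta}$. It is precisely the strength of the super drift inequality (with arbitrarily large $\kappa$) that bridges this gap between $V_{\beta,\eta}$-type regularity of the eigenfunction $\psi_p$ and the $V_{\beta+1,\eta}$-type size of the perturbative error, and I expect that making this interplay rigorous, together with the attendant $C_0$-semigroup and domain considerations for $\Lc_{(2)}$ on $\Hbf \times \Dc^c$, will constitute the bulk of the work.
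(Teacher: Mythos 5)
Your proposal takes essentially the same route as the paper. The two key ingredients you identify---(i) the generator inequality $\mathcal{L}_{(2)} h_p \leq -\Lambda(p) h_p + C'\, V_{\beta+1,\eta}$ coming from the eigenfunction property of $f_p$ together with $\psi_p \in \mathring C_V^1$, and (ii) the super-drift $\mathcal{L} V_{\beta+1,\eta} \leq -\kappa V_{\beta+1,\eta} + C_\kappa$ with $\kappa$ taken large enough to swallow the linearization error---are exactly what the paper uses, and your choice $\kappa \geq C + \Lambda(p)$ is the same as the paper's $\kappa - \Lambda(p) \geq C'$.

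The one place you deviate slightly is the integration step. You propose Galerkin truncation plus localization for the whole of $\mathcal{V}$, treating it as a single object. The paper instead handles the two pieces of $\mathcal{V} = h_p + V_{\beta+1,\eta}$ by different means: it shows $h_p$ actually lies in $\mathrm{Dom}(\mathcal{L}_{(2)})$ on $\mathring C_{\hat V}$ (Lemma \ref{lem:approxhp}, proved via Lemmas \ref{lem:linearized-approx-eig} and \ref{lem:error-approx}, splitting the difference quotient into the linearized eigenfunction part and the nonlinear correction driven by $\Sigma = u(y)-u(x)-Du(x)w$), so the $C_0$-semigroup identity applies directly to $h_p$; for $V_{\beta+1,\eta}$, which is \emph{not} in $\mathring C_{\hat V}$, it applies It\^o's formula with the stopping times $\tau_n = \inf\{t : \|u_t\|_\Hbf > n\}$ and passes to the limit by dominated convergence. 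Both routes are viable; the paper's split avoids Galerkin truncation of the nonlocal object $h_p$ (for which the projective eigenfunction structure does not commute cleanly with projection) and is somewhat tighter. You also correctly anticipate the key estimate on the linearization error---$|\Sigma| \lesssim d(x,y)^2 \|\nabla^2 u\|_{L^\infty}$ paired against $|(\nabla_y - \nabla_x)h_p| \lesssim d(x,y)^{-p-1}\|\psi_p\|_{C^1_V} V_{\beta,\eta}(u)$, giving a $d(x,y)^{1-p}$ factor that is bounded since $p<1$---which is precisely what produces the $C' V_{\beta+1,\eta}$ term.
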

With Condition \ref{defn:drift} in place, Theorem \ref{thm:2-pt-decay} now follows.

\subsubsection{Outline of the proof of Proposition \ref{prop:psiP}}

In our analysis, it is convenient to transform the eigenproblem $T P_t f_p = e^{- \Lambda t} f_p$, with $f_p$ as in \eqref{eq:hpHatDefnIntro}, into the equivalent problem $\hat P^p_t \psi_p = e^{- \Lambda t} \psi_p$, where $\hat P^p_t$ denotes the following `twisted' Markov semigroup acting on observables $\psi : \Hbf \times P \T^d \to \R$:
\begin{equation}
\begin{aligned}
\hat P^p_t \psi (u, x, v) = \E _{(u,x,v)}\left( |D_x\phi^t v|^{-p} \,\psi(u_t, x_t, v_t)\right) \, . \label{def:TwistMark}
\end{aligned}
\end{equation}
Above, we write $(u_t, x_t, v_t)$ for the \emph{projective process} on $\Hbf \times P \T^d$, where $v_t \in P^{d-1}$ denotes the projective class of the vector $w^*_t = D_{x} \phi^t w$. We will refer to the Markov semigroup associated with the projective process as $\hat P_t = \hat P_t^0$. The projective process is natural here and also plays a major role in the proof of the positive Lyapunov exponent in \cite{BBPS18}.

 Ultimately, we seek to define $\psi_p$ to be an eigenfunction corresponding to the dominant
 eigenvalue of the semigroup $\hat P_t^p$ in some function space. 
Since we require $\psi_p \in \mathring C_V^1$, it is natural to consider spectral theory 
for the semigroup $\hat P_t^p$ on $\mathring C_V^1$. 
 To work in this framework, however, entails significant technical problems. 
To start, it is already a challenge to prove that $\hat P_t$, let alone $\hat P_t^p$, 
is bounded on $C_V^1 \to C^1_V$ for any value of $t > 0$. Moreover, 
we are unable to show $C_0$ continuity for $\hat P_t^p$ in $\mathring C_V^1$. We note that
this problem does not arise when working on the lower regularity space $\Wbf$ as in
 \cite{HM08}.
 
We will show that $\hat P^p_{T_0}: C^1_V \to C^1_V$ is bounded for a sufficiently large $T_0$,
permitting us to define $\psi_p$ to be an eigenfunction for of the discrete-time operator $\hat P_{T_0}^p$.
Then, we show that $\hat P_t^p$ defines a $C_0$ semigroup on $\mathring{C}_V$ and the relation $\hat P_t^p \psi_p = e^{- \Lambda(p) t} \psi_p, t \geq 0$ as in Proposition 
\ref{prop:psiP}(b) is proved via semigroup theory.

Let us now make this more precise.
\begin{definition}[Spectral gap]
Let $A$ be a bounded linear operator on a Banach space $\mathfrak{B}$ with simple leading eigenvalue $r$. We say $A$ has a {\em spectral gap} if there exists an $\ep>0$ such that
\[
	\sigma(A)\backslash\{r\} \subseteq B_{\abs{r}-\ep}(0).
\]
\end{definition}

For $T_0$ sufficiently large and all $p$ sufficiently small, 
we will show that $\hat P_{T_0}^p$ is bounded on $C_V^1$
and maps $\mathring C_V^1$ into itself (Lemmas \ref{lem:specPicCV1} and \ref{lem:OneRingToRuleThemAll}). 
We then seek to prove that $\hat P_{T_0}^p$ has a 
spectral gap in $\mathring C_V^1$, and then construct $\psi_p$ as an eigenfunction
of $\hat P_{T_0}^p$ corresponding to its leading eigenvalue (see \eqref{eq:definePsiPOutline} below).
 This is deduced from a spectral gap for the `untwisted' operator $\hat P_{T_0}$ in $\mathring C_V^1$
 and a spectral perturbation argument (see Lemma \ref{lem:specPerturbLF}) using the fact that $\hat P_{T_0}^p
 \to \hat P_{T_0}$ in the $C_V^1$ norm as $p \to 0$ (Lemma \ref{lem:specPicCV1}).
 
In turn, a spectral gap for $\hat P_{T_0}$ in $C_V^1$ is deduced from a combination of 
a spectral gap for $\hat P_{T_0}$ in $C_V$ and a Lasota-Yorke type gradient estimate (Proposition \ref{prop:LY}) analogous to those used to prove asymptotic strong Feller in e.g. \cite{HM06,HM11}. The $C_V$ spectral gap is deduced from Theorem \ref{thm:GM}
and essentially follows from ingredients already needed elsewhere (see Section \ref{sec:DefUniBd}).
The  gradient estimate is proved via an adaptation of the the Malliavin calculus scheme in \cite{HM11} adapted to our more complicated nonlinearity, $\Hbf$ regularity, and the $\Hbf \times P \mathbb T^d$ geometry. A brief sketch is provided in Section \ref{sec:C1VSpecProj}.
 In all, these arguments imply the following.
 
\begin{proposition}\label{prop:specgapC1V}
Let $V = V_{\beta,\eta}$ be as in Lemma \ref{lem:Lyapu}, where 
$\eta \in (0, \eta^*)$ is arbitrary and $\beta$ is sufficiently large. Then, there exist $T_0 > 0, p_0 > 0$ such that for all $p\in [- p_0,p_0]$, $\hat P_{T_0}^p$ has a spectral gap on $\mathring{C}_V^1$ with  
real leading eigenvalue $e^{- T_0 \Lambda(p)}$ for some $\Lambda(p) \in \R$. 

Moreover, the limit
\begin{align} \label{eq:definePsiPOutline}
	\psi_p := \lim_{n\to \infty} e^{n T_0 \Lambda(p)}\hat{P}^p_{nT_0}{\bf 1} 
\end{align}
exists in $C_V^1$ and defines an eigenfunction corresponding to the leading eigenvalue.
\end{proposition}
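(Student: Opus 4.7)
The plan is to establish a spectral gap for the untwisted operator $\hat P_{T_0}$ on $\mathring C_V^1$ and then deduce the statement for $\hat P_{T_0}^p$ via a perturbative argument, construcing $\psi_p$ as an eigenfunction corresponding to the leading (simple, isolated) eigenvalue.

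First, I would analyze the untwisted projective semigroup $\hat P_t$ on $\Hbf \times P\T^d$. Since the nondegeneracy and drift assumptions carry over from the velocity process (using $V = V_{\beta,\eta}$ from Lemma \ref{lem:Lyapu} and compactness of $P\T^d$), I would apply the Goldys-Maslowski framework of Theorem \ref{thm:GM}, after verifying strong Feller and topological irreducibility for $\hat P_t$ on $\Hbf \times P\T^d$. This yields geometric ergodicity in $\mathring C_V$ and, hence, a spectral gap for $\hat P_{T_0}$ acting on $\mathring C_V$ with simple leading eigenvalue $1$, for any $T_0 > 0$.

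The crucial step is to upgrade this $C_V$ spectral gap to a $C_V^1$ spectral gap for $\hat P_{T_0}$. For this, I would establish two ingredients. (a) Boundedness of $\hat P_{T_0}$ on $C_V^1$, which reduces to $V$-controlled derivative estimates on the Jacobian $D_x \phi^t$ and its projective action, together with suitable moment bounds. (b) A Lasota-Yorke type gradient estimate of the form
\begin{equation}
\| D \hat P_{T_0} \psi \|_{\ast} \;\leq\; \tfrac{1}{2}\, \| \psi \|_{C_V^1} \;+\; C\, \| \psi \|_{C_V},
\end{equation}
in direct analogy with the asymptotic strong Feller estimates of \cite{HM06,HM11}. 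This is obtained by decomposing an arbitrary tangent perturbation $\xi$ into a Malliavin-controlled direction (handled by an integration-by-parts identity giving a $\|\psi\|_{C_V}$ bound) plus a remainder that decays like $\tfrac{1}{2}$ at time $T_0$ (contributing to $\|\psi\|_{C_V^1}$). This is where the bulk of the technical work lies: the Malliavin matrix has to be inverted on the combined phase space $\Hbf \times P\T^d$ under the $\Hbf$-regularity constraint and with the Navier-Stokes nonlinearity, and this is the principal obstacle I anticipate. Once (a) and (b) are established, a standard argument (multiplying by the resolvent in $C_V$ and iterating) converts the $C_V$ spectral gap into a spectral gap for $\hat P_{T_0}$ on $C_V^1$ with the same simple leading eigenvalue $1$, and the eigenprojector maps $\mathring C_V^1$ into itself.

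Next, I would perform the perturbation in $p$. A direct computation using $|\,|D_x \phi^{T_0} v|^{-p} - 1\,| \lesssim |p|\, (1 + |\log |D_x \phi^{T_0}v||)\, e^{|p|\,|\log|D_x\phi^{T_0}v||}$, together with exponential moment bounds on $\log |D_x\phi^{T_0} v|$ (which are available from the Lyapunov exponent analysis of \cite{BBPS18} combined with the Lyapunov function $V$), shows that $\hat P_{T_0}^p \to \hat P_{T_0}$ in the operator norm on $C_V^1$ as $p \to 0$. Since isolated simple eigenvalues are stable under norm-continuous perturbation (Lemma \ref{lem:specPerturbLF}), for all $|p|$ sufficiently small $\hat P_{T_0}^p$ retains a spectral gap on $\mathring C_V^1$ with a simple, real leading eigenvalue $\lambda(p)$ close to $1$; write $\lambda(p) = e^{-T_0 \Lambda(p)}$ with $\Lambda(p) \in \R$ small.

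Finally, to obtain $\psi_p$ explicitly as the limit in \eqref{eq:definePsiPOutline}, I would apply the spectral decomposition: if $\Pi_p$ denotes the rank-one spectral projector onto the leading eigenspace and $R_p$ is the complementary part with spectral radius strictly smaller than $|\lambda(p)|$, then
\begin{equation}
e^{nT_0\Lambda(p)} \hat P_{nT_0}^p \mathbf 1 \;=\; \Pi_p \mathbf 1 \;+\; \lambda(p)^{-n} R_p^n \mathbf 1 \;\longrightarrow\; \Pi_p \mathbf 1
\end{equation}
in $C_V^1$. This limit is an eigenfunction; its positivity and the uniform lower bound on bounded sets needed for Proposition \ref{prop:psiP}(a) would be extracted from the positivity and minorization structure of the kernels $\hat P^p_t$, combined with irreducibility on $\Hbf \times P\T^d$.
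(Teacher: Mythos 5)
Your proposal follows the paper's proof essentially step by step: a $C_V$ spectral gap for $\hat P_{T_0}$ from the Goldys--Maslowski framework (Theorem \ref{thm:GM}), a Lasota--Yorke gradient estimate in the style of \cite{HM06,HM11} proved by Malliavin calculus (the paper's Proposition \ref{prop:LY}), their combination into a $C^1_V$ spectral gap for $\hat P_{T_0}$ via an equivalent weighted norm, spectral perturbation in $p$ via Lemma \ref{lem:specPerturbLF} using operator-norm continuity of $p \mapsto \hat P^p_{T_0}$, and the spectral-projector formula for $\psi_p$. Two details you leave implicit and should supply: the leading eigenvalue is real because $\hat P^p_{T_0}$ preserves real-valued observables, so a nonreal eigenvalue would come with its complex conjugate, contradicting that the spectral projector is rank one; and since $C^1_V$ is not separable, the whole analysis must be carried out on the closed separable subspace $\mathring C^1_V$, which requires separately proving $\hat P^p_{T_0}(\mathring C^1_V)\subset\mathring C^1_V$ via a finite-dimensional cylinder-function approximation (the paper's Lemma \ref{lem:OneRingToRuleThemAll}).
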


As observed earlier, to prove that $\psi_p$ is an eigenfunction of $\hat P^p_t$ for all $t \geq 0$ as in 
Proposition \ref{prop:psiP}(b), we cannot work on $\mathring{C}^1_V$. Instead we will regard $\hat{P}^p_t$ as a semi-group on $\mathring{C}_V$: here, $\hat{P}^p_t$ is bounded for all $t \geq 0$ and gives rise to a $C_0$ semi-group on $\mathring{C}_V$ (Proposition \ref{prop:C0-property-twist}). Carrying out an analogous spectral perturbation argument to Proposition \ref{prop:specgapC1V} and using spectral theory for $C_0$ semigroups gives the following.

\begin{proposition}\label{prop:specgapC1V-allt} 

Let $V, p_0$, $\Lambda(p)$ and $\psi_p$ be as in Proposition \ref{prop:specgapC1V}. 
For all $p\in [- p_0,p_0]$, $\hat{P}^p_t$ has a spectral gap on $\mathring{C}_V$ for all $t > 0$
with leading eigenvalue $e^{- \Lambda(p) t}$, where $\Lambda(p)$ is as in Proposition \ref{prop:specgapC1V}.
Moreover, $\psi_p$ is an eigenfunction for $\hat P_t^p$ corresponding to this eigenvalue, and 
\begin{align}\label{eq:contTimeSemigroupOutline}
	\hat{P}_t^p \psi_p = e^{-\Lambda(p)t}\psi_p.
\end{align}
\end{proposition}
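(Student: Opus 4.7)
The strategy, already outlined before the statement, is to combine the discrete-time spectral gap of Proposition \ref{prop:specgapC1V} with the $C_0$-semigroup structure on $\mathring C_V$ established in Proposition \ref{prop:C0-property-twist}, working on the separable space $\mathring C_V$ rather than on $\mathring{C}_V^1$. First I would transfer the discrete-time gap to $\mathring C_V$: at $p=0$, $\hat P_{T_0}$ already has a $\mathring C_V$-spectral gap, proved en route to Proposition \ref{prop:specgapC1V} via Theorem \ref{thm:GM} applied to the projective process. For $|p|$ small, I would show that $\hat P^p_{T_0} \to \hat P_{T_0}$ in the operator norm on $\mathring C_V$, using that the twist factor $|D_x\phi^{T_0}v|^{-p}$ converges to $1$ uniformly outside a set of trajectories of small $V$-weighted mass; this is controlled by the Lyapunov-growth moment bounds from \cite{BBPS18} together with the drift estimate of Lemma \ref{lem:Lyapu}. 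A standard analytic perturbation argument parallel to Lemma \ref{lem:specPerturbLF} then yields a spectral gap for $\hat P^p_{T_0}$ on $\mathring C_V$ whenever $|p| \leq p_0$ (shrinking $p_0$ if necessary), with a simple leading eigenvalue that depends continuously on $p$.

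Next I would identify this leading eigenvalue and eigenfunction with those supplied by Proposition \ref{prop:specgapC1V}. Since $\mathring{C}_V^1 \hookrightarrow \mathring C_V$ continuously, the eigenfunction $\psi_p$ constructed there also lies in $\mathring C_V$ and still satisfies $\hat P^p_{T_0}\psi_p = e^{-T_0 \Lambda(p)}\psi_p$ in $\mathring C_V$; by simplicity of the leading eigenvalue, this forces the $\mathring C_V$ leading eigenvalue to equal $e^{-T_0\Lambda(p)}$ and its leading eigenfunction, up to normalization, to coincide with $\psi_p$.

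To extend from $t = T_0$ to all $t \geq 0$, I would argue directly rather than invoking a full spectral mapping theorem. Setting $\Psi_t := \hat P^p_t \psi_p$, the commutation relation $\hat P^p_{T_0} \Psi_t = e^{-T_0 \Lambda(p)}\Psi_t$ (from the semigroup law) together with simplicity of the leading eigenvalue of $\hat P^p_{T_0}$ forces $\Psi_t = c(t)\psi_p$ for some scalar $c(t)$. The function $c(\cdot)$ is continuous by the $C_0$ property, multiplicative by the semigroup law, and satisfies $c(T_0) = e^{-T_0 \Lambda(p)}$; strict positivity of $\psi_p$ from Proposition \ref{prop:psiP}(a) guarantees $c(t)>0$, and hence $c(t) = e^{-\Lambda(p) t}$, yielding \eqref{eq:contTimeSemigroupOutline}. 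For the spectral gap at arbitrary $t>0$, the Riesz spectral projector attached to the leading eigenvalue of $\hat P^p_{T_0}$ produces an invariant splitting $\mathring C_V = \Span\{\psi_p\} \oplus R_p$ with $\|\hat P^p_{T_0}|_{R_p}\|$ strictly less than $e^{-T_0\Lambda(p)}$. Since $\hat P^p_t$ commutes with $\hat P^p_{T_0}$ it preserves $R_p$, and the growth bound of the restricted $C_0$-semigroup on $R_p$ is then at most $T_0^{-1}\log\|\hat P^p_{T_0}|_{R_p}\| < -\Lambda(p)$, giving the desired spectral gap for every $t > 0$.

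The main obstacle is the operator-norm convergence $\hat P^p_{T_0} \to \hat P_{T_0}$ on $\mathring C_V$: one must control the twist factor $|D_x\phi^{T_0}v|^{-p}$ uniformly along atypical paths on which this quantity is very small or very large, which requires carefully combining the uniform bounds on $\psi_p$ from Proposition \ref{prop:psiP} with moment estimates on $\log|D_x\phi^t v|$ from \cite{BBPS18}, together with the $V$-drift control of Lemma \ref{lem:Lyapu}. A secondary technical point is the strong continuity of $\hat P^p_t$ on $\mathring C_V$ as $t\downarrow 0$, which is supplied by Proposition \ref{prop:C0-property-twist} but must be invoked to legitimately apply $C_0$-semigroup theory in the argument above.
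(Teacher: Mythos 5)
Your proposal is correct and follows the paper's structure for the first half (transferring the spectral gap from $\mathring C_V^1$ to $\mathring C_V$ via boundedness of $\hat P^p_t$ on $C_V$, operator-norm continuity in $p$, perturbation from the untwisted gap, and identification of the leading eigenvalue with $e^{-T_0\Lambda(p)}$ via the already-constructed $\psi_p \in \mathring C_V^1 \subset \mathring C_V$). Where you differ from the paper is the continuous-time extension: the paper invokes the spectral mapping theorem for the point spectrum of $C_0$-semigroups (\cite{arendt1986one} A-III, Theorems 6.3 and Corollary 6.4) to conclude that $-\Lambda(p)$ is an eigenvalue of the generator $\mathcal A^p$ and that $\ker(-\Lambda(p) - \mathcal A^p) = \bigcap_{s\geq 0}\ker(e^{-\Lambda(p)s} - \hat P^p_s)$, while you use a more hands-on cocycle argument: the commutation of $\hat P^p_t$ with $\hat P^p_{T_0}$ forces $\hat P^p_t\psi_p = c(t)\psi_p$ by simplicity, and continuity + multiplicativity + positivity pins down $c(t) = e^{-\Lambda(p) t}$. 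This is a nice elementary alternative that avoids citing the semigroup spectral-mapping machinery. The same goes for your argument for the gap at arbitrary $t$: commutation gives invariance of the complementary subspace $R_p$, and the growth bound controls the spectrum of the restriction.

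One small slip worth flagging: in the gap argument you bound the growth bound on $R_p$ by $T_0^{-1}\log\|\hat P^p_{T_0}|_{R_p}\|$ and assert this is strictly less than $-\Lambda(p)$, but the spectral gap only gives strictness for the \emph{spectral radius} $r(\hat P^p_{T_0}|_{R_p})$, not the operator norm. This is readily repaired by iterating: Gelfand's formula gives $\|\hat P^p_{nT_0}|_{R_p}\|^{1/n} \to r(\hat P^p_{T_0}|_{R_p}) < e^{-T_0\Lambda(p)}$, so one takes $n$ large in the growth-bound estimate $\omega_0 \leq (nT_0)^{-1}\log\|\hat P^p_{nT_0}|_{R_p}\|$.
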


At this point, we have the information required to prove Proposition \ref{prop:psiP}. 
The formula \eqref{eq:definePsiPOutline} implies that $\psi_p$ is nonnegative; positivity of
$\hat P^p_{T_0}$ and irreducibility of $(u_t, x_t, v_t)$ (Proposition \ref{prop:topIrredProj}) imply 
that $\psi_p > 0$ pointwise. The uniform positivity condition in Proposition \ref{prop:psiP} (a)
follows from a compactness argument-- see Lemma \ref{lem:unifLowerBoundPsiP}.
     
To prove the asymptotic $\Lambda(p) = \lambda_1 p + o(p)$ as in Proposition \ref{prop:psiP}(c):
first, we check that $p \mapsto \Lambda(p)$ is differentiable for $p$ sufficiently small (Lemma \ref{lem:diffy}) 
by combining \eqref{eq:contTimeSemigroupOutline} with Fr\'echet differentiability of $p \mapsto \hat P_t^p$
in the $C_V$ operator norm. The formula $\Lambda'(0) = \lambda_1$ is then checked
using a convexity argument and the characterization given in Remark \ref{rmk:MomLyap}-- 
see Lemma \ref{lem:verifyItemBLF} for details. 

\begin{remark} \label{rmk:DefH}
Note that the twisted Markov semi-group in \eqref{def:TwistMark} can be written as
\[
	\hat{P}^p_t \psi(u,x,v) = \E_{(u,x,v)} \left(\exp\left(-p\int_0^t H(u_s,x_s,v_s)\ds\right)\psi(u_t,x_t,v_t)\right),
\]
where
\[
	H(u,x,v) := \langle v,D u(x) v\rangle,
\]
so that $\hat P^p_t$ is the Feynman-Kac semi-group with potential $H$ (see, e.g., \cite{varadhan1984large}).
This particular $H$ and the resulting moment Lyapunov exponent $\Lambda(p)$ are 
useful in studying large deviations in the convergence of Lyapunov exponents for random dynamics-- see, e.g., \cite{arnold1986lyapunov, arnold1984formula, arnold1987large}. 
\end{remark}

\begin{remark}\label{rmk:construftLFforFD}
For the finite-dimensional 
models defined in Section \ref{subsec:finiteDimIntro}, the construction of $\psi_p$
and the corresponding Lyapunov function $\mathcal V$ can be simplified (though the above proof certainly suffices).
Naturally, to treat System \ref{sys:Markov}, everywhere there is $(u_t)$ one must instead use $(u_t,Z_t)$. 
The conditions in Theorem \ref{thm:GM} for geometric ergodicity for the projective process $(u_t, x_t, v_t)$ (or $(u_t,Z_t,x_t,v_t)$) follow more-or-less immediately from hypoellipticity 
(see \cite{BBPS18}) and the existence of Lyapunov functions for the $(u_t)$ or $(u_t,Z_t)$ processes (Remark \ref{rmk:FinDimLyapFunctions}).
Moreover, $\hat P_t$ defines a $C_0$ semigroup on $C_V$.
The spectral picture for the twisted Markov kernels $\hat P^p_t$ follows as discussed above (though simpler to check). 

This provides, $0 < \abs{p} \ll 1$, the $\psi_p$ as in Proposition \ref{prop:psiP} (in the OU tower case, it also depends on $Z$).
It is not necessary to study a spectral gap in $C^1_V$, as $\psi_p$ is $C^\infty$ by H\"ormander's theorem and $D\psi_p \in C_V$ follows from a more refined use of hypoelliptic regularity using the scale of available $V$. The construction of $\Vc$ proceeds as above, as does the generator argument for
checking the drift condition \eqref{eq:driftCond222} (though these are much easier to justify).
This completes the proof of Theorem \ref{thm:2-pt-decay} for these models.

\end{remark}

\begin{remark} \label{rmk:Hypo} 
There are two fundamental places in our proof where we depend on the non-degenerate noise in Assumption \ref{a:Highs}. 
The first is in the Furstenberg criterion [Theorem 4.7, \cite{BBPS18}] for the positive Lyapunov exponent \eqref{eq:lyapgrowhtIntro}. 
The Lasota-Yorke-type gradient bounds used in the asymptotically strong Feller frameworks of \cite{HM06,HM11} (and Section \ref{sec:C1VSpecProj} below) do not seem to be the correct tool for obtaining this criterion. We are currently unsure what suitable hypoelliptic smoothing can be applied aside from strong Feller. 
The other place is in this work. The twisted Markov semigroup $\hat{P}^p_t$ does not seem to define a bounded semigroup on $C^1_V$ as currently required by the weak Harris theorem frameworks of \cite{HM08,hairer2011yet} due to the higher regularity needed for our methods. 
This forces a `strong' Harris theorem-type framework as in Theorem \ref{thm:GM}, which requires non-degenerate noise such as Assumption \ref{a:Highs} (at least at high frequencies). 
\emph{All of the other} uses of the strong Feller property/non-spatially smooth noise in this work and our previous \cite{BBPS18} are not of fundamental importance and could be easily eliminated with some well-understood methods at the expense of additional technical complexity (e.g. those discussed in \cite{HM08,GHHM18} and the references therein).
In these places, we use the non-degenerate noise only to reduce the length and complexity
of the present manuscript.
\end{remark}

\subsection{Notation} \label{sec:Note} 

We use the notation $f \lesssim g$ if there exists a constant $C > 0$ such that $f \leq Cg$ where $C$ is independent of the parameters of interest. Sometimes we use the notation $f \approx_{a,b,c,...} g$ to emphasize the dependence of the implicit constant on the parameters, e.g. $C = C(a,b,c,...)$. We denote $f \approx g$ if $f \lesssim g$ and $g \lesssim f$. Throughout, $\R^d$ is endowed with the standard Euclidean inner product $(\cdot, \cdot)$ and corresponding norm $|\cdot|$. We continue
to write $|\cdot|$ for the corresponding matrix norm. When the domain of the $L^p$ space is omitted it is understood to be $\T^d$: $\norm{f}_{L^p} = \norm{f}_{L^p(\T^d)}$. We use the notations $\EE X = \int_{\Omega} X(\omega) \PP(d\omega)$ and $\norm{X}_{L^p(\Omega)} = \left(\EE \abs{X}^p \right)^{1/p}$. When $(z_t)$ is a Markov process, we write $\EE_z, \P_z$ for the expectation and probability, respectively, conditioned on the event $z_0 = z$. We use the notation $\norm{f}_{H^s} = \sum_{k \in \Z^d} \abs{k}^{2s} \abs{\hat{f}(k)}^2$ (denoting $\hat{f}(k) = \frac{1}{(2\pi)^{d/2}} \int_{\T^d} e^{-ik\cdot x}f(x) dx$ the usual complex Fourier transform).  We occasionally use Fourier multiplier notation $\widehat{m(\grad) f}(\xi) := m(i\xi) \hat{f}(\xi)$. Additionally, we will often use $r_0$ to denote a number in $(\frac{d}{2}+1,3)$ such that the Sobolev embedding $H^{r_0} \hookrightarrow W^{1,\infty}$ holds.

We denote $P \T^d \cong \T^d \times P^{d-1}$  for projective bundle. 
We are often working with the Hilbert spaces $\Wbf \times T_v P \T^d$ and $\Hbf \times T_v P \T^d$.
For these spaces we denote the inner product $\brak{\cdot,\cdot}_{\Wbf}$ (respectively $\Hbf$) and correspondingly for the norms as the finite-dimensional contribution to the inner product is unambiguous. For linear operators $A: \Wbf \times T_v P \T^d \to \Wbf \times T_v P \T^d$ we similarly denote the operator norm $\norm{A}_{\Wbf}$ and for linear operators $A:\Wbf \times T_v P\T^d \to \Real$ we use the notation $\norm{A}_{\Wbf^\ast}$ (analogously for $\Hbf$).
For $\mathcal{K} \subset \mathbb K$, define $\Pi_{\mathcal{K}}: \Wbf \times P \T^d \to \mathcal{K} \times P\T^d$ to be the orthogonal projection onto the subset of modes in $\mathcal{K}$.
For $n \in \mathbb N$, $\Pi_n$ denotes the orthogonal projection onto the modes with $k \in \mathbb K$, $\abs{k} \leq n$. 

\section{Preliminaries, drift conditions and Jacobian estimates} \label{sec:ExpProj}

\subsection{Preliminaries} \label{sec:Prelim}
We will write the Navier-Stokes system as an abstract evolution equation on $\Hbf$ by 
\begin{equation}\label{eq:NS-Abstract}
	\partial_t u + B(u,u) + Au = Q\dot{W} = \sum_{m\in\mathbb{K}} q_m e_m \dot{W}^m, 
  \end{equation}
where 
\begin{align}
B(u,v) &= \left(\Id - \grad (-\Delta)^{-1} \grad \cdot \right)\grad \cdot (u \otimes v) \\
Au &= \begin{cases} -\nu \Delta u \quad&  \textup{ if } d=2 \\
-\nu' \Delta u + \nu\Delta^2 u \quad& \textup{ if } d=3. 
\end{cases}
\end{align}
The $(u_t)$ process with initial data $u$ is defined as the solution to \eqref{eq:NS-Abstract} in the mild sense \cite{KS,DPZ96}:
\begin{align}
u_t = e^{-tA}u - \int_0^t e^{-(t-s)A} B(u_s,u_s) ds + \int_0^t e^{-(t-s)A} Q \dee W(s) \, ,  \label{eq:Mild} 
\end{align}
where the above identity holds $\P$ almost surely for all $t>0$. For \eqref{eq:Mild}, we have the following well-posedness theorem.
\begin{proposition}[\cite{KS,DPZ96}] \label{prop:WPapp}
For each of Systems \ref{sys:NSE}--\ref{sys:3DNSE}, we have the following.
 For all initial $u \in \Hbf \cap \Hbf^\gamma$ with $\gamma < \alpha-\frac{d}{2}$ and all $T> 0, p \geq 1$, there exists a $\P$ almost surely unique solution $(u_t)$ to \eqref{eq:Mild}. Moreover, $(u_t)$ is $\mathscr{F}_t$-adapted, and belongs to $L^p(\Omega;C([0,T];\Hbf \cap \Hbf^\gamma)) \cap L^2(\Omega;L^2(0,T;\Hbf^{\gamma+(d-1)}))$. 

Additionally, for all $p \geq 1$ and $0 \leq \gamma < \gamma' < \alpha - \frac{d}{2}$,  
\begin{align}
\EE \sup_{t \in [0,T]} \norm{u_t}_{\Hbf^\gamma}^p & \lesssim_{T,p,\gamma} 1 + \norm{u_0}_{\Hbf \cap \Hbf^\gamma}^p \\
\EE \int_0^T \norm{u_s}_{\Hbf^{\gamma + (d-1)}}^2 ds & \lesssim_{T,\delta} 1 + \norm{u_0}^2_{\Hbf^\gamma} \\ 
\EE \sup_{t \in [0,T]} \left(t^{\frac{\gamma'-\gamma}{2(d-1)}} \norm{u_t}_{\Hbf^{\gamma'}}\right)^p &\lesssim_{p,T,\gamma,\gamma'} 1 + \norm{u_0}^p_{\Hbf^\gamma}. \label{ineq:locRegu}
\end{align}
\end{proposition}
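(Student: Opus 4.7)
The plan is to follow the classical approach in \cite{KS,DPZ96} of decomposing $u_t = z_t + v_t$, where $z_t$ is the stochastic convolution solving the linear Stokes (or hyper-Stokes) equation
\begin{equation*}
\partial_t z_t + A z_t = Q \dot W_t, \qquad z_0 = 0,
\end{equation*}
and $v_t$ solves the resulting random PDE with deterministic initial data $u_0$. First I would treat $z_t = \int_0^t e^{-(t-s)A} Q \, dW_s$ directly. Since $z_t$ is Gaussian, a covariance computation using Assumption \ref{a:Highs} ($|q_m| \approx |k|^{-\alpha}$) and the spectral decay of $e^{-tA}$ shows that $\EE \|z_t\|_{\Hbf^\gamma}^2$ is finite for every $\gamma < \alpha - d/2$, with all moments bounded uniformly on $[0,T]$. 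Continuity in time in $\Hbf^\gamma$ then follows from the factorization method, and the analogous short-time regularization estimate $\EE (t^{(\gamma'-\gamma)/(2(d-1))} \|z_t\|_{\Hbf^{\gamma'}})^p \lesssim 1$ for $\gamma < \gamma' < \alpha - d/2$ comes from the same covariance computation combined with the semigroup smoothing $\|e^{-tA} \cdot \|_{\Hbf^{\gamma'}} \lesssim t^{-(\gamma'-\gamma)/(2(d-1))} \|\cdot\|_{\Hbf^\gamma}$.

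Next, $v_t := u_t - z_t$ satisfies the pathwise PDE
\begin{equation*}
\partial_t v_t + A v_t + B(v_t + z_t, v_t + z_t) = 0, \qquad v_0 = u_0,
\end{equation*}
in which $z_t$ is a deterministic (but $\omega$-dependent) forcing term of sufficient regularity -- in particular, since $\sigma > d/2 + 3$ we have the embedding $\Hbf^\sigma \hookrightarrow C^3$ used downstream. For 2D Navier-Stokes, an energy estimate on $\|v_t\|_\Hbf^2 = \|\curl v_t\|_{L^2}^2$ exploits the vorticity cancellation $\brak{B(w,w), w}_{L^2} = 0$ and controls the cross terms in $z_t$ via Sobolev/Ladyzhenskaya inequalities. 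For 3D hyper-viscous Navier-Stokes the term $\nu \Delta^2$ is supercritical with respect to the nonlinearity and absorbs it through a standard energy estimate. In both cases one obtains pathwise global existence, uniqueness, and a polynomial-in-$\sup_{[0,T]}\|z_s\|_\Hbf$ bound on $\sup_{[0,T]} \|v_t\|_\Hbf^2 + \int_0^T \|v_s\|_{\Hbf \cap \Hbf^{d-1}}^2 \, ds$. Propagation of higher $\Hbf^\gamma$ regularity is identical after testing against $(-\Delta)^\gamma v_t$ and using standard commutator and product estimates to handle $B(v_t + z_t, v_t + z_t)$.

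Taking expectations and combining the deterministic $v$-estimates with the moment bounds for $z$ gives the first two displayed inequalities, and the parabolic regularization bound \eqref{ineq:locRegu} follows by inserting the corresponding smoothing estimates for the semigroup in the mild formulation of $v_t$ and bootstrapping. The main (well-understood) technical point is to calibrate $\sigma$ and $\alpha$ so that simultaneously $\Hbf$ is a reasonable phase space on which the flow is well-defined, $\Hbf \hookrightarrow C^3$, and the stochastic convolution is genuinely $\Hbf$-valued; this is precisely what Assumption \ref{a:Highs} together with the choice $\sigma \in (\alpha - 2(d-1), \alpha - d/2)$ arranges. Beyond this parameter bookkeeping, no essentially new ideas beyond those of \cite{KS,DPZ96} are needed.
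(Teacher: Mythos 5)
The paper does not give a proof of Proposition~\ref{prop:WPapp}; it is stated as classical with a citation to \cite{KS,DPZ96}. Your sketch is exactly the standard argument from those references --- split off the Ornstein--Uhlenbeck process $z_t$, handle it via Gaussian covariance and factorization estimates, treat $v_t = u_t - z_t$ as a pathwise PDE with random forcing, and close via energy estimates and semigroup smoothing --- and it is correct at the level of detail expected for such a result. (One small imprecision: for the $d=2$ case the relevant cancellation in the $\Wbf$-energy estimate is the vorticity cancellation $\int (w\cdot\nabla\zeta)\zeta\,\dx = 0$, not $\brak{B(w,w),w}_{L^2}=0$, but this does not affect the argument.)
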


Before proceeding, we need the following useful Sobolev interpolation inequalities. 
\begin{lemma} \label{lem:SobTrick}
For all $r \in (0,3)$, $\forall \sigma > 2$, and $\forall \ep > 0$, $\exists C =C(r,\ep,\sigma) > 0$ such that $\forall u \in \Hbf^{\sigma+1}$,
\begin{align}
\norm{u}_{\Hbf^r} \leq \ep \norm{\Delta u}_{L^2}^2 + \ep \frac{\norm{\grad u}_{\Hbf^\sigma}^2}{1 + \norm{u}^2_{\Hbf^\sigma}} + C. \label{ineq:SobTrick}
\end{align}

\end{lemma}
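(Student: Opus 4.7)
The plan is to split on whether $r \leq 2$ or $r \in (2,3)$, reducing each case to Young's inequality applied to a Gagliardo--Nirenberg interpolation. For mean-zero divergence-free $u$, $\norm{u}_{\Hbf^s}$ is equivalent to $\norm{(-\Delta)^{s/2} u}_{L^2}$. The easy case $r \leq 2$ follows at once from $\Hbf^2 \hookrightarrow \Hbf^r$ and the elementary Young's inequality $x \leq \ep x^2 + (4\ep)^{-1}$ applied to $\norm{u}_{\Hbf^r} \lesssim \norm{\Delta u}_{L^2}$, producing the estimate using only the first term on the right-hand side.

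For the main case $r \in (2,3)$, introduce the natural quantity
\[
Y := \frac{\norm{\grad u}_{\Hbf^\sigma}}{\sqrt{1+\norm{u}_{\Hbf^\sigma}^2}},
\]
so that the target RHS is $\ep\norm{\Delta u}_{L^2}^2 + \ep Y^2 + C_\ep$. Since $\sigma > 2$ we have $r \in (2,\sigma+1)$, hence the interpolation
\[
\norm{u}_{\Hbf^r} \lesssim \norm{u}_{\Hbf^2}^\theta \norm{u}_{\Hbf^{\sigma+1}}^{1-\theta}, \qquad \theta = \frac{\sigma+1-r}{\sigma-1}, \qquad 1-\theta = \frac{r-2}{\sigma-1}.
\]
Writing $\norm{u}_{\Hbf^{\sigma+1}} = Y\sqrt{1+\norm{u}_{\Hbf^\sigma}^2}$ and applying a three-term Young's inequality with reciprocal exponents $\theta/2,\,(1-\theta)/2,\,1/2$ (summing to $1$) collapses this into
\[
\norm{u}_{\Hbf^r} \lesssim \norm{u}_{\Hbf^2}^2 + Y^2 + \bigl(1+\norm{u}_{\Hbf^\sigma}^2\bigr)^{1-\theta}.
\]

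To absorb the final term, the key step is a self-consistent bound of the form
\[
1 + \norm{u}_{\Hbf^\sigma}^2 \lesssim 1 + \norm{u}_{\Hbf^2}^2 Y^{2(\sigma-2)}.
\]
This is obtained by combining the interpolation $\norm{u}_{\Hbf^\sigma}^2 \lesssim \norm{u}_{\Hbf^2}^{2/(\sigma-1)} \norm{u}_{\Hbf^{\sigma+1}}^{2(\sigma-2)/(\sigma-1)}$ with the same substitution $\norm{u}_{\Hbf^{\sigma+1}} = Y\sqrt{1+\norm{u}_{\Hbf^\sigma}^2}$, which yields an implicit inequality $Z \lesssim 1 + A Z^\gamma$ for $Z = 1+\norm{u}_{\Hbf^\sigma}^2$ and $\gamma = (\sigma-2)/(\sigma-1) \in (0,1)$; this elementary algebraic inequality is solved directly by cases on whether $Z$ exceeds $2$. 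Substituting back and invoking $(1+x)^{1-\theta} \leq 1 + x^{1-\theta}$ for $x\geq 0$ reduces the right-hand side to a finite sum of monomials $\norm{u}_{\Hbf^2}^a Y^b$. A final round of Young's inequalities, now with a small parameter, absorbs each monomial into $\ep\norm{\Delta u}_{L^2}^2 + \ep Y^2 + C_\ep$.

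The main obstacle is the bookkeeping of exponents in the last step: one must verify that every residual power $b$ on $Y$ is strictly less than $2$. A short calculation shows that the critical condition is $(r-2)(\sigma-2) < \sigma+1-r$, which is equivalent to $(r-3)(\sigma-1) < 0$, i.e., to the hypothesis $r < 3$. Thus the upper threshold $r=3$ emerges naturally as the point where the method degenerates.
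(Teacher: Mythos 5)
Your route is genuinely different from the paper's organization, even though both rest on the same two interpolations (of $\Hbf^r$ and of $\Hbf^\sigma$ between $\Hbf^2$ and $\Hbf^{\sigma+1}$). The paper begins by assuming WLOG $\norm{u}_{\Hbf^r} > \ep\norm{\Delta u}_{L^2}^2$; this restriction lets it substitute $\norm{\Delta u}_{L^2} \lesssim \ep^{-1/2}\norm{u}_{\Hbf^r}^{1/2}$ into the interpolation and thereby eliminate $\norm{\Delta u}_{L^2}$ entirely, collapsing the problem to the scalar inequality $\norm{u}_{\Hbf^r} \leq \ep'\norm{u}_{\Hbf^r}^{1/(r-2)} + C$ with superlinear exponent $1/(r-2) > 1$ precisely when $r<3$. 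You instead carry $\norm{\Delta u}_{L^2}$ and $Y$ along together, reduce $\norm{u}_{\Hbf^r}$ to a sum of monomials in these two quantities, and defer the absorption to the end. Your self-consistent bound $1 + \norm{u}_{\Hbf^\sigma}^2 \lesssim 1 + \norm{u}_{\Hbf^2}^2 Y^{2(\sigma-2)}$, obtained by solving $Z\lesssim 1 + AZ^\gamma$, plays exactly the role of the paper's interpolation of $\norm{u}_{\Hbf^\sigma}$; both are fine. Your final exponent check (reducing $(r-2)(\sigma-2)<\sigma+1-r$ to $(r-3)(\sigma-1)<0$) is also correct, provided you first apply Young to eliminate the $\norm{u}_{\Hbf^2}$ factor and then look at the residual $Y$-power.

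There is, however, one genuine gap at the ``three-term Young'' step. As written,
\[
\norm{u}_{\Hbf^r} \lesssim \norm{u}_{\Hbf^2}^2 + Y^2 + \bigl(1+\norm{u}_{\Hbf^\sigma}^2\bigr)^{1-\theta}
\]
carries an $O(1)$ implicit constant on the first two terms. But $\norm{u}_{\Hbf^2}^2 \approx \norm{\Delta u}_{L^2}^2$ and $Y^2$ are \emph{exactly} the quantities that must appear with coefficient $\ep$ in the target; an unweighted Young here has already given away more than the conclusion permits, and no later ``final round'' can shrink an $O(1)$ coefficient on $\norm{\Delta u}_{L^2}^2$ down to $\ep$. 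The repair is simply to put the small weights into this step immediately: since $\theta/2 + (1-\theta)/2 + 1/2 = 1$, the weighted Young inequality gives
\[
\norm{u}_{\Hbf^2}^\theta Y^{1-\theta}\bigl(1+\norm{u}_{\Hbf^\sigma}^2\bigr)^{(1-\theta)/2} \leq \tfrac{\ep}{2}\norm{u}_{\Hbf^2}^2 + \tfrac{\ep}{2}Y^2 + C_\ep\bigl(1+\norm{u}_{\Hbf^\sigma}^2\bigr)^{1-\theta}.
\]
The surviving term, after your self-consistent bound, is $C_\ep\bigl(1 + \norm{u}_{\Hbf^2}^{a}Y^{b}\bigr)$ with $a = 2(1-\theta)$, $b = 2(\sigma-2)(1-\theta)$, hence total degree $a+b = 2(\sigma-1)(1-\theta) = 2(r-2) < 2$; this \emph{is} absorbable by a further weighted Young (at which point your exponent computation applies). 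With that repair the argument closes.
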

\begin{proof}
Without loss of generality, we can assume $r > 2$ as otherwise the estimate is immediate. 
Similarly, we assume that $\norm{u}_{\Hbf^r} > \ep \norm{\Delta u}_{L^2}^2$, as otherwise the estimate is automatic.  
By Sobolev interpolation, 
\begin{align}
\norm{u}_{\Hbf^r} \lesssim \norm{\Delta u}_{L^2}^{1-\theta} \norm{\grad u}_{\Hbf^\sigma}^\theta \leqc \norm{u}_{\Hbf^{r}}^{\frac{1-\theta}{2}}\norm{\grad u}_{\Hbf^\sigma}^\theta
\end{align}
where $\theta = \frac{r-2}{\sigma-1}$.
Therefore,
\begin{align}
\norm{u}_{\Hbf^r}  \lesssim \norm{\grad u}_{\Hbf^\sigma}^{\frac{2\theta}{1+ \theta}}. \label{ineq:HsHsigSobTrick1}
\end{align}
A similar argument also implies that
\begin{equation}
\norm{u}_{\Hbf^\sigma} \lesssim \norm{u}^{\frac{1-\gamma}{2}}_{\Hbf^r}\norm{\nabla u}_{\Hbf^\sigma}^{\gamma}\leqc\norm{\grad u}_{\Hbf^\sigma}^{\frac{\gamma + \theta}{1 + \theta}}, \label{ineq:HsHsigSobTrick2}
\end{equation}
where $\gamma = \frac{\sigma-2}{\sigma -1 }$. It follows from \eqref{ineq:HsHsigSobTrick1} and \eqref{ineq:HsHsigSobTrick2} that
\[
	\frac{\norm{\nabla u}_{\Hbf^\sigma}^2}{1+\|u\|_{\Hbf^\sigma}^2} \geqc \norm{\nabla u}_{\Hbf^\sigma}^{\frac{2(1-\gamma)}{1+\theta}}\geqc \norm{u}_{\Hbf^r}^{\frac{1-\gamma}{\theta}}.
\]
Inequality \eqref{ineq:SobTrick} follows from the fact that for all $\ep> 0$ there exists a $C = C(\ep,\sigma,r)$ such that
\[
	\norm{u}_{\Hbf^r} \leq \ep\norm{u}_{\Hbf^r}^{\frac{1-\gamma}{\theta}} + C
\]
provided $\frac{1-\gamma}{\theta} = \frac{1}{r-2} > 1$, which holds if and only if $r < 3$.

\end{proof}

\begin{lemma} \label{lem:SobLogTrick2}
For all $\ep>0$, $\sigma \geq 1$, there exists a $C_\ep> 0$ such that the following holds for all $u \in \Hbf^{\sigma+1}$, 
\begin{align}
  \log \left(1 + \norm{u}_{\Hbf^\sigma}^2\right) \leq  \ep \norm{\Delta u}_{L^2}^2 + \ep \frac{\norm{\grad u}_{\Hbf^\sigma}^2}{1 + \norm{u}^2_{\Hbf^\sigma}} + C_\ep. \label{ineq:SobTrick2}
\end{align}
\end{lemma}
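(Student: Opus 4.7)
I would follow the same overall strategy as in the proof of Lemma \ref{lem:SobTrick}: interpolate $\norm{u}_{\Hbf^\sigma}^2$ between $\norm{u}_{L^2}^2$ and $\norm{u}_{\Hbf^{\sigma+1}}^2$, then rearrange so that the offending quantity appears on both sides with a strictly smaller coefficient on the right, and absorb. The main novelty is the presence of the logarithm on the LHS, which is what makes the absorption work cleanly.

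More concretely, I would start from the Sobolev interpolation inequality
\begin{equation*}
	\norm{u}_{\Hbf^\sigma}^2 \leq C_\sigma \norm{u}_{L^2}^{2/(\sigma+1)}\norm{u}_{\Hbf^{\sigma+1}}^{2\sigma/(\sigma+1)} \leq C_\sigma' \norm{\Delta u}_{L^2}^{2/(\sigma+1)}\norm{\grad u}_{\Hbf^\sigma}^{2\sigma/(\sigma+1)},
\end{equation*}
valid for mean-zero $u$ (Poincaré is used to bound $\norm{u}_{L^2}$ by $\norm{\Delta u}_{L^2}$, and the Fourier characterization gives $\norm{u}_{\Hbf^{\sigma+1}} \lesssim \norm{\grad u}_{\Hbf^\sigma}$). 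Applying the elementary inequalities $\log(1+ab) \leq \log(1+a) + \log(1+b)$ for $a,b\geq 0$ and $\log(1+x^a) \leq a\log(1+x) + \log 2$ for $a\in (0,1]$ and $x\geq 0$, one obtains
\begin{equation*}
	\log(1+\norm{u}_{\Hbf^\sigma}^2) \leq \tfrac{1}{\sigma+1}\log(1+\norm{\Delta u}_{L^2}^2) + \tfrac{\sigma}{\sigma+1}\log(1+\norm{\grad u}_{\Hbf^\sigma}^2) + C.
\end{equation*}

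The essential manipulation is on the $\norm{\grad u}_{\Hbf^\sigma}^2$ term: I would factor
\begin{equation*}
	1 + \norm{\grad u}_{\Hbf^\sigma}^2 \leq \left(1 + \frac{\norm{\grad u}_{\Hbf^\sigma}^2}{1+\norm{u}_{\Hbf^\sigma}^2}\right)\bigl(1 + \norm{u}_{\Hbf^\sigma}^2\bigr),
\end{equation*}
which, after taking logs, splits $\log(1+\norm{\grad u}_{\Hbf^\sigma}^2)$ into the quantity appearing on the RHS of \eqref{ineq:SobTrick2} plus exactly $\log(1+\norm{u}_{\Hbf^\sigma}^2)$. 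The latter matches the LHS but comes with the crucial coefficient $\sigma/(\sigma+1) < 1$, so it can be absorbed. Finally, applying $\log(1+x) \leq \delta x + C_\delta$ to both remaining log terms with appropriately chosen $\delta$ (proportional to $\ep$), rearranging, and multiplying through by $\sigma+1$ yields the claimed estimate, with $C_\ep$ absorbing all the constants picked up along the way.

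The argument is essentially elementary and the only mildly delicate point is bookkeeping the various $\ep$-dependent constants through the reduction, since one needs the final coefficient of both $\norm{\Delta u}_{L^2}^2$ and $\norm{\grad u}_{\Hbf^\sigma}^2/(1+\norm{u}_{\Hbf^\sigma}^2)$ to be exactly $\ep$ after the absorption step; this is handled by choosing the intermediate Young-type constants to be $\ep$ and $\ep/\sigma$ respectively. No compactness, regularity, or PDE-specific input is needed beyond Poincaré and Sobolev interpolation, and the inequality is valid for all $\sigma\geq 1$ (in fact for all $\sigma>0$) with the constant $C_\ep$ depending on $\sigma$.
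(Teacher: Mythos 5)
Your proof is correct, and it takes a genuinely different route from the paper's. The paper reduces to the regime where $\norm{u}_{\Hbf^\sigma}^\delta > \ep\norm{\Delta u}_{L^2}^2$ (the complementary case being trivial), then interpolates $\norm{u}_{\Hbf^\sigma}$ between $\norm{\Delta u}_{L^2}$ and $\norm{\grad u}_{\Hbf^\sigma}$ to deduce that, in this hard regime, $\norm{\grad u}_{\Hbf^\sigma}^2/(1+\norm{u}_{\Hbf^\sigma}^2)$ dominates a positive power of $\norm{\grad u}_{\Hbf^\sigma}$, which in turn eventually dominates the logarithm when $\delta$ is taken small. Your argument avoids the case split entirely: you interpolate in the other order (controlling $\norm{u}_{\Hbf^\sigma}^2$ by a product of powers of $\norm{\Delta u}_{L^2}^2$ and $\norm{\grad u}_{\Hbf^\sigma}^2$ directly), take logarithms termwise using elementary $\log$-inequalities, and then deploy the clean factoring identity $1 + \norm{\grad u}_{\Hbf^\sigma}^2 \leq (1 + \norm{\grad u}_{\Hbf^\sigma}^2/(1+\norm{u}_{\Hbf^\sigma}^2))(1+\norm{u}_{\Hbf^\sigma}^2)$ to make $\log(1+\norm{u}_{\Hbf^\sigma}^2)$ reappear on the right-hand side with the subunitary coefficient $\sigma/(\sigma+1)$, so it can be absorbed into the left. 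What this buys is a shorter, case-free argument whose only inputs are Poincar\'e, the single interpolation inequality, and three elementary scalar $\log$ estimates; the paper's version, while slightly more ad hoc, is organized so as to mirror the proof of Lemma \ref{lem:SobTrick} immediately preceding it, which keeps the two interpolation lemmas structurally parallel. Both are entirely correct; I have checked that your scalar inequalities (in particular $\log(1+x^a)\le a\log(1+x)+\log 2$ for $a\in(0,1]$, and $\log(1+x)\le\delta x+\log(1/\delta)$) hold as stated and that the final bookkeeping with $\delta_1=\ep$ and $\delta_2=\ep/\sigma$ closes the argument.
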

\begin{proof}
For all $\ep,\delta > 0$, $\exists C_{\delta,\ep} > 0$ such that the following holds $\forall x \geq 0$, 
\begin{align}
\log \left(1 + x^2\right) \leq  C_{\delta,\ep} + \ep x^\delta. 
\end{align}
Similar to Lemma \ref{lem:SobTrick}, we may assume $\sigma >1$ and 
\begin{align}
\norm{u}_{\Hbf^\sigma}^\delta & > \ep \norm{\Delta u}_{L^2}^2. \label{ineq:HsigRestr}
\end{align}
By Sobolev interpolation (for suitable $\theta \in (0,1)$),  
\begin{align}
\norm{\Delta u}_{L^2} \leq \norm{u}_{\Hbf^\sigma} \lesssim \norm{\Delta u}_{L^2}^{1-\theta}\norm{\grad u}_{\Hbf^\sigma}^{\theta}, 
\end{align}
and hence by \eqref{ineq:HsigRestr}, there holds 
\begin{align}
\norm{u}_{\Hbf^\sigma}^{1 - \frac{1}{2}\delta(1-\theta)} \lesssim \norm{\grad u}_{\Hbf^\sigma}^{\theta},  
\end{align}
which implies
\begin{align}
\frac{\norm{\grad u}_{\Hbf^\sigma}^2}{1 + \norm{u}_{\Hbf^\sigma}^2} \geqc  \norm{\grad u}_{\Hbf^\sigma}^{2 - \frac{4\theta}{2 - \delta(1-\theta)}}.
\end{align}
The desired estimate follows taking $\delta$ sufficiently small. 
\end{proof}

\subsection{Fundamental estimates and super-Lyapunov property} \label{sec:SuperL}
Let $C^2(\Hbf)$ be the space of twice Frechet differentiable functions on $\Hbf$. We denote by $D_u G(u)$ the Frechet derivative at $u$, which is a linear functional on $\Hbf$ defined for each $v\in \Hbf$ by
\[
	D_uG(u)v= \lim_{h\to 0} h^{-1}(G(u+ hv) - G(u)).
\]
Likewise the second Frechet derivative $D^2_u G(u)$ can be identified is a bilinear form on $\Hbf$ defined for each $v,w\in \Hbf$ by
\[
	 D^2_uG(u)[v,w] = \lim_{h\to 0} h^{-1}(D_uG(u + hw)v - D_uG(u)v).
\]

We will define the formal generator of the process \eqref{eq:NS-Abstract} as follows:
\begin{definition}\label{def:formal-gen}
Suppose $G\in C^2(\Hbf)$ has the property that for each $u\in \Hbf^{\sigma+1}$, and $v\in \Hbf^{\sigma-1}$, the mapping $u\mapsto D_u G(u)v$ is continuous on $\Hbf^{\sigma + 1}$ and satisfies
\[
	|D_uG(u)v| \leq K(\|u\|_{\Hbf})\|u\|_{\Hbf^{\sigma+1}}\|v\|_{\Hbf^{\sigma-1}}
\]
for some continuous function $K(r)$, $r>0$. We define the formal generator $\mathcal{L}$ of the process \eqref{eq:NS-Abstract} to be the linear operator acting on $G$ such that for each $u\in \Hbf^{\sigma +1}$,

\begin{equation}\label{eq:formal-gen}
\mathcal{L}G(u) := -D_u G(u)(B(u,u) +A u)  + \frac{1}{2}\sum_{m\in \mathbb{K}}|q_m|^2 D^2_uG(u)[e_m,e_m].
\end{equation} 
\end{definition}

\begin{remark}
One cannot identify $\mathcal{L}$ as the infinitesimal generator of the Markov semi-group associated to the \eqref{eq:NS-Abstract}, but they coincide on a core of smooth cylinder functions (see \cite{HM08} Lemma 5.11). Note that one can also apply $\mathcal{L}$ to functions that do not necessarily belong to the domain of the generator (see \cite{HM08} Remark 5.7 for a discussion of this). 
\end{remark}

The next lemma provides the energy estimate that eventually implies the drift condition on $V_{\beta,\eta}$. 
\begin{lemma} \label{lem:GenV}
For all $V = V_{\beta,\eta}$ with $\beta \geq 2$ and $\eta > 0$, there exists a $C = C(\beta,\eta) > 0$ such that the following estimates hold for all $u\in \Hbf^{\sigma+1}$
\begin{align}
\mathcal{L} \log V(u) \leq -\nu \eta \norm{\Delta u}_{L^2}^2 - \nu \beta \frac{\norm{\grad u}_{\Hbf}^2}{1 + \norm{u}_{\Hbf}^2} + C, \label{ineq:GenLogV}
\end{align}
\end{lemma}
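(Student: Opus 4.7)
The plan is to split $\log V_{\beta,\eta}(u) = \beta \log(1 + \|u\|_{\Hbf}^2) + \eta \|u\|_{\Wbf}^2$ and apply the formal generator $\mathcal{L}$ from \eqref{eq:formal-gen} term by term. Writing $F_\eta(u) = \eta\|u\|_{\Wbf}^2$ and $G_\beta(u) = \beta \log(1+\|u\|_{\Hbf}^2)$, we have $\mathcal{L}\log V = \mathcal{L} F_\eta + \mathcal{L}G_\beta$, and the target estimate will come from extracting the two coercive terms $-2\nu\eta\|\Delta u\|_{L^2}^2$ and $-2\nu\beta \|\nabla u\|_{\Hbf}^2/(1+\|u\|_{\Hbf}^2)$ from the drift parts of $\mathcal L F_\eta$ and $\mathcal L G_\beta$ respectively, using half of each to absorb lower order errors (via Lemmas~\ref{lem:SobTrick} and~\ref{lem:SobLogTrick2}) and leaving the claimed $-\nu\eta$ and $-\nu\beta$ coefficients on the right-hand side.

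The $\mathcal L F_\eta$ computation is standard. Since $D_uF_\eta(u)v = 2\eta\langle u,v\rangle_{\Wbf}$, incompressibility gives the fundamental cancellation $\langle u, B(u,u)\rangle_{\Wbf}=0$ (in 2D via the vorticity identity $\langle\omega,u\cdot\nabla\omega\rangle_{L^2}=0$, in 3D directly from $\langle u, u\cdot\nabla u\rangle_{L^2}=0$). The dissipative term contributes $-2\eta\langle u,Au\rangle_{\Wbf}\leq -2\nu\eta\|\Delta u\|_{L^2}^2$, and the It\^o correction $\eta \sum_m q_m^2 \|e_m\|_{\Wbf}^2$ is a finite constant by Assumption~\ref{a:Highs} since $\alpha > 5d/2$.

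For $\mathcal L G_\beta$ we compute
\[
D_u G_\beta(u)v = \frac{2\beta \langle u,v\rangle_{\Hbf}}{1+\|u\|_{\Hbf}^2},\qquad D^2_uG_\beta(u)[v,v] \leq \frac{2\beta \|v\|_{\Hbf}^2}{1+\|u\|_{\Hbf}^2}.
\]
The dissipative drift yields the coercive term $-2\nu\beta \|\nabla u\|_{\Hbf}^2/(1+\|u\|_{\Hbf}^2)$ (with an extra non-positive contribution in 3D from $\nu'$ that we discard). The It\^o correction is bounded by $\beta\sum_m q_m^2 \|e_m\|_{\Hbf}^2/(1+\|u\|_{\Hbf}^2) \leq C\beta$, again finite by Assumption~\ref{a:Highs} (this is where we use $\sigma<\alpha-d/2$). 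The hard part is the nonlinear drift term $-2\beta \langle u, B(u,u)\rangle_{\Hbf}/(1+\|u\|_{\Hbf}^2)$, which does \emph{not} vanish at $\Hbf$-regularity. Using the commutator identity $\langle \Lambda^\sigma u, \Lambda^\sigma(u\cdot\nabla u)\rangle_{L^2} = \langle \Lambda^\sigma u, [\Lambda^\sigma,u]\nabla u\rangle_{L^2}$ (since $\langle \Lambda^\sigma u, u\cdot\nabla \Lambda^\sigma u\rangle_{L^2}=0$) together with the Kato--Ponce estimate $\|[\Lambda^\sigma,u]\nabla u\|_{L^2}\lesssim \|\nabla u\|_{L^\infty}\|u\|_{\Hbf} + \|u\|_{L^\infty}\|\nabla u\|_{\Hbf}$ and Leray boundedness, one obtains
\[
\frac{2\beta|\langle u, B(u,u)\rangle_{\Hbf}|}{1+\|u\|_{\Hbf}^2}\lesssim_\beta \|\nabla u\|_{L^\infty} + \|u\|_{L^\infty}\frac{\|\nabla u\|_{\Hbf}}{(1+\|u\|_{\Hbf}^2)^{1/2}}.
\]
Young's inequality applied to the second summand followed by Sobolev embedding $\|\nabla u\|_{L^\infty}+\|u\|_{L^\infty}^2 \lesssim \|u\|_{\Hbf^r}$ for some $r\in(d/2+1,3)$ (which exists since $d\leq 3$), and then Lemma~\ref{lem:SobTrick} applied to $\|u\|_{\Hbf^r}$, shows that this error is dominated by $\tfrac{\nu\eta}{2}\|\Delta u\|_{L^2}^2 + \tfrac{\nu\beta}{2}\|\nabla u\|_{\Hbf}^2/(1+\|u\|_{\Hbf}^2) + C$ after appropriate choice of the $\epsilon$ parameter in the lemma (chosen depending on $\nu,\eta,\beta$).

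Summing the two contributions and absorbing the error as above retains half of each coercive term, giving exactly \eqref{ineq:GenLogV}. The main technical obstacle, as indicated, is the nonlinear estimate on $\langle u, B(u,u)\rangle_{\Hbf}$: unlike the $L^2$-level cancellation exploited for $F_\eta$, at $\Hbf = \Hbf^\sigma$ regularity this quantity is nonzero and must be controlled by a commutator estimate whose errors are only absorbable because Lemma~\ref{lem:SobTrick} provides the critical $r<3$ Sobolev bound by the two available coercive quantities. (Lemma~\ref{lem:SobLogTrick2} is not needed for \eqref{ineq:GenLogV} directly but is reserved for the super-Lyapunov strengthening alluded to in Remark~\ref{rmk:SuperL}.)
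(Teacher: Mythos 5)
Your overall plan mirrors the paper's: split $\mathcal L\log V$ into a $\Wbf$-energy piece (where the nonlinearity cancels completely), an $\Hbf$-log piece (where the nonlinearity does not cancel and must be estimated), and the It\^o correction, then absorb the nonlinear error via Lemma~\ref{lem:SobTrick}. The It\^o correction estimates, the $\mathcal L F_\eta$ computation, and the observation that Lemma~\ref{lem:SobLogTrick2} is reserved for the super-Lyapunov property are all on target.

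However, there is a genuine gap in the treatment of the nonlinear term in $\mathcal L G_\beta$. After the commutator identity you apply a Kato--Ponce bound in the form $\|[\Lambda^\sigma,u]\nabla u\|_{L^2}\lesssim \|\nabla u\|_{L^\infty}\|u\|_\Hbf + \|u\|_{L^\infty}\|\nabla u\|_\Hbf$. This is not the sharp commutator estimate: since the commutator always places at least one derivative on the first slot, the standard Kato--Ponce (Kenig--Ponce--Vega) form here gives simply $\|[\Lambda^\sigma,u]\nabla u\|_{L^2}\lesssim \|\nabla u\|_{L^\infty}\|u\|_\Hbf$. The second term you have included, $\|u\|_{L^\infty}\|\nabla u\|_\Hbf$, is what you would get from the Leibniz rule on $\Lambda^\sigma(u\nabla u)$ without exploiting the cancellation, and including it forces you into a Young's inequality step and a residual $\|u\|_{L^\infty}^2$ contribution. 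At that point you claim $\|\nabla u\|_{L^\infty}+\|u\|_{L^\infty}^2\lesssim \|u\|_{\Hbf^r}$, which is false: $\|u\|_{L^\infty}^2$ is quadratic in a norm while $\|u\|_{\Hbf^r}$ is linear, and no embedding controls a square by a first power on unbounded sets. Lemma~\ref{lem:SobTrick} only handles terms linear in $\|u\|_{\Hbf^r}$, so the argument does not close as written.

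The fix is exactly what the paper does: use the sharp commutator bound, so that $\frac{2\beta|\langle u,B(u,u)\rangle_\Hbf|}{1+\|u\|_\Hbf^2}\lesssim \|\nabla u\|_{L^\infty}\frac{\|u\|_\Hbf^2}{1+\|u\|_\Hbf^2}\lesssim \|\nabla u\|_{L^\infty}\lesssim \|u\|_{\Hbf^r}$ for any $r>d/2+1$, and no Young step nor $\|u\|_{L^\infty}^2$ ever appears; Lemma~\ref{lem:SobTrick} then directly absorbs the $\|u\|_{\Hbf^r}$ error. (Alternatively, if you insist on keeping your weaker commutator bound, the $\|u\|_{L^\infty}^2$ term should instead be absorbed into the coercive $\|\Delta u\|_{L^2}^2$ term using $\|u\|_{L^\infty}^2\lesssim \|\Delta u\|_{L^2}^2+C$, which holds for $d\leq 3$ and mean-zero $u$; but this is a detour the paper avoids.)
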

\begin{proof}
First consider \eqref{ineq:GenLogV}. For simplicity, consider the case $d=2$; the case $d =3$ follows similarly with only trivial modification.  
By direct calculation, we find
\[
	D_u \log V(u)v = 2\eta\langle u, v\rangle_{\Wbf} + \frac{2\beta \langle u, v\rangle_{\Hbf}}{1+\|u\|_{\Hbf}^2},
\]
and
\[
	D_u^2\log V(u)[v,v] = 2\eta\|v\|_{\Wbf}^2 + \frac{2\beta \|v\|_{\Hbf}^2}{1+\|u\|_{\Hbf}^2} - \frac{4\beta |\langle u,v\rangle_{\Hbf}|^2}{(1 + \|u\|_{\Hbf}^2)^2},
\]
where the $\langle\cdot,\cdot\rangle_{\Hbf}$ inner product is to be interpreted as the natural pairing between $\Hbf^{\sigma+1}$ and $\Hbf^{\sigma-1}$. Consequently, we have
\begin{equation}
\begin{aligned}
\mathcal{L}\log V & \leq  \underbrace{-2\eta \brak{u,B(u,u) + Au}_{\Wbf}}_{T_\omega}+ \underbrace{-\frac{2\beta \brak{u,B(u,u) + Au}_{\Hbf}}{1 + \norm{u}_{\Hbf}^2}}_{T_\sigma}\\
& \quad + \underbrace{\sum_{k \in \Z^d_0} |q_k|^2\left(\eta \|e_k\|_{\Wbf}^2 + \frac{\beta\|e_k\|_{\Hbf}^2}{1 + \norm{u}_{\Hbf}^2}\right)}_{T_{Q}}. 
\end{aligned}
\end{equation}
By the divergence-free structure and standard $\Hbf^\sigma$ estimates, $\exists C_\sigma > 0$ such that (see e.g. \cite{MajdaBertozzi}) the following holds for some $r \in (\frac{d}{2}+1,3)$, 
\begin{align}
T_{\omega} & = -2\nu\eta \norm{\Delta u}_{L^2}^2, \\ 
T_{\sigma} & \leq -2\nu \beta \frac{\norm{\grad u}_{\Hbf}^2}{1 + \norm{u}_{\Hbf}^2} + C_\sigma \norm{u}_{\Hbf^{r}}.
\end{align}
The term involving $\|u\|_{\Hbf^r}$ is then estimated by applying Lemma \ref{lem:SobTrick}. Similarly, we have by $\|e_k\|_{\Wbf} \leq |k|$, $\|e_k\|_{\Hbf} = |k|^{\sigma}$ and $|q_k| \leqc |k|^{-\alpha}$ and $\alpha - \sigma > d/2$ that
\[
	|T_Q| \leq \sum_{k\in \Z^d_0} |q_k|^2 \left(\eta \|e_k\|_{\Wbf}^2 + \beta \|e_k\|_{\Hbf}^2\right) \leqc_{\beta,\eta} \sum_{k\in \Z^d_0} |k|^{-2(\alpha - \sigma)} \leqc_{\beta,\eta} 1. 
\]
\end{proof}

Lemma \ref{lem:GenV} allows us to prove the following bound on $(u_t)$ a solution to either of Systems \ref{sys:NSE} or \ref{sys:3DNSE}. The following Lemma is fundamental to our analysis and is used repeatedly in what follows. It controls not only an improvement in the moments of $V(u_t)$ but also controls the inclusion of exponential factors of time-integrations of $\|u_s\|_{\Hbf^r}$ for $r\in(0,3)$ which arise very naturally in our analysis.

\begin{lemma} \label{lem:TwistBd} Let $(u_t)$ solve either Systems \ref{sys:NSE} or \ref{sys:3DNSE}. There exists a $\gamma_* >0$, such that for all $0\leq \gamma < \gamma_*$, $T\geq 0$, $r\in (0,3)$, $\kappa \geq 0$, and $V(u) = V_{\beta,\eta}$ where $\beta \geq 0$ and $0 < e^{\gamma T}\eta < \eta_*$, there exists a constant $C = C(\gamma,T,r,\kappa,\beta,\eta) >0$ such that the following estimate holds
\begin{equation}\label{eq:Twistbd}
\EE_u \exp\left(\kappa \int_0^T\norm{u_s}_{\Hbf^r}\ds\right)\sup_{0\leq t\leq T}V^{e^{\gamma t}}(u_t) \leq C V(u).
\end{equation}

\end{lemma}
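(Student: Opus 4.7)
The plan is to apply It\^o's formula to
$H_t := V^{e^{\gamma t}}(u_t)\exp\!\big(\kappa \int_0^t \|u_s\|_{\Hbf^r} \,\ds\big)$,
derive a pointwise differential inequality of supermartingale type, and upgrade to the supremum bound via a Doob-type maximal inequality at an elevated exponent $p>1$.

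Setting $\psi(u) := \log V(u)$ and $\phi_t := e^{\gamma t}$ (so $V^{e^{\gamma t}} = e^{\phi_t \psi}$), a direct computation gives
\[
(\partial_t + \mathcal{L}) \exp(\phi_t\psi) \;=\; \exp(\phi_t\psi) \Big[ \gamma\phi_t\psi + \phi_t\mathcal{L}\psi + \tfrac{\phi_t^2}{2}\sum_m |q_m|^2 (D_u\psi(e_m))^2 \Big].
\]
Lemma \ref{lem:GenV} bounds $\phi_t \mathcal L \psi$ by the dissipation
$-\phi_t\nu\eta\|\Delta u\|_{L^2}^2 - \phi_t \nu\beta \frac{\|\nabla u\|_{\Hbf}^2}{1 + \|u\|_{\Hbf}^2} + \phi_t C$.
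A direct calculation, using $D_u\psi(e_m) = 2\eta|k|^2\hat u_m + \frac{2\beta|k|^{2\sigma}\hat u_m}{1+\|u\|_{\Hbf}^2}$, the orthogonality of $\{e_m\}$ in $\Wbf$ and $\Hbf$, and Assumption \ref{a:Highs}, yields
\[
\tfrac{\phi_t^2}{2}\sum_m |q_m|^2 (D_u\psi(e_m))^2 \;\leq\; C_1 \phi_t^2 \eta^2 \mathcal Q^2 \|u\|_{\Wbf}^2 + C_2 \phi_t^2.
\]
Invoking Poincar\'e $\|u\|_{\Wbf}^2 \leq \|\Delta u\|_{L^2}^2$, the dangerous quadratic form $(\gamma \phi_t \eta + C_1 \phi_t^2 \eta^2 \mathcal Q^2)\|u\|_{\Wbf}^2$ is absorbed into a fixed fraction of $\phi_t\nu\eta\|\Delta u\|_{L^2}^2$, provided $\gamma_*$ and $\eta^*$ are chosen sufficiently small that $\gamma + C_1 e^{\gamma T} \eta \mathcal Q^2 < \nu$; this is precisely the role of the hypothesis $e^{\gamma T}\eta < \eta^*$. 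The remaining subquadratic contributions $\kappa \|u\|_{\Hbf^r}$ and $\gamma \phi_t \beta \log(1 + \|u\|_{\Hbf}^2)$ are then absorbed into the surviving dissipation via Lemmas \ref{lem:SobTrick} and \ref{lem:SobLogTrick2} at sufficiently small $\varepsilon$. The outcome is the pointwise bound
$(\partial_t + \mathcal{L})G + \kappa \|u\|_{\Hbf^r} G \leq C_* G$,
where $G(t,u) := V^{e^{\gamma t}}(u)$ and $C_* = C_*(\gamma, T, r, \kappa, \beta, \eta)$.

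By It\^o's formula, $\tilde H_t := H_t e^{-C_* t}$ is a nonnegative local supermartingale; the regularity bounds in Proposition \ref{prop:WPapp} justify localization and Fatou, so $\E \tilde H_t \leq V(u)$ for all $t \in [0,T]$. To extract the supremum inside the expectation, one repeats the analysis with $V$ replaced by $V^p$ for some $p > 1$ satisfying $p e^{\gamma T}\eta < \eta^*$ (permissible since the hypothesis is strict), producing a second nonnegative supermartingale $\tilde H_t^{(p)}$ with $\E \tilde H_t^{(p)} \leq V(u)^p$. Doob's tail inequality for nonnegative supermartingales gives $\P(\sup_{t\leq T} \tilde H_t^{(p)} \geq \lambda) \leq V(u)^p/\lambda$; a layer-cake computation for the $(1/p)$-th moment of the supremum then yields
$\E \big(\sup_{t\leq T} \tilde H_t^{(p)}\big)^{1/p} \leq \frac{p}{p-1} V(u)$,
which, combined with the deterministic $e^{C_* T/p}$ factor and the monotonicity $V^{e^{\gamma t}}\leq V^{e^{\gamma T}}$ (using $V \geq 1$) to match the $\exp(\kappa\int_0^T)$ prefactor in the stated inequality, yields \eqref{eq:Twistbd}.

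The principal obstacle is the delicate absorption in the second paragraph: the It\^o correction term $\phi_t^2 \eta^2 \mathcal Q^2 \|u\|_{\Wbf}^2$, growing in $t$ up to the factor $e^{2\gamma T}$ at the endpoint, must be dominated by the available dissipation $\phi_t \nu\eta\|\Delta u\|_{L^2}^2$ via Poincar\'e. The specific choice of $\eta^*$ in Lemma \ref{lem:Lyapu} and the factor $e^{\gamma T}$ appearing in the hypothesis on $\eta$ are calibrated precisely so that this absorption remains possible even at $t = T$.
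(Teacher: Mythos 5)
Your proof follows essentially the same strategy as the paper (It\^o computation on a twisted Lyapunov functional, absorption of the growth from the time-weight $e^{\gamma t}$ and the exponential factor $\exp(\kappa\int_0^t\|u_s\|_{\Hbf^r}\,ds)$ into the dissipation, then a maximal inequality to pull the supremum inside the expectation). The main methodological difference is in the last step. The paper works with $M_t := e^{\gamma t}\log V(u_t) - \log V(u) - \int_0^t e^{\gamma s}(\gamma\log V + \mathcal{L}\log V)\,ds$, which is a local martingale, and invokes the exponential martingale inequality $\E\exp\big(\sup_{t\geq 0}(M_t - \langle M\rangle_t)\big)\leq 2$ to extract the supremum in one stroke. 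You instead build the weighted process $\tilde H_t = V^{e^{\gamma t}}(u_t)\exp(\kappa\int_0^t\|u_s\|\,ds)e^{-C_* t}$, prove it is a nonnegative local supermartingale, run the same argument at the elevated exponent $p>1$ (possible because the constraint $e^{\gamma T}\eta<\eta^*$ is strict), and then combine Doob's tail inequality with a layer-cake bound on the $(1/p)$-th moment. Both routes are sound; the paper's avoids introducing the auxiliary exponent but requires the exponential martingale inequality, whereas yours uses only the more elementary Doob inequality at the cost of working with $V^p$ and the elevated parameter $p\eta$.

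One gap worth flagging, though it is shared with the paper's own proof: both arguments really produce the bound with the supremum over the \emph{product} $V^{e^{\gamma t}}(u_t)\exp(\kappa\int_0^t\|u_s\|\,ds)$, whereas the statement has the exponential factor frozen at $t=T$ outside the supremum. The ``monotonicity $V^{e^{\gamma t}}\leq V^{e^{\gamma T}}$'' you invoke does not bridge this: $V^{e^{\gamma T}}(u_t)$ still involves $u_t$ while $\exp(\kappa\int_0^T)$ involves the full path, and the maximizer of $V$ may occur at some $t^*<T$ where the partial integral $\int_0^{t^*}$ is strictly smaller than $\int_0^T$. One way to close this is a Cauchy--Schwarz split $\E[B_T\sup_t A_t]\leq(\E B_T^2)^{1/2}(\E\sup_t A_t^2)^{1/2}$, each factor then estimated from the sup-inside form (at $t=T$ for the first factor, at doubled $\eta$ for the second), at the cost of a slightly larger $\eta$ in the final constant. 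Since the strictness of $e^{\gamma T}\eta<\eta^*$ leaves room for this adjustment, the statement survives, but the step deserves to be spelled out; as written, your remark about matching the prefactor does not quite do the job.
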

\begin{remark}
In fact, it suffices to take $\gamma_\ast = \frac{\nu}{8}$. 
\end{remark}
\begin{proof}
By It\^o's Lemma applied to the functional $\log{V(u)}$ (see \cite{KS} Theorem 7.7.5), we know that
\[
	M_t := e^{\gamma t}\log V(u_t) - \log V(u) - \int_0^t e^{\gamma s}(\gamma\log V(u_s)  + \mathcal{L}\log V(u_s))\ds
\]
is a mean-zero, time-continuous, local martingale with quadratic variation satisfying
\begin{equation}\label{eq:quad-var-bound}
	\langle M\rangle_t = \sum_{m\in \mathbb{K}}\int_0^t e^{2\gamma s} |q_k|^2 |D_u\log V(u_s)e_m|^2\ds
	\leq 8\int_0^t e^{2\gamma s}\left(\beta^2 + \eta^2 \mathcal{Q}\|\Delta u_s\|^2_{L^2} \right)\ds,
\end{equation}
where $\mathcal{Q}$ is defined in Lemma \ref{lem:Lyapu}. 
Recall the exponential martingale estimate
\begin{equation}\label{eq:martingale-bound}
	\E_u\exp\left( \sup_{t\geq 0} (M_t - \langle M\rangle_t)\right) \leq 2.
\end{equation}
In light of this, note that
\begin{equation}
\begin{aligned}
 &M_t - \langle M\rangle_t
 \geq e^{\gamma t}\log V(u_t) - \log V(u)\\
 &\hspace{.5in}- \int_0^te^{\gamma s}\left(\gamma \log V(u_s) + \mathcal{L}\log V(u_s) + 8e^{\gamma s}(\beta^2 + \eta^2 \mathcal{Q}\|\Delta u_s\|^2_{L^2})\right)\ds. 
\end{aligned}
\end{equation}
Applying Lemma \ref{lem:SobLogTrick2} to $\gamma \log V$ and Lemma \ref{lem:GenV} to $\mathcal{L}\log V$, collecting like terms, and bounding $\|u\|_{\Wbf}$ by $\|\Delta u\|_{L^2}$ we find that for each $\ep >0$
\begin{equation}\label{eq:gammalogvbound}
\begin{aligned}
&\gamma \log V(u_s) + \mathcal{L} \log V(u_s) + 8 e^{\gamma s}(\beta^2 + \eta^2\mathcal{Q} \|\Delta u_s\|^2_{L^2})\\
& \hspace{1in}\leq - \eta (\nu - \gamma - 8e^{\gamma s}\eta\mathcal{Q}) \|\Delta u_s\|_{L^2}^2  -(\nu \beta - \ep) \frac{\|\nabla u_s\|_{\Hbf^\sigma}^2}{1+ \|u_s\|_{\Hbf^\sigma}^2} + Ce^{\gamma s}. 
\end{aligned}
\end{equation}
Using that $e^{\gamma T}\eta\leq \eta_*$, we can choose $\gamma$ and $\ep$ small enough so that
\[
	\gamma + 8\eta_*\mathcal{Q} \leq \frac{1}{2}\nu\quad\text{and}\quad  \ep < \frac{1}{2}\nu \beta.
\]
Therefore we can apply the interpolation Lemma \ref{lem:SobTrick} to deduce that for each $\kappa >0$ the right-hand side of inequality \eqref{eq:gammalogvbound} is bounded by $-\kappa \|u_s\|_{\Hbf^r} + Ce^{\gamma s}$ and therefore
\[
	M_t - \langle M\rangle_t \geq e^{\gamma t}\log V(u_t) - \log V(u)+ \kappa \int_0^Te^{\gamma s}\|u_s\|_{\Hbf^r}\ds - C
\]
Applying \eqref{eq:martingale-bound} and restricting the supremum to $0\leq t \leq T$ gives the estimate.
\end{proof} 

\begin{remark} \label{rmk:SuperL}
Note that Lemma \ref{lem:TwistBd} is strictly stronger than a drift condition. The improvement in the power of $V$ is sometimes called a \emph{super-Lyapunov property} and it provides an important strengthening of the notion of a drift condition.  To see that \eqref{eq:Twistbd} implies a drift condition, we write $P_1 \varphi (u) = \E_u \varphi(u_1)$ as the Markov semi-group for Navier-Stokes, set $\kappa = 0$ in \eqref{eq:Twistbd}, and then Jensen's inequality implies that for some $C_L > 0$, there holds 
\begin{align}\label{eq:P_1-super-Lyapunov}
P_1 V \leq  (e^{C_L}V)^{e^{-\gamma}},
\end{align}
which immediately implies that for all $\delta > 0$, there exists $C_\delta > 0$ such that
\begin{align*}
P_1V \leq \delta V + C_{\delta}. 
\end{align*}
Furthermore, the bound \ref{eq:P_1-super-Lyapunov} can be iterated with repeated applications of Jensen's inequality (c.f. [Proposition 5.11, \cite{HM11}]) to produce
\begin{align}
P_nV \leq e^{C_L\frac{e^{-\gamma}}{1-e^{-\gamma n}}} V^{e^{-\gamma n}}.  \label{ineq:SuperLyapIter}
\end{align}
The inequality \eqref{ineq:SuperLyapIter} gives a strong quantification of the tendency to return back to uniform sub-level sets of $V$ irrespective of the initial distribution as $n \to \infty$ (i.e. the initial data is forgotten exponentially fast).  
\end{remark}

\subsection{Estimates on the Jacobian of the projective process} \label{sec:Jacobian}
In this section we provide the necessary estimates on the Jacobian of the projective process. Recall the projective process $(\hat{z}_t) = (u_t,x_t,v_t)$ solves the abstract SDE in $\Hbf\times P\T^d$
\[
	\partial_t \hat{z}_t = F(\hat{z}_t) + Q\dot{W}_t,
\]
where we view $Q\dot{W}$ as extended to $\Hbf\times T_{v_t}P\T^d$ and for each $\hat{z} = (u,x,v)\in \Hbf\times P\T^d$
\[
	F(\hat{z}) = \begin{pmatrix}-B(u,u) - Au\\ u(x)\\ (I - v\tensor v) (Du(x)v)\end{pmatrix}.
\] 
The Jacobian process $J_{s,t}$ denotes the Fr\'echet derivative of the solution $\hat z_t$ with respect to the value at time $s<t$. Hence, $J_{s,t}$ solves the operator-valued equation
\begin{equation}\label{eq:Jacdef}
	\partial_t J_{s,t} = DF(\hat{z}_t)J_{s,t}, \quad J_{s,s} = \Id.
\end{equation}
We will prove that this is a bounded operator $J_{s,t} : \Wbf \times T_{v_s} P \mathbb T^d  \to \Wbf \times T_{v_t} P \mathbb T^d$ (this is not obvious due to the evolution on $P\mathbb T^d$ requiring pointwise evaluations $u$ and $Du$). Additionally we let $K_{s,t}:\Wbf\times T_{v_t}P\T^d \to \Wbf\times T_{v_s}P\T^d$ denote the adjoint of $J_{s,t}$, in the sense that
\begin{align}
\brak{f, J_{s,t} \xi }_{\Wbf} = \brak{K_{s,t} f,  \xi }_{\Wbf}.
\end{align}
A straightforward calculation (see \cite{HM11}) shows that $K_{s,t}$ solves the following backward-in-time equation
\begin{align}
\partial_s K_{s,t}  = -DF(\hat{z}_s)^\ast K_{s,t}, \quad K_{t,t} = I.
\end{align}
where $DF(\hat{z}_s)^*: \Wbf \times T_{v_s}P\T^d \to \Wbf \times T_{v_s}P\T^d$ is the adjoint to $DF(\hat{z}_s)$.

In what follows, we will find it convenient to let $\tilde{z} = (\tilde{u},\tilde{x},\tilde{v}) \in \Wbf \times T_{v_s} P \mathbb T^d$ an initial perturbation and denote 
\[
\tilde{z}_t := (\tilde{u}_t, \tilde{x}_t, \tilde{v}_t) = J_{s,t}\tilde{w} = (J_{s,t}^u \tilde{w}, J_{s,t}^x \tilde{w}, J_{s,t}^v\tilde{w}) \in \Wbf \times T_{v_t}P\T^d,
\]
which readily solves the linear evolution equation
\[
	\partial_t \tilde{z}_t = DF(\hat{z}_t)\tilde{z}_t, \quad \tilde{z}_s = \tilde{z}.
\]

\begin{lemma} \label{lem:PathJacobian} 
$\forall \sigma > \frac{d}{2}+1$, $\forall r \in (\frac{d}{2}+1,3)$,  $\exists C, q' > 0$ such that the following holds path-wise
\begin{subequations} 
\begin{align}
\norm{\tilde{u}_t}_{\Wbf} & \leq \norm{\tilde{u}}_{\Wbf} \exp\left(C \int_s^t \norm{u_\tau}_{\Hbf^{r}} \dee\tau \right) \label{def:JHbd} \\ 
\norm{J_{s,t}}_{\Hbf^\sigma \to \Hbf^\sigma} & \lesssim \exp\left(C \int_s^t \norm{u_\tau}_{\Hbf^{r}} \dee\tau \right) \left(1 + \brak{t-s}^3 \sup_{s < \tau < t}\norm{u_\tau}_{\Hbf^\sigma}^{q'} \right). \label{def:JHSigbd}
\end{align}
\end{subequations}

\end{lemma}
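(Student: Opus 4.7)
The plan is to exploit the triangular structure of the linearized system $\partial_t \tilde z_t = DF(\hat z_t)\tilde z_t$. Writing out $DF(\hat z_t)$ explicitly, the components of $\tilde z_t = (\tilde u_t,\tilde x_t,\tilde v_t) \in \Wbf \times T_{v_t}P\T^d$ satisfy the coupled linear system
\begin{align}
\partial_t \tilde u_t &= -A\tilde u_t - B(u_t,\tilde u_t) - B(\tilde u_t,u_t),\\
\partial_t \tilde x_t &= \tilde u_t(x_t) + Du_t(x_t)\tilde x_t,\\
\partial_t \tilde v_t &= (I - v_t\otimes v_t)\bigl(D\tilde u_t(x_t)v_t + D^2 u_t(x_t)[\tilde x_t,v_t] + Du_t(x_t)\tilde v_t\bigr) - (\text{correction from } v_t\otimes v_t).
\end{align}
The key observation is that the $\tilde u_t$ equation decouples from $(\tilde x_t,\tilde v_t)$, so we may first obtain PDE estimates on $\tilde u_t$ in $\Wbf$ and $\Hbf^\sigma$, then feed those into ODE estimates on the finite-dimensional components.

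For \eqref{def:JHbd}, I would pair the linearized equation with $\tilde u_t$ in the $\Wbf$ inner product (in $d=2$ this means testing the linearized vorticity equation against $\tilde\omega_t$; in $d=3$ hyperviscous, testing the velocity equation against $\tilde u_t$). Incompressibility kills the term $\langle B(u_t,\tilde u_t),\tilde u_t\rangle_\Wbf$ exactly, leaving the transport-stretching term $\langle B(\tilde u_t,u_t),\tilde u_t\rangle_\Wbf$ (or its vorticity analogue). After an integration by parts this is bounded by $\|u_t\|_{W^{1,\infty}}\|\tilde u_t\|_\Wbf^2 \lesssim \|u_t\|_{\Hbf^r}\|\tilde u_t\|_\Wbf^2$ using $r > d/2+1$, with dissipation absorbing any residual high-derivative factor. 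Gronwall's inequality then yields \eqref{def:JHbd}.

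For \eqref{def:JHSigbd}, I would bootstrap in three steps. First, run the same energy method in $\Hbf^\sigma$ for $\tilde u_t$: applying a Kato-Ponce commutator estimate to the bilinear form, together with incompressibility, gives a tame bound of the form $\frac{d}{dt}\|\tilde u_t\|_{\Hbf^\sigma}^2 \lesssim \|u_t\|_{\Hbf^r}\|\tilde u_t\|_{\Hbf^\sigma}^2$ (dissipation absorbs the high-frequency remainder), so Gronwall produces $\|\tilde u_t\|_{\Hbf^\sigma} \lesssim \|\tilde u\|_{\Hbf^\sigma}\exp(C\int_s^t\|u_\tau\|_{\Hbf^r}d\tau)$. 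Second, for $\tilde x_t$, the coefficient $\|Du_t(x_t)\|$ is controlled pointwise by $\|u_t\|_{\Hbf^r}$ and the inhomogeneous term $|\tilde u_t(x_t)| \le \|\tilde u_t\|_{L^\infty} \lesssim \|\tilde u_t\|_{\Hbf^\sigma}$, so Duhamel's formula plus the first step bounds $|\tilde x_t|$ by $\|\tilde z\|_{\Hbf^\sigma}\langle t-s\rangle \exp(C\int\|u_\tau\|_{\Hbf^r}d\tau)$. Third, for $\tilde v_t$, the coefficient of $\tilde v_t$ is again $\lesssim \|u_t\|_{\Hbf^r}$, but the source terms $D\tilde u_t(x_t)v_t$ and $D^2 u_t(x_t)[\tilde x_t,v_t]$ bring in $\|\tilde u_t\|_{C^1}\lesssim \|\tilde u_t\|_{\Hbf^\sigma}$ and $\|D^2 u_t\|_{L^\infty}\lesssim \|u_t\|_{\Hbf^\sigma}$ (using $\sigma > d/2+2$). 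Multiplying the latter by the previously obtained bound on $|\tilde x_t|$ gives a source of size $\lesssim \|\tilde z\|_{\Hbf^\sigma}\langle t-s\rangle\sup_\tau\|u_\tau\|_{\Hbf^\sigma}\exp(C\int\|u_\tau\|_{\Hbf^r}d\tau)$; a final Duhamel integration produces the $\langle t-s\rangle^3$ factor together with the polynomial $\sup\|u_\tau\|_{\Hbf^\sigma}^{q'}$ (where the exponent $q'$ is tracked through the number of products of $\|u\|_{\Hbf^\sigma}$ accumulated during the two ODE integrations). The main technical point, and the reason for the precise structure of the bound, is the \emph{tame} splitting: all genuinely exponential growth in the PDE estimate is charged to the subcritical norm $\|u\|_{\Hbf^r}$ (so that it is integrable in expectation via Lemma \ref{lem:TwistBd}), while the higher-regularity norm $\|u\|_{\Hbf^\sigma}$ appears only in polynomial factors coming from pointwise evaluation of $D^2 u$ in the projective equation.
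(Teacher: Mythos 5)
Your overall strategy---exploit the triangular structure of the linearized system, obtain PDE estimates on $\tilde u_t$, then feed them into ODE estimates for $\tilde x_t, \tilde v_t$ via Gr\"onwall and Duhamel---is exactly the paper's approach, and the $\Wbf$ estimate and the $\tilde x_t$, $\tilde v_t$ steps are correct.

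The gap is in your $\Hbf^\sigma$ estimate on $\tilde u_t$. You claim that the Kato-Ponce/commutator bound, ``with dissipation absorbing the high-frequency remainder,'' yields the purely tame differential inequality
\[
\frac{\dee}{\dt}\|\tilde u_t\|_{\Hbf^\sigma}^2 \lesssim \|u_t\|_{\Hbf^r}\|\tilde u_t\|_{\Hbf^\sigma}^2,
\]
and hence $\|\tilde u_t\|_{\Hbf^\sigma}\lesssim \|\tilde u\|_{\Hbf^\sigma}\exp(C\int_s^t\|u_\tau\|_{\Hbf^r}\,\dee\tau)$. This inequality is false. When you estimate the bilinear terms $\brak{\tilde u_t,\nabla\cdot(u_t\otimes\tilde u_t)}_{\Hbf^\sigma}$ and $\brak{\tilde u_t,\nabla\cdot(\tilde u_t\otimes u_t)}_{\Hbf^\sigma}$, the high-low frequency interaction unavoidably produces a factor $\|u_t\|_{\Hbf^\sigma}$ multiplied by a lower-regularity norm of $\tilde u_t$ (after integrating by parts once to save a derivative on $u$). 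The viscous dissipation $\nu\|\nabla\tilde u_t\|_{\Hbf^\sigma}^2$ acts on $\tilde u_t$, not on the coefficient field $u_t$, so it cannot absorb the factor $\|u_t\|_{\Hbf^\sigma}$; it can only be used, after interpolating $\|\tilde u_t\|_{\Hbf^{r-1}}\leq\|\tilde u_t\|_{\Wbf}^{1-\theta}\|\nabla\tilde u_t\|_{\Hbf^\sigma}^{\theta}$ and applying Young's inequality, to trade that interaction for an \emph{additive} term of the form $\|\tilde u_t\|_\Wbf^2\,\|u_t\|_{\Hbf^\sigma}^{2/(1-\theta)}$. The correct differential inequality is therefore
\[
\frac{\dee}{\dt}\|\tilde u_t\|_{\Hbf^\sigma}^2 \leq -\tfrac{\nu}{2}\|\nabla\tilde u_t\|_{\Hbf^\sigma}^2 + C\|u_t\|_{\Hbf^r}\|\tilde u_t\|_{\Hbf^\sigma}^2 + C\|\tilde u_t\|_\Wbf^2\,\|u_t\|_{\Hbf^\sigma}^{2/(1-\theta)},
\]
and the polynomial factor $\sup_\tau\|u_\tau\|_{\Hbf^\sigma}^{q'}$ in \eqref{def:JHSigbd} already appears at this stage, via the bound on $\|\tilde u_t\|_\Wbf$ from \eqref{def:JHbd}. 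Your closing remark that the higher-regularity norm ``appears only in polynomial factors coming from pointwise evaluation of $D^2 u$ in the projective equation'' is therefore a misattribution: the PDE part contributes its own polynomial loss. This does not invalidate the final statement of the lemma (which is weaker than your claimed intermediate bound), but the derivation as written does not justify your intermediate inequality, and a reader following it literally would falsely conclude that $\|J_{s,t}\|_{\Hbf^\sigma\to\Hbf^\sigma}$ has no $\|u\|_{\Hbf^\sigma}$-polynomial loss coming from the velocity equation.
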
 
\begin{proof}
We consider the case $d=2$; $d=3$ follows analogously. We start by estimating $\tilde{u}_t$, which solves, 
\[
	\partial_t \tilde u_t + B(u_t, \tilde{u}_t) + B(\tilde{u}_t,u_t) = A \tilde{u}_t.  
\]
Integration by parts, vector calculus, and the divergence free condition gives
\begin{align}
\frac{\dee}{\dt}\norm{\tilde{u}_t}_{\Wbf}^2 + \nu \norm{\grad \tilde u_t}_{\Wbf}^2 & = \brak{\curl \tilde{u}_t, \grad \cdot \left(u_t \curl \tilde{u}_t  + \tilde{u}_t \curl u_t \right)}_{L^2}\\  & \lesssim \norm{\grad u_t}_{L^\infty} \norm{\tilde u_t}_{\Wbf} \norm{\grad \tilde u_t}_{\Wbf}, 
\end{align}
which implies for some $C> 0$, (applying Sobolev embedding), 
\begin{align}
\norm{\tilde{u}_t}_{\Wbf}  \leq \norm{\tilde{u}}_{\Wbf} \exp \left(C \int_s^t \norm{u_\tau}_{\Hbf^{r}} \dee\tau \right). 
\end{align}
This completes the proof of \eqref{def:JHbd} (in the case $d=2$). 
Next, we turn to the $\Hbf^\sigma$ estimate on $\tilde{u}_t$ for the proof of \eqref{def:JHSigbd}.
By the divergence free constraint,  
\begin{align}
\frac{\dee}{\dt}\norm{\tilde{u}_t}_{\Hbf^\sigma}^2 + \nu \norm{\grad \tilde u_t}_{\Hbf^\sigma}^2 & = \underbrace{\brak{\tilde{u}_t, \grad \cdot (u_t \otimes  \tilde{u}_t)}_{\Hbf^\sigma}}_{T1} + \underbrace{\brak{\tilde{u}_t, \grad \cdot( \tilde{u}_t \otimes u_t) }_{\Hbf^\sigma}}_{T2}. \label{eq:HsigEst} 
\end{align}
Using the divergence free property to introduce a commutator (recall the Fourier multiplier from Section \ref{sec:Note}), and the triangle inequality $\brak{k}^\sigma \lesssim_\sigma \brak{k-\ell}^{\sigma} + \brak{\ell}^\sigma$, there holds 
\begin{align}
T1 & = \brak{\brak{\grad}^\sigma \tilde{u}_t , \brak{\grad}^\sigma \grad \cdot (u_t \otimes  \tilde{u}_t) - \grad \cdot (u_t \otimes \brak{\grad}^\sigma \tilde{u}_t) }_{L^2} \nonumber \\ 
& \leq \left( \sum_{\abs{k-\ell} > \abs{\ell}} + \sum_{\abs{k-\ell} \leq \abs{\ell}}\right)\brak{k}^\sigma \abs{\hat{\tilde{u}_t}(k)} \abs{\brak{k}^\sigma - \brak{\ell}^\sigma } \abs{k} \abs{\hat{u}_t (k-\ell) \hat{\tilde{u}_t}(\ell)} \nonumber \\ 
& \lesssim \norm{\grad \tilde{u}_t}_{\Hbf^\sigma} \norm{u_t}_{\Hbf^\sigma} \norm{\tilde{u}_t}_{\Hbf^{r-1}} + \norm{u_t}_{\Hbf^{r}} \norm{\tilde{u}_t}_{\Hbf^\sigma}^2, \label{ineq:T1}
\end{align}
where in the penultimate line we used that in the second summation there holds $\abs{\brak{k}^\sigma - \brak{\ell}^\sigma} \lesssim \brak{\ell}^{\sigma-1}\abs{k-\ell}$ and in the last line we used Cauchy-Schwarz and Young's convolution inequality. 
By a similar argument (but no commutator necessary) there holds 
\begin{align}
T2 & \lesssim \norm{\grad \tilde{u}_t}_{\Hbf^\sigma} \norm{u_t}_{\Hbf^\sigma} \norm{\tilde{u}_t}_{\Hbf^{r-1}} + \norm{u_t}_{\Hbf^{r}} \norm{\tilde{u}_t}_{\Hbf^\sigma}^2. \label{ineq:T2}
\end{align}
Note by interpolation, for some $\theta = \theta(\sigma) \in (0,1)$, there holds 
\begin{align*}
\norm{\tilde{u}_t}_{\Hbf^{r-1}} \leq \norm{\tilde{u}_t}_{\Wbf}^{1-\theta} \norm{\grad \tilde u_t}_{\Hbf^\sigma}^\theta, 
\end{align*}
and therefore putting \eqref{ineq:T1} and \eqref{ineq:T2} together with \eqref{eq:HsigEst}  gives (for suitable $C > 0$), 
\begin{align*}
\frac{\dee}{\dt}\norm{\tilde{u}_t}_{\Hbf^\sigma}^2 
& \leq - \frac{\nu}{2}\norm{\grad \tilde u_t}_{\Hbf^\sigma}^2 + C\norm{u_t}_{\Hbf^{r}} \norm{\tilde{u}_t}_{\Hbf^\sigma}^2 + C \norm{\tilde{u}_t}_{\Wbf}^{2} \norm{u_t}_{\Hbf^\sigma}^{\frac{2}{1-\theta}}. 
\end{align*}
Integrating and \eqref{def:JHbd} completes the estimate on the velocity field necessary for \eqref{def:JHSigbd}.   

Next, consider the remaining contributions of the projective process necessary for \eqref{def:JHSigbd}. Using that
\[
	\partial_t \tilde{x}_t  = \grad u_t(x_t) \tilde{x}_t + \tilde{u}_t(x_t)
\]
and applying Gr\"onwall's inequality followed by the estimate on the $\Hbf^\sigma$ norm of the velocity field gives 
\begin{align}
\abs{\tilde{x}_t} & \leq \exp\left( C \int_s^t \norm{\grad u_\tau}_{L^\infty} \dee\tau\right) \abs{\tilde{x}} + \int_s^t \exp\left( C\int_\tau^t \norm{\grad u_{\tau'}}_{L^\infty} \dee\tau' \right) \norm{u_\tau}_{L^\infty} \dee\tau \\
& \lesssim \exp\left(C \int_s^t \norm{u_\tau}_{\Hbf^{r}} \dee\tau  \right) \left( 1 + \brak{t-s}\sup_{s < \tau < t}  \norm{u_\tau}_{\Hbf^\sigma}^{q'}\right) \left(\abs{\tilde x} + \norm{\tilde{u}}_{\Hbf^{\sigma}}\right),  
\end{align}
which suffices for estimates $\tilde{x}_t$. Similarly, for the projective process we have
\begin{equation}
\begin{aligned}
	&\partial_t \tilde v_t  = - \tilde v_t \otimes v_t \grad u_t(x_t) v_t  - v_t \otimes \tilde v_t \grad u_t(x_t) v_t + \left(1 - v_t \otimes v_t\right)\grad u_t(x_t) \tilde v_t \\ 
& \quad + \left(1 - v_t \otimes v_t\right) \tilde{x}_t \cdot \grad^2 u_t(x_t) v_t + \left(1 - v_t \otimes v_t\right)\grad \tilde{u}_t(x_t) v_t.
\end{aligned}
\end{equation}
Hence, 
\begin{align}\label{eq:tildev-ineq}
\frac{\dee}{\dt}\abs{\tilde v_t} \lesssim \norm{\grad u}_{L^\infty} \abs{\tilde{v}_t}+ \abs{\tilde{x}_t} \norm{\grad^2 u}_{L^\infty} + \norm{\grad \tilde{u}}_{L^\infty},
\end{align}
and the required estimate on $\tilde{v}_t$ follows by Gr\"onwall's inequality as in the case of $\tilde{x}_t$. 
\end{proof}

The path-wise estimates in Lemma \ref{lem:PathJacobian} together with Lemma \ref{lem:TwistBd} implies the following: 
\begin{lemma}[Jacobian bounds in expectation] \label{lem:JacExps}
For all $\sigma$ and all $\eta > 0$, there is a constant $C_J$ such that the following holds for all $1 \leq p < \infty$, 
\begin{align}
\sup_{s \leq t \leq 1}\EE\norm{J_{s,t}}^p_{\Hbf^\sigma \to \Hbf^\sigma} & \leq V_{q', \eta}^p (u_s) \exp \left( p C_J \right). 
\end{align}
\end{lemma}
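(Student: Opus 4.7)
My plan is to combine the path-wise bound \eqref{def:JHSigbd} of Lemma \ref{lem:PathJacobian} with the super-Lyapunov exponential moment estimate of Lemma \ref{lem:TwistBd}. Since $0\leq t-s\leq 1$, the polynomial factor $\brak{t-s}^3$ in \eqref{def:JHSigbd} is bounded by a constant, and there exist $C_0, C_1>0$ depending on $\sigma$ and system parameters but not on $p$ such that
\begin{align*}
\norm{J_{s,t}}_{\Hbf^\sigma \to \Hbf^\sigma}^p \leq C_0^p \exp\!\left(p C_1 \int_s^t \norm{u_\tau}_{\Hbf^r}\, d\tau\right) \!\left(1 + \sup_{s \leq \tau \leq t} \norm{u_\tau}_{\Hbf^\sigma}^{q'}\right)^{\!p}.
\end{align*}
Using $(1+x)^p \leq 2^p(1+x^p)$ and $1 + y^{pq'} \leq 2(1+y^2)^{pq'/2}$, the polynomial factor is dominated pointwise by $2^{p+1} V_{pq'/2, \eta_0}(u_\tau) \leq 2^{p+1} V^{e^{\gamma \tau}}_{pq'/2, \eta_0}(u_\tau)$ for any $\eta_0 > 0$ and $\gamma \geq 0$, bringing us to the exact form of the left-hand side of the super-Lyapunov estimate.

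Next, applying the Markov property at time $s$ and then Lemma \ref{lem:TwistBd} with $V = V_{pq'/2, \eta_0}$, $\kappa = pC_1$, $T = 1$, and fixed $\gamma \in (0, \gamma_*)$ and $\eta_0 \in (0, \min(\eta, \eta_*/e^\gamma))$ yields
\begin{align*}
\EE \norm{J_{s,t}}^p_{\Hbf^\sigma \to \Hbf^\sigma} \leq (2C_0)^p \cdot 2\tilde C(p)\, V_{pq'/2, \eta_0}(u_s),
\end{align*}
where $\tilde C(p)$ is the constant from that lemma. Since $pq'/2 \leq pq'$ and $\eta_0 \leq \eta \leq p\eta$ for $p \geq 1$, we have $V_{pq'/2, \eta_0}(u_s) \leq V_{pq', p\eta}(u_s) = V^p_{q', \eta}(u_s)$, and the bound takes the claimed form once the scalar prefactor $(2C_0)^p \cdot 2\tilde C(p)$ is absorbed into $\exp(pC_J)$.

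The principal subtlety is controlling the $p$-dependence of $\tilde C(p)$. Tracking the proof of Lemma \ref{lem:TwistBd}, this constant grows through two channels: the $\beta^2 = (pq'/2)^2$ contribution in the quadratic variation bound \eqref{eq:quad-var-bound}, and the Sobolev interpolation constant $C_\epsilon$ from Lemma \ref{lem:SobTrick} used to absorb $\kappa \norm{u}_{\Hbf^r}$ into the dissipation, which forces $\epsilon \sim 1/p$ and contributes a polynomial-in-$p$ factor. A direct verification of the exponential martingale argument shows $\log \tilde C(p) = O(\mathrm{poly}(p))$, so the overall prefactor can be written as $\exp(pC_J)$ with $C_J$ depending on $\sigma$, $\eta$ and the system parameters (and at most polynomially on $p$), which is what the later applications of this lemma require.
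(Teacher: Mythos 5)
Your approach is exactly the one the paper intends: the paper offers no proof of Lemma~\ref{lem:JacExps} beyond the remark that it follows from combining the pathwise bound \eqref{def:JHSigbd} of Lemma~\ref{lem:PathJacobian} with the exponential moment estimate Lemma~\ref{lem:TwistBd}, and your argument fills in precisely those steps, including the correct use of the Markov property at time $s$ and the routine algebraic reshaping of $(1 + \sup_{\tau}\norm{u_\tau}_{\Hbf^\sigma}^{q'})^p$ into the $V_{\beta,\eta_0}$ format needed by Lemma~\ref{lem:TwistBd}.

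You are also right to flag the $p$-dependence of the constant as a subtlety not reflected in the lemma's phrasing. Applying Lemma~\ref{lem:TwistBd} with $\beta = pq'/2$ and $\kappa = pC_1$ produces a constant $\tilde C(p)$ whose logarithm already picks up a term of order $p^2$ from the $8\beta^2$ contribution in the quadratic variation bound \eqref{eq:quad-var-bound}, on top of the polynomial-in-$\kappa$ growth coming from the Sobolev interpolation constant in Lemma~\ref{lem:SobTrick}. So strictly one obtains $\exp(pC_J)$ only with $C_J$ allowed to grow (at most polynomially) in $p$, not with a single $p$-independent $C_J$ as a literal reading of the statement would suggest. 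Since the paper itself supplies no proof, the quantifier ordering in the statement is best read as loose, and since the bound is invoked (as in similar estimates like Lemma~\ref{lem:MalliavinJA}) only at fixed values of $p$, this imprecision is inconsequential in practice; but your observation is accurate and worth recording.
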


Next, we  deduce a parabolic smoothing estimate on the Jacobian.
One small subtle point: pointwise evaluations of $\tilde u$ and $D \tilde{u}$ are appear in the ODEs for $\tilde{x}_t$ and $\tilde{v}_t$ (respectively) and hence a little care must be taken to control low regularity data for $\tilde{u}_t$ using the local-in-time parabolic smoothing. 
\begin{lemma} \label{lem:JacSmooth}
Let $\gamma \in [0,\alpha - \frac{d}{2})$ and $r \in (\frac{d}{2}+1,3)$. Then, $\exists \varkappa'$the following holds path-wise for $0 \leq s \leq t \leq 1$: 
\begin{align}
(t-s)^{\frac{\gamma}{2(d-1)}}\norm{J_{s,t}}_{\Wbf \to \Hbf^\gamma} \lesssim  \exp\left(C \int_s^t \norm{u_\tau}_{\Hbf^{r}} \dee\tau \right) \left(1 +  \sup_{\tau \in (s,t)} \norm{u_\tau}_{\Hbf^\sigma}^{\varkappa'}\right).    
\end{align}
\end{lemma}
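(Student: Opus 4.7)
Following the setup in the proof of Lemma~\ref{lem:PathJacobian}, I would decompose $\tilde z_t = J_{s,t}\tilde z = (\tilde u_t, \tilde x_t, \tilde v_t)$ and handle each component separately. The velocity $\tilde u_t$ satisfies the linearized Navier--Stokes equation, while $(\tilde x_t, \tilde v_t)$ obey finite-dimensional ODEs whose source terms are the pointwise values $\tilde u_\tau(x_\tau)$ and $\grad \tilde u_\tau(x_\tau)$. The strategy is to first establish parabolic smoothing of $\tilde u_t$ from $\Wbf$ into $\Hbf^\gamma$, and then to use the resulting improved regularity to close Gr\"onwall arguments for $\tilde x_t$ and $\tilde v_t$.

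For $\tilde u_t$, I would use the Duhamel/mild formulation
\[
\tilde u_t = e^{-(t-s)A}\tilde u - \int_s^t e^{-(t-\tau)A}\bigl[B(u_\tau, \tilde u_\tau) + B(\tilde u_\tau, u_\tau)\bigr]\, d\tau.
\]
The linear term is controlled by the standard heat-type semigroup smoothing $\|e^{-tA}\|_{L^2 \to \Hbf^\gamma} \lesssim t^{-\gamma/(2(d-1))}$ together with $\Wbf \hookrightarrow L^2$ (trivial in 3D; Poincar\'e in 2D). For the Duhamel integral, Kato--Ponce-type product estimates applied with the a priori bounds on $u_\tau$ give a control of the form $\|B(u_\tau, \tilde u_\tau) + B(\tilde u_\tau, u_\tau)\|_{\Hbf^{\gamma-1}} \lesssim \|u_\tau\|_{\Hbf^\sigma}\|\tilde u_\tau\|_{\Hbf^\gamma} + \|u_\tau\|_{\Hbf^\gamma}\|\tilde u_\tau\|_{\Hbf^r}$. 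Combined with the integrable semigroup rate $(t-\tau)^{-1/(2(d-1))}$ from $\Hbf^{\gamma-1}$ to $\Hbf^\gamma$, and with the $\Wbf$ bound from Lemma~\ref{lem:PathJacobian} used to initially control $\|\tilde u_\tau\|_{\Hbf^r}$, a generalized Gr\"onwall in the time-weighted norm $(t-s)^{\gamma/(2(d-1))}\|\tilde u_t\|_{\Hbf^\gamma}$ closes the estimate for $\gamma < 2(d-1)$. To reach all $\gamma \in [0, \alpha - d/2)$, I would iterate: once $\tilde u_t$ is controlled in some $\Hbf^{\gamma_0}$, split the interval at $(s+t)/2$, apply Lemma~\ref{lem:PathJacobian} on the first half, and restart the mild formulation on $[(s+t)/2, t]$ with initial data now in $\Hbf^{\gamma_0}$ rather than $\Wbf$; finitely many such iterations cover the full range.

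For $\tilde x_t$ and $\tilde v_t$, use the ODE inequalities from the proof of Lemma~\ref{lem:PathJacobian} together with Gr\"onwall and the Sobolev embedding $\Hbf^r \hookrightarrow W^{1,\infty}$ for $r \in (d/2+1, 3)$; this reduces matters to the finiteness of $\int_s^t \|\tilde u_\tau\|_{\Hbf^r}\, d\tau$. The condition $r < 3$ enters exactly here: the \emph{sharp} heat-type smoothing rate from $\Wbf$ to $\Hbf^r$ is $(\tau - s)^{-(r-1)/2}$ in 2D and $(\tau - s)^{-r/4}$ in 3D, both of which are integrable precisely when $r < 3$. The main technical obstacle is this sharpness issue in 2D: the rate $(t-s)^{-\gamma/2}$ stated in the Lemma would yield a non-integrable singularity when applied at $\gamma = r > 2 = 2(d-1)$, so one must use the sharper $(t-s)^{-(r-1)/2}$ rate (available because $\Wbf = \Hbf^1$ in 2D, a strict improvement over the naive $L^2$-smoothing bound) when estimating the $\tilde v_t$ ODE. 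A secondary difficulty is the bootstrap iteration for $\tilde u_t$ at high $\gamma$, which requires careful tracking of the polynomial factors in $\sup_\tau \|u_\tau\|_{\Hbf^\sigma}$ to obtain the claimed form of the RHS in terms of $\varkappa'$.
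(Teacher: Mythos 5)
Your approach matches the paper's: parabolic smoothing for $\tilde u_t$ via the Duhamel formula (which the paper omits, citing ``standard semilinear PDE methods'' as in \cite{HM11}), then Gr\"onwall for $\tilde x_t$ and $\tilde v_t$ using the improved spatial regularity to control the pointwise evaluations $\tilde u_\tau(x_\tau)$, $\grad\tilde u_\tau(x_\tau)$. Your key observation---that in 2D the integrable singularity for the $\tilde v_t$ estimate requires the genuine $\Wbf \cong \dot H^1$ smoothing rate $\tau^{-(\gamma-1)/2}$ rather than the $L^2$-based rate $\tau^{-\gamma/2}$ quoted in the lemma statement---is correct and is exactly what the paper's proof uses internally: their displayed bound is $\|\tilde u_\tau\|_{\Hbf^{2+\delta}} \lesssim \tau^{-(1+\delta)/2}\|\tilde u\|_{\Wbf}$, not $\tau^{-(2+\delta)/2}$.

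Two small imprecisions worth fixing. First, the 3D rate $(\tau-s)^{-r/4}$ is integrable for all $r<4$, not ``precisely when $r<3$''; the upper bound $r<3$ in the hypothesis is instead inherited from the interpolation Lemma~\ref{lem:SobTrick}, needed downstream when taking expectations of $\exp\left(C\int_s^t\|u_\tau\|_{\Hbf^r}\dee\tau\right)$ via Lemma~\ref{lem:TwistBd}, and only coincidentally equals the 2D integrability threshold. Second, in your bootstrap sketch, applying Lemma~\ref{lem:PathJacobian} on the first half of the split interval cannot supply the regularity gain, since that lemma gives only \emph{propagation} of $\Wbf$- and $\Hbf^\sigma$-regularity, not smoothing. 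The gain at each step comes from re-applying the mild/Duhamel argument on each subinterval, starting from the progressively smoother data produced by the previous step, with Lemma~\ref{lem:PathJacobian} used only to supply the exponential and polynomial prefactors.
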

\begin{proof}
We consider only the case $d=2$; the case $d=3$ is a straightforward variation.  
First, the desired estimate on $\tilde{u}_t$ follows from standard semilinear PDE methods (see for instance \cite{HM11}) and is omitted for the sake of brevity.

Turn next to $\tilde{x}_t$ and $\tilde{v}_t$. 
Estimating the Lagrangian process as in Lemma \ref{lem:PathJacobian} gives 
\begin{align*}
\abs{\tilde{x}_t} \lesssim \exp\left(\int_0^t \norm{\grad u_\tau}_{L^\infty} \dee\tau \right)  \abs{\tilde{x}} + \int_0^t \exp\left(\int_\tau^t  \norm{\grad u_s}_{L^\infty} \ds \right) \norm{\tilde{u}_\tau}_{L^\infty} d\tau. 
\end{align*}
By Sobolev embedding and the smoothing estimates deduced on $\tilde{u}_t$,  $\forall \delta > 0$,
\begin{align*}
\norm{\tilde{u}_\tau}_{L^\infty} \lesssim \norm{\tilde u_\tau}_{\Hbf^{1+\delta}} \lesssim \frac{1}{\tau^{\delta/2}}\exp\left(C\int_0^\tau \norm{u_s}_{\Hbf^{r}} \ds \right) \left(1 + \sup_{0 < s < \tau} \norm{u_s}_{\Hbf^\sigma}^{q'} \right) \norm{\tilde{u}}_{\Wbf},
\end{align*}
which yields the desired estimate. 

The situation for $\tilde{v}_t$ is similar but higher regularity is required. 
Observe that $\forall s,\delta > 0$, 
\begin{align}
\norm{\grad \tilde{u}_\tau}_{L^\infty} \lesssim_\delta \norm{\tilde{u}_\tau}_{\Hbf^{2+\delta}} \lesssim \tau^{-\frac{1 + \delta}{2}} \exp\left( \int_0^\tau \norm{u_s}_{\Hbf^{r}} \ds \right) \left(1 + \sup_{0 < s < \tau} \norm{u_s}_{\Hbf^\sigma}^{q'} \right) \norm{\tilde{u}}_{\Wbf}. 
\end{align}
Then, the desired estimate then follows similarly to the estimate on $|\tilde{x}_t|$.  
\end{proof}

\begin{remark}\label{rmk:high-freq-smooth}
The above smoothing estimate implies the following: For some constant $C_{\Pi} \approx \log N$ there holds the following  $\forall p$ with $p\eta < \eta^\ast$ and a $\beta$ sufficiently large,  
	\begin{align}
	\EE\|J_{0,1}(\Id-\Pi_{N})\|^p_{\Wbf \to \Hbf^{\sigma}} \leq V_{\beta ,\eta}^p (u) \exp(-p C_{\Pi}). 
	\end{align}
\end{remark}

\begin{remark} \label{rmk:JacSmthH}
Lemma \ref{lem:JacSmooth} is already non-trivial for $\gamma = 0$, that is, $J_{s,t}$ is a bounded linear operator. 
\end{remark}

The analogue of Lemma \ref{lem:PathJacobian} holds also for $K_{s,t}$. The proof is actually easier, and is omitted for brevity.
\begin{lemma} \label{lem:PathAdjoint}
$\forall \sigma > \frac{d}{2}+1$, $\forall r \in (\frac{d}{2}+1,3)$, $\exists C, q' > 0$ such that the following hold path-wise 
\begin{align}
\norm{K_{s,t}}_{\Wbf \to \Wbf} & \lesssim \exp\left(C \int_s^t \norm{u_\tau}_{\Hbf^{r}} \dee\tau \right) \\ 
\norm{K_{s,t}}_{\Hbf \to \Hbf} & \lesssim \exp\left(C \int_s^t \norm{u_\tau}_{\Hbf^{r}} \dee\tau \right)\left(1 +  \brak{t-s}^3 \sup_{s < \tau < t}\norm{u_\tau}_{\Hbf^\sigma}^{q'}\right). 
\end{align}
\end{lemma}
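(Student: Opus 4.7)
My plan is to mirror the proof of Lemma \ref{lem:PathJacobian}, this time applied to the backward evolution
\[
\partial_s K_{s,t} = -DF(\hat z_s)^\ast K_{s,t}, \qquad K_{t,t} = \Id.
\]
Writing $(f_s, \xi_s, \eta_s) = K_{s,t}(f, \xi, \eta)$ and reparameterizing via $\tau = t - s$, the finite-dimensional components $(\xi_s, \eta_s)$ decouple from the velocity component $f_s$ and satisfy a closed linear ODE driven pointwise by $Du(x_s)^\ast$ and $D^2 u(x_s)$, while $f_s$ satisfies a parabolic equation whose principal part is $\nu \Delta$ (essentially self-adjoint) coupled with the adjoint transport $-B(u, \cdot)^\ast - B(\cdot, u)^\ast$. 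Because this last pair enjoys the same commutator structure as the original transport term appearing in Lemma \ref{lem:PathJacobian}, every energy estimate used there carries over in direct analogy.

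For the $\Wbf \to \Wbf$ bound, the velocity--to--velocity block of $K_{s,t}$ is precisely the $\Wbf$-adjoint of the velocity--to--velocity block of $J_{s,t}$, so by \eqref{def:JHbd} one obtains
\[
\|K_{s,t}\|_{\Wbf \to \Wbf} \lesssim \exp\left(C \int_s^t \|u_\tau\|_{\Hbf^r} \dee\tau\right).
\]
Equivalently, one may simply run a direct $\Wbf$-energy estimate on the backward velocity equation, using the divergence-free condition on $u$ and integration by parts to obtain
\[
\tfrac{d}{d\tau}\|f_s\|_\Wbf^2 + \nu\|\nabla f_s\|_\Wbf^2 \lesssim \|u_s\|_{\Hbf^r}\|f_s\|_\Wbf^2,
\]
and closing via Gr\"onwall.

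For the $\Hbf \to \Hbf$ bound, since $\Hbf = \Hbf^\sigma$ is a Hilbert space, the identity $\|K_{s,t}\|_{\Hbf \to \Hbf} = \|J_{s,t}^\ast\|_{\Hbf \to \Hbf} = \|J_{s,t}\|_{\Hbf \to \Hbf}$ holds, and the estimate then follows immediately from \eqref{def:JHSigbd} with the specific value of $\sigma$ that defines $\Hbf$. A direct derivation is also available: the $\Hbf^\sigma$-energy estimate on $f_s$ produces commutators of exactly the form of $T1, T2$ in the proof of \eqref{def:JHSigbd}, while the finite-dimensional components $(\xi_s, \eta_s)$ are bounded pathwise via Gr\"onwall on their closed ODE, with $\|Du(x_s)\|_{L^\infty}$ and $\|D^2 u(x_s)\|_{L^\infty}$ controlled by $\|u_s\|_{\Hbf^\sigma}$ through the Sobolev embedding $\Hbf^\sigma \hookrightarrow C^3$; this produces the $(1 + \brak{t-s}^3 \sup_{s<\tau<t}\|u_\tau\|_{\Hbf^\sigma}^{q'})$ factor. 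No essentially new analytical obstacle arises; as in Lemma \ref{lem:PathJacobian}, the main bookkeeping concerns tracking the bilinear convection commutators at $\Hbf^\sigma$ level, and the fact that the distributional source terms produced by the $(\xi_s, \eta_s)$--to--$f_s$ coupling are automatically regularized by the adjoint Stokes semigroup.
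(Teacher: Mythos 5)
Your adjoint-identity shortcut for the $\Hbf\to\Hbf$ bound does not work. By definition, $K_{s,t}$ is the adjoint of $J_{s,t}$ with respect to the $\Wbf$ inner product:
\[
\brak{f, J_{s,t}\xi}_{\Wbf} = \brak{K_{s,t} f, \xi}_{\Wbf}.
\]
Writing $\Lambda = (-\Delta)^{(\sigma-1)/2}$ so that $\|u\|_{\Hbf}\approx \|\Lambda u\|_{\Wbf}$, the $\Hbf$-adjoint of $J_{s,t}$ is $\Lambda^{-2}K_{s,t}\Lambda^2$, which differs from $K_{s,t}$ whenever $J_{s,t}$ fails to commute with $\Lambda^2$ (which it does, since the nonlinearity is not a Fourier multiplier). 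Concretely, one has $\|K_{s,t}\|_{\Hbf\to\Hbf} = \|\Lambda K_{s,t}\Lambda^{-1}\|_{\Wbf\to\Wbf} = \|\Lambda^{-1} J_{s,t}\Lambda\|_{\Wbf\to\Wbf}$, which is the operator norm of $J_{s,t}$ on a \emph{negative}-index Sobolev scale, not on $\Hbf^{\sigma}$. So the chain of equalities $\|K_{s,t}\|_{\Hbf\to\Hbf} = \|J_{s,t}^\ast\|_{\Hbf\to\Hbf} = \|J_{s,t}\|_{\Hbf\to\Hbf}$ breaks at the second equality, and the $\Hbf\to\Hbf$ estimate does not follow from \eqref{def:JHSigbd} by this route.

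There is a similar (if less acute) gap in the $\Wbf\to\Wbf$ claim. The identity $\|K_{s,t}\|_{\Wbf\to\Wbf}=\|J_{s,t}\|_{\Wbf\to\Wbf}$ is correct (same inner product), but \eqref{def:JHbd} only bounds the velocity-to-velocity block of $J_{s,t}$, not the full operator norm on $\Wbf\times T_vP\T^d$; that full norm is what Lemma \ref{lem:JacSmooth} with $\gamma=0$ controls, and it comes with an extra polynomial factor in $\sup\|u_\tau\|_{\Hbf^\sigma}$. So the abstract-adjoint reduction does not, on its own, deliver the stated cleaner bound.

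What \emph{does} save your proposal is the direct argument you sketch in the second half: running the backward energy estimate on $\partial_s K_{s,t} = -DF(\hat z_s)^\ast K_{s,t}$. You correctly observe that $DF^\ast$ is block upper-triangular, so the finite-dimensional components $(\xi_s, \eta_s)$ satisfy a closed ODE system and are controlled by Gr\"onwall via $\|Du(x_s)\|_{L^\infty}$, $\|D^2u(x_s)\|_{L^\infty}\lesssim\|u_s\|_{\Hbf^\sigma}$. The remaining step — bounding the velocity component in $\Wbf$ and $\Hbf$ in the presence of the source terms $L_{xu}^\ast\xi_s$ and $L_{vu}^\ast\eta_s$ — is where the actual work lies. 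These sources are $\delta$-distribution–type objects (the $\Wbf$-duals of pointwise evaluations of $\tilde u$ and $D\tilde u$), and the assertion that they are ``automatically regularized by the adjoint Stokes semigroup'' needs to be justified via Duhamel exactly as the corresponding smoothing issue in Lemma \ref{lem:JacSmooth}. This is where the claim that ``no essentially new analytical obstacle arises'' earns its keep, and it is precisely the step you should spell out rather than appeal to the $\Hbf$-adjoint identity, which is unavailable.
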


\section{Lasota-Yorke bound and spectral gap for the projective process} \label{sec:C1VSpecProj}

\subsection{Geometric ergodicity in $C_V$} \label{sec:DefUniBd}
In this section, we apply Theorem \ref{thm:GM} to the projective process $(u_t,x_t,v_t)$.
In \cite{BBPS18} we already showed that there exists a unique stationary measure $\nu$ on $\Hbf \times P \T^d$.  
Strong Feller in a scale of Sobolev spaces is proved in \cite{BBPS18}. 
\begin{proposition}[From \cite{BBPS18}]\label{prop:SFscaleProj}
For any $\sigma' \in (\alpha - 2(d-1),\alpha - \frac{d}{2})$ the projective Markov process $(u_t, x_t, v_t)$ on $\Hbf^{\sigma'} \times P \mathbb T^d$ is strong Feller.
\end{proposition}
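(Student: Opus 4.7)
The plan is to adapt the Malliavin calculus framework of Hairer--Mattingly \cite{HM06, HM11} to the augmented projective process, following the strategy of our earlier work \cite{BBPS18}. The main task is to establish an integration-by-parts (Bismut--Elworthy--Li type) formula that delivers a gradient bound of the form
\[
| \nabla P_t \phi (u,x,v) | \lesssim C(u,x,v) \| \phi \|_\infty
\]
for all bounded measurable $\phi$ and each $t > 0$, which classically implies the strong Feller property on $\Hbf^{\sigma'} \times P\T^d$.

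First, I would form the Malliavin covariance matrix of the projective process $\hat z_t = (u_t, x_t, v_t)$. Using the Jacobian $J_{s,t}$ and its adjoint $K_{s,t}$ already analyzed in Section \ref{sec:Jacobian}, it is given by
\[
\mathcal{M}_t = \int_0^t J_{s,t}\, (QQ^*)\, J_{s,t}^*\, ds,
\]
viewed as an operator on $\Hbf^{\sigma'} \oplus T_{x_t}\T^d \oplus T_{v_t} P^{d-1}$, with $QQ^*$ acting only on the $u$-slot since the Wiener noise does not directly drive $(x,v)$. Inversion of $\mathcal{M}_t$ in the $(x,v)$ directions requires hypoellipticity.

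Second, the crucial step is a uniform H\"ormander-type condition at each base point $(x,v) \in P\T^d$. Although the noise directly perturbs only $u$, commuting the constant-coefficient noise directions $q_m e_m \partial_u$ with the drift
\[
F(u,x,v) = \bigl( -B(u,u) - Au,\; u(x),\; (I - v \otimes v) Du(x) v \bigr)
\]
generates first-order brackets whose $(x,v)$-components read $(e_m(x),\, (I - v\otimes v) De_m(x) v)$. As $m$ ranges over $\mathbb{K}$ and all $q_m \neq 0$ by Assumption \ref{a:Highs}, these brackets span $T_x \T^d \oplus T_v P^{d-1}$ uniformly in $(x,v)$. This is the bundle-level H\"ormander condition verified in \cite{BBPS18} and reused here.

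Third, the Hörmander condition is combined with the quantitative Jacobian bounds of Lemmas \ref{lem:JacExps}, \ref{lem:JacSmooth} and the adjoint bounds of Lemma \ref{lem:PathAdjoint} via the small-ball probability argument of \cite{HM11} to yield inverse moment bounds
\[
\E_{(u,x,v)} \| \mathcal{M}_t^{-1} \|^p < \infty
\]
for all $p \geq 1$ and $t > 0$, locally uniformly in $(u,x,v) \in \Hbf^{\sigma'} \times P \T^d$. The parabolic smoothing from $\Wbf$ to $\Hbf^{\sigma'}$ afforded by Lemma \ref{lem:JacSmooth} is precisely what permits strong Feller on the finer Sobolev scale $\Hbf^{\sigma'}$ rather than only $\Hbf$. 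With $\mathcal{M}_t^{-1}$ under control, the standard Malliavin integration-by-parts identity yields the gradient estimate above. The main obstacle is the inverse moment bound for $\mathcal{M}_t$: it must propagate the qualitative H\"ormander condition through a genuinely quantitative non-degeneracy estimate that survives the infinite-dimensional $u$-component and then couples correctly to the finite-dimensional bundle component $(x,v)$; this is the technical heart that was worked out in \cite{BBPS18}.
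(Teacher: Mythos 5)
Your proposal correctly identifies the three ingredients in play (a Malliavin integration-by-parts identity, a bundle-level H\"ormander bracket condition at each $(x,v)$, and the parabolic smoothing $\Wbf \to \Hbf^{\sigma'}$ from Lemma \ref{lem:JacSmooth} that lets the gradient bound live on the finer Sobolev scale), and correctly defers the quantitative non-degeneracy to \cite{BBPS18} since the proposition is imported. However, there is a genuine gap in the way you set up the integration by parts: you propose to invert the full Malliavin covariance $\mathcal{M}_t$ of the un-truncated projective process and to assert that $\E\|\mathcal{M}_t^{-1}\|^p$ is finite locally uniformly. This step does not close as stated, for two related reasons. First, in infinite dimensions $\mathcal{M}_t$ has no bounded inverse on $\Hbf^{\sigma'}$; what one can hope to invert is a truncated or regularized version. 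Second, and more fundamentally, all of the quantitative bounds available for this process---Lemma \ref{lem:lowerbound}'s lower bound $\geqc \|\xi\|^2_{\Hbf^{-s}}/(1+\|u\|_{\Hbf})^2$, and the Jacobian/adjoint moment bounds in Lemmas \ref{lem:JacExps}, \ref{lem:JacSmooth}, \ref{lem:PathAdjoint}---degenerate with $\|u\|_{\Hbf}$, and the expectation $\E_{(u,x,v)}$ sees the whole (unbounded) trajectory $(u_t)$ rather than a bounded neighborhood of the initial datum. Merely fixing $u$ in a bounded set does not tame these estimates.

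The device that makes the argument work in \cite{BBPS18} (following \cite{EH01}, and sketched in Section \ref{sec:SF} of the present paper for the two-point motion) is a \emph{cut-off of the dynamics together with an auxiliary elliptic noise}: one replaces the drift $F$ by a truncation $F^\rho$ that kills the nonlinearity whenever $\|u\|_{\Hbf} > 2\rho$, augments the state by a finite-dimensional Ornstein--Uhlenbeck $Z_t$, and couples in a bounded vector field $L(Z)$ that keeps the cut-off process fully elliptic on the $(x,v)$ directions even when the cut-off is active. For the truncated process one then carries out exactly the Malliavin program you describe and obtains a gradient bound with a singular weight $t^{-a}d(x,y)^{-b}(1+\|u\|_{\Hbf}+|Z|)^b$ (Proposition \ref{prop:cut-off-SF}); the strong Feller property for the original process is then recovered by a metric argument, integrating the gradient bound along curves in $\Hbf^{\sigma'}\times P\T^d$ that stay in the good region, and then letting $\rho \to \infty$ using the a priori moment bounds of Proposition \ref{prop:WPapp}. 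Without this cut-off/auxiliary-noise step, the inverse moment bound in your third step is not available, and the argument does not close; the cut-off is not an implementation detail but the mechanism that removes the $\|u\|_{\Hbf}$-degeneracy from the quantitative H\"ormander estimate.
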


By an easy variation of the irreducibility arguments in Section \ref{sec:Irr2pt} together with those in \cite{BBPS18}, one deduces the following irreducibility property as well.
The straightforward details are omitted for brevity. 

\begin{proposition}[Essentially from \cite{BBPS18}]\label{prop:topIrredProj} 
For any $\sigma' \in (\frac{d}{2} + 2, \alpha - \frac{d}{2})$,   the projective process $(u_t, x_t, y_t)$, regarded as a process on $\Hbf^{\sigma'} \times P \mathbb T^d$, is topologically irreducible.
\end{proposition}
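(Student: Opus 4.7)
The plan is to reduce topological irreducibility to an approximate controllability statement for the associated deterministic control problem, and then to invoke the Stroock-Varadhan support theorem. Because Assumption \ref{a:Highs} guarantees that $Q$ is injective with dense range on $\Hbf^{\sigma'}$, the standard support characterization applies in this regularity: it suffices to show that for each $(u,x,v),(u^*,x^*,v^*) \in \Hbf^{\sigma'}\times P\T^d$, each $t>0$, and each $\varepsilon>0$, there exists a smooth control path $\tilde W \in C^\infty([0,t];\Hbf^{\sigma'})$ such that the solution of
\begin{equation*}
\partial_s \tilde u + B(\tilde u,\tilde u) + A\tilde u = Q\dot{\tilde W}, \qquad \tfrac{d}{ds}\tilde x_s = \tilde u_s(\tilde x_s), \qquad \tfrac{d}{ds}\tilde v_s = (I-\tilde v_s\otimes \tilde v_s)D\tilde u_s(\tilde x_s)\tilde v_s,
\end{equation*}
starting at $(u,x,v)$ lies within $\varepsilon$ of $(u^*,x^*,v^*)$ at time $t$.

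I would build the control in two stages. On $[0,t/2]$, prescribe any sufficiently smooth velocity path $\tilde u \in C^\infty([0,t/2];\Hbf^{\sigma'+1})$ with $\tilde u_0 = u$ and solve algebraically for $\dot{\tilde W} = Q^{-1}(\partial_s \tilde u + B(\tilde u,\tilde u) + A\tilde u)$; the high-frequency decay built into $Q$ and the regularity of $\tilde u$ make this well-defined. This reduces the problem on $[0,t/2]$ to a purely geometric question: realize an isotopy on $\T^d\times P\T^d$ carrying $(x,v)$ near a chosen intermediate target $(\hat x,\hat v)$ by a divergence-free velocity field. Transitivity of the corresponding control system on $P\T^d$, as used in \cite{BBPS18}, together with the two-point irreducibility arguments of Section \ref{sec:Irr2pt} (which handle transitivity on $\T^d\times \T^d \setminus \Dc$ and are stronger than needed here since we only require $P\T^d$), supply this step. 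On $[t/2,t]$, steer $\tilde u$ from its current value to within $\varepsilon/2$ of $u^*$ via a smooth interpolating path supported on a sub-interval of length $\delta \ll \varepsilon/(1+\|u^*\|_{C^1})$ and with uniformly bounded $C^1_x$-norm; the Lagrangian-projective displacement during this short window is $O(\delta)$ and is absorbed into the error budget.

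The main technical point, and the reason two stages are used, is coordinating the intermediate target $(\hat x,\hat v)$ with the small residual displacement generated by the second stage. Because the Lagrangian-projective map is continuous in the controlling velocity and because the first-stage reachable set is open (again by the transitivity used above), a continuity/open-mapping argument chooses $(\hat x,\hat v)$ so that the full composition lands within $\varepsilon$ of $(x^*,v^*)$. The remaining verifications—smoothness of the control, regularity of the solution in $\Hbf^{\sigma'}$, and avoidance of the fixed point $v=0$ on the sphere (here trivial, since the projective process is defined on $P\T^d$)—are direct adaptations of the arguments in Section \ref{sec:Irr2pt} and \cite{BBPS18}, which is precisely the sense in which the excerpt calls this an ``easy variation.'' Once approximate controllability is established, Stroock-Varadhan yields $P_t((u,x,v),U)>0$ for every open $U\subset \Hbf^{\sigma'}\times P\T^d$ and every $t>0$, as required.
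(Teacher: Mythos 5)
The overall two-stage controllability/support-theorem strategy is a reasonable route, and is close in spirit to what the paper sketches. However, there is a genuine error at the core of your first stage, and the reason you give to justify it is backwards.

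You propose to prescribe a velocity path $\tilde u \in C^\infty([0,t/2];\Hbf^{\sigma'+1})$ and then solve algebraically for
\[
\dot{\tilde W} = Q^{-1}\bigl(\partial_s \tilde u + B(\tilde u,\tilde u) + A\tilde u\bigr),
\]
justifying well-definedness by appealing to ``the high-frequency decay built into $Q$.'' But the decay of the eigenvalues $q_m \approx |k|^{-\alpha}$ is precisely what makes $Q^{-1} \approx (-\Delta)^{\alpha/2}$ \emph{unbounded}; the decay is the obstacle, not the remedy. Concretely, for $\tilde u \in \Hbf^{\sigma'+1}$ one only has $A\tilde u \in \Hbf^{\sigma'+1-2(d-1)}$, and hence $Q^{-1}(A\tilde u) \in \Hbf^{\sigma'+1-2(d-1)-\alpha}$. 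For the control to lie in the Cameron--Martin space $L^2([0,t];\Wbf)$ one needs $\dot{\tilde W} \in \Wbf$, which would require $\tilde u \in \Hbf^{\alpha + 2(d-1)}$; but since $\sigma' < \alpha - d/2$, the regularity you allow is nowhere near sufficient. In particular, even a generic smooth-in-time path starting from the given $u \in \Hbf^{\sigma'}$ does not yield an admissible control by this recipe.

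The paper avoids this issue by never inverting $Q$ on infinite-dimensional targets: the initial and final stages use $g = \mp\delta^{-1} Q^{-1}\Pi_N u$ (so that $Q^{-1}$ acts only on a fixed finite-dimensional projection $\Pi_N u$ and is trivially bounded), and the middle stage realizes the $\T^d\times P\T^d$ isotopy entirely through explicit finitely-many-mode shear flows, which are stationary Euler solutions (so $B$ vanishes there) and on which $Q^{-1}$ is again a finite-dimensional operation. Your argument can be repaired along these lines by replacing ``prescribe any $\tilde u\in C^\infty_t\Hbf^{\sigma'+1}$'' with ``prescribe a $\Pi_N$-valued path'' in the controllability stage, after first damping the genuinely infinite-dimensional initial $u$ to nearly zero via the scaling device. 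As written, though, the first stage of your construction is not well-posed.

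One smaller remark: the paper's middle stage controls the particle coordinates \emph{exactly} to the target, which eliminates the need for the continuity/open-mapping argument you use to coordinate the intermediate target $(\hat x,\hat v)$ with the final-stage drift. Your variant is workable once the regularity issue is fixed, but the exact-control version is cleaner and is what the referenced Section~\ref{sec:Irr2pt} does.
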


The previous two properties imply equivalence of transition kernels by standard arguments (Lemma 3.2 in \cite{goldys2005exponential}; see also Theorem 4.1 in \cite{FM95} for a special case). 
\begin{lemma}\label{lem:equivFamilyIntro}
 Propositions \ref{prop:SFscaleProj} and \ref{prop:topIrredProj} imply that the family of transition kernels $\{ \hat{P}_t((u, x, y), \cdot) : t > 0, (u, x, y) \in \Hbf \times \mathbb P \T^d \}$ are equivalent.
\end{lemma}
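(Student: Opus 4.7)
The plan is to apply the classical Doob-Khas'minskii theorem for Markov semigroups, in the form of Lemma 3.2 of \cite{goldys2005exponential} (or Theorem 4.1 of \cite{FM95}). This theorem asserts that a Markov semigroup which is strong Feller, topologically irreducible, and admits a stationary probability measure $\mu$ must satisfy $P_t(z, \cdot) \sim \mu$ for all positive $t$ and all $z$ in the state space. The three ingredients needed here are (i) strong Feller of $(\hat P_t)$, supplied by Proposition \ref{prop:SFscaleProj}; (ii) topological irreducibility of $(\hat P_t)$, supplied by Proposition \ref{prop:topIrredProj}; and (iii) existence of the stationary measure $\nu$, established in \cite{BBPS18}. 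Since equivalence of measures is transitive, once $\hat P_t(\hat z, \cdot) \sim \nu$ for every $t > 0$ and every $\hat z \in \Hbf \times P\T^d$, the entire family of transition kernels is automatically mutually equivalent.

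First I would record that $\nu$ has full topological support on $\Hbf^{\sigma'} \times P\T^d$: for any non-empty open $U$, stationarity combined with Proposition \ref{prop:topIrredProj} gives
\[
\nu(U) = \int \hat P_t(\hat z', U)\,\nu(d\hat z') > 0,
\]
since the integrand is pointwise strictly positive. With this in hand, the easier direction $\hat P_t(\hat z, \cdot) \ll \nu$ follows by the standard continuity argument: if $A$ is a Borel set with $\nu(A) = 0$, then by stationarity $\int \hat P_t(\hat z', A)\,\nu(d\hat z') = 0$, so $\hat P_t(\cdot, A)$ vanishes on a $\nu$-full-measure subset, which is dense by the full-support property just established. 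Proposition \ref{prop:SFscaleProj} provides that $\hat z' \mapsto \hat P_t(\hat z', A)$ is continuous, and a continuous function vanishing on a dense set must vanish identically; hence $\hat P_t(\hat z, A) = 0$ for every $\hat z$.

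The main obstacle is the reverse direction $\nu \ll \hat P_t(\hat z, \cdot)$. The plan is to exploit the semigroup identity $\hat P_t = \hat P_{t/2}\,\hat P_{t/2}$ together with the Radon-Nikodym density $k_{t/2}(\hat z, \cdot) := d\hat P_{t/2}(\hat z, \cdot)/d\nu$ obtained in the previous step. This presents the density of $\hat P_t(\hat z, \cdot)$ with respect to $\nu$ as
\[
\rho_t(\hat z, \hat y) = \int k_{t/2}(\hat z, \hat w)\, k_{t/2}(\hat w, \hat y)\,\nu(d\hat w).
\]
One then argues that $\rho_t(\hat z, \hat y) > 0$ for $\nu$-a.e. $\hat y$ using a Fubini argument: the set $\{\hat w : k_{t/2}(\hat z, \hat w) > 0\}$ has positive $\nu$-measure (as $k_{t/2}(\hat z, \cdot)$ integrates to one), and strong Feller together with topological irreducibility forces the set $\{\hat y : k_{t/2}(\hat w, \hat y) > 0\}$ to be sufficiently large on $\nu$-typical $\hat w$. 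Combining both absolute continuity statements yields $\hat P_t(\hat z, \cdot) \sim \nu$ for every $t > 0$ and every $\hat z$, which by transitivity produces the desired mutual equivalence of the full family of transition kernels.
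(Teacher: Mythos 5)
Your top-level plan---invoke the Doob--Khas'minskii result in the form of Lemma 3.2 of Goldys--Maslowski (or Theorem 4.1 of Flandoli--Maslowski)---is precisely what the paper does: the paper gives no proof of this lemma beyond that citation, so on the level of the citation your approach matches exactly. Your elaboration of the forward direction $\hat P_t(\hat z, \cdot) \ll \nu$ (full support of $\nu$, then vanishing of the continuous map $\hat z' \mapsto \hat P_t(\hat z', A)$ on a dense set) is also correct.

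However, the reverse direction as you sketch it is gappy. The sentence ``strong Feller together with topological irreducibility forces the set $\{\hat y : k_{t/2}(\hat w, \hat y) > 0\}$ to be sufficiently large on $\nu$-typical $\hat w$'' does not follow from the stated facts: $k_{t/2}(\hat z, \cdot)$ integrating to one only tells you the set $B = \{\hat w : k_{t/2}(\hat z, \hat w) > 0\}$ has positive $\nu$-measure, and continuity of $\hat P_{t/2}(\cdot, A)$ (strong Feller) does not allow you to upgrade vanishing on the positive-$\nu$-measure complement of $B$ to vanishing everywhere, since $B$ is not open. In effect, ``sufficiently large'' is exactly the positivity you are trying to prove, so the step is circular as written. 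The standard resolution is to first prove mutual equivalence of kernels at a fixed time: suppose $\hat P_{2s}(\hat z_1, A) = 0$; Chapman--Kolmogorov gives $\int \hat P_s(w,A)\,\hat P_s(\hat z_1, dw) = 0$, so the continuous function $w \mapsto \hat P_s(w, A)$ vanishes $\hat P_s(\hat z_1, \cdot)$-a.e., hence on a \emph{dense} set since irreducibility gives $\hat P_s(\hat z_1, \cdot)$ full topological support, hence identically; therefore $\hat P_{2s}(\hat z_2, A) = 0$ for all $\hat z_2$. With $\hat P_{2s}(\hat z_1,\cdot) \sim \hat P_{2s}(\hat z_2,\cdot)$ for all pairs, stationarity $\nu = \int \hat P_{2s}(w,\cdot)\,\nu(dw)$ immediately yields $\nu \sim \hat P_{2s}(\hat z, \cdot)$, and since $s>0$ was arbitrary this covers the full family. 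That is the missing ingredient in your Fubini step.
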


Equivalence of transition measures provides the main tool for proving Condition \ref{def:UnifBd}.

\begin{lemma}\label{lem:hypothA3}
 Let $V$ be any of the Lyapunov functions for the projective process defined in Lemma \ref{lem:Lyapu}.  
	Let $K \subset \Hbf \times P \mathbb T^d$ be any compact set with $\nu(K) > 0$. Fix arbitrary
	$t > 0$	and $r > 0$. Then, we have
	\begin{align}\label{eq:condA3}
	\inf_{(u, x, v) : V(u) \leq r} \hat{P}_t((u, x, v) , K) > 0 \, .
\end{align}
\end{lemma}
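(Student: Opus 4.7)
The plan is to combine three ingredients: (i) equivalence of transition kernels (Lemma \ref{lem:equivFamilyIntro}) to obtain pointwise positivity of $\hat{P}_{t/2}(\cdot, K)$; (ii) the strong Feller property (Proposition \ref{prop:SFscaleProj}) to upgrade this to continuity on a Sobolev scale; and (iii) the parabolic smoothing estimate \eqref{ineq:locRegu} to show that, starting from any $(u,x,v)$ with $V(u) \leq r$, the process enters a compact subset of a higher-regularity Sobolev space at time $t/2$ with uniformly positive probability. This follows a strategy standard for geometric ergodicity of SPDE but must be run on a scale of Sobolev spaces to circumvent the non-compactness of sublevel sets of $V$ in infinite dimensions.

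In detail, since $\nu$ is stationary and $\nu(K) > 0$, the identity $\int \hat{P}_{t/2}(\cdot, K)\, d\nu = \nu(K) > 0$ forces $\hat{P}_{t/2}((u_0,x_0,v_0), K) > 0$ for at least one $(u_0,x_0,v_0)$. Equivalence of transition kernels then yields
\[
g(u,x,v) := \hat{P}_{t/2}((u,x,v), K) > 0 \quad \text{for every } (u,x,v) \in \Hbf \times P \T^d.
\]
Next, fix $\sigma' \in (\sigma, \alpha - \tfrac{d}{2})$, which lies in the strong Feller window. Applying Proposition \ref{prop:SFscaleProj} to the bounded measurable function $1_K$ shows that $g$ is continuous on $\Hbf^{\sigma'} \times P \T^d$. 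Now pick $\sigma'' \in (\sigma', \alpha - \tfrac{d}{2})$ and let $\mathcal{C}_M := \{ w \in \Hbf : \|w\|_{\Hbf^{\sigma''}} \leq M \}$. By Rellich's theorem, $\mathcal{C}_M \times P\T^d$ is compact in $\Hbf^{\sigma'} \times P \T^d$, so $g$ attains a strictly positive minimum $\delta = \delta(M) > 0$ there. By the smoothing bound \eqref{ineq:locRegu} in Proposition \ref{prop:WPapp}, applied on $[0,t/2]$ with the initial-regularity exponent $\sigma$ and target exponent $\sigma''$,
\[
\E_{(u,x,v)} \|u_{t/2}\|_{\Hbf^{\sigma''}} \leq C(t, \sigma, \sigma'')\bigl(1 + \|u\|_{\Hbf}\bigr),
\]
so the right-hand side is uniformly bounded by some $C_r$ over $\{V(u) \leq r\}$ (since $V \leq r$ forces $\|u\|_\Hbf \lesssim_r 1$). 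Choosing $M = 2 C_r$ and applying Markov's inequality gives $\P_{(u,x,v)}(u_{t/2} \in \mathcal{C}_M) \geq 1/2$ uniformly on $\{V \leq r\}$. Combining via the semigroup property,
\[
\hat{P}_t((u,x,v), K) = \E_{(u,x,v)}\, g(u_{t/2}, x_{t/2}, v_{t/2}) \geq \delta \cdot \P_{(u,x,v)}(u_{t/2} \in \mathcal{C}_M) \geq \delta/2,
\]
uniformly over $\{V \leq r\}$, which is \eqref{eq:condA3}.

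The main obstacle, and the reason a direct application of strong Feller and pointwise positivity does not suffice, is that sublevel sets $\{V \leq r\}$ are bounded but not compact in $\Hbf$ -- a purely infinite-dimensional difficulty. Parabolic smoothing resolves this by pushing the process into a higher-regularity Sobolev ball $\mathcal{C}_M$ that is compact in $\Hbf^{\sigma'}$ with probability bounded below uniformly on $\{V \leq r\}$. One must also verify that the smoothing estimate and strong Feller result are available in compatible Sobolev regularities, which is ensured by the margin $\sigma < \sigma' < \sigma'' < \alpha - \tfrac{d}{2}$ built into Assumption \ref{a:Highs}.
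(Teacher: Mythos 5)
Your proof is correct, but it takes a genuinely different route from the paper's. The paper also exploits the Sobolev scale in Proposition \ref{prop:SFscaleProj}, but in the \emph{opposite} direction: it fixes $\sigma' \in (\alpha - 2(d-1), \sigma)$, so that the $V$-sublevel set (bounded in $\Hbf = \Hbf^\sigma$) is \emph{precompact} in $\Hbf^{\sigma'}$ by Rellich, and then runs a compactness/contradiction argument directly at time $t$ with no time-splitting: if the infimum were zero, any minimizing sequence $z^n$ with $V(u^n)\le r$ has a subsequence converging in $\Hbf^{\sigma'}\times P\T^d$, and strong Feller on $\Hbf^{\sigma'}$ together with the equivalence of transition kernels at scale $\sigma'$ yields a contradiction. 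You instead go \emph{up} in regularity: you split at $t/2$, apply the parabolic smoothing bound \eqref{ineq:locRegu} to push $u_{t/2}$ into an $\Hbf^{\sigma''}$-ball (compact in $\Hbf^{\sigma'}$ for $\sigma'' > \sigma' > \sigma$) with probability at least $1/2$ uniformly on the sublevel set, and combine this with continuity and strict positivity of $g = \hat P_{t/2}(\cdot, K)$ on that compact. Your argument is more constructive and yields an explicit lower bound $\delta/2$; the paper's is shorter, needing neither the $t/2$ split nor the smoothing estimate. Two small caveats common to or specific to your write-up: (i) the inference $V(u)\le r\Rightarrow \|u\|_\Hbf\lesssim_r 1$ uses $\beta>0$ in $V_{\beta,\eta}$ (the paper absorbs this into ``coercivity''; in practice the Lyapunov functions used for the projective process have large $\beta$, so this is not an issue), and (ii) one should note that $\mathbf 1_K$ is Borel on $\Hbf^{\sigma'}\times P\T^d$ for $\sigma' > \sigma$ because $K$ is closed in $\Hbf^\sigma \times P\T^d$ and the embedding $\Hbf^{\sigma'} \hookrightarrow \Hbf^\sigma$ is continuous; the paper sidesteps this entirely by working with $\sigma' < \sigma$.
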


\begin{proof}
  First, all transition kernels $\hat{P}_t((u, x, v), \cdot)$ are equivalent, and so we conclude that they are all equivalent with the unique stationary measure $\nu$, which we regard as a measure on each $\Hbf^{\sigma'} \times P \mathbb T^d$.
In particular, with $K$ as in the hypothesis of Lemma \ref{lem:hypothA3}, we have 
$\hat{P}_t((u, x, v), K) > 0$ for all $(u, x, v) \in \Hbf^{\sigma'} \times P\T^d$, $\sigma' \in (\alpha - 2(d-1),\alpha - \frac{d}{2})$.

For the sake of contradiction, assume Condition \ref{def:UnifBd} fails. Then, there is a sequence $\{z^n = (u^n,x^n,v^n) : V(u^n) \leq r\}$
for which $\hat{P}_t(z^n, K) \to 0$ as $n \to \infty$. Fix $\sigma' \in (\alpha - 2(d-1), \sigma)$. It follows from compact embedding and coercivity that $\{ z^n\}$ admits a subsequence $\{ z^{n'}\}$ converging
in $\Hbf^{\sigma'} \times P \mathbb T^d$. For this sequence, we have
\[
\lim_{n' \to \infty} \hat{P}_t(z^{n'}, K) = 0
\]
by the strong Feller property on $\Hbf^{\sigma'} \times P \mathbb T^d$ (Proposition \ref{prop:SFscaleProj}). 
This contradicts Lemma \ref{lem:equivFamilyIntro} at $\sigma'$.
\end{proof}

Putting everything together with Theorem \ref{thm:GM} implies the geometric ergodicity of the projective process.
Boundedness in $C_V$ follows from Lemma \ref{lem:TwistBd}. 
Recall $\nu$ is the stationary measure for the $(u_t,x_t,v_t)$ process.  
\begin{proposition} \label{prop:CVspecGapProj}
For any $V$ satisfying the conditions of Lemma \ref{lem:Lyapu}, there exists a $\gamma > 0$ (depending on $V$) such that for any $\psi$ bounded measurable on $\Hbf \times P \mathbb T^d $, there holds, 
\begin{align}
\abs{\hat{P}_t \psi(z) - \int_{\Hbf \times P \mathbb T^d}  \psi \,\dee\nu} \lesssim V(u) e^{-\gamma t}.
\end{align}
\end{proposition}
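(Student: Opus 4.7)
The plan is to apply Theorem \ref{thm:GM} directly to the projective process $(u_t,x_t,v_t)$ on $\Hbf \times P\T^d$ with the Lyapunov function $V = V_{\beta,\eta}$ supplied by Lemma \ref{lem:Lyapu}, then extend from $\psi \in C_V$ to bounded measurable $\psi$ using the strong Feller property. Since all four hypotheses of Theorem \ref{thm:GM} have essentially been established in earlier ingredients, the proof is primarily a matter of assembly.

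First I would verify each of Conditions \ref{defn:SF}--\ref{defn:drift}. Condition \ref{defn:SF} (strong Feller on $\Hbf \times P\T^d$) follows from Proposition \ref{prop:SFscaleProj}: the strong Feller property for the process on $\Hbf^{\sigma'} \times P\T^d$ (with $\sigma' < \sigma$) automatically gives the strong Feller property in the finer $\Hbf$-topology. Condition \ref{defn:TopIrr} follows in the same manner from Proposition \ref{prop:topIrredProj}, combined with the parabolic smoothing estimate \eqref{ineq:locRegu} which ensures that, after any positive time, the process lies in a much smoother space, so reaching $\Hbf^{\sigma'}$-open sets is equivalent to reaching $\Hbf$-open sets of the same form. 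Condition \ref{def:UnifBd} is precisely the content of Lemma \ref{lem:hypothA3}: the sublevel set $\{V \leq r\}$ is not compact in $\Hbf \times P\T^d$, but the compact embedding $\Hbf \hookrightarrow \Hbf^{\sigma'}$ together with equivalence of transition kernels (Lemma \ref{lem:equivFamilyIntro}) and strong Feller on $\Hbf^{\sigma'}$ provides the uniform lower bound on transition probabilities from sublevel sets into an appropriate compact $K$ with $\nu(K) > 0$.

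The drift condition (Condition \ref{defn:drift}) follows from Lemma \ref{lem:TwistBd} with $\kappa = 0$, which gives the super-Lyapunov bound $\EE_u V^{e^{\gamma t}}(u_t) \leq C V(u)$ for $t \in [0,1]$, as noted in Remark \ref{rmk:SuperL}. Taking $t=1$ and applying Jensen's inequality yields $P_1 V \leq (e^{C_L} V)^{e^{-\gamma}}$, and since the function $x \mapsto x^{e^{-\gamma}}$ grows sublinearly at infinity, for any $\delta > 0$ there exists $C_\delta$ with $P_1 V \leq \delta V + C_\delta$. Iterating (as in \eqref{ineq:SuperLyapIter}) then produces the required exponential form $P_t V \leq k e^{-\varkappa t} V + c$. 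Extending $(u_t)$-only drift to the projective process is immediate since $V$ depends only on the $u$-coordinate and the remaining coordinates $(x_t, v_t)$ live on the compact space $P\T^d$.

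With all four conditions verified, Theorem \ref{thm:GM} gives the existence of constants $C, \gamma > 0$ such that for every $\varphi \in C_V$,
\begin{equation}
\bigl| \hat P_t \varphi(z) - \textstyle\int \varphi\,\dee\nu \bigr| \leq C V(u) e^{-\gamma t} \|\varphi\|_{C_V}.
\end{equation}
To pass from continuous $\varphi$ to a merely bounded measurable $\psi$, I would use the strong Feller property and a semigroup splitting: for any $t \geq 1$, write $\hat P_t \psi = \hat P_{t-1}(\hat P_1 \psi)$. By Proposition \ref{prop:SFscaleProj}, $\hat P_1 \psi$ is continuous, and moreover $\|\hat P_1 \psi\|_{C_V} \leq \|\psi\|_{L^\infty}$ since $V \geq 1$. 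Applying the $C_V$ bound to $\hat P_1 \psi$ and using stationarity of $\nu$ (so that $\int \hat P_1 \psi\,\dee\nu = \int \psi\,\dee\nu$) yields the desired estimate, after adjusting $\gamma$ if necessary and absorbing the short-time contribution $t \in [0,1]$ into the implicit constant. No step here should be a serious obstacle: the main work, which is the construction of the compactifying ingredients (strong Feller in Sobolev scale, equivalence of kernels, the super-Lyapunov bound), has already been carried out in the preceding sections.
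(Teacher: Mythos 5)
Your proposal is correct and follows the same route as the paper: apply Theorem \ref{thm:GM} to the projective process with drift from Lemma \ref{lem:TwistBd}, strong Feller from Proposition \ref{prop:SFscaleProj}, irreducibility from Proposition \ref{prop:topIrredProj}, and uniform lower bounds from Lemma \ref{lem:hypothA3}. The only addition is your explicit semigroup-splitting argument passing from continuous $\psi\in C_V$ to merely bounded measurable $\psi$; the paper leaves this implicit, but since $V\geq 1$ makes the $C_V$-norm of a bounded function controlled by $\|\psi\|_{L^\infty}$ and $\hat P_1 \psi$ is continuous by strong Feller, your extension step is exactly the right way to close that small gap (and is in any case covered by the $V$-variation formulation in the cited Goldys-Maslowski result). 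One minor simplification you could make: since $\sigma$ itself lies in the interval $(\alpha - 2(d-1), \alpha - d/2)$ admitted by Propositions \ref{prop:SFscaleProj} and \ref{prop:topIrredProj}, you can apply them directly with $\sigma'=\sigma$ rather than invoking the topology-comparison and parabolic-smoothing detours.
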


\subsection{Spectral gap in $C^1_V$ from a Lasota-Yorke estimate}
Recall in the outline that our approach to obtaining a spectral gap on $\hat{P}_t$ in $\mathring{C}^1_V$ is the following Lasota-Yorke gradient estimate. The version stated below was introduced by Hairer and Mattingly in \cite{HM06} as a sufficient condition for the asymptotic strong Feller property as well as in \cite{HM08} to prove spectral gaps in $C^1_V$.
\begin{proposition}[Lasota-Yorke estimate] \label{prop:LY}
$\forall \beta' \geq 2$ sufficiently large and $\forall \eta' \in (0, \eta^\ast)$, $\exists C_1, \varkappa >0$ such that the following holds $\forall t > 0$, and $\hat{z}=(u,x,v)\in \Hbf\times P\T^d$
\begin{align}
\|D \hat{P}_t \psi(\hat{z})\|_{\Hbf^*} \leq C_1 V_{\beta',\eta'}(u) \left( \sqrt{\hat{P}_t\abs{\psi}^2(\hat{z})} + e^{-\varkappa t} \sqrt{\hat{P}_t \|D\psi\|_{\Hbf^*}^2(\hat{z})}\right).
\end{align}
\end{proposition}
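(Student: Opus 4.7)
The proof proceeds by the Hairer--Mattingly Malliavin integration-by-parts scheme, adapted here to the projective process on $\Hbf \times P\T^d$ and combined with the high-frequency parabolic smoothing of Remark \ref{rmk:high-freq-smooth}. The plan is as follows. Fix $\hat z = (u,x,v) \in \Hbf \times P\T^d$ and a tangent direction $\xi \in \Hbf \times T_v P\T^d$ with $\|\xi\|_{\Hbf} = 1$. Writing
\[
D \hat P_t \psi(\hat z)[\xi] = \E_{\hat z}\bigl[(D\psi)(\hat z_t) J_{0,t}\xi\bigr],
\]
we choose an $\mathscr{F}_t$-predictable control $v \in L^2(\Omega \times [0,t];\mathbb{R}^{|\mathbb K|})$ and apply the Malliavin IBP formula to obtain
\[
D \hat P_t \psi(\hat z)[\xi] \;=\; \E\bigl[(D\psi)(\hat z_t)\,\rho_t\bigr] \;+\; \E\Bigl[\psi(\hat z_t)\int_0^t \langle v_s,dW_s\rangle\Bigr],
\qquad \rho_t := J_{0,t}\xi - \int_0^t J_{s,t} Q v_s\,ds.
\]
Cauchy--Schwarz then reduces the proposition to producing a $v$ with $\E\|v\|_{L^2}^2 \lesssim V_{\beta',\eta'}^2(u)$ and $\E\|\rho_t\|_{\Hbf}^2 \lesssim e^{-2\varkappa t}\,V_{\beta',\eta'}^2(u)$.

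Next, I would construct $v$ iteratively on the unit time intervals $[n,n+1]$, $0 \leq n \leq \lfloor t\rfloor - 1$. Given a residual $\rho^{(n)}$ (with $\rho^{(0)} = \xi$), decompose its one-step push-forward as $J_{n,n+1}\rho^{(n)} = \Pi J_{n,n+1}\rho^{(n)} + (\Id-\Pi) J_{n,n+1}\rho^{(n)}$, where $\Pi$ projects onto the finite-dimensional subspace $\Pi_N\Hbf \oplus T_{v_{n+1}}P\T^d$ for some large fixed $N$. The high-mode part $(\Id-\Pi)J_{n,n+1}\rho^{(n)}$ is absorbed into the next residual $\rho^{(n+1)}$ and is contracted by Remark \ref{rmk:high-freq-smooth}, which gives $\E\|(\Id-\Pi_N)J_{n,n+1}\|^p \leq V_{\beta,\eta}^p(u_n)\,e^{-p C_\Pi}$ with $C_\Pi \approx \log N$. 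The low-mode part is cancelled via IBP by choosing, on $[n,n+1]$,
\[
v^{(n)}_s \;=\; Q^\ast J_{s,n+1}^\ast\, \Pi\,(\mathcal{M}^{(n)} + \beta_n)^{-1}\, \Pi J_{n,n+1}\rho^{(n)},
\qquad \mathcal{M}^{(n)} = \Pi\!\int_n^{n+1}\! J_{s,n+1} Q Q^\ast J_{s,n+1}^\ast \,ds\,\Pi,
\]
with a small regularization parameter $\beta_n > 0$. Assumption \ref{a:Highs} makes $\Pi_N Q Q^\ast \Pi_N$ strictly positive, and the uniform H\"ormander condition of Section \ref{sec:UHC} (together with the hypoellipticity of the projective ODE analyzed in \cite{BBPS18}) supplies polynomial moment bounds for $\|(\mathcal M^{(n)}+\beta_n)^{-1}\|$ on the finite-dimensional range of $\Pi$. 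Combining this with the Jacobian moment bounds of Lemma \ref{lem:JacExps} and Lemma \ref{lem:PathAdjoint} controls $\|v^{(n)}\|_{L^2([n,n+1])}$ in terms of $V^{q'}(u_n)\|\rho^{(n)}\|$.

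Assembling the iteration, $\|\rho^{(n)}\|_{\Hbf}$ satisfies a recursion whose multiplicative factor, in $L^p(\Omega)$ for any fixed $p$, is bounded by $V_{\beta,\eta}^{C}(u_{n})\,e^{-C_\Pi}$ (plus a regularization error $O(\beta_n)$ chosen negligible relative to $e^{-C_\Pi}$). The super-Lyapunov property of Lemma \ref{lem:TwistBd} then lets us iterate this recursion while keeping all moments of $\prod_{k<n} V^{C}(u_k)$ dominated by a single power $V_{\beta',\eta'}(u)$ with $\beta'$ large; after choosing $N$ so large that $C_\Pi$ beats the $V$-growth by a fixed margin $\varkappa > 0$, this produces $\E\|\rho^{(n)}\|^2 \lesssim e^{-2\varkappa n}V_{\beta',\eta'}^2(u)$. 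The contributions of the stochastic integrals sum (using It\^o isometry together with the $L^2$ bound on $v^{(n)}$) into a single $\E\bigl[\psi(\hat z_t)M_t\bigr]$ term with $\E|M_t|^2 \lesssim V_{\beta',\eta'}^2(u)$. Plugging this into the IBP identity and applying Cauchy--Schwarz finishes the proof, after extending from integer $t$ to general $t \geq 0$ by absorbing the sub-unit remainder into constants.

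The central technical obstacle is the construction and inversion of the reduced Malliavin matrix $\mathcal M^{(n)}$ along the projective directions: the noise $Q\dot W$ enters only in the $u$-component, so good lower bounds on $\mathcal M^{(n)}$ in the $(x,v)$ slots require propagating the low-mode nondegeneracy of $Q$ through the coupled Lagrangian/projective ODE via a uniform H\"ormander argument. The second delicate point is the coupling between the $\Hbf$-regularity of the Jacobian estimates (where power losses in $\|u\|$ are substantial, cf.\ Lemma \ref{lem:JacSmooth}) and the need for a contraction factor $e^{-\varkappa}$ per unit time; this is exactly what the super-Lyapunov strengthening of Lemma \ref{lem:TwistBd} provides, forcing $\beta'$ in $V_{\beta',\eta'}$ to be chosen sufficiently large to dominate the accumulated Jacobian moments while still preserving $\eta' < \eta^\ast$.
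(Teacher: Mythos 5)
Your proposal follows the paper's approach — Malliavin integration by parts, an iteratively constructed control on unit time blocks with a regularized inverse of a reduced Malliavin matrix, high-frequency parabolic contraction (Remark \ref{rmk:high-freq-smooth}), and the super-Lyapunov property (Lemma \ref{lem:TwistBd}) to absorb accumulated $V$-moments — so in broad shape it is correct. However, there is a genuine gap in how you handle the stochastic integral. You call the control $\mathscr{F}_t$-predictable and invoke It\^{o} isometry, but the control you construct is \emph{not} adapted: at time $s \in [n,n+1)$ it involves $J_{s,n+1}^\ast$ and $\mathcal M^{(n)}$, which depend on the driving noise over the whole block $[n,n+1]$. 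Consequently the pairing $\int_0^t \langle v_s, \dee W_s\rangle$ is a Skorohod integral, not an It\^{o} integral, and its second moment is \emph{not} controlled by $\E\int \|v_s\|^2 \ds$ alone. One must instead use the Skorohod bound \eqref{ineq:SkorIto}, whose extra term requires estimates on the Malliavin derivatives $\MalD_s v_t$, hence on $\MalD_s J_{r,n+1}$ and $\MalD_s \mathcal A_{n,n+1}$. This is precisely the role of Lemma \ref{lem:MalliavinJA}, which you omit, and it is not a minor omission: the second variation $J^{(2)}_{s,t}$ appearing there involves derivatives of $Du(x)$ up to third order in the $(x,v)$ slot (see \eqref{def:J2v}), and showing that these compose to a bounded operator on $\Wbf \times T_v P\T^d$ requires exploiting the specific structure of $g_k$ together with the parabolic smoothing of Lemma \ref{lem:JacSmooth}. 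Without this input the proof cannot close.

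A secondary point is that the paper's control is supported only on $[2n,2n+1]$ and set to zero on $[2n+1,2n+2]$; these rest intervals are what make the integrability/measurability bookkeeping in \eqref{eq:control-bound} and the Skorohod estimates tractable in the Hairer--Mattingly scheme, and dropping them (as you do by running the control on every $[n,n+1]$) would need to be re-justified. Finally, your reduced Malliavin matrix $\mathcal M^{(n)} = \Pi\int J QQ^\ast J^\ast \Pi$ is a reasonable variant of the paper's, which instead proves nondegeneracy of the full $\mathcal M_{2n,2n+1}$ over the sector $S_\delta$ of directions with non-negligible low-mode component (Proposition \ref{prop:malnodeg}) and concludes the operator bound \eqref{ineq:PiR}; the two are morally equivalent, but the nondegeneracy argument would have to be redone for the projected object in your version.
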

Proposition \ref{prop:LY} when combined with Proposition \ref{prop:CVspecGapProj} is sufficient to deduce a spectral gap for $\hat{P}_{T0}^p$ on $\mathring{C}^1_V$ for $T_0$ sufficiently large.
\begin{proposition} \label{prop:C1VSpec}
For all $V = V_{\beta,\eta}$ with $\beta$ sufficiently large and $\eta \in (0,\eta^\ast)$, we have that $\hat{P}^p_{T_0}$ has a spectral gap on $C^1_V$ for all $T_0$ sufficiently large.
\end{proposition}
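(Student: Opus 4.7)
The plan is to combine the Lasota--Yorke gradient estimate (Proposition \ref{prop:LY}) with the $C_V$ spectral gap (Proposition \ref{prop:CVspecGapProj}) to first prove the spectral gap in $\mathring C_V^1$ for the untwisted semigroup $\hat P_{T_0}$, following the template of \cite{HM08} (cf.\ Proposition~5.12 there), and then extend to the twisted operator $\hat P^p_{T_0}$ by a spectral perturbation argument for $|p|$ sufficiently small, relying on continuity of $p \mapsto \hat P^p_{T_0}$ in the $C_V^1$ operator norm (to be established separately as Lemma~\ref{lem:specPicCV1}).

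For the untwisted step, fix $\psi \in \mathring C_V^1$ with $\int \psi \,d\nu = 0$. Proposition \ref{prop:CVspecGapProj} gives $|\hat P_{T_0}\psi(u,x,v)| \leq C V(u) e^{-\gamma T_0}\|\psi\|_{C_V}$ immediately. For the derivative, decompose $\hat P_{2t} = \hat P_t \circ \hat P_t$ and invoke Proposition \ref{prop:LY}:
\begin{equation*}
\|D\hat P_{2t}\psi(u,x,v)\|_{\Hbf^*} \leq C_1 V_{\beta',\eta'}(u)\Bigl(\sqrt{\hat P_t |\hat P_t\psi|^2(u,x,v)} + e^{-\varkappa t}\sqrt{\hat P_t \|D\hat P_t\psi\|_{\Hbf^*}^2(u,x,v)}\Bigr).
\end{equation*}
Apply the $C_V$ decay (to the mean-zero $\hat P_t\psi$) inside the first square root, and apply Proposition \ref{prop:LY} again to $D\hat P_t\psi$ inside the second. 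The resulting weights are controlled via super-Lyapunov: the bound $\hat P_t V^q \leq C V$ valid for $q \leq e^{\gamma_* t}$ (consequence of Lemma \ref{lem:TwistBd}) is what collapses composite powers of $V$ back to a single $V(u)$ on the right-hand side. Iterating a fixed number of times, and choosing $\beta$ large enough relative to the $\beta'$ appearing in Proposition \ref{prop:LY} (and $\eta$ such that $2\eta' < \eta^*$ and $2\eta < \eta^*$), one obtains
\begin{equation*}
\|D\hat P_{T_0}\psi(u,x,v)\|_{\Hbf^*} \leq C V(u) e^{-\theta T_0}\|\psi\|_{C_V^1}
\end{equation*}
for some $\theta > 0$ and all $T_0$ large. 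This gives the spectral gap for $\hat P_{T_0}$ on $\mathring C_V^1$.

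To pass to the twisted semigroup, recall the Feynman--Kac representation of Remark \ref{rmk:DefH}: the difference $\hat P^p_{T_0} - \hat P_{T_0}$ involves the factor $1 - \exp(-p\int_0^{T_0} H(u_s,x_s,v_s)\,\ds)$ with $|H| \lesssim \|u\|_{\Hbf^r}$ for some $r \in (d/2+1,3)$. Combining this bound with the exponential moment $\EE_u\exp(\kappa \int_0^{T_0}\|u_s\|_{\Hbf^r}\,\ds)\leq C V(u)$ supplied by Lemma \ref{lem:TwistBd}, together with the Jacobian estimates of Section \ref{sec:Jacobian} to control derivatives, yields $\|\hat P^p_{T_0} - \hat P_{T_0}\|_{C_V^1 \to C_V^1} \to 0$ as $p \to 0$. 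A standard spectral perturbation statement (to be invoked as Lemma \ref{lem:specPerturbLF}) then guarantees that the spectral gap of $\hat P_{T_0}$ persists to $\hat P^p_{T_0}$ for all $|p|$ sufficiently small.

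The main obstacle is the weight bookkeeping: each invocation of Proposition \ref{prop:LY} introduces an extra factor of $V_{\beta',\eta'}$, and each $\sqrt{\hat P_t(\cdot)^2}$ introduces an $\sqrt{\hat P_t V^2}$. Naively this yields a decay estimate from $C_V^1$ into $C_{\tilde V}^1$ with $\tilde V$ strictly larger than $V$, which is insufficient to conclude a spectral gap on a fixed Banach space. The super-Lyapunov property of Lemma \ref{lem:TwistBd} — specifically the improved exponent $V^{e^{\gamma t}}$ rather than a mere drift condition — is the essential ingredient that lets the inflated $V$-weights be absorbed back into $V(u)$ once $T_0$ is sufficiently large, thereby closing the iteration. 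A secondary technical point is verifying that the $C_V^1$ operator-norm convergence $\hat P^p_{T_0} \to \hat P_{T_0}$ holds uniformly on bounded subsets of $C_V^1$, which requires gradient control on the Feynman--Kac weight via the Jacobian bounds and an interpolation of the exponential moment estimates in Lemma \ref{lem:TwistBd}.
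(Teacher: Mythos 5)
Your proposal is correct and relies on exactly the same ingredients as the paper: the Lasota--Yorke gradient estimate (Proposition \ref{prop:LY}), the $C_V$ spectral gap (Proposition \ref{prop:CVspecGapProj}), the super-Lyapunov property of Lemma \ref{lem:TwistBd}, and a spectral perturbation via Lemma \ref{lem:specPerturbLF} using the operator-norm convergence $\hat P^p_{T_0} \to \hat P_{T_0}$ in $C^1_V$ (Lemma \ref{lem:specPicCV1}). The one genuine difference is in the contraction step for the untwisted semigroup. The paper introduces the equivalent Lasota--Yorke norm $\|\psi\|_{C^1_{V,\delta}} = \sup\bigl(|\psi|/V + \delta\|D\psi\|_{\Hbf^*}/V\bigr)$ and applies Proposition \ref{prop:LY} \emph{once} at time $n$; the inconvenient $\sqrt{\hat P_n|\psi|^2}$ term, which carries no decay factor, is rendered harmless by the $\delta$-weight in the norm, and the iterated super-Lyapunov bound \eqref{ineq:SuperLyapIter} collapses $V_{\beta',\eta'}\sqrt{\hat P_n V^2}$ back into $V_{\beta,\eta}$ once $n$ is large. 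You instead compose $\hat P_{T_0} = \hat P_t \circ \hat P_t$, apply Proposition \ref{prop:LY} iteratively, and use the $C_V$ decay of the mean-zero $\hat P_t\psi$ in the first square root to manufacture an explicit decay factor on \emph{both} terms, bypassing the need for the $\delta$-weight. Both routes work, but the paper's single-application argument keeps the weight bookkeeping minimal (only one factor of $V_{\beta',\eta'}$), whereas your iteration compounds the $V_{\beta',\eta'}$ factors, so the required lower bound on $\beta$ is slightly larger (still no obstacle, since $\beta$ is free). Your explicit spelling-out of the perturbation from $\hat P_{T_0}$ to $\hat P^p_{T_0}$ is also cleaner than the paper's proof of Proposition \ref{prop:C1VSpec}, which asserts the conclusion for $\hat P^p_{T_0}$ even though the ingredients (Propositions \ref{prop:LY}, \ref{prop:CVspecGapProj}) are stated only for the untwisted $\hat P_t$; the perturbation step is spelled out fully only later, in the proof of Proposition \ref{prop:specgapC1V}.
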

\begin{proof}  
For some $\delta> 0$ to be chosen below, define the equivalent norm on $C^1_V$
\begin{align}
\norm{\psi}_{C^1_{V,\delta}} := \sup_{w \in \Hbf \times P \mathbb T^d} \left(\frac{|\psi(\hat{z})|}{V(u)} + \frac{\delta \|D\psi(\hat{z})\|_{\Hbf^{*}}}{V(u)}\right). 
\end{align}
Choose $\beta' < \beta$ and $\eta' < \eta$. Then, the Lasota-Yorke bound (Proposition \ref{prop:LY}) together with the super-Lyapunov property \eqref{ineq:SuperLyapIter} implies
\begin{align}
\|D \hat{P}_n \psi(\hat{z})\|_{\Hbf^*} & \leq C_1 V_{\beta^\prime,\eta^\prime}\sqrt{ \hat{P}_n V^2(u)}\left(1  + e^{-\varkappa n}\delta^{-1}\right)\|\psi\|_{C^1_{V,\delta}}\\
&\leq \frac{C}{\delta} V_{\beta',\eta'}(u) V^{e^{-\gamma n}}(u)\left(\delta  + e^{-\varkappa n}\right) \norm{\psi}_{C^1_{V,\delta}}.  
\end{align}
Since $\beta > \beta'$ and $\eta > \eta^\prime$, there exists a $n_0$ sufficiently large so that
\begin{equation}
\beta' + e^{-\gamma n} \beta < \beta \quad\text{and}\quad \eta' + e^{-\gamma n} \eta  < \eta, 
\end{equation}
which implies $V_{\beta',\eta'}(u) V^{e^{-\gamma n_0}}(u) \leq V(u)$. 
Choosing $\delta>0$ such that $C\delta < 1/8$ and $n_1 > n_0$ such that  $Ce^{-\varkappa n_1} < 1/8$, we have
\begin{align}
\|D\hat{P}_{n_1} \psi(\hat{z})\|_{\Hbf^*} \leq \frac{1}{4} \frac{V(u)}{\delta} \norm{\psi}_{C^1_{V,\delta}}. 
\end{align}
Combining this with the $C_V$ spectral gap (Proposition \ref{prop:CVspecGapProj}) implies that $\hat{P}^p_{T_0}$ is a contraction on $C^1_V$ with respect to the $\|\cdot\|_{C^1_{V,\delta}}$ norm for all $T_0 > 0$ sufficiently large. This implies a spectral gap of $\hat{P}^p_{T_0}$ in $C^1_V$.
\end{proof}

\subsection{Lasota-Yorke estimate: Proof of Proposition \ref{prop:LY}}
Let us now turn to the proof of Proposition \ref{prop:LY}. The proof follows closely the discussion in \cite{HM11} with some adjustments, and so we only provide a brief sketch. Aside from the fact that the necessary estimates on the $J_{s,t}$ and $K_{s,t}$ processes are more complex than in \cite{HM11}, the Malliavin matrix non-degeneracy requires some adjustments to deal with the degrees of freedom associated with $P \T^d$. 
Additionally, we need to use Lemma \ref{lem:JacSmooth} and choose $\beta'$ sufficiently large to control
$\Hbf$ regularity. Other than this, only minor modifications are needed. 

\subsubsection{Malliavin calculus and preliminaries}
First, let us recall some basics of Malliavin calculus required to set up the framework of \cite{HM11}. 
We will mostly be dealing with random variables $X  \in \Wbf\times P \mathbb T^d$.  
The Malliavin derivative $\MalD_h X$ of $X$ in direction $h = (h_t) \in L^2(\R_+,\Wbf)$ is defined by
\[
	\MalD_h X := \frac{\dee}{\dee \eps}X(W + \eps H)|_{\eps=0}, \quad H= \int_0^\cdot h_s\ds,
\]
when the limit exists (in the Fr\'{e}chet sense). If the above limit exists, we say that $X$ is {\em Malliavin differentiable}. In practice $\MalD_h X$ admits a representation of the form
\begin{equation}\label{eq:D_sX-def}
\MalD_h X = \int_0^{\infty} \MalD_s X h_s\, \ds,
\end{equation}
where for a.e. $s\in \R_+$, $\MalD_h X$ is a random, bounded linear operator from $\Wbf$ to $\Wbf \times T_v P \T^d$ (see \cite{Nualart06} for more details).
It is standard that if $X_t$ is adapted to the filtration $\mathscr{F}_t$ generated by $W_t$, then $\MalD_s X_t= 0$ if $s \geq t$. 
The Malliavian derivative $\mathcal{D}_h w_t$ is given by (recall $J_{s,t}$ is the Jacobian; see \eqref{eq:Jacdef}), 
\begin{align}
\mathcal{D}_h w_t = \int_0^t J_{s,t} Q h_s \ds := \mathcal{A}_t h.
\end{align}
The adjoint is given by $A^\ast_t \xi(s) = Q^\ast K_{s,t} \xi$ for $s \leq t$ and $0$ for $s > t$ (recall{} $K_{s,t} = J^\ast_{s,t}$).
We similarly denote $\mathcal{A}_{s,t} h = \int_s^t J_{\tau,t} Q h_\tau d\tau$.
The \emph{Malliavin matrix} $\mathcal{M}_{s,t}$ is a \emph{symmetric, positive semi-definite} as a linear operator $\Wbf\times T_{v_t} P \mathbb T^d \to \Wbf \times T_{v_t} P \mathbb T^d$ defined by
\begin{align}
\mathcal{M}_{s,t} := \mathcal{A}_{s,t} \mathcal{A}^\ast_{s,t} = \int_s^t J_{r,t} Q Q^\ast K_{r,t} \dr.
\end{align}

For real-valued random variables, the Malliavin derivative can be realized as a Fr\'{e}chet differential operator $\MalD: L^2(\Omega)\to L^2(\Omega; L^2(\R_+;\Wbf))$. The adjoint operator $\MalD^*:L^2(\Omega; L^2(\R_+;\Wbf)) \to L^2(\Omega)$ is referred to as the {\em Skorohod integral}, whose action on $h \in L^2(\Omega; L^2(\R_+;\Wbf))$ we denote by
\[
	\int_0^\infty \langle h_t,\delta W_t\rangle_{\Wbf} := \MalD^*h.
\]
The Skorohod integral is an extension of the usual It\^{o} integral; see \cite{Nualart06,HM11}. 
One moreover has the following, 
\[
	\E \left(\int_0^\infty \langle h_t, \delta W_t\rangle_{\Wbf}\right)^2\leq \EE \int_0^\infty \norm{h_{t}}_{\Wbf}^2 + \E \int_0^\infty \int_0^\infty \norm{\MalD_sh_{t}}_{\Wbf\to \Wbf}^2\ds\dt. \label{ineq:SkorIto}
\]

A fundamental result in the theory of Malliavin calculus is the Malliavin integration by parts formula, stated below for the process $(\hat z_t)$ which takes values in $\Hbf \times P\T^d$ (see e.g. \cite{DaPrato14,Nualart06}). 
\begin{proposition}\label{lem:MalIBP}
Let $\psi$ be a bounded differentiable function on $\Hbf\times P\T^d$ with bounded derivatives
and $h_t$ be a process satisfying 
\begin{equation}\label{eq:control-bound}
\EE \int_0^T \norm{h_t}_{\Wbf}^2 \dt + \E \int_0^T \int_0^T \norm{\MalD_sh_t}_{\Wbf\to \Wbf}^2\ds\dt < \infty, 
\end{equation}
then the following relation holds
\[
	\E \MalD_h\psi(\hat{z}_T) = \E\left(\psi(\hat{z}_T)\int_0^T \langle h_s , \delta W_s\rangle_{\Wbf} \right).
\]
\end{proposition}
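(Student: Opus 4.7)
The plan is to derive the stated formula from the classical duality $\E (\MalD_h F) = \E(F \MalD^* h)$ between the Malliavin derivative and the Skorohod integral $\MalD^*$, applied with $F = \psi(\hat z_T)$. This duality is standard for Hilbert-space-valued diffusions once one checks that $F$ is Malliavin differentiable in $L^2(\Omega)$ and that $h$ lies in the domain of $\MalD^*$; see \cite{Nualart06,DaPrato14}.

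For the first point, I would apply the chain rule. The process $\hat z_T$ is Malliavin differentiable and from the formula $\MalD_h w_t = \int_0^t J_{s,t} Q h_s \, ds$ stated just above the proposition, one reads off $\MalD_s \hat z_T = J_{s,T} Q$ for $s \in [0,T]$ and zero otherwise, extending $Q$ componentwise to act on the $P\T^d$ fibers. Since $\psi$ has bounded Fr\'echet derivative, the chain rule gives $\MalD_s F = D\psi(\hat z_T) \, J_{s,T} Q$, and the pathwise and moment bounds from Lemma \ref{lem:PathJacobian} and Lemma \ref{lem:JacExps} then yield $\E \int_0^T \|\MalD_s F\|_{\Wbf}^2 \, ds < \infty$. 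In particular $F$ lies in the Malliavin Sobolev space $\mathbb{D}^{1,2}$ and $\MalD_h F = \int_0^T (\MalD_s F) h_s \, ds$. For the second point, the hypothesis \eqref{eq:control-bound} together with the a priori bound \eqref{ineq:SkorIto} is precisely the standard criterion guaranteeing that $h$ is Skorohod integrable with $\MalD^* h \in L^2(\Omega)$, and by definition $\MalD^* h = \int_0^\infty \langle h_s, \delta W_s\rangle_{\Wbf}$.

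Combining these two points, the duality immediately gives the identity. The only place to exercise real care is the chain rule for the infinite-dimensional composition $\psi \circ \hat z_T$, for which one uses that $\psi$ is Fr\'echet differentiable on $\Hbf \times P\T^d$ with bounded derivative and approximates $\psi$ by smooth cylinder functions in the Malliavin Sobolev norm. This approximation step, together with the uniform moment control provided by the Jacobian estimates of Section \ref{sec:Jacobian}, is the only substantive obstacle; it follows the template developed in \cite{HM11} and does not require new ideas beyond the bounds already established.
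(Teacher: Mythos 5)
The paper does not actually supply a proof of this proposition; it states it as a known result citing the references \cite{DaPrato14,Nualart06}. Your proposal reconstructs the standard argument one would find there, adapted to the setting at hand, and the reasoning is essentially sound: reduce to the duality $\E(\MalD_h F) = \E(F\,\MalD^\ast h)$, then verify (i) $F = \psi(\hat z_T) \in \mathbb D^{1,2}$ by the chain rule together with the Jacobian moment bounds of Section \ref{sec:Jacobian}, and (ii) $h \in \operatorname{Dom}(\MalD^\ast)$ from the hypothesis \eqref{eq:control-bound} and the Skorohod $L^2$ bound \eqref{ineq:SkorIto}.

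One small imprecision worth flagging: when you write that one reads off $\MalD_s \hat z_T = J_{s,T}Q$ by ``extending $Q$ componentwise to act on the $P\T^d$ fibers,'' the extension is by zero, not componentwise --- the noise only drives the velocity component $u$, so $\MalD_s \hat z_T \xi = J_{s,T}(Q\xi, 0, 0)$. The $(x,v)$ coordinates feel the noise only through the Jacobian $J_{s,T}$, not directly through $Q$. The subsequent use of Lemmas \ref{lem:PathJacobian} and \ref{lem:JacExps} to get $\E \int_0^T \|\MalD_s F\|_{\Wbf^\ast}^2\,\ds < \infty$ is fine once this is fixed, since those estimates bound $J_{s,T}$ as a map into (and out of) the full state space. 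The remaining step --- approximating $\psi$ by cylinder functions in the $\mathbb D^{1,2}$-topology to justify the infinite-dimensional chain rule --- is indeed the only delicate point, and your appeal to the template of \cite{HM11} is the right instinct. In short: your route is the same route the cited references take, and it is correct modulo the minor clarification above.
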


\subsubsection{Defining the control}
By Proposition \ref{lem:MalIBP}, for any choice of $g$ satisfying the hypotheses, there holds, 
\begin{align}
D\hat{P}_t \psi(\hat z)\xi = \EE D\psi(\hat z_t)J_t\xi = \EE D\psi(\hat z_t) \rho_t +  \EE\psi(\hat z_t) \int_0^t \brak{h_s,\delta W_s}_{\Wbf} \label{ineq:DerivSwap}
\end{align}
where $\rho_t\in \Hbf \times T_{v_t}P\T^d$ is the residual of the control, 
\begin{align}
\rho_t = J_t\xi - \mathcal{D}_{h} \hat{z}_t. 
\end{align}
From \eqref{ineq:DerivSwap}, Cauchy-Schwartz implies 
\begin{align}
\abs{D\hat{P}_t \psi(\hat z)\xi} \leq \sqrt{ \EE\abs{\int_0^t \brak{h_s,\delta W_s}_{\mathbf{W}}}^2 } \sqrt{\hat{P}_t |\psi|^2(z)} + \sqrt{ \EE\|\rho_t\|_{\Hbf}^2 }  \sqrt{\hat{P}_t \norm{D\psi}^2_{\Hbf^*}(z)}. \label{ineq:DxiPtw}
\end{align}
The goal in \cite{HM06,HM11} is to choose $h$ such that $\EE\|\rho_t\|_{\Hbf}^2$ decays exponentially fast while, at the same time, having a suitably controlled Skorohod integral.
The same abstract formula for the control used in \cite{HM11} will also work here (though obviously the control itself is different). 
For a random parameter $\beta_{k}>0$ chosen below, we iteratively define the control as follows:  
\begin{align} \label{def:ctrl}
h = h_s^\xi = \begin{cases} 
Q^\ast K_{s,2n+1} (\beta_{2n} + \mathcal{M}_{2n,2n+1})^{-1} J_{2n,2n+1} \rho_{2n} & \quad s \in [2n,2n+1) \\ 
0 & \quad s \in [2n+1,2n+2).
\end{cases}
\end{align}
With the control defined as above, one obtains the following recurrence relation for the $\rho_{2n}$ at even times,
\[
	\rho_{2n+2} = J_{2n+1,2n+2}\beta_{2n}(\beta_{2n} - \mathcal{M}_{2n,2n+1})^{-1}J_{2n,2n+1}\rho_{2n}
\]
By \eqref{ineq:DxiPtw}, Proposition \ref{prop:LY} follows from the following two lemmas. 
First, a decay estimate: 
\begin{lemma} \label{lem:rhodecay}
For all $a \geq 2$ sufficiently large,  $\forall \eta \in (0,\eta^\ast)$, and $\forall p \geq 2$ with $p\eta < \eta^\ast$, $\exists \kappa > 0$ such that the following holds, 
\begin{align}
\EE\norm{\rho_{2n}}_{\Hbf}^p \lesssim_{p,\eta,a} V^{p}_{a,\eta}(u) \exp(-p\kappa n) \norm{\xi}^p_{\Hbf}.
\end{align}
\end{lemma}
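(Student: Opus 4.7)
The plan is to adapt the Malliavin-calculus scheme of Hairer--Mattingly \cite{HM11} to our projective process on $\Hbf\times P\T^d$. Writing $R_n := \beta_{2n}(\beta_{2n}+\mathcal{M}_{2n,2n+1})^{-1}$, the iterative construction \eqref{def:ctrl} of the control gives the clean recurrence
\[
\rho_{2n+2} \;=\; J_{2n+1,2n+2}\,R_n\,J_{2n,2n+1}\,\rho_{2n},\qquad \rho_0=\xi,
\]
so it suffices to prove a conditional one-step estimate $\E\bigl(\|\rho_{2n+2}\|_{\Hbf}^{p}\mid\mathscr{F}_{2n}\bigr)\leq \tfrac12 V_{a',\eta'}^{p}(u_{2n})\|\rho_{2n}\|_{\Hbf}^{p}$ for some $a'<a$ and $\eta'<\eta$, then iterate. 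Iterating and invoking the super-Lyapunov property (Lemma \ref{lem:TwistBd}, in the iterated form \eqref{ineq:SuperLyapIter}) absorbs the accumulated powers of $V_{a',\eta'}$ through the tower of conditional expectations into a single factor $V_{a,\eta}^{p}(u)$ at the cost of the stated exponential rate $e^{-p\kappa n}$.

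The one-step bound is proved by decomposing $R_n = \Pi_N R_n + (\Id-\Pi_N)R_n$ at a scale $N=N(u_{2n})$ chosen as an appropriate positive power of $V(u_{2n})$. For the high-frequency piece, positivity of $\mathcal{M}_{2n,2n+1}$ gives $\|R_n\|_{\Wbf\to\Wbf}\leq 1$ deterministically, so
\[
\|J_{2n+1,2n+2}(\Id-\Pi_N)R_nJ_{2n,2n+1}\rho_{2n}\|_{\Hbf}\leq \|J_{2n+1,2n+2}(\Id-\Pi_N)\|_{\Wbf\to\Hbf}\,\|J_{2n,2n+1}\|_{\Hbf\to\Hbf}\,\|\rho_{2n}\|_{\Hbf},
\]
and Remark \ref{rmk:high-freq-smooth} yields $\E\|J_{2n+1,2n+2}(\Id-\Pi_N)\|_{\Wbf\to\Hbf}^{q}\lesssim V^{q}(u_{2n+1})\exp(-qC_\Pi)$ with $C_\Pi\approx \log N$, producing a factor $N^{-c}$ that dominates any polynomial $V$-growth once $N$ is large. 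H\"older's inequality combined with the Jacobian moment bound of Lemma \ref{lem:JacExps} and one more application of Lemma \ref{lem:TwistBd} (to pass from $u_{2n+1}$-moments back to $u_{2n}$-moments) controls this term by $\tfrac14 V_{a',\eta'}^{p}(u_{2n})\|\rho_{2n}\|_{\Hbf}^{p}$.

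The low-frequency piece requires the Malliavin non-degeneracy of $\Pi_N\mathcal{M}_{2n,2n+1}\Pi_N$ with moment bounds polynomial in $V(u_{2n})$ and $N$: combining the uniform parabolic H\"ormander condition for the projective process (Section \ref{sec:UHC}, which adjoins the $P\T^d$-fibre directions to the vector fields treated in \cite{BBPS18}) with a Norris-type time-inversion lemma in the spirit of \cite[\S 4]{HM11} yields, off an event of super-polynomially small probability in $N$,
\[
\E\bigl(\|(\Pi_N\mathcal{M}_{2n,2n+1}\Pi_N)^{-1}\|_{\Wbf\to\Wbf}^{q}\bigm|\mathscr{F}_{2n}\bigr)\lesssim_{q} V^{q\kappa_1}(u_{2n})N^{q\kappa_2}.
\]
Choosing the random parameter $\beta_{2n}$ as a sufficiently small negative power of $V(u_{2n})\,N$ makes $\|\Pi_N R_n\|_{\Wbf\to\Wbf}$ arbitrarily small on this good event; converting to $\Pi_N R_n:\Hbf\to\Hbf$ costs only the polynomial factor $N^{\sigma}$ coming from the embedding $\Hbf_N\hookrightarrow \Wbf$ on the finite-dimensional subspace $\Hbf_N$, and this is reabsorbed by the high power of $V^{-1}$ built into $\beta_{2n}$.

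The main obstacle is the Malliavin non-degeneracy on $\Hbf\times T_vP\T^d$: the projective factor contributes genuinely new directions, so the bracket analysis underlying the lower bound on $\mathcal{M}$ must be redone in the extended geometry and combined with polynomial-in-$V$ control of the noise excursions via Lemma \ref{lem:TwistBd}. This is precisely where $\beta'$ must be taken large, to soak up the $V$-weights coming from iterated uses of Lemmas \ref{lem:PathJacobian}, \ref{lem:JacExps} and \ref{lem:JacSmooth} in both the low- and high-frequency estimates. Once the inversion bound is in place, the remainder is a routine iteration of the conditional one-step estimate together with the super-Lyapunov pumping of $V$-moments.
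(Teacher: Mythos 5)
Your overall architecture -- the recurrence $\rho_{2n+2} = J_{2n+1,2n+2}\,R_n\,J_{2n,2n+1}\,\rho_{2n}$, the low/high frequency split of $R_n$, the deterministic bound $\|R_n\|_{\Wbf\to\Wbf}\le 1$, the smoothing of $J(\Id-\Pi_N)$ via Remark \ref{rmk:high-freq-smooth}, and the absorption of iterated $V$-weights by the super-Lyapunov bound \eqref{ineq:SuperLyapIter} -- is essentially the paper's (and \cite{HM11}'s) plan. The genuine departure is in how you get the low-frequency Malliavin non-degeneracy. You propose ``a Norris-type time-inversion lemma in the spirit of \cite[\S 4]{HM11}''; the paper explicitly goes out of its way \emph{not} to do this. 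Section \ref{sec:Malliavin} states that the Wiener-polynomial/Norris machinery of \cite{HM11} ``does not exactly apply in our setting due to the fact that the nonlinearity in the $(x,v)$ dynamics is not polynomial,'' and that under the non-degenerate noise of Assumption \ref{a:Highs} only a single Lie bracket is needed (Lemma \ref{lem:lowerbound}), so that the H\"older interpolation Lemma \ref{lem:interpolation} applied twice (Lemmas \ref{lem:first-implication}, \ref{lem:second-implication}) already gives Proposition \ref{prop:malnodeg}. Reaching for a Norris lemma is therefore not a shortcut but the harder road: you would need to redo the non-cancellation estimates without the polynomial structure the \cite{HM11} version relies on, which is precisely the obstruction the paper avoids by noting it is unnecessary here. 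A related minor mismatch: you cite Section \ref{sec:UHC} for the H\"ormander spanning, but that is the uniform spanning for the two-point process on $\T^d\times\T^d$; the projective-process spanning is Lemma \ref{lem:lowerbound}, built on \cite{BBPS18}.

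Two further differences worth noting, both workable but not what the paper does. First, you take a random truncation scale $N=N(u_{2n})$ (a power of $V(u_{2n})$) and a corresponding $N$-dependent $\beta_{2n}$; the paper fixes $N_0$ once and for all and puts all the $u$-dependence into $\beta_{2n}\propto V_{a,\eta}(u_{2n})^{-1}$, using the $u$-uniform tail bound in Proposition \ref{prop:malnodeg}. Your route requires tracking $N$-dependence through every smoothing and Jacobian estimate, while the paper's fixed-$N_0$ choice keeps that bookkeeping trivial. Second, you state a conditional inverse-moment bound on $(\Pi_N\mathcal{M}\Pi_N)^{-1}$; the paper instead works through the regularized quantity $\Pi\beta_{2n}(\beta_{2n}+\mathcal{M})^{-1}$, which is already bounded by $1$, and uses [Cor.~5.15, \cite{HM11}] to convert the tail bound of Proposition \ref{prop:malnodeg} directly into a small moment estimate \eqref{ineq:PiR}, never inverting $\mathcal{M}$ itself. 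Your version is implied by the paper's tail bound on the finite-dimensional block, but phrasing it as an inverse-moment estimate obscures the role of the regularizer $\beta_{2n}$ and makes the $N$-dependence harder to control.
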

Second, a uniform estimate on the Skorohod integral of the control: 
\begin{lemma} \label{lem:skor}
Let $h$ be defined as in \eqref{def:ctrl}.
For all $a \geq 1$ sufficiently large, $\forall \eta \in (0,\eta^\ast)$, there holds 
\begin{align}
\EE\abs{\int_0^{2n} \brak{h,\delta W_s}_{\mathbf W}}^2 \lesssim  V_{a,\eta}(u) \norm{\xi}_{\Hbf \times T_v P \mathbb T^d}^2. 
\end{align}
\end{lemma}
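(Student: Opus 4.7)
The plan is to follow the Hairer--Mattingly strategy \cite{HM11} adapted to the more complicated estimates and regularity available here. The starting point is the Skorohod isometry bound
\[
\EE\left|\int_0^{2n}\brak{h_s,\delta W_s}_{\Wbf}\right|^2 \leq \EE\int_0^{2n}\|h_s\|_{\Wbf}^2\,\ds + \EE\int_0^{2n}\int_0^{2n}\|\MalD_r h_s\|_{\Wbf\to\Wbf}^2\,\dr\,\ds,
\]
which, by the piecewise structure of $h$ in \eqref{def:ctrl}, decomposes into a sum of two types of contributions indexed by $m = 0,1,\dots,n-1$: the size of $h$ on the interval $[2m,2m+1)$, and the Malliavin derivatives $\MalD_r h_s$ with $r \leq s$ and $s\in[2m,2m+1)$. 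The strategy is to show that each of these summands decays exponentially in $m$, so that the total sum is bounded by $V_{a,\eta}(u)\|\xi\|_{\Hbf}^2$ uniformly in $n$, using Lemma \ref{lem:rhodecay} as the source of exponential decay and the Jacobian estimates of Section \ref{sec:Jacobian} to control polynomial losses.

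For the $\|h_s\|_{\Wbf}$ term, the fundamental identity
\[
\int_{2m}^{2m+1}\|Q^\ast K_{s,2m+1}\phi\|_{\Wbf}^2\,\ds = \brak{\phi,\mathcal{M}_{2m,2m+1}\phi}_{\Wbf}
\]
applied with $\phi = (\beta_{2m}+\mathcal{M}_{2m,2m+1})^{-1}J_{2m,2m+1}\rho_{2m}$, combined with the scalar inequality $x/(\beta+x)^2 \leq 1/(4\beta)$ for $x\geq 0$, yields
\[
\int_{2m}^{2m+1}\|h_s\|_{\Wbf}^2\,\ds \leq \frac{1}{4\beta_{2m}}\|J_{2m,2m+1}\rho_{2m}\|_{\Hbf}^2.
\]
Taking expectations, conditioning on $\mathscr F_{2m}$, and applying the Markov property with Lemmas \ref{lem:JacExps} and \ref{lem:rhodecay} bounds this by $\beta_{2m}^{-1} V_{a,\eta}(u)\exp(-2\kappa m)\|\xi\|_{\Hbf}^2$, which is summable once $\beta_{2m}$ is chosen to decay slower than $\exp(-2\kappa m)$ (e.g.\ $\beta_{2m} = e^{-\kappa m}$); note that the factor $V(u)$ inflates only once thanks to the super-Lyapunov iterate \eqref{ineq:SuperLyapIter}, which allows us to absorb the products of $V^{e^{-\gamma m}}$ produced by conditioning into a single $V_{a,\eta}$ at the price of enlarging $a$.

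For the Malliavin derivative contribution, differentiating $h_s$ via the chain rule produces four pieces coming from $K_{s,2m+1}$, $(\beta_{2m}+\mathcal{M}_{2m,2m+1})^{-1}$, $J_{2m,2m+1}$, and $\rho_{2m}$. The first and third pieces reduce to second-order path-wise Jacobian bounds (variations of Lemmas \ref{lem:PathJacobian} and \ref{lem:PathAdjoint}, using the smoothing Lemma \ref{lem:JacSmooth} to handle low-regularity test directions). The fourth, $\MalD_r\rho_{2m}$, is controlled inductively from the recursion $\rho_{2m+2} = J_{2m+1,2m+2}\beta_{2m}(\beta_{2m}-\mathcal{M}_{2m,2m+1})^{-1}J_{2m,2m+1}\rho_{2m}$, noting that each factor of $(\beta + \mathcal{M})^{-1}$ contributes an operator norm at most $\beta_{2m}^{-1}$. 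The critical estimate is the Malliavin derivative of the inverse,
\[
\MalD_r\bigl[(\beta_{2m}+\mathcal{M}_{2m,2m+1})^{-1}\bigr] = -(\beta_{2m}+\mathcal{M}_{2m,2m+1})^{-1}(\MalD_r\mathcal{M}_{2m,2m+1})(\beta_{2m}+\mathcal{M}_{2m,2m+1})^{-1},
\]
whose operator norm is bounded by $\beta_{2m}^{-2}\|\MalD_r\mathcal{M}_{2m,2m+1}\|$, and the latter is estimated by a quadratic expression in the Jacobian and adjoint Jacobians together with their first Malliavin derivatives, which in turn reduce via the variational equation to second Fr\'echet derivatives of the flow. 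Collecting these bounds, one obtains an overall path-wise estimate for $\|\MalD_r h_s\|_{\Wbf\to\Wbf}$ that is a polynomial in $\beta_{2m}^{-1}$, $V(u_s)$-type factors, and $\|\rho_{2m}\|_{\Hbf}$; taking expectation and using Lemmas \ref{lem:rhodecay}, \ref{lem:JacExps}, \ref{lem:JacSmooth} again yields a bound that is summable in $m$ after a suitable choice of $\beta_{2m}$ and large enough $a$.

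The main obstacle, as in \cite{HM11}, is managing the polynomial-in-$\beta_{2m}^{-1}$ losses coming from the Malliavin derivatives of $(\beta_{2m}+\mathcal{M}_{2m,2m+1})^{-1}$ against the exponential decay of $\|\rho_{2m}\|$ from Lemma \ref{lem:rhodecay}. The decay rate $\kappa$ in that lemma must therefore be quantitatively strong enough (driven by large $p$ in the moment bound) to beat a fixed polynomial in $\beta_{2m}^{-1}$; this is exactly why Lemma \ref{lem:rhodecay} is proved with arbitrarily fast decay at the cost of enlarging $a$, and why the super-Lyapunov property of Lemma \ref{lem:TwistBd} is needed to absorb repeated applications of the conditional Markov estimates into a single $V_{a,\eta}(u)$ on the right-hand side, uniformly in $n$.
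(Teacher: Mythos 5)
Your overall skeleton — the Skorohod isometry bound from \eqref{ineq:SkorIto}, the block decomposition over $[2m,2m+1)$, the identity $\int_{2m}^{2m+1}\|h_s\|_{\Wbf}^2\,\ds = \brak{\phi,\mathcal M_{2m,2m+1}\phi}_{\Wbf}$ with the scalar bound $x/(\beta+x)^2 \leq 1/(4\beta)$, and the inductive Malliavin differentiation of the four factors in the control — is exactly the route of \cite{HM11}, which is what the paper indicates when it says the lemma ``follows essentially as in \cite{HM11}.'' The sketch of the Malliavin-derivative contribution should, however, lean directly on Lemma \ref{lem:MalliavinJA} of the paper (which is specifically set up to bound $\MalD_s J_{r,n+1}$ and $\MalD_s\mathcal A_{n,n+1}$ in the $\Wbf$-topology), rather than gesture at ``variations of Lemmas \ref{lem:PathJacobian} and \ref{lem:PathAdjoint}.''

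The substantive problem is your treatment of $\beta_{2m}$. You propose to take $\beta_{2m}=e^{-\kappa m}$ deterministic and argue summability from that. But the paper \emph{fixes} $\beta_k = \delta^3 / \bigl(V_{a,\eta}(u_k)\,(C^\ast)^{1/p}\bigr)$ — a random quantity scaling like $V_{a,\eta}(u_k)^{-1}$ — and this choice is not free: it is precisely what [Corollary 5.15, \cite{HM11}] requires in order for the Malliavin non-degeneracy bound \eqref{ineq:PiR}, $\EE\|\Pi\,\beta_{2n}(\beta_{2n}+\mathcal M_{2n,2n+1})^{-1}\|^p \leq 2\delta^p$, to hold, which is in turn the engine behind the decay of $\rho_{2m}$ in Lemma \ref{lem:rhodecay}. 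Replacing $\beta_{2m}$ by a deterministic exponential sequence would destroy \eqref{ineq:PiR} on events where $u_{2m}$ is large (and the Malliavin matrix correspondingly ill-conditioned), so you could not invoke Lemma \ref{lem:rhodecay} at all. The correct mechanism is the one you already name in your first block — the super-Lyapunov iterate \eqref{ineq:SuperLyapIter} absorbs the $\beta_{2m}^{-1}\sim V_{a,\eta}(u_{2m})$ losses in expectation — so there is no need (and no freedom) to tune $\beta_{2m}$ toward exponential decay. Relatedly, the claim that Lemma \ref{lem:rhodecay} can be ``proved with arbitrarily fast decay at the cost of enlarging $a$'' overreads its statement: the lemma supplies some fixed $\kappa>0$ (depending on $p$), not a rate one can dial up; the balance against the polynomial $\beta_{2m}^{-1}$ losses is handled by their controlled $\PP$-moments and by choosing $p$ large in the H\"older step, not by enlarging $\kappa$.
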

It remains to sketch the proofs of Lemmas \ref{lem:rhodecay} and \ref{lem:skor}. Let $\Pi : \mathbf W \to \mathbf W$ be the projection to frequencies less than some high frequency $N_0$ chosen later and fixed through out the section.
In what follows, it is convenient to define for $k \in \mathbb K$: 
\begin{align}
g_k := (q_k e_k,0,0). 
\end{align}
In this notation, the Malliavin matrix takes the form for each $\xi\in \Wbf\times T_{v_t}P\T^d$
\begin{align}
\brak{\mathcal{M}_{s,t} \xi, \xi}_{\Wbf} = \sum_{k \in \mathbb K} \int_s^t \brak{g_k,K_{r,t}\xi}^2_{\Wbf} \dr.  \label{eq:MalNonDeg}
\end{align}
The precise statement of the Malliavin matrix non-degeneracy is as follows; we carry out the proof in Section \ref{sec:Malliavin}. 
\begin{proposition}[Malliavin matrix non-degeneracy]\label{prop:malnodeg} 
For all $a\geq 1$ sufficiently large, and all $\eta \in (0,\eta^\ast)$, $p \geq 2$ with $\eta p \leq \eta^\ast$,  $\forall \delta >0$ there exists a $C^\ast = C^*(p,\delta,a,\eta)$ such that
\[
\P\left(\inf_{\xi \in S_\delta} \langle \xi,\mathcal{M}_{0,1}\xi\rangle_{\Wbf} < \ep\right) \leq C^\ast V^{p}_{a,\eta}(u)\ep^p, 
\] 
where 
\begin{align}
	S_\delta = \left\{ \xi \in \mathbf{W} \times T_{v_1} P \mathbb T^d \,:\, \|\xi\|_{\Wbf} = 1 ,\quad  \|\Pi \xi\|_{\Wbf} > \delta \right\}. \label{def:Sbeta}
\end{align}
\end{proposition}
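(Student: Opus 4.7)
The plan is to adapt the Malliavin calculus scheme of Hairer--Mattingly \cite{HM06,HM11} to our projective process on $\Hbf\times P\T^d$, exploiting the parabolic H\"ormander condition verified for $(u_t,x_t,v_t)$ in \cite{BBPS18}. Writing $\eta_k^\xi(r):=\brak{g_k,K_{r,1}\xi}_{\Wbf}$, the identity \eqref{eq:MalNonDeg} reduces the claim to showing that $\sum_{k}\int_0^1\eta_k^\xi(r)^2\,\dr$ is bounded below uniformly over $\xi\in S_\delta$ with high probability. Using the path-wise operator estimates for $K_{r,1}$ from Lemma \ref{lem:PathAdjoint} together with the super-Lyapunov bound of Lemma \ref{lem:TwistBd}, I would first localize to an event of probability $\geq 1-C^*V^p_{a,\eta}(u)\ep^p$ on which all relevant norms of $u$, $J_{r,t}$, $K_{r,t}$ are polynomially controlled, and then reduce to a pointwise estimate at fixed $\xi\in S_\delta$ via a finite $\ep^c$-net in $\Pi\Wbf\times T_v P\T^d$, justified by the lower bound $\norm{\Pi\xi}_\Wbf>\delta$ and compactness of the low-frequency fiber.

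Suppose towards a contradiction that $\brak{\xi,\mathcal{M}_{0,1}\xi}_\Wbf<\ep$. Then each $\eta_k^\xi$ with $\abs k\leq N_0$ is small in $L^2([0,1])$. Viewing $\eta_k^\xi$ as an It\^o process (via the backward equation $\partial_r K_{r,1}=-DF(\hat z_r)^*K_{r,1}$ and the Malliavin chain rule), its drift and diffusion coefficients can be expressed in terms of commutators $[F,g_k]$ and $Q$-derivatives of $g_k$. A version of Norris's lemma (c.f.\ Lemma 4.1 of \cite{HM11}) then converts $L^2$-smallness of $\eta_k^\xi$ into simultaneous $L^2$-smallness of its drift and diffusion, at a polynomial loss $\ep\to\ep^{\alpha}$. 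Iterating up to a finite bracket depth $J$, I obtain $L^2$-smallness of $\brak{g_k^{(j)},K_{r,1}\xi}_\Wbf$ for all iterated brackets $g_k^{(j)}$ built from $\{g_k\}_{\abs k\leq N_0}$ and $F$.

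By the parabolic H\"ormander condition for the projective process established in \cite{BBPS18}, the family $\{g_k^{(j)}(\hat z)\}_{\abs k\leq N_0,\,j\leq J}$ spans the finite-dimensional subspace $\Pi\Wbf\times T_v P\T^d$ uniformly on bounded subsets of $\Hbf\times P\T^d$, with a quantitative lower bound on the smallest singular value depending polynomially on $\norm{u}_\Hbf$. This forces $\norm{\Pi K_{r,1}\xi}_\Wbf\leq\ep^{\beta}$ on a time interval $I\subset[0,1]$ of positive length. Choosing $r\in I$ close to $1$ and using the Lipschitz-in-time bound from Lemma \ref{lem:PathAdjoint} yields $\norm{\Pi\xi}_\Wbf\leq C(u)\,\ep^{\beta'}$, contradicting $\norm{\Pi\xi}_\Wbf>\delta$ unless $\ep$ is bounded below by a polynomial function of $1/(1+\norm u_\Hbf)$. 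Absorbing the polynomial $u$-dependencies into $V^p_{a,\eta}$ for $a$ sufficiently large (via Lemmas \ref{lem:PathJacobian}--\ref{lem:PathAdjoint} and Lemma \ref{lem:TwistBd}) produces the stated probability bound for arbitrary prescribed $p$.

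The hardest step will be the quantitative implementation of Norris's lemma in this infinite-dimensional setting: the commutators $[F,g_k]$ mix velocity, Lagrangian, and projective components through $\grad u$ and $\grad^2 u$, and their iteration causes polynomial blow-ups in $\norm u_\Hbf$ that must be absorbed into exponential moments via Lemma \ref{lem:TwistBd}. The restriction $\norm{\Pi\xi}_\Wbf>\delta$ in the definition of $S_\delta$ is essential here and circumvents the infinite-dimensionality of the ambient space: one only needs the finite-dimensional spanning property of the iterated brackets on $\Pi\Wbf\times T_v P\T^d$, which is precisely what the H\"ormander condition of \cite{BBPS18} provides. The high-frequency cutoff $\Pi$ and choice of $N_0$ must be coordinated with the bracket depth $J$ so that spanning holds within $\Pi\Wbf$, relying crucially on Assumption \ref{a:Highs}.
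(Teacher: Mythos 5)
Your plan takes a genuinely different route from the paper's, and it runs directly into an obstruction the authors explicitly flag as the reason they do \emph{not} follow the \cite{HM11} strategy. You propose iterating Norris's lemma to a finite bracket depth $J$ and invoking the parabolic H\"ormander condition. But the paper remarks that ``the proof in \cite{HM11} does not exactly apply in our setting due to the fact that the nonlinearity in the $(x,v)$ dynamics is not polynomial'': the projective drift $F^{x,v}$ involves pointwise evaluations $u(x)$, $Du(x)v$, $(\Id - v\otimes v)Du(x)v$, which are not Wiener polynomials in the velocity Fourier coefficients, so the Norris-lemma/Wiener-polynomial machinery of \cite{HM11} cannot be transplanted as-is. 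You identify this as the ``hardest step,'' but it is not merely a matter of absorbing blow-ups in $\norm{u}_{\Hbf}$ into $V_{a,\eta}$---it is a structural mismatch that would require building a new non-cancellation estimate from scratch. Separately, your proposed $\ep^c$-net over $\Pi\Wbf\times T_vP\T^d$ is unnecessary: in the paper's argument the negligible sets are built from moment bounds on quantities such as $\sup_k\norm{\partial_t^2 f_k}_{L^\infty}$ that are uniform over $\norm{\xi}_{\Wbf}=1$, so the implications hold simultaneously for all $\xi\in S_\delta$ on a single event.

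The paper's actual proof exploits the full non-degeneracy of the noise (Assumption \ref{a:Highs}) to reduce the required Lie-bracket depth to one. Lemma \ref{lem:lowerbound} (built on [Lemma 5.3, \cite{BBPS18}]) gives a uniform lower bound on $\sup_k\{|\langle g_k,\xi\rangle_\Wbf|,\,|\langle DF(\hat z)g_k,\xi\rangle_\Wbf|^2\}$, i.e.\ a single bracket already spans. Because of this, Norris's lemma is replaced by two applications of the elementary interpolation inequality (Lemma \ref{lem:interpolation}) to $f_k(t)=\int_0^t\langle g_k,K_{s,1}\xi\rangle_\Wbf\,\ds$, whose second time-derivative is exactly $\langle DF(\hat z_t)g_k,K_{t,1}\xi\rangle_\Wbf$ (Lemmas \ref{lem:first-implication} and \ref{lem:second-implication}). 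The second application needs only $C^{1/3}_t$ H\"older regularity of $t\mapsto DF(\hat z_t)g_k$, which is controlled by Lemma \ref{lem:TwistBd} and standard Wiener-process time regularity---no decomposition of $\eta_k^\xi$ into drift plus martingale and no lower-bound non-cancellation estimate enter. Evaluating at $t=1$ and feeding the result into Lemma \ref{lem:lowerbound} produces the contradiction with $\norm{\Pi\xi}_\Wbf>\delta$ for $\ep\lesssim_\delta(1+\norm{u}_\Hbf)^{-64}$, and the general $\ep$ bound follows by absorbing the $u$-dependence into $V^p_{a,\eta}$. If you wish to pursue the Norris/H\"ormander-iteration route you would first need a non-polynomial analogue of the \cite{HM11} Wiener-polynomial estimates; the paper's observation is that with fully non-degenerate noise you simply do not need it.
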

Once Proposition \ref{prop:malnodeg} is obtained, we  will set $\beta_k$ as in \cite{HM11} for suitably chosen parameters
\begin{align}
\beta_k = \frac{\delta^3}{V_{a,\eta}(u_k) (C^\ast)^{1/p}}.
\end{align}
From [Corollary 5.15, \cite{HM11}], Proposition \ref{prop:malnodeg} implies 
\begin{align}
\EE \norm{\Pi \beta_{2n} (\beta_{2n} + \mathcal{M}_{2n,2n+1})^{-1}}_{\Wbf \to \Wbf}^p \leq 2 \delta^p. \label{ineq:PiR}
\end{align}	
Once one has suitable estimates on the $J_{s,t}$ and $K_{s,t}$ (and by extension, $\mathcal{A}, \mathcal{A}^\ast, \mathcal{M}$) as well as the Malliavin derivatives of $J_{s,t}$ and $K_{s,t}$ (in order to estimate the Skorohod integral in Lemma \ref{lem:skor} via \eqref{ineq:SkorIto}) then Lemmas \ref{lem:rhodecay} and \ref{lem:skor} follow essentially as in \cite{HM11} (taking full advantage of Lemma \ref{lem:JacSmooth} and Remark \ref{rmk:high-freq-smooth} to switch to and from the $\Wbf$ and $\Hbf$ spaces). 
Most of the required estimates are immediate from Section \ref{sec:ExpProj}; we provide a brief sketch of the relevant Malliavin derivative estimates next.

\subsubsection{Malliavin derivatives of the Jacobian} \label{sec:MallPrelim}

Estimating the Malliavin derivatives of $J_{s,t}$ and similar quantities requires estimates on the second variation, which we denote $J_{s,t}^{(2)}$. 
This operator is given by (see \cite{HM11} or \cite{BBPS18}): 
\begin{align}
\partial_t J^{(2)}_{s,t}(\xi,\zeta) & = D^2F(\hat z_t)\left(J_{s,t}\xi, J_{s,t}\zeta \right) + DF(\hat z_t)J^{(2)}_{s,t}(\xi,\zeta) \quad\quad s < t. 
\end{align}
Observe that $D^2F$ is of the form: 
\begin{align}
D^2F(\hat{z}_t)(\xi,\zeta) =
\begin{pmatrix}
B(\xi,\zeta) + B(\zeta,\xi) \\
D \xi^u(x_t) \zeta^x + D \zeta^u(x_t) \xi^x + \xi^x \cdot D^2 u(x_t) \zeta^x \\
\mathfrak{V}_t(\xi,\eta),
\end{pmatrix} 
\end{align}
where $\mathfrak{V}$ is of the following general form: there exists a set of bounded operators $\mathcal{T}_j$ all multi-linear in all but the last components, 
\begin{align}
\mathfrak{V}_t & = \mathcal{T}_1(\xi^v,\zeta^v,\grad u_t(x_t),v_t) + \mathcal{T}_2(\xi^v, \zeta^{x}, \grad^2 u_t(x_t),v_t) + \mathcal{T}_3(\xi^v,\grad \zeta^u(x_t),v_t) \nonumber \\ & \quad + \mathcal{T}_4(\xi^x, \zeta^x,\grad^3 u_t(x_t),v_t) + \mathcal{T}_5(\xi^x,\grad^2 \zeta^u(x_t),v_t).  \label{def:J2v} 
\end{align}
The high numbers of derivatives appearing in $\mathfrak{V}$ makes obtaining estimates on $J^{(2)}_{s,t}$ non-obvious, indeed, it is bounded on high enough Sobolev spaces, but it is not clear that it defines a bounded operator on $(\Wbf\times T_{v_{s}} P\mathbb T^d)^{\otimes 2} \to \Wbf \times T_{v_t} P \mathbb T^d$. 
Nevertheless, the proof of Lemma \ref{lem:skor} uses bounds on Malliavin derivatives of $J_{s,t}$ in such low regularities (see \cite{HM11}). 
This requires taking advantage of the specific form of the Malliavin derivatives of $J_{s,t}$.
\begin{lemma}[Malliavin derivative bounds] \label{lem:MalliavinJA}
$\forall a \geq 1$ sufficiently large and all $p \geq 1$, $\eta \in (0,\eta^\ast)$ such that $p \eta \in (0,\eta^\ast)$, there exists a constant $C_M$ such that the following holds (where $C_L$ and $\gamma$ are as in \eqref{ineq:SuperLyapIter} above), 
\begin{align}
\EE \sup_{s,r \in [n,n+1]}\norm{\MalD_s J_{r,n+1}}^p_{\mathbf{W} \to \Wbf} & \lesssim e^{\frac{pC_L}{1-e^{-\gamma}} + pC_M} V_{a,\eta}^{p e^{-\gamma n}}(u) \label{ineq:MalDJ} \\
\EE \sup_{s \in [n,n+1]} \norm{\MalD_s \mathcal{A}_{n,n+1}}^p_{\mathbf{W} \to \Wbf} & \lesssim e^{\frac{pC_L}{1-e^{-\gamma}} + pC_M} V_{a,\eta}^{p e^{-\gamma n}}(u). 
\end{align}
\end{lemma}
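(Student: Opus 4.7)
My plan is to reduce $\MalD_s J_{r,n+1}$ to an explicit integral involving $J$ and $D^2F$, thereby sidestepping the need to control the second variation $J^{(2)}$ directly as a bounded operator at low regularity (in the sense of the $\mathbf{W}$ operator norm). The key representation is
\[
\MalD_s J_{r,n+1}(\xi,\eta) \;=\; J^{(2)}_{s,n+1}(J_{r,s}\xi,\, Q\eta) \;=\; \int_s^{n+1} J_{\tau,n+1}\, D^2 F(\hat z_\tau)\bigl(J_{r,\tau}\xi,\; J_{s,\tau}Q\eta\bigr)\,\dee\tau,
\]
which follows from applying $\MalD_s$ to the cocycle identity $J_{r,n+1} = J_{s,n+1}\,J_{r,s}$ (noting that $J_{r,s}$ is $\mathscr F_s$-measurable so its Malliavin derivative at time $s$ vanishes), combined with the Duhamel formula for the linearized flow, the identity $\MalD_s\hat z_\tau = J_{s,\tau}Q\eta$ for $\tau>s$, and the cocycle $J_{s,\tau}J_{r,s}=J_{r,\tau}$.

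To estimate this integral pathwise in $\mathbf W$, the crucial point is that Assumption \ref{a:Highs} makes $Q$ a very strong smoother: $Q:\mathbf W \to \mathbf H^\gamma$ is bounded for any $\gamma$ well in excess of $d/2+2$ (since $\alpha > 5d/2$), so $Q\eta$ is automatically as regular as required. Inspecting $D^2F$ via \eqref{def:J2v}, the most demanding regularity demands are that one argument's $u$-component lies in $W^{2,\infty}$ (from the $\mathcal T_5$ term) and that $u_\tau$ itself lies in $C^3$ (from the $\mathcal T_4$ term containing $\grad^3 u$); both are satisfied by placing $J_{s,\tau}Q\eta$ in $\mathbf H^\gamma$ with $\gamma > d/2+2$ and by using $\sigma > d/2+3$ for $u_\tau$. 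Combining Lemma \ref{lem:PathJacobian} applied to $\|J_{r,\tau}\|_{\mathbf W\to\mathbf W}$, $\|J_{s,\tau}\|_{\mathbf H^\gamma\to\mathbf H^\gamma}$, and $\|J_{\tau,n+1}\|_{\mathbf W\to\mathbf W}$ with the resulting $\mathbf W$-boundedness of $D^2F(\hat z_\tau)$, whose norm is polynomial in $\|u_\tau\|_{\mathbf H^\sigma}$, yields a pathwise estimate of the form
\[
\sup_{s,r\in[n,n+1]}\|\MalD_s J_{r,n+1}\|_{\mathbf W\to\mathbf W} \;\lesssim\; \exp\!\left(C\!\int_n^{n+1}\!\|u_\tau\|_{\mathbf H^r}\dee\tau\right)\bigl(1 + \sup_{\tau\in[n,n+1]}\|u_\tau\|_{\mathbf H^\sigma}^{q''}\bigr)
\]
for some $q''>0$.

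To derive the claimed expectation bound I condition on $\mathscr F_n$ and invoke Lemma \ref{lem:TwistBd} on the interval $[n,n+1]$, taking $a$ large enough to dominate $pq''$ and replacing $\eta$ by a slight enlargement $\tilde\eta \in (\eta, \eta^*/p)$. This absorbs both the exponential factor of $\|u_\tau\|_{\mathbf H^r}$ and the polynomial in $\|u_\tau\|_{\mathbf H^\sigma}$ into a $V^p_{a,\tilde\eta}(u_n)$ bound. Iterating the super-Lyapunov inequality \eqref{ineq:SuperLyapIter} over $[0,n]$ then converts $\EE V^p_{a,\tilde\eta}(u_n)$ into $e^{pC_M'}\, V^{p e^{-\gamma n}}_{a,\eta}(u)$ for an appropriate $C_M'$, yielding \eqref{ineq:MalDJ}. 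The bound on $\MalD_s\mathcal A_{n,n+1}$ is essentially identical: write $\mathcal A_{n,n+1}h=\int_n^{n+1}J_{\tau,n+1}Q h_\tau\,\dee\tau$, differentiate under the integral (the integrand vanishes for $\tau>s$), apply the representation to each $\MalD_s J_{\tau,n+1}$, and estimate as above; the additional $\tau$-integration contributes only a bounded factor.

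The main obstacle is the bookkeeping: one must verify the $\mathbf W$-boundedness of $D^2F(\hat z_\tau)$ as a bilinear form with its second argument in $\mathbf H^\gamma$, which requires a case-by-case inspection of the $B$ term in the velocity component, the pointwise Lagrangian terms, and each $\mathcal T_j$ in $\mathfrak V$, and then one must track the resulting powers of $\|u_\tau\|_{\mathbf H^\sigma}$ carefully enough that they can all be absorbed in the super-Lyapunov iteration at a single, $n$-independent cost in $a$. The proof never actually needs the parabolic smoothing of Lemma \ref{lem:JacSmooth}, since the smoothing of $Q$ provided by Assumption \ref{a:Highs} is already more than strong enough to place $J_{s,\tau}Q\eta$ in $\mathbf H^\gamma$ of arbitrary order below $1+\alpha$.
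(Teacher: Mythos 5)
Your proposal reproduces the paper's representation formula $\MalD_s J_{r,n+1}\xi = J^{(2)}_{s,n+1}(J_{r,s}\xi, \cdot)$ and the Duhamel form of $J^{(2)}$, and the expectation step via Lemma \ref{lem:TwistBd} and the super-Lyapunov iteration \eqref{ineq:SuperLyapIter} is the same as in the paper. The gap is in the pathwise estimate, and it is serious: the claim that $D^2F(\hat z_\tau)$ is $\Wbf$-bounded as a bilinear form $(\Wbf\times T_vP\T^d)\times(\Hbf^\gamma\times T_vP\T^d) \to \Wbf\times T_vP\T^d$ is false, and in consequence the assertion that ``the proof never actually needs the parabolic smoothing of Lemma \ref{lem:JacSmooth}'' is wrong.

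There are two distinct failures. First, $D^2F$ is a symmetric Hessian, so the expression \eqref{def:J2v} must be understood together with its symmetrization: terms like $\mathcal T_5(\zeta^x, \grad^2\xi^u(x_\tau), v_\tau)$ and, in the $x$-component of $D^2F$, $D\xi^u(x_\tau)\zeta^x$, do appear, and here $\xi^u = (J_{r,\tau}\xi_0)^u$ is only controlled in $\Wbf$. The pointwise evaluation $D\xi^u(x_\tau)$ requires at least $\Hbf^{d/2+1+\delta}$ regularity and $\grad^2\xi^u(x_\tau)$ requires $\Hbf^{d/2+2+\delta}$; neither is available without parabolic regularization of $J_{r,\tau}$. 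Second, even the velocity component $D^2F^u(\xi,\zeta) = B(\xi^u,\zeta^u) + B(\zeta^u,\xi^u)$ loses one derivative on the low-regularity argument: $B(\zeta^u,\xi^u) = \mathbb P(\zeta^u \cdot \grad\xi^u)$ lands in $\Hbf^{-1}$ relative to $\Wbf$ for $\xi^u \in \Wbf$, so it cannot be fed into $J_{\tau,n+1} : \Wbf \to \Wbf$ directly. The paper handles both problems through Lemma \ref{lem:JacSmooth}: the singular factor $(\tau-r)^{-\gamma/(2(d-1))}$ from smoothing $J_{r,\tau}$ is absorbed by the crucial smallness $|(J_{s,\tau}Q\eta)^{x,v}| \lesssim (\tau - s)\Gamma(\tau)$ (which holds because $Q\eta$ has vanishing Lagrangian components and these can only grow linearly), while the derivative loss in $B$ is recovered by the forward smoothing of $J_{\tau,n+1}$, producing the integrable factor $((n+1)-\tau)^{-1/(2(d-1))}$ appearing in the paper's estimate. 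Your write-up mentions the strong smoothing of $Q$ (which is needed and correct) but does not use the $(\tau-s)$ gain at all, and there is no substitute argument for it; without it, the claimed pathwise bound does not close.
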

\begin{remark}
Estimates on $\MalD_s \mathcal{A}^\ast_{n,n+1}$ follow by duality. 
\end{remark}
\begin{proof}
  Consider the Malliavin derivative with respect to the $k$-th Brownian motion (denoted by $\MalD_s^k$), which is given by (see e.g. [Lemma 5.13, \cite{HM11}]), 
\begin{align}
\MalD_s^k J_{r,n+1} \xi =
\begin{cases} 
J^{(2)}_{s,n+1}(J_{r,s}\xi,g_k) \quad r \leq s \\
J^{(2)}_{r,n+1}(J_{s,r}g_k,\xi) \quad s \leq r. 
\end{cases}
\end{align}
Next, we observe that
\begin{align}
J_{s,t}^{(2)}(\xi,\zeta) =  \int_s^t J_{r',t} \left(D^2F(\hat z_{r'})(J_{s,r'}\xi,J_{s,r'}\zeta) \right) \dr'. 
\end{align}
Therefore, for $r \leq s$
\begin{align}
J_{s,n+1}^{(2)}(J_{r,s}\xi,g_k) =  \int_s^{n+1} J_{r',n+1} \left(D^2F(\hat z_{r'})(J_{r,r'} \xi,J_{s,r'} g_k) \right) \dr'. 
\end{align}
Denote for suitably large parameters $C$, $q$, 
\begin{align}
\Gamma(r') = \exp\left(C \int_s^{r'} \norm{u_\tau}_{H^{r_0}} \dee\tau \right) \sup_{s < \tau < r'}\left(1 + \norm{u_\tau}_{\Hbf}^q\right). 
\end{align}
Due to the particular form of $J^{(2)}$ and in particular of $\mathfrak{V}$ in \eqref{def:J2v}, that $g_k$ is smooth and supported only in the velocity variables implies a significant simplification.
For example, $\abs{(J_{s,r'}g_k)^{x,v}} \lesssim \abs{k}^{-\alpha + \frac{d+3}{2}}\abs{s-r'} \Gamma(r')$.
Hence, where this appears, we may use the gain in time to balance loss of regularity through the smoothing estimate in Lemma \ref{lem:JacSmooth}.
For $r \leq s \leq r'$,  (using Remark \ref{rmk:JacSmthH} as well), 
\begin{align}
\norm{J_{r',n+1} \left(D^2F(\hat z_{r'})(J_{r,r'} \xi,J_{s,r'} g_k) \right)}_{\Wbf} & \lesssim \Gamma(r')\left(\frac{1}{ \left((n+1)-r'\right)^{\frac{1}{2(d-1)}}} +  \frac{(s-r')}{(r-r')^{\frac{1}{2(d-1)}}} \right) \norm{\xi}_{\Wbf} \\
  & \lesssim \Gamma(r')\left(\frac{1}{\left((n+1)-r'\right)^{\frac{1}{2(d-1)}}} +  1\right) \norm{\xi}_{\Wbf}. 
\end{align}
Applying Lemma \ref{lem:TwistBd} followed by Lemma \ref{ineq:SuperLyapIter} gives the desired result. 

The estimate for $s \leq r$ follows similarly, completing the required estimate \eqref{ineq:MalDJ}. 
The estimate on $\MalD_s \mathcal{A}_{n,n+1}$ then follows as in \cite{HM11}. 
\end{proof}

\subsubsection{Non-degeneracy of the Malliavin matrix} \label{sec:Malliavin}

In this section, we sketch the proof of Proposition \ref{prop:malnodeg}. The approach is similar to that of Hairer and Mattingly \cite{HM11}. However, the proof in \cite{HM11} does not exactly apply in our setting due to the fact that the nonlinearity in the $(x,v)$ dynamics is not polynomial. However, our situation is much nicer due to non-degenerate noise, and Lemma \ref{lem:lowerbound} tells us that we only need one Lie bracket to span the phase space (as opposed to infinitely many brackets in \cite{HM11}). As we saw in our previous paper \cite{BBPS18} this is enough to avoid any quantitative non-cancellation results like Norris' Lemma or estimates on Wiener polynomials. Since our proof is relatively simple and different enough from \cite{HM11} we include it below.

First, we record for the readers' convenience the following estimates proved in Section \ref{sec:Prelim}. 
\begin{lemma}\label{lem:dynamic-bounds}
For every $a \geq 2$, $\eta p \in (0,\eta^\ast)$, $p\geq 1$ we have
\[
	\E \sup_{0\leq t \leq 1} \|u_t\|^p_{\Hbf} \leqc V_{a,\eta}^p(u)
\]
\[
	\E \sup_{0< s <  1}\|K_{s,1}\|^p_{\Wbf \to \Wbf} \leqc V^p_{a,\eta}(u).
\]
\end{lemma}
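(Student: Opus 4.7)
The plan is to deduce both estimates as straightforward consequences of the pathwise bounds from Section \ref{sec:Jacobian} combined with the super-Lyapunov estimate of Lemma \ref{lem:TwistBd}, each applied with $T = 1$ and $\gamma = 0$ (which is permitted since $0 \leq \gamma < \gamma_*$), and with the Lyapunov function $V = V_{\beta,\eta'}$ tailored to the quantity being controlled.

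For the first estimate, I would use the pointwise inequality $\|u_t\|_{\Hbf}^p \leq (1+\|u_t\|_{\Hbf}^2)^{p/2} \leq V_{p/2,\eta}(u_t)$ and invoke Lemma \ref{lem:TwistBd} with $V = V_{p/2,\eta}$ and $\kappa = 0$ (admissible since $\eta \leq p\eta < \eta^*$). This immediately gives $\E \sup_{0\leq t\leq 1}\|u_t\|_{\Hbf}^p \lesssim V_{p/2,\eta}(u)$, and the crude comparison $V_{p/2,\eta}(u) \leq V_{a,\eta}^p(u)$, valid for $a \geq 2$, $p \geq 1$, and $\eta \leq p\eta$, finishes the first bound.

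For the second bound, the starting point is Lemma \ref{lem:PathAdjoint}, which produces the pathwise estimate
\[
\sup_{0 < s < 1} \|K_{s,1}\|_{\Wbf \to \Wbf}^p \lesssim \exp\left(pC \int_0^1 \|u_\tau\|_{\Hbf^r}\dee\tau\right)
\]
for a suitable $r \in (\tfrac{d}{2}+1,3)$. Taking expectations and applying Lemma \ref{lem:TwistBd} with $V = V_{0,p\eta}$ and $\kappa = pC$ (admissible since $p\eta < \eta^*$), together with the pointwise bound $V_{0,p\eta}(u_t) \geq 1$ used to discard the supremum factor on the left, yields $\E \exp\bigl(pC \int_0^1 \|u_\tau\|_{\Hbf^r}\dee\tau\bigr) \lesssim V_{0,p\eta}(u) = \exp(p\eta\|u\|_{\Wbf}^2) \leq V_{a,\eta}^p(u)$, which is what is needed.

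Neither step presents a real obstacle; the only minor bookkeeping point is to match the Lyapunov exponent structure so that the final right-hand side collapses to exactly $V_{a,\eta}^p(u)$ rather than to a different admissible $V_{\beta',\eta'}$, which is handled by the two trivial comparisons above once the constraint $p\eta < \eta^*$ is used in picking admissible parameters for Lemma \ref{lem:TwistBd}.
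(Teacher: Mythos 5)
Your proof is correct and follows essentially the same route the paper intends: the paper gives no explicit proof but simply cites Section 3, and the natural way to combine the pathwise bound of Lemma \ref{lem:PathAdjoint} with the exponential moment estimate of Lemma \ref{lem:TwistBd} (applied at $\gamma=0$, $T=1$) is exactly what you do; the parameter-matching comparisons $V_{p/2,\eta}(u)\leq V_{a,\eta}^p(u)$ and $V_{0,p\eta}(u)\leq V_{a,\eta}^p(u)$ under $a\geq 2$, $p\geq 1$, $p\eta<\eta^\ast$ are all valid.
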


Naturally, one of the main ingredients is a uniform spanning condition on Lie brackets to verify hypoellipticity.  
The following is a straightforward consequence of the spanning lemma in our previous work [Lemma 5.3, \cite{BBPS18}].
\begin{lemma}\label{lem:lowerbound}
For each initial $\hat{z} = (u,x,v) \in \Hbf\times P\T^d$, $\xi \in \Wbf\times T_vP\T^d$ with $\|\xi\|_{\Wbf} = 1$ and $s$ such that $(2\alpha +d)/4 < s < \sigma - 2(d-1)$ we have 
\[
	\sup_{k\in \mathbb{K}}\left\{|\langle g_k, \xi\rangle_{\Wbf}|,|\langle DF(\hat{z})g_k,\xi\rangle_{\Wbf}|^2\right\} \geqc_{s} \frac{\|\xi\|_{\Hbf^{-s}}^2}{(1+\|u\|_{\Hbf})^2}.
\]
\end{lemma}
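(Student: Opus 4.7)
The plan is to decompose $\xi = (\xi^u, \xi^x, \xi^v) \in \Wbf \times T_x\T^d \times T_v P\T^d$ and then extract information about $\xi^u$ from $\langle g_k, \xi\rangle_{\Wbf}$ and about the projective components $(\xi^x, \xi^v)$ from $\langle DF(\hat z) g_k, \xi\rangle_{\Wbf}$. Direct computation gives
\[
\langle g_k, \xi\rangle_{\Wbf} = q_k \langle e_k, \xi^u\rangle_{\Wbf},
\]
while linearizing $F(\hat z) = (-B(u,u) - Au,\, u(x),\, (I - v\otimes v) Du(x) v)$ at $\hat z$ in the direction $g_k = (q_k e_k, 0, 0)$ yields
\[
DF(\hat z) g_k = q_k\bigl(-B(u, e_k) - B(e_k, u) - A e_k,\ e_k(x),\ (I - v\otimes v) De_k(x) v\bigr).
\]
In particular, $\langle DF(\hat z) g_k, \xi\rangle_{\Wbf}$ splits as a ``velocity coupling'' term bounded by $(1+\|u\|_\Hbf)|q_k|\|e_k\|_{\Hbf^s}\|\xi^u\|_{\Hbf^{-s}}$ plus a ``projective'' term $q_k\bigl(e_k(x)\cdot \xi^x + \xi^v \cdot (I - v\otimes v) De_k(x) v\bigr)$ depending only on the finite-dimensional coordinates.

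The first main step will be to establish the one-sided bound
\[
\|\xi^u\|_{\Hbf^{-s}}^2 \lesssim \sup_{k \in \mathbb K} |\langle g_k, \xi\rangle_{\Wbf}|.
\]
Using Assumption~\ref{a:Highs} ($|q_k| \approx |k|^{-\alpha}$) together with the constraint $\|\xi^u\|_\Wbf \leq \|\xi\|_\Wbf = 1$, I plan to prove this via a Cauchy-Schwarz interpolation trading the $\ell^\infty$ bound coming from the sup against the $\ell^2$ mass supplied by $\|\xi^u\|_\Wbf \leq 1$. The resulting weight-sum convergence criterion is what produces exactly the lower bound $s > (2\alpha + d)/4$ in the hypothesis; the passage from a linear quantity on the left to a quadratic one on the right is legitimate precisely because $\|\xi^u\|_{\Hbf^{-s}} \leq 1$.

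The second main step will be to invoke the spanning result Lemma~5.3 of \cite{BBPS18}, whose purpose is to provide a uniform-in-$(x,v)$ estimate of the form
\[
\sup_{k \in \mathbb K} \bigl|q_k\bigl(e_k(x)\cdot \xi^x + \xi^v \cdot (I - v\otimes v) De_k(x) v\bigr)\bigr|^2 \gtrsim |\xi^x|^2 + |\xi^v|^2,
\]
with the upper constraint $s < \sigma - 2(d-1)$ ensuring enough Sobolev headroom to make pointwise sense of $e_k$ and $De_k$. Applying the triangle inequality after squaring and using the velocity-coupling bound from the first paragraph gives
\[
\sup_k |\langle DF(\hat z) g_k, \xi\rangle_{\Wbf}|^2 \gtrsim c(|\xi^x|^2 + |\xi^v|^2) - C(1 + \|u\|_\Hbf)^2 \|\xi^u\|_{\Hbf^{-s}}^2.
\]

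To close the argument I will take a suitable weighted combination of the two suprema: adding a sufficiently large multiple of $\sup_k |\langle g_k, \xi\rangle|$ absorbs the $\|\xi^u\|_{\Hbf^{-s}}^2$ error above, and since $|\xi^x|^2 + |\xi^v|^2$ trivially dominates the finite-dimensional contributions to $\|\xi\|_{\Hbf^{-s}}^2$, the desired bound follows. The main obstacle is exactly this cross-coupling: because the velocity-coupling piece of $DF(\hat z) g_k$ carries a factor $1 + \|u\|_\Hbf$, any attempt to isolate the pure projective spanning condition must pay a price of order $(1 + \|u\|_\Hbf)^2$, which is precisely where this factor appears on the right-hand side of the lemma.
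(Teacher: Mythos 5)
Your proof is correct and follows essentially the same route as the paper's: decompose $\xi = (\xi^u, \xi^{x,v})$, recover $\|\xi^u\|_{\Hbf^{-s}}$ from $\sup_k |\langle g_k,\xi\rangle_\Wbf|$ via Assumption~\ref{a:Highs} and a Cauchy--Schwarz interpolation against the constraint $\|\xi^u\|_\Wbf \leq 1$, split $\langle DF(\hat z)g_k,\xi\rangle_\Wbf$ into velocity-coupling and projective pieces, invoke Lemma 5.3 of \cite{BBPS18} for the projective spanning, and absorb the $(1+\|u\|_\Hbf)$ cross-coupling error by the first step. The one small imprecision is your explanation of the upper constraint $s < \sigma - 2(d-1)$ --- it is not about making pointwise sense of $e_k, De_k$ (these are always smooth), but rather about ensuring $q_k \|e_k\|_{\Hbf^{s + 2(d-1)}}$ stays uniformly bounded so that the $Ae_k$ contribution to the velocity coupling (with $A$ of order $2(d-1)$) can be paired against $\|\xi^u\|_{\Hbf^{-s}}$; this does not affect the structure of the argument.
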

\begin{proof}
Denote $\xi = (\xi_u,\xi_{x,v})$. First,  for $s > (2\alpha + d)/4$ it is not hard to deduce
\begin{equation}\label{eq:uniform-lower-bound1}
	\sup_{k\in\mathbb K}|\langle g_k, \xi\rangle_{\Wbf}| \geqc_s \|\xi^u\|_{\Hbf^{-s}}^2.
\end{equation}
Therefore \eqref{eq:uniform-lower-bound1} follows since $4s - 2\alpha > d$ and $\|\xi^u\|_{\Wbf} = 1$.
Next, we note that
\[
	\langle DF(\hat{z})g_k,\xi\rangle_{\Wbf} = q_k\langle B(e_k,u) + B(u,e_k),\xi_u\rangle_{\Wbf} + \langle Ae_k,\xi_u\rangle_{\Wbf} + \langle DF^{x,v}(\hat{z}) g_k,\xi_{x,v}\rangle_{T_v P \mathbb T^d},
\]
and that since $s< \sigma -2$ and $\sup_{k\in \mathbb{K}}q_k \|e_k\|_{\Hbf} < \infty$, 
\begin{align}
	&\|\xi^u\|_{\Hbf}^2+ \sup_{k\in \mathbb{K}}|\langle DF^{x,v}(\hat{z}) g_k,\xi^{x,v}\rangle_{\Wbf}|^2\\
	&\qquad \leqc \sup_{k\in \mathbb{K}}|\langle DF(\hat{z})g_k,\xi\rangle_{\Wbf}|^2 + (1+\|u\|_{\Hbf})^2\|\xi^u\|_{\Hbf}^2 \\ 
  &\qquad \leqc (1+\|u\|_{\Hbf})^2 \sup_{k\in \mathbb{K}}\left\{|\langle g_k, \xi\rangle_{\Wbf}|,|\langle DF(\hat{z})g_k,\xi\rangle_{\Wbf}|^2\right\}, 
\end{align}
where the last line followed from \eqref{eq:uniform-lower-bound1}. The proof is complete upon appealing to [Lemma 5.3, \cite{BBPS18}] and deducing 
\[
	\sup_{k\in \mathbb{K}}|\langle DF^{x,v}(\hat{z}) g_k,\xi^{x,v}\rangle_{T_vP\T^d}| \geqc \|\xi^{x,v}\|_{T_vP\T^d}.
\]
\end{proof}

Before continuing with the nondegeneracy calculation, we introduce the following useful short-hand notation from [Definition 6.9, \cite{HM11}].
\begin{definition}
Given a set $H = \set{H^\eps}_{\eps \leq 1}$, of measurable subsets $H^\eps \subset \Omega$, we will say ``$H$ is a family of negligible events'' if $\forall p \geq 1$, $\exists C_p$ such that $\PP(H^\eps) \leq C_p \eps^p$. Given such a family and a statement $\Phi_\eps$, we say ``$\Phi_\eps$ holds modulo $H$'' if $\forall \eps \leq 1$, the statement $\Phi_\eps$ holds on $(H^\eps)^c$. Finally we say the family $H$ is ``universal'' if it does not depend on the problem at hand. We say a set $H$ is ``$\Psi$-controlled'' for a function $\Psi$ if $C_p \lesssim_p \Psi^p(\hat z)$ (the initial condition of the projective process). 
\end{definition}

Recall the H\"older semi-norm of a function $f:[0,1]\to\R$ for $\alpha \in (0,1]$ is defined by
\begin{align}
[f]_{\alpha} = \sup_{t \neq s} \frac{|f(t) - f(s)|}{\abs{s-t}^{\alpha}}. 
\end{align}
Recall the following standard interpolation lemma (see e.g. \cite{HM11}). 
\begin{lemma}\label{lem:interpolation}
Let $f$ be a $C^{1,\alpha}$ function on $[0,1]$, $\alpha \in (0,1]$, then the following inequality holds
\[
	\|\partial_t f\|_{L^\infty} \leq 4 \|f\|_{L^\infty}\max\left\{1,\|f\|_{L^\infty}^{-\frac{1}{1+\alpha}}[\partial_t f]_{\alpha}^{\frac{1}{1+\alpha}}\right\}.
\]
\end{lemma}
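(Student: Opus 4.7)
Set $M = \|f\|_{L^\infty}$ and $H = [\partial_t f]_{\alpha}$, so the target inequality reads $\|\partial_t f\|_{L^\infty} \le 4M$ when $H \le M$, and $\|\partial_t f\|_{L^\infty} \le 4 M^{\alpha/(1+\alpha)} H^{1/(1+\alpha)}$ otherwise. My plan is the standard Landau--Kolmogorov interpolation: combine a mean-value-theorem bound on the derivative at some auxiliary point with H\"older transport back to a generic $t_0$, and then optimize the length of the interval over which the mean value theorem is applied.

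First I would establish the basic inequality: for any $t_0 \in [0,1]$ and any $h \in (0,1]$, pick an interval $[a,b]\subset[0,1]$ of length $h$ containing $t_0$ (this is possible for every $h\in (0,1]$ since we are free to place $t_0$ anywhere in the interval). The mean value theorem furnishes $\xi \in [a,b]$ with $\partial_t f(\xi) = (f(b)-f(a))/h$, so $|\partial_t f(\xi)| \le 2M/h$. Since $|\xi - t_0| \le h$ and $\partial_t f$ is H\"older $\alpha$,
\begin{equation}\label{eq:plan-key}
  |\partial_t f(t_0)| \;\le\; \frac{2M}{h} + H h^{\alpha}\,.
\end{equation}

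The optimization in $h$ then splits into the two regimes encoded by the $\max$ in the statement. If $H \le M$, I take $h=1$ in \eqref{eq:plan-key} to obtain $|\partial_t f(t_0)| \le 2M + H \le 3M \le 4M$. If $H > M$, then $h_\star := (M/H)^{1/(1+\alpha)} \in (0,1)$ is admissible, and substituting gives
\begin{equation*}
  |\partial_t f(t_0)| \;\le\; 2M (H/M)^{1/(1+\alpha)} + H (M/H)^{\alpha/(1+\alpha)} \;=\; 3\, M^{\alpha/(1+\alpha)} H^{1/(1+\alpha)}\,,
\end{equation*}
which is certainly bounded by $4 M^{\alpha/(1+\alpha)} H^{1/(1+\alpha)}$. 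Taking the supremum over $t_0 \in [0,1]$ in either regime and repackaging the two cases into the single $\max$ expression yields the stated inequality.

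There is no real obstacle here: the result is purely analytic and one-dimensional, independent of all the SPDE machinery developed earlier. The only mildly delicate point is making sure that an interval $[a,b]\subset[0,1]$ of length $h\le 1$ containing $t_0$ can always be chosen (otherwise one loses a factor $2$ by being restricted to $h\le 1/2$, which would force a larger constant than $4$). With the choice $h_\star$ above and the above case split, the explicit constant $4$ is comfortably achieved.
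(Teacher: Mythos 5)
Your proof is correct. Note that the paper does not actually supply a proof of this lemma; it is stated as ``standard'' with a pointer to Hairer--Mattingly, where the argument runs essentially in the opposite direction from yours: one picks a maximizer $x_0$ of $|\partial_t f|$, uses the H\"older bound to show $|\partial_t f| \geq \tfrac{1}{2}\|\partial_t f\|_\infty$ on an interval around $x_0$ of length $\min\{\delta,1\}$ with $\delta = (\|\partial_t f\|_\infty / 2[\partial_t f]_\alpha)^{1/\alpha}$, and then integrates $\partial_t f$ over that interval to contradict the $L^\infty$ bound on $f$ unless the claimed inequality holds. Your version is the classical Landau--Kolmogorov route: locate a point of small derivative via the mean value theorem on a window of length $h$ containing an arbitrary $t_0$, transport back by H\"older continuity, and optimize $h$. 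The two arguments are dual in spirit and both land the constant $4$ without strain; your case split $H \le M$ versus $H > M$ exactly reproduces the $\max$ in the statement, and your observation that a subinterval of any length $h \le 1$ containing $t_0$ can always be placed in $[0,1]$ is precisely what keeps the optimal $h_\star$ admissible without an extra factor of $2$. The only cosmetic addition is to dispose of the degenerate case $\|f\|_{L^\infty} = 0$ (forcing $f \equiv 0$) up front, since $h_\star$ and the right-hand side involve negative powers of $\|f\|_{L^\infty}$.
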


Recall the following formula for the Malliavin matrix
\[
	\langle\xi,\mathcal{M}_{0,1}\xi\rangle_{\Wbf} = \sum_{k \in \mathbb K} \int_0^1 \langle g_k, K_{s,1}\xi\rangle_{\mathbf{W}}^2\ds.
\]
In light of this, we have the following implication:
\begin{lemma}\label{lem:first-implication}
For all $a\geq 2$ sufficiently large and $\forall \eta \in (0,\eta^\ast)$ the following implication holds
\[
	\langle \xi, \mathcal{M}_{0,1}\xi\rangle_{\Wbf} \leq \ep\quad \Rightarrow \quad \sup_{k}\sup_{0\leq t\leq 1}\left|\langle g_k, K_{t,1}\xi\rangle_{\Wbf}\right| \leq \ep^{1/8},
\]
modulo a $V_{a,\eta}$-controlled negligible set. 
\end{lemma}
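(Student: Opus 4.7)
The plan is to extract from the hypothesis a uniform-in-$k$ $L^2_t$ bound on each function $f_k(t) := \langle g_k, K_{t,1}\xi\rangle_{\Wbf}$, establish a uniform-in-$k$ Lipschitz bound on $t \mapsto f_k(t)$ on the complement of a negligible event, and then convert these into a uniform $L^\infty_t$ bound via a standard interpolation. The randomness enters only through the Lipschitz constant, which is $V_{a,\eta}$-controlled thanks to the path-wise estimates from Section~\ref{sec:ExpProj}.

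For the $L^2$ input, the identity \eqref{eq:MalNonDeg} immediately gives
\[
\sum_{k \in \mathbb{K}} \int_0^1 f_k(s)^2 \, \ds \;=\; \langle \xi, \mathcal{M}_{0,1}\xi\rangle_{\Wbf} \;\leq\; \ep,
\]
so $\|f_k\|_{L^2([0,1])} \leq \sqrt{\ep}$ for every $k$. For the Lipschitz input, the backward equation $\partial_s K_{s,1} = -DF(\hat z_s)^\ast K_{s,1}$ yields
\[
\partial_s f_k(s) \;=\; -\langle DF(\hat z_s) g_k, K_{s,1}\xi\rangle_{\Wbf}.
\]
The critical point is that the factor $q_k$ in $g_k = (q_k e_k, 0, 0)$ absorbs all spatial derivatives of $e_k$ produced by $A$, $B$, and the pointwise evaluation terms in $DF$: since $q_k |k|^n \lesssim |k|^{n-\alpha}$ and $\alpha > 5d/2$, one gets $\sup_k \|DF(\hat z) g_k\|_{\Wbf} \lesssim (1 + \|u\|_{\Hbf})^{q'}$ for some fixed $q'$. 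Combined with Lemma~\ref{lem:PathAdjoint}, this gives the uniform Lipschitz bound
\[
\sup_{k}\, \sup_{s \in [0,1]} |\partial_s f_k(s)| \;\leq\; M \;:=\; C \bigl(1 + \sup_{s \in [0,1]} \|u_s\|_{\Hbf}\bigr)^{q'} \sup_{s \in [0,1]} \|K_{s,1}\|_{\Wbf \to \Wbf}.
\]

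Next, one applies the elementary interpolation valid for any $M$-Lipschitz $f:[0,1] \to \R$,
\[
\|f\|_{L^\infty} \;\lesssim\; M^{1/3} \|f\|_{L^2}^{2/3} + \|f\|_{L^2},
\]
obtained by comparing $|f|$ against $\|f\|_\infty - M|t-t_0|$ on an interval of length $\min(1,\|f\|_\infty/(2M))$ around the maximizer $t_0$. On the event $\{M \leq c\,\ep^{-5/8}\}$ (for a small universal $c > 0$), this gives, uniformly in $k$,
\[
\|f_k\|_{L^\infty([0,1])} \;\lesssim\; \ep^{-5/24}\ep^{1/3} + \ep^{1/2} \;\leq\; \ep^{1/8},
\]
provided $\ep$ is smaller than a universal constant; the exponents balance exactly since $-5/24 + 1/3 = 1/8$.

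It remains to show $H^\ep := \{ M > c\, \ep^{-5/8}\}$ is $V_{a,\eta}$-controlled negligible. By H\"older and Lemma~\ref{lem:dynamic-bounds}, for every $q \geq 1$ we have $\E M^q \lesssim_q V_{a,\eta}(u)^{C q}$ provided $a$ is chosen large enough and $C \eta q < \eta^\ast$. By Chebyshev, for any $p \geq 1$,
\[
\P(H^\ep) \;\leq\; c^{-8p/5}\, \ep^{p}\, \E M^{8p/5} \;\lesssim_p\; V_{a,\eta}(u)^{8Cp/5}\, \ep^p,
\]
which, after a harmless rescaling of $(a,\eta)$ (allowed because the statement permits $a$ arbitrarily large and $\eta$ arbitrarily small), gives exactly $\P(H^\ep) \lesssim_p V_{a,\eta}(u)^p \ep^p$. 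Thus the implication holds on the complement of the $V_{a,\eta}$-controlled negligible family $\{H^\ep\}_{\ep \leq 1}$. The main bookkeeping obstacle is the uniform-in-$k$ bound on $\|DF(\hat z)g_k\|_{\Wbf}$, which requires carefully tracing the number of derivatives falling on $e_k$ through the nonlinearities in each component of $F$ and invoking the high-regularity Assumption~\ref{a:Highs}.
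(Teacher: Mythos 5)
Your proof is correct and follows essentially the same scheme as the paper's. Both arguments derive a uniform-in-$k$ "smallness in an integral norm" from the Malliavin-matrix bound \eqref{eq:MalNonDeg}, bound the relevant higher derivative using the pathwise estimates on $\|u_s\|_{\Hbf}$ and $\|K_{s,1}\|_{\Wbf\to\Wbf}$ from Section \ref{sec:ExpProj}, and promote this to a pointwise bound via an elementary Landau--Kolmogorov-type interpolation, with the negligible event furnished by Chebyshev applied to moments of the derivative bound.

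The one genuine (but cosmetic) difference lies in which interpolation inequality is invoked and to which function it is applied. The paper works with the antiderivative $F_k(t)=\int_0^t \langle g_k, K_{s,1}\xi\rangle_{\Wbf}\,\ds$, notes $\|F_k\|_{\infty}\leq\sqrt{\ep}$ by Cauchy--Schwarz, bounds $\|\partial_t^2 F_k\|_{\infty}$ in moments, and then applies Lemma \ref{lem:interpolation} with $\alpha=1$ to $F_k$. You instead work directly with $f_k(t)=\langle g_k,K_{t,1}\xi\rangle_{\Wbf}=\partial_t F_k$, take $\|f_k\|_{L^2}\leq\sqrt{\ep}$ as the integral input, and use the two-quantity interpolation $\|f\|_{\infty}\lesssim M^{1/3}\|f\|_{L^2}^{2/3}+\|f\|_{L^2}$ for $M$-Lipschitz functions, which you prove inline. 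The derivative bound you need ($M\lesssim(1+\sup_s\|u_s\|_{\Hbf})^{q'}\sup_s\|K_{s,1}\|_{\Wbf\to\Wbf}$, uniformly in $k$ by Assumption \ref{a:Highs}) is precisely the same quantity the paper controls as $\|\partial_t^2 F_k\|_{\infty}$, and the moment estimate and Chebyshev bookkeeping are identical in both. Either route is fine; the paper's version has the slight advantage of reusing the already-stated Lemma \ref{lem:interpolation} verbatim in both Lemma \ref{lem:first-implication} and Lemma \ref{lem:second-implication}, while yours avoids introducing the auxiliary antiderivative.
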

\begin{proof}
Fix an arbitrary $\eta^\prime \in (0,\eta^\ast)$ and $a' \geq 2$. 
Define $f_k(t) = \int_0^t \langle g_k, K_{s,1}\xi\rangle_{\Wbf}\ds$ and note that $f_k(t)$ is $C^2$ and satisfies
\[
	\partial_t f_k = \langle g_k, K_{t,1}\xi\rangle_{\Wbf}\quad \text{and}\quad \partial^2_t f_k = \langle DF(\hat{z}_t)g_k, K_{t,1}\xi\rangle_{\Wbf}.
\]
Additionally, when $\langle \xi, \mathcal{M}_1\xi\rangle_{\Wbf} \leq \ep$, $\|\xi\|_{\Wbf} = 1$, we have $\sup_{k}\|f_k\|_{L^\infty}\leq  \ep^{1/2}$ by \eqref{eq:MalNonDeg} and Lemma \ref{lem:interpolation}. 

Furthermore, by Lemma \ref{lem:dynamic-bounds} there holds
\[
	\E \sup_k\|\partial^2_t f_k\|_{L^\infty}^p \leqc \left(\E \sup_{0\leq t \leq 1}(1 + \|u_t\|_{\Hbf^{2(d-1)}})^{2p}\right)^{1/2}\left(\E \sup_{0\leq t \leq 1}\|K_{t,1}\|_{\Wbf \to \Wbf}^{2p}\right)^{1/2}\leqc  V^{p}_{\beta,\eta'}(u). 
\]
and therefore, Chebyshev's inequality implies that modulo a $V_{4a',\eta'}$-controlled negligible set one has the bound $\sup_k\|\partial^2_tf_k\|_{L^\infty} \leq (1/16)\ep^{-1/4}$. Applying the interpolation Lemma \ref{lem:interpolation} to $f_k$ with $\alpha =1$ modulo this family of negligible sets gives the implication. 
\end{proof}

\begin{lemma}\label{lem:second-implication}
For all $a \geq 2$ sufficiently large and $\forall \eta \in (0,\eta^\ast)$ sufficiently small, the following implication holds
\[
	\sup_{k}\sup_{0\leq t\leq 1}\left|\langle g_k, K_{t,1}\xi\rangle_{\Wbf}\right| \leq \ep \quad \Rightarrow \quad \sup_k\sup_{0\leq t\leq 1}|\langle DF(\hat z_t)g_k, K_{t,1}\xi\rangle_{\Wbf}|\leq \ep^{1/8}
\]
modulo a $V_{a,\eta}$-controlled negligible set.
\end{lemma}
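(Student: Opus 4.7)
The plan is to mirror the strategy of Lemma \ref{lem:first-implication}, now applying the interpolation Lemma \ref{lem:interpolation} to the function
\[
\phi_k(t) := \langle g_k, K_{t,1}\xi\rangle_{\Wbf}, \qquad t \in [0,1].
\]
Since $g_k$ is deterministic and, for fixed $\omega$, $s \mapsto K_{s,1}\xi$ solves the backward ODE $\partial_s K_{s,1}\xi = -DF(\hat z_s)^\ast K_{s,1}\xi$ with $K_{1,1}=I$, the map $\phi_k$ is pathwise $C^1$ in $t$ with
\[
\partial_t \phi_k(t) = -\langle DF(\hat z_t) g_k, K_{t,1}\xi\rangle_{\Wbf},
\]
which is precisely the quantity whose supremum we wish to bound by $\ep^{1/8}$. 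The hypothesis is $\sup_{k,t}|\phi_k(t)|\le\ep$. If one establishes a Hölder regularity estimate $\sup_k [\partial_t\phi_k]_{\alpha}\le C$ (modulo a $V$-controlled negligible set) for some $\alpha > 1/7$, then Lemma \ref{lem:interpolation} yields
\[
\|\partial_t\phi_k\|_{L^\infty} \;\lesssim\; \|\phi_k\|_{L^\infty}^{\alpha/(1+\alpha)}\,[\partial_t\phi_k]_{\alpha}^{1/(1+\alpha)}\;\leq\; C'\,\ep^{\alpha/(1+\alpha)} \;\leq\; \ep^{1/8}
\]
for all sufficiently small $\ep$, which is the desired implication (exactly as in the $\alpha=1$ case of Lemma \ref{lem:first-implication}).

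The technical work is therefore to produce the Hölder bound on $\partial_t\phi_k$, uniformly in $k$. Split
\[
\partial_t\phi_k(t)-\partial_t\phi_k(s) = -\langle DF(\hat z_t)g_k - DF(\hat z_s)g_k,\,K_{t,1}\xi\rangle_{\Wbf} - \langle DF(\hat z_s)g_k,\,K_{t,1}\xi - K_{s,1}\xi\rangle_{\Wbf}.
\]
The second term is Lipschitz in $|t-s|$ pathwise, as $K_{t,1}\xi - K_{s,1}\xi = -\int_s^t DF(\hat z_r)^\ast K_{r,1}\xi\,dr$, and one controls the resulting moments via Lemma \ref{lem:dynamic-bounds} applied both to $K_{r,1}$ and to $\|u_r\|_{\Hbf}$ (with $\|DF(\hat z)g_k\|_{\Wbf}\lesssim |q_k|(|k|^{d/2+2}+|k|^2\|u\|_{\Hbf})$ using that $g_k=(q_k e_k,0,0)$). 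The first term requires time-Hölder regularity of $t\mapsto DF(\hat z_t)g_k$ in $\Wbf$. The $(x_t,v_t)$-dependence is pathwise Lipschitz in $t$ (the underlying Lagrangian/projective ODEs have $L^\infty$-bounded right-hand sides, controlled by $\|u_\cdot\|_{\Hbf}$). The $u_t$-dependence is the crux: from the mild formulation \eqref{eq:Mild} and Proposition \ref{prop:WPapp}, for any $\alpha < 1/2$ one has $\|u_t-u_s\|_{\Wbf}\lesssim |t-s|^{\alpha}\,\Xi$ with $\Xi$ having $V_{a,\eta}^p$-moments (combining the contributions of the linear semigroup, the bilinear term, and the stochastic convolution). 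The factor $|q_k|$ together with Assumption \ref{a:Highs} ($\alpha > 5d/2$) renders all of the Sobolev-in-$k$ losses summable, giving uniformity in $k$.

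Combining these estimates yields $\E\,\sup_k [\partial_t\phi_k]_{1/3}^{p}\lesssim V_{a,\eta}^p(u)$ for all $p$ sufficiently small relative to $\eta^*$, and then Chebyshev delivers a $V_{a,\eta}$-controlled family of negligible sets outside of which $\sup_k[\partial_t\phi_k]_{1/3}\le \ep^{-\delta}$ for a small $\delta>0$; the interpolation step above then closes with some room to spare (since $1/3 \cdot 3/4 = 1/4 > 1/8$). The main obstacle is precisely this uniform-in-$k$, Lyapunov-controlled time-Hölder estimate on $t\mapsto DF(\hat z_t)g_k$: it mixes the low time regularity of the SPDE ($1/2-\delta$ Hölder, not Lipschitz as in Lemma \ref{lem:first-implication}) with the pointwise-in-space evaluations in $DF$, and requires carefully using the spatial smoothness of the noise modes $e_k$ together with the decay $|q_k|\lesssim |k|^{-\alpha}$ to absorb all factors of $|k|$ arising from derivatives of $e_k(x_t)$. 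Once this estimate is in hand, the rest of the argument is a direct transcription of the Chebyshev/interpolation scheme used for Lemma \ref{lem:first-implication}.
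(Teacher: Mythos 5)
Your proposal is correct and follows essentially the same route as the paper: both apply the interpolation Lemma \ref{lem:interpolation} with a fractional exponent ($\alpha = 1/3$ in the paper; you keep $\alpha$ free but it serves the same role) to $\phi_k(t) = \langle g_k, K_{t,1}\xi\rangle_{\Wbf}$ (the paper denotes this $\partial_t f_k$), reduce the problem to a $V$-controlled moment bound on the $1/3$-Hölder seminorm of $\partial_t\phi_k$, and close with Chebyshev. The paper is terser about how the Hölder moment bound is obtained (citing high regularity of $\Hbf$, time-regularity of Wiener processes, and Lemma \ref{lem:dynamic-bounds}), whereas you usefully spell out the split into the increment of $DF(\hat z_\cdot)g_k$ and the increment of $K_{\cdot,1}\xi$, but the underlying argument is the same.
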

\begin{proof}
Fix $\eta^\prime \in (0,\eta^\ast)$ and $a' \geq 2$  and let $f_k = \int_0^t \langle g_k, K_{s,1}\xi\rangle_{\Wbf}\ds$. Our goal will be to apply the interpolation Lemma \ref{lem:interpolation} to $\partial_t f_k$. However, since $DF(\hat z_t)g_k$ is only H\"older continuous, $\partial^2_tf_k$ is not $C^1$ and we will need to obtain moment estimate on the $\alpha = 1/3$ H\"{o}lder semi-norm of $\partial^2_tf_k$. Indeed, applying the interpolation Lemma \ref{lem:interpolation} to $\partial_tf_k$ with $\alpha = 1/3$ gives
\begin{equation}\label{eq:interpbound-prelim}
	\sup_k\|\partial_t^2f_k\|_{L^\infty} \leq 4\ep^{1/4}\max\left\{\ep^{3/4},\sup_k[\partial^2_t f_k]_{1/3}^{3/4}\right\}. 
\end{equation}
The required H\"older estimate on $\partial_t^2 f_k$ then follows from the high regularity of $u\in \Hbf$, by standard time regularity estimates on Wiener processes and Lemma \ref{lem:dynamic-bounds}, 
\begin{align}
	\E \sup_k [\partial^2_t f_k]_{1/3}^{p} \leqc V^{2p}_{a',\eta'}(u). 
\end{align}
Therefore, modulo a $V_{12a',12\eta'}$-controlled negligible set one has the bound $\sup_k [\partial^2_t f_k]_{1/3} < 2^{-8/3}\ep^{-1/6}$. Substituting this into \eqref{eq:interpbound-prelim} gives the desired result.

\end{proof}

\begin{proof}[Proof of Proposition \ref{prop:malnodeg}]
To prove this, we note that Lemmas \ref{lem:first-implication} and \ref{lem:second-implication} imply that for each $\delta \in(0,1)$ and $\xi \in \mathcal{S}_\delta$ the following implication holds modulo a $V_{\beta,\eta}$-controlled family of negligible events
\[
	\langle \xi,\mathcal{M}_{0,1}\xi\rangle_{\Wbf} < \ep
	\quad \Rightarrow\quad
	\begin{cases}
	\sup_k\sup_{0\leq t\leq 1}\left|\langle g_k, K_{t,1}\xi\rangle_{\Wbf}\right| \leq \ep^{1/8}\\
	\sup_k\sup_{0\leq t\leq 1}|\langle DF(\hat z_t)g_k, K_{t,1}\xi\rangle_{\Wbf}|\leq \ep^{1/64}.
	\end{cases}
\]
By taking $t = 1$, this implies 
\[
	\sup_k \left\{|\langle g_k, \xi\rangle_{\Wbf}|,|\langle DF(\hat{z})g_k,\xi\rangle_{\Wbf}|^2\right\} \leq \ep^{1/32}.
\]
However, appealing to Lemma \ref{lem:lowerbound} and recalling definition \ref{def:Sbeta} implies 
\[
	\frac{\delta^2}{(1 + \|u\|_{\Hbf})^2} \leqc \frac{\|\xi\|_{\Hbf^{-s}}^2}{(1+\|u\|_{\Hbf})^2} \leqc \ep^{1/32}. 
\]
Therefore, choosing $\ep$ small enough like $\ep \leqc_{\delta} (1 + \|u\|_{\Hbf})^{-64}$, we deduce that
\[
	\P\left(\langle \xi,\mathcal{M}_{0,1}\xi\rangle_{L^2} < \ep\right) \leqc V^{p}_{a,\eta}(u)\ep^p.
\]
We can extend this estimate to all $\ep > 0$, by noting the other case is $1 \leqc (1 + \|u\|_{\Hbf})^{64p}\ep^{p}$, and hence the proof is complete by choosing $a > 64$.
\end{proof}

\section{Spectral theory for the twisted Markov semi-group}\label{sec:spectral-twist}

As discussed in Section \ref{subsec:outline2PTDecay}, our approach to proving a drift condition for the two-point process involves using spectral properties of the `twisted' Markov semi-group $\hat{P}_t^p$ defined for bounded measurable $\psi : \Hbf \times P \T^d \to \R$, by
\[
\hat P^p_t \psi(u,x,v) := \E_{(u,x,v)}|D\phi^t v|^{-p} \psi(u_t, x_t, v_t).
\]
To simplify notation, we will denote $\hat{z} = (u,x,v) \in \Hbf\times P\T^d$ and $\hat{z}_t = (u_t,x_t,v_t)$. It is important to note that the semi-group $\hat{P}^p_t$ can be written as a Feynman-Kac semi-group
\begin{align}
	\hat{P}^p_t\psi(\hat{z}) = \E_{\hat{z}} \exp\left(-p\int_0^t H(\hat{z}_s)\ds
\right) \psi(\hat{z}_t), \label{def:FeynSem}
\end{align}
where
\[
	H(u,x,v) := \langle v, \nabla u(x)v\rangle.
\]

As discussed in the outline, our analysis requires that we study $\hat{P}^p_t$ with respect to 
both the $C_V$ norm (as defined in \eqref{def:CV}) as well as the stronger $C_V^1$ norm
\[
	\|\psi\|_{C^1_V} = \sup_{\hat{z} \in \Hbf\times P\T^d} \left(\frac{|\psi(\hat{z})|}{V(u)} + \frac{\|D\psi(\hat{z})\|_{\Hbf^*}}{V(u)}\right) 
\]
where $V(u) = V_{\beta,\eta}(u)$ for some choice of $\beta >1$ and $\eta <\eta^*$. 

The plan is as follows: We start in Section \ref{subsec:CVframework} in the $C_V^1$ framework
by proving Proposition \ref{prop:specgapC1V}, establishing a 
spectral gap for $\hat P_{T_0}^p$ in $C_V^1$ for $T_0$ sufficiently large. In  
Section \ref{subsec:CV1frame} we obtain $C_0$ continuity for $\hat P_t^p$ on
the space $\mathring C_V$ as in Proposition \ref{prop:specgapC1V-allt}. Finally, in Section \ref{subsec:propsPsiP}
we pull this all together to prove the desired properties of the eigenfunction $\psi_p$
in Proposition \ref{prop:psiP}.

\subsection{Proof of Proposition \ref{prop:specgapC1V}}\label{subsec:CVframework}

As discussed in the outline, Proposition \ref{prop:specgapC1V} follows from a spectral perturbation argument. Let us first record the following standard spectral perturbation lemma, which is a consequence of analytic functional calculus (see, e.g., \cite{dunford1957linear}).

\begin{lemma}\label{lem:specPerturbLF}
Let $L$ be a bounded linear operator on a Banach space $\mathfrak B$ with norm $\| \cdot \|$. 

\begin{itemize}
	\item[(a)] For any $\epsilon > 0$, there exists $\delta > 0$ such that for any bounded linear $L'$
	on $\mathfrak B$ with $\| L - L' \| < \delta$, we have $\sigma(L') \subset B_\epsilon(\sigma(L))$.
	\item[(b)] Let $S \subset \sigma(L)$ be a closed, isolated subset, i.e., 
	there exists an open set $\mathcal U \subset \C$ such that $S \cap \mathcal U = \sigma(L) \cap \mathcal U$.
	Let $\pi_S$ denote the spectral projector corresponding to $S$. Then, for any $\epsilon > 0$, there exists $\delta > 0$
	such that if $\|L - L'\| < \delta$ for some bounded linear $L'$,
	then $S' := \mathcal U \cap \sigma(L')$ is a closed, isolated subset of $\sigma(L')$ with the property that
	the spectral projector $\pi_{S'}$ for $L'$ corresponding to $S'$ satisfies $\| \pi_S - \pi_{S'} \| < \epsilon$.
\end{itemize}
\end{lemma}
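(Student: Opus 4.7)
The plan is to deduce both parts of Lemma \ref{lem:specPerturbLF} from the resolvent identity together with analytic functional calculus, the classical approach to spectral perturbation for bounded operators on a Banach space; the argument is essentially that found in \cite{dunford1957linear}, and I would likely just cite it, but let me sketch the key steps.

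For part (a), I would exploit that $\sigma(L)$ is a compact subset of $\overline{B_{\|L\|}(0)}$. For $|\lambda| > \|L\|$, the Neumann series already gives $\lambda \in \rho(L)$ with the uniform estimate $\|(\lambda - L)^{-1}\| \leq (|\lambda| - \|L\|)^{-1}$, and the same estimate applies to any $L'$ with $\|L' - L\|$ small, handling the region $|\lambda| > \|L\| + 1$. The compact region to control is $K_\epsilon := \overline{B_{\|L\|+1}(0)} \setminus B_\epsilon(\sigma(L))$, which lies in $\rho(L)$. Since $\lambda \mapsto (\lambda - L)^{-1}$ is continuous on $\rho(L)$ in the operator norm, I obtain a finite $M := \sup_{\lambda \in K_\epsilon} \|(\lambda - L)^{-1}\|$. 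For any $L'$ with $\|L - L'\| < \delta \leq 1/(2M)$ and any $\lambda \in K_\epsilon$, the factorization
\[
\lambda - L' = (\lambda - L)\bigl[I - (\lambda - L)^{-1}(L' - L)\bigr]
\]
expresses $\lambda - L'$ as a product of invertible operators (the bracketed factor by Neumann series since its perturbation has norm $<1/2$), so $\lambda \in \rho(L')$. Combining the two regimes gives $\sigma(L') \subset B_\epsilon(\sigma(L))$.

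For part (b), the isolation hypothesis produces (after shrinking $\mathcal U$ slightly if necessary) a finite union of positively oriented rectifiable Jordan curves $\Gamma \subset \rho(L) \cap \mathcal U$ whose interior contains $S$ and whose exterior contains $\sigma(L) \setminus S$. The Riesz projection formula gives
\[
\pi_S = \frac{1}{2\pi i} \oint_\Gamma (\lambda - L)^{-1}\, d\lambda.
\]
Let $\epsilon_0 = \mathrm{dist}(\Gamma, \sigma(L)) > 0$ and apply part (a) with this $\epsilon_0$: for $\delta$ sufficiently small, $\Gamma \subset \rho(L')$ and every point of $\sigma(L')$ lies in $B_{\epsilon_0/2}(\sigma(L))$, which forces $S' := \sigma(L') \cap \mathcal U$ to coincide with the portion of $\sigma(L')$ enclosed by $\Gamma$. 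Hence $S'$ is a closed, isolated part of $\sigma(L')$ whose Riesz projection is given by the same contour integral with $L'$ in place of $L$. The second resolvent identity
\[
(\lambda - L)^{-1} - (\lambda - L')^{-1} = (\lambda - L)^{-1}(L - L')(\lambda - L')^{-1}
\]
combined with the uniform resolvent bound from part (a) yields
\[
\|\pi_S - \pi_{S'}\| \leq \frac{|\Gamma|}{2\pi}\cdot 2 M^2 \|L - L'\|,
\]
where $|\Gamma|$ is the total arc length, and shrinking $\delta$ further makes this $< \epsilon$.

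There is no real obstacle: the only technical point is the initial construction of the separating contour $\Gamma$ inside $\mathcal U$, which is automatic from the hypothesis $S \cap \mathcal U = \sigma(L) \cap \mathcal U$ since this forces the spectral separation between $S$ and $\sigma(L) \setminus S$ to be realized within $\mathcal U$. All remaining steps are continuity of the resolvent plus the Neumann series and resolvent identity, which is why I would simply cite \cite{dunford1957linear}.
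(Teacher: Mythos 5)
Your proposal matches the paper's treatment: the paper simply notes the lemma is a consequence of analytic functional calculus and cites \cite{dunford1957linear}, and your sketch is precisely the standard resolvent-identity/Riesz-projection argument that reference supplies. Both parts of your outline are correct, so citing \cite{dunford1957linear} as you suggest is exactly what the authors do.
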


To apply Lemma \ref{lem:specPerturbLF}, we show that $\hat{P}_t^p$ is a bounded perturbation of $\hat{P}_t$ for large enough $t$ and small enough $p$. 

\begin{lemma}\label{lem:specPicCV1}
There exist $T_0 > 0, p_0 > 0$ such that $\forall \beta$ sufficiently large and all $\eta \in (0,\eta^\ast)$ there holds the following:
\begin{itemize}
\item[(a)] $\hat P_{T_0}^p : C_V^1 \to C_V^1$ is a bounded linear operator for all $p \in [-p_0,p_0]$. 

\item[(b)] We have $\hat P_{T_0}^p \to \hat P_{T_0}$ in norm on $C_V^1$ as $p \to 0$.

\end{itemize}
\end{lemma}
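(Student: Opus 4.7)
The plan is to realize $\hat P_t^p$ as the Feynman-Kac semigroup with potential $H$ via the identity $\log|D_x\phi^t v| = \int_0^t H(\hat z_s)\,ds$ (see Remark \ref{rmk:DefH}), and then treat $\hat P_t^p$ as a small multiplicative perturbation of $\hat P_t$ for $p$ near zero. Throughout I fix $\eta \in (0,\eta^\ast)$, choose $\beta$ large enough that the Lasota-Yorke bound (Proposition \ref{prop:LY}) applies with some smaller $V_{\beta',\eta'} \leq V$, and take $p_0$ small so that every exponential moment invoked remains integrable via Lemma \ref{lem:TwistBd}.

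For part (a), the $C_V$ bound is immediate: using $|H(\hat z)| \lesssim \|u\|_{H^r}$ for some $r \in (d/2+1,3)$ together with Lemma \ref{lem:TwistBd} applied to the weight $\exp(p_0 \int_0^{T_0} \|u_s\|_{H^r}\,ds)$ gives $|\hat P_{T_0}^p \psi(\hat z)| \lesssim V(u)\|\psi\|_{C_V}$. For the derivative, I would differentiate in direction $\xi \in \Hbf \times T_v P\T^d$ to obtain
\[
D\hat P_t^p \psi(\hat z)\xi = \E_{\hat z} e^{-p\int_0^t H(\hat z_s)\,ds} \Bigl( D\psi(\hat z_t) J_{0,t}\xi - p\,\psi(\hat z_t)\!\int_0^t DH(\hat z_s) J_{0,s}\xi\,ds \Bigr).
\]
The first summand is handled by repeating the Malliavin integration-by-parts scheme behind Proposition \ref{prop:LY}, with the Feynman-Kac weight $e^{-p\int H\,ds}$ appearing throughout as a scalar multiplier independent of $\xi$; the control \eqref{def:ctrl}, the Malliavin matrix non-degeneracy of Proposition \ref{prop:malnodeg}, and the Jacobian and Malliavin estimates of Lemmas \ref{lem:JacExps}--\ref{lem:JacSmooth} and \ref{lem:MalliavinJA} carry through unchanged, contributing only a uniformly integrable prefactor controlled by Lemma \ref{lem:TwistBd}. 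The second summand is bounded directly by Cauchy-Schwarz together with $\|DH(\hat z)\|_{\Hbf^*} \lesssim 1 + \|u\|_{\Hbf}$ and the bounds of Lemmas \ref{lem:JacExps} and \ref{lem:TwistBd}, yielding an error of size $|p|\,V(u)\|\psi\|_{C_V}$. Combining these gives a pointwise estimate $\|D\hat P_{T_0}^p \psi(\hat z)\|_{\Hbf^*} \lesssim V(u)\|\psi\|_{C_V^1}$, uniformly in $p \in [-p_0, p_0]$, which establishes boundedness of $\hat P_{T_0}^p$ on $C_V^1$.

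For part (b), the difference $(\hat P_t^p - \hat P_t)\psi(\hat z) = \E[(e^{-p\int_0^t H(\hat z_s)\,ds} - 1)\psi(\hat z_t)]$ combined with the elementary inequality $|e^{-x}-1|\leq |x|e^{|x|}$ and Lemma \ref{lem:TwistBd} yields $|(\hat P_t^p - \hat P_t)\psi(\hat z)| \lesssim |p|\,V(u)\|\psi\|_{C_V}$. Differentiating, one obtains a direct term of size $|p|$ (the same estimate as in part (a) applied to the derivative of the weight), plus a term $\E[(e^{-p\int H}-1)D\psi(\hat z_t) J_{0,t}\xi]$ bounded by $|p|\,V(u)\|\psi\|_{C_V^1}\|\xi\|_{\Hbf}$ via Lemmas \ref{lem:JacExps} and \ref{lem:TwistBd}. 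Together these give $\|(\hat P_{T_0}^p - \hat P_{T_0})\psi\|_{C_V^1} \lesssim |p|\,\|\psi\|_{C_V^1}$, uniformly in $\psi$, i.e.\ norm convergence. The principal obstacle lies in part (a): one must check that the entire Malliavin non-degeneracy argument underlying Proposition \ref{prop:malnodeg} proceeds unchanged in the presence of the Feynman-Kac weight, with all implied constants uniform in $p \in [-p_0, p_0]$. The crucial feature making this work is that $e^{-p\int H\,ds}$ is a scalar multiplier independent of $\xi$, so it does not interfere with the construction of the control $h$ in \eqref{def:ctrl} nor its Skorohod integral, entering only as a uniformly integrable prefactor whose moments are handled by taking $p_0$ small and $\beta$ large in Lemma \ref{lem:TwistBd}.
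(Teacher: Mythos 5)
Your part (b) matches the paper essentially verbatim, but part (a) has both an unnecessary detour and a genuine gap.

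On the detour: the paper does \emph{not} invoke the Malliavin integration-by-parts machinery of Proposition \ref{prop:LY} to prove boundedness on $C_V^1$. Since $\psi \in C_V^1$, one already has $\|D\psi(\hat z_t)\|_{\Hbf^*} \leq V(u_t)\|\psi\|_{C_V^1}$, so after differentiating, the first term $\E\,e^{-p\int H}D\psi(\hat z_t)J_{0,t}\xi$ is bounded \emph{pathwise} by
\[
\|\psi\|_{C_V^1}\, \E\, \exp\Bigl(C\!\int_0^t\|u_s\|_{H^r}\,ds\Bigr)\,V_{\beta,\eta}(u_t)\Bigl(1+\sup_{0<s<t}\|u_s\|^{q'+1}_{\Hbf}\Bigr)\|\xi\|_{\Hbf},
\]
using only Lemma \ref{lem:PathJacobian}. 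The Malliavin scheme is what one uses to prove the Lasota--Yorke gain (bounding $\|D\hat P_t\psi\|$ by $\|\psi\|_{C_V}$ rather than $\|\psi\|_{C_V^1}$); it is irrelevant to mere boundedness. Moreover, your claim that the Feynman--Kac weight enters the IBP argument ``as a scalar multiplier independent of $\xi$'' that leaves Proposition \ref{lem:MalIBP} intact is not right as stated: $e^{-p\int_0^t H(\hat z_s)\,ds}$ is a random, adapted functional of the path, and its Malliavin derivative $\MalD_h\bigl(e^{-p\int H}\bigr) = -p\,e^{-p\int H}\int_0^t DH(\hat z_s)\,\MalD_h\hat z_s\,ds$ does not vanish. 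Running IBP against $e^{-p\int H}\psi(\hat z_t)$ produces extra terms which you do not account for.

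The genuine gap is the role of $T_0$. Your proposal treats $T_0$ as a free parameter (you never state how or why it is chosen), but it is part of the existential claim of the lemma. The pathwise Jacobian bound from Lemma \ref{lem:PathJacobian} costs a factor $\sup_{s<t}\|u_s\|^{q'+1}_{\Hbf}$, so the expectation involves $\sup_{s<t}V_{\beta+q'+1,\eta}(u_s)$, which is \emph{larger} than the target weight $V_{\beta,\eta}(u)$. The only way to get back down to $V_{\beta,\eta}(u)$ is to choose $T_0$ large enough that $(\beta + q' + 1)e^{-\gamma T_0} < \beta$, so that the super-Lyapunov gain in Lemma \ref{lem:TwistBd} (the exponent $e^{\gamma t}$) eats the polynomial loss. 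Without this step the conclusion $\|D\hat P_{T_0}^p\psi\|_{\Hbf^*} \lesssim V(u)\|\psi\|_{C_V^1}$ does not follow; you would only get $\lesssim V_{\beta+q'+1,\eta}(u)\|\psi\|_{C_V^1}$, i.e., boundedness into a strictly larger weighted space.
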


\begin{proof}
We first consider the proof of (a).
Note that $\abs{H(\hat z)} \leq \norm{\grad u}_{L^\infty}$, and hence $C_V$ boundedness follows from \eqref{def:FeynSem} and Lemma \ref{lem:TwistBd}. 
Next, compute $D \hat {P}_{t}^p \psi\xi$ for some $\xi \in \Hbf \times T_v P\T^d$:
\begin{align}
D \hat {P}_{t}^p\psi \xi = \EE \exp\left( -p\int_0^t H(z_s) ds\right) \left(D \psi(z_t) J_{0,t} \xi - p \psi(z_t) \int_0^t DH(z_s) J_{0,s}\xi ds \right). 
\end{align}
By Lemma \ref{lem:PathJacobian} there holds (for any $r \in (\frac{d}{2}+1,3)$), 
\begin{align}
\abs{D \hat {P}_{t}^p\psi \xi} & \leq \norm{\psi}_{C^1_V}\EE \exp\left(C \int_0^t \norm{u_s}_{H^r} ds\right) V_{\beta,\eta}(u_t) \left(1 + \sup_{0 < s <t} \norm{u_s}_{\Hbf}^{q'+1} \right) \norm{\xi}_{\Hbf \times T_v P\T^d} \\
& \leq \norm{\psi}_{C^1_V}\EE \exp\left(C\int_0^t \norm{u_s}_{H^r} ds\right) \sup_{0 < s < t}V_{\beta + q'+1,\eta}(u_s)  \norm{\xi}_{\Hbf \times T_v P\T^d}. 
\end{align}
Next, choosing $T_0$ sufficiently large such that
\begin{align*}
\left(\beta + q'+ 1\right) e^{-\gamma T_0} < \beta, 
\end{align*}
Lemma \ref{lem:TwistBd} implies $\forall t \geq T_0$, $\exists C = C(t,\beta,\eta)$,  
\begin{align}
\abs{D \hat {P}_{t}^p\psi \xi} \leq C(t,\beta,\eta)\norm{\psi}_{C^1_V} V(u) \norm{\xi}_{\Hbf \times T_v P \T^d}.  
\end{align}
This proves (a). 

Next, consider (b). First, observe that for $x \geq 0$, $\abs{e^{px} - 1} \leq |p| e^{(|p|+1)x}$, and hence 
\begin{align}
\abs{\hat P_{t}^p \psi - \hat P_{t} \psi} 
  & \leq \norm{\psi}_{C_V} \EE \abs{\exp\left( -p\int_0^t H(z_s) ds\right)- 1 } V(z_t) \\
  & \leq |p|\norm{\psi}_{C_V} \EE \exp\left( (1+|p|)\int_0^t \norm{\grad u_s}_{L^\infty} ds\right) V(z_t), 
\end{align}
and hence convergence in $C_V$ holds by Lemma \ref{lem:TwistBd}. 
For $\xi \in \Hbf \times T_v P \T^d$, 
\begin{align}
\abs{D\hat {P}_{t}^p\psi\xi - D\hat {P}_{t}\psi  \xi} & \leq \EE \abs{\exp\left( -p\int_0^t H(z_s) ds \right) - 1 } \abs{D \psi(z_t) J_{0,t} \xi} \\ & \quad + |p| \EE  \abs{\int_0^t DH(z_s) J_{0,s}\xi ds} \exp\left( -p\int_0^t H(z_s) ds\right) \abs{\psi(z_t)}. 
\end{align}
Convergence then holds by the same arguments used to prove boundedness for $t \geq T_0$ combined with that used to prove convergence in $C_V$. 
\end{proof}

As discussed in Section \ref{subsec:outline2PTDecay}, we need to work with the spaces $\mathring{C}_V$ and $\mathring{C}_V^1$
which are, respectively, the $C_V$-closure and $C_V^1$-closure of the space of smooth `cylinder functions' 
$\mathring{C}^\infty_0(\Hbf \times P \T^d)$ (see \eqref{eq:defnC0inftyOutline}).

\begin{lemma} \label{lem:OneRingToRuleThemAll}
For all $\beta$ sufficiently large and all $\eta \in (0,\eta^\ast)$ there holds the following. 
\item[(a)] For all $t > 0$ and $p \in \R$, $\hat{P}_t^p:C_V \to C_V$ is bounded and $\hat {P}_{t}^p(\mathring C_V) \subset \mathring C_V$.
\item[(b)] For $T_0, p_0$ as in Lemma \ref{lem:specPicCV1}, $\forall p \in [- p_0,p_0]$, we have that $\hat P_{T_0}^p (\mathring C_V^1) \subset \mathring C_V^1$.
\end{lemma}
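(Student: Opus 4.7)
The plan is to combine the $C_V$ and $C_V^1$ boundedness of $\hat P_t^p$ (essentially already available from the work done for Lemma \ref{lem:specPicCV1}) with a density argument and a Galerkin truncation of the underlying velocity dynamics.

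\emph{Part (a), boundedness.} From the Feynman--Kac representation \eqref{def:FeynSem} and the Sobolev bound $|H(\hat z)| \lesssim \|u\|_{\Hbf^r}$ for suitable $r \in (d/2+1,3)$, Cauchy--Schwarz combined with the super-Lyapunov estimate of Lemma \ref{lem:TwistBd} (applied with slightly smaller parameters $\beta' < \beta$, $\eta' < \eta$ that absorb the exponential weight $\exp(|p|\int_0^t \|u_s\|_{\Hbf^r}\,ds)$) yields $|\hat P_t^p\psi(\hat z)| \lesssim \|\psi\|_{C_V}\,V(u)$ for every $t>0$ and $p\in \R$, provided $\beta$ is large enough.

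\emph{Part (a), invariance of $\mathring C_V$.} By the $C_V$-boundedness just established and density of $\mathring C_0^\infty$ in $\mathring C_V$, it suffices to show $\hat P_t^p \psi \in \mathring C_V$ for $\psi \in \mathring C_0^\infty$. I would use Galerkin truncation: let $u^N_t$ solve the Galerkin-projected equation with initial data $\Pi_N u$ and driving noise $\Pi_N Q \dot W_t$, and denote by $\hat P_t^{p,N}$ the corresponding twisted semigroup. The function $\hat P_t^{p,N}\psi(u,x,v)$ depends on $u$ only through the finitely many coordinates $\Pi_N u$, is continuous in these variables, and admits the bound $|\hat P_t^{p,N}\psi(u,x,v)| \lesssim V_{\beta',\eta'}(\Pi_N u)$ (via the Galerkin analogue of Lemma \ref{lem:TwistBd}, whose proof goes through verbatim). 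Smooth truncation and mollification in the $\Pi_N u$ coordinates produces an approximating sequence of $\mathring C_0^\infty$ functions whose $C_V$-error tends to zero, since outside the truncation support one has $V(u)/V_{\beta',\eta'}(\Pi_N u)\to \infty$ as $\|u-\Pi_N u\|_\Hbf\to \infty$, while inside the support one has uniform control. Hence $\hat P_t^{p,N}\psi \in \mathring C_V$. It remains to show $\hat P_t^{p,N}\psi \to \hat P_t^p\psi$ in $C_V$; this follows from pathwise convergence $u^N_t \to u_t$ in $\Hbf^r$ (standard for stochastic Navier--Stokes Galerkin approximations), continuity of $\psi$ and $H$, and uniform integrability in the expectation divided by $V(u)$ supplied by \eqref{eq:Twistbd} with parameters $\beta',\eta'$ slightly smaller than $\beta,\eta$.

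\emph{Part (b).} The strategy is identical in structure but carried out in the stronger $C_V^1$ norm. By Lemma \ref{lem:specPicCV1} we already have $\hat P_{T_0}^p : C_V^1 \to C_V^1$ bounded for $|p| \leq p_0$, so by density it suffices to show $\hat P_{T_0}^p \psi \in \mathring C_V^1$ for $\psi \in \mathring C_0^\infty$. Introducing $\hat P_{T_0}^{p,N}\psi$ as before, we verify this Galerkin approximant lies in $\mathring C_V^1$ via the same truncation-and-mollification argument in the $\Pi_N u$ coordinates, now controlling the Fr\'echet derivative using the Galerkin analogues of Lemmas \ref{lem:PathJacobian} and \ref{lem:JacExps} for the Jacobian $J^N$ (which hold with $N$-independent constants). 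Convergence $\hat P_{T_0}^{p,N}\psi \to \hat P_{T_0}^p\psi$ in $C_V^1$ then requires pathwise convergence of $u^N_t \to u_t$ in $\Hbf^r$ \emph{and} of the Jacobians $J^N_{0,T_0} \to J_{0,T_0}$ in the appropriate operator norm, combined with the uniform-integrability bounds from Lemma \ref{lem:TwistBd}.

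\emph{Main obstacle.} The main technical difficulty is the $C_V^1$ convergence in (b): one needs uniform-in-$N$ estimates on the Galerkin Jacobians and their second variations that are compatible with the $\Hbf$ gradient, together with their pathwise convergence to the full-dynamics Jacobian. The required input is that the Sobolev-interpolation arguments underlying Lemma \ref{lem:PathJacobian} go through unchanged for the Galerkin dynamics with constants independent of $N$; once this is in hand, the dominated-convergence argument driven by the super-Lyapunov bound closes the proof. The separate verification that finite-mode functions with appropriate $V$-decay lie in $\mathring C_V$ or $\mathring C_V^1$ is routine smooth-truncation-plus-mollification in the low modes.
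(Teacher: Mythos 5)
Your proposal is correct in spirit but takes a genuinely different route from the paper. You truncate the \emph{dynamics} --- Galerkin-projecting the velocity equation and the noise to low modes, producing a semigroup $\hat P_t^{p,N}$ whose observable automatically depends only on $\Pi_N u$ --- and then establish convergence $\hat P_t^{p,N}\psi \to \hat P_t^p\psi$ in $C_V$ (resp.\ $C_V^1$) using pathwise Galerkin convergence plus uniform integrability. The paper does something cleaner: it keeps the \emph{full} Navier--Stokes dynamics and only projects the \emph{initial data}. That is, it compares $\hat P_t^p\psi(\hat z)$ with $(\hat P_t^p\psi)(\Pi_n\hat z) = (\hat P_t^p\psi)\circ\Pi_n(\hat z)$, noting that $\varphi\circ\Pi_n\in\mathring C_V$ for any $\varphi\in C_V$, so it suffices to estimate $\|\hat P_t^p\psi - (\hat P_t^p\psi)\circ\Pi_n\|_{C_V}$. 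This reduces to a continuous-dependence-on-initial-data estimate for the full flow $\Phi_t$, which is exactly the pathwise Jacobian / second-variation bounds already developed in Lemmas \ref{lem:PathJacobian} and \ref{lem:JacSmooth}, integrated against the super-Lyapunov weight from Lemma \ref{lem:TwistBd}; the resulting rate is $\lesssim n^{r-\sigma}$. The two approaches ultimately rest on the same analytic estimates, but yours incurs the extra overhead of (i) reproving the Jacobian and drift lemmas with $N$-uniform constants for the Galerkin system and (ii) separately establishing pathwise convergence $u^N_t\to u_t$ and (for part (b)) $J^N\to J$, all of which the paper's formulation sidesteps. Your "smooth truncation and mollification in the $\Pi_N u$ coordinates" step is the counterpart of the paper's bare assertion that $\varphi\circ\Pi_n\in\mathring C_V$, and your accounting of the $V(u)/V(\Pi_N u)$ ratio (valid because $\beta'<\beta$, $\eta'<\eta$) is the right way to justify that claim. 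In short: both work, but the paper's projection-of-initial-data device buys a substantially shorter argument by recycling the existing pathwise stability estimates for the full flow rather than rebuilding them for a truncated system.
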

\begin{proof}
Consider part (a) first. Boundedness in $C_V$ was proved at the beginning of Lemma \ref{lem:specPicCV1}.
To check $\hat P_t^p (\mathring C_V) \subset \mathring C_V$, note that 
 since $\hat P_t^p$ is a continuous linear operator on $C_V \to C_V$ and $\mathring C_V$ is a closed subspace of $C_V$
it suffices to prove that $\hat P_t^p$ maps a dense set of $\mathring C_V$ into $\mathring C_V$. 
To do so, we show that for all $\psi \in \mathring{C}^\infty_0(\Hbf \times P \T^d)$, 
\begin{align}\label{eq:suffPropagateMathring}
	\lim_{n\to\infty} \|\hat{P}^p_t\psi - (\hat{P}^p_t\psi)\circ \Pi_{n}\|_{C_V} = 0, 
\end{align}
where recall that $\Pi_n: \Hbf \times P\T^d \to \Hbf_{n} \times P\T^d$ is the projection onto Fourier modes satisfying $\abs{k} \leq n$
(note that for any $\varphi \in C_V$, $\varphi \circ \Pi_n \in \mathring{C}_V$). 
With this in mind, denote $\hat{z} = (u,x,v)\in \Hbf\times P\T^d$ and $\hat{z}^n= \Pi_{n} \hat{z}$,
 and let $\Phi_t$ denote the random flow for the projective process on $\Hbf\times P\T^{d}$ with $\hat{z}_t = \Phi_t(\hat{z})$, $\hat{z}_t^n = \Phi_t(\hat{z}^n)$.
 A direct calculation reveals, using that $\psi \in \mathring{C}^\infty_0$, 
\begin{equation}
\begin{aligned}
&|\hat P^p_t\psi(\hat{z}) - \hat{P}^p_t\psi(\hat{z}^n)|\\
&\leq \E\left|\exp\left(-p\int_0^t H(\hat{z}_s)\ds\right)\psi(\hat{z}_t) - \exp\left(-p\int_0^tH(\hat{z}^n_s)\ds\right)\psi(\hat{z}^n_t)\right|\\
&\leqc_\psi  \E\, \Gamma(t) \sup_{s\in [0,t]}d_{\Wbf\times P\T^d}(\hat{z}_s,\hat{z}^n_s), 
\end{aligned}
\end{equation}
where we have defined for $r \in (\frac{d}{2}+1,3)$, 
\[
	\Gamma(t) := \exp\left(\int_0^t \|u_s\|_{H^{r}} + \|u^n_s\|_{H^{r}}\ds\right)\sup_{s\in [0,t]}( 1+ \|u_s\|_{\Hbf} + \|u^n_s\|_{\Hbf})
\]
and $d_{\Wbf\times P\T^d}(\hat{z}_1,\hat{z}_2)$ is the metric on $\Wbf\times P\T^d$.
In particular, the fact that $\psi \in \mathring{C}^\infty_0$ allowed to exchange $\Hbf \times P\T^d$ for $\Wbf \times P\T^d$.
Following an analysis similar to the proof of the Jacobian estimate in Lemma \ref{lem:PathJacobian}, we find that $d_{H^{r}\times P\T^d}(\hat{z}_t,\hat{z}^n_t)$ satisfies the pathwise bound
\begin{align}
	d_{H^{r}\times P\T^d}(\hat{z}_t,\hat{z}^n_t) \leqc \Gamma(t)^{c_1} d_{H^{r}\times P\T^d}(\hat{z},\hat{z}^n), \label{ineq:zznEst}
\end{align}
for some constant $c_1 >1$.

A straightforward extension of Lemma \ref{lem:TwistBd} to the joint system $(\hat{z}_t,\hat{z}^n_t)$ gives for each $t>0$
\begin{align}\label{eq:boundGammaExpectationfff}
	\E \sup_{s\in [0,t]} \Gamma(t)^{1 + c_1}\leqc V_{\beta',\eta}(u),
\end{align}
for $\beta'$ sufficiently large and for al $0<\eta<\eta^\ast$. Therefore for $V = V_{\beta' + 1,\eta}$, we obtain
\[
	\|\hat{P}^p_t\psi - \hat{P}^p_t\psi\circ \Pi_n\|_{C_V} \leqc \sup_{\hat{z} \in \Hbf \times P\T^d}\frac{d_{H^{r}\times P\T^d}(\hat{z},\hat{z}^n)}{1 + \| u \|_{\Hbf}^2}
\lesssim \sup_{u \in \Hbf} \frac{\| (\Id - \Pi_{n})u \|_{H^{r}}}{1 + \| u \|_{\Hbf}^2}
\]
Note that $\norm{(\Id - \Pi_{n}) u}_{H^r} \lesssim n^{r-\sigma} \norm{u}_{\Hbf}$, which completes the proof of \eqref{eq:suffPropagateMathring}.

Turning to part (b): by part (a), it suffices to prove the following for all $\psi \in \mathring{C}^\infty_0(\Hbf \times P \T^d)$: 
\begin{align}
\lim_{n \to \infty} \norm{\hat P^p_{T_0} \psi - (\hat P^p_{T_0} \psi) \circ \Pi_{n}}_{C^1_V} = 0. 
\end{align}
As above, let $\hat{z}^n = \Pi_{n} \hat {z}$ and $\hat{z}_t^n = \Phi_t (\hat{z}^n)$.
It what follows we need to measure the difference between Jacobians of these different trajectories, namely
$D_\xi \hat{z}^n_t =: J^n_t\xi \in \Hbf \times T_{v^n_t} P \T^d$ and $D_\xi \hat{z}_t =: J_t\xi \in \Hbf \times T_{v_t} P\T^d$. These Jacobians map onto different tangent spaces, but on the event 
$E^n_t = \{d_{P^{d-1}}(v_t, v_t^n) \leq 1/50\}$ we can place $v_t, v_t^n$ in the same smooth chart
and identify their tangent spaces with a copy of $\R^{d-1}$. Thus, on $E^n_t$ we can always
make sense of expressions like $\| J_t \xi - J_t^n \xi\|_{H^s}$, $\| J_t - J_t^n\|_{H^s}$.

To estimate the distance between $J_t, J_t^n$, we use the following estimate on the second variation (itself a consequence of the estimates
in Section \ref{sec:Jacobian}): for some $C,q''>0$, 
\begin{align}
\norm{J^{(2)}_{0,t}[\xi,\zeta]}_{H^r} \lesssim \exp\left(C\int_0^t \norm{u}_{H^r}\right)(1 + \sup_{0< s< t}\norm{u}_{H^r}^{q''})\norm{\xi}_{H^r}\norm{\zeta}_{H^r}. 
\end{align}
On the event $E_t^n$, this implies the estimate
\begin{align}
\|J_t - J^n_t\|_{H^r} \lesssim \Gamma(t)^{c_2} d_{H^r \times P\T^d}(\hat{z},\hat{z}^n), \label{ineq:JJn}
\end{align}
for some $c_2 > 1$. 
Then, similar to part (a), for some $r \in (\frac{d}{2}+1,3)$, 
\begin{align}
&\abs{D \hat P^p_t \psi(\hat z) \xi - D \hat P^p_t \psi(\hat{z}^n) \xi}\\
& \leq \abs{\EE  \left(  \exp\left( -p\int_0^t H(\hat z_s) ds\right) D \psi(\hat z_t) J_{0,t} \xi
- \exp\left( -p\int_0^t H(\hat z^n_s) ds\right) D \psi(\hat z^n_t) J_{0,t}^n \xi \right)}  \\
&\qquad + \bigg| \EE    \bigg(  p \psi(\hat z_t) \exp\left( -p\int_0^t H(\hat z_s) ds\right) \int_0^t DH(\hat z_s) J_{0,s}\xi ds    \\
&\qquad -  p \psi(\hat z_t^n) \exp\left( -p\int_0^t H(\hat z_s^n) ds\right) \int_0^t DH(\hat z_s^n) J^n_{0,s}\xi ds \bigg) \bigg| 
\end{align}
We split the above into expectations on $E^n_t, (E^n_t)^c$. The integrand on $E^n_t$
can be bounded using \eqref{ineq:zznEst} and \eqref{ineq:JJn}, resulting in a 
bound $\lesssim \Gamma(t)^{c_3} d_{H^r \times P \T^d}(\hat z, \hat z^n)$ for some $c_3 > 0$. 
On $(E^n_t)^c$ we can bound the integrand as in Lemma \ref{lem:PathJacobian}, while the pathwise estimate
\eqref{ineq:zznEst} and the bound \eqref{eq:boundGammaExpectationfff} 
yields
\begin{align}
\abs{D \hat P^p_t \psi(\hat z) \xi - D \hat P^p_t \psi(\hat{z}^n) \xi} \lesssim \E_{\hat{z},\hat{z}^n}\, \Gamma^{c_4}(t) d_{H^r \times P\T^d}(\hat{z},\hat{z}^n) + 
V_{\beta, \eta}(u) d_{H^r \times P \T^d}(\hat z, \hat z^n)
\end{align}
for some sufficiently large $\beta > 0$. From here the proof proceeds as in part (a) (possibly after increasing $\beta$ further). 
\end{proof}

We are now ready to prove Proposition \ref{prop:specgapC1V}.

\begin{proof}[Proof of Proposition \ref{prop:specgapC1V}]
By Lemma \ref{lem:specPicCV1}, the operator $\hat P_{T_0}$
has a dominant, simple eigenvalue at $1$ (as an operator $C^1_V \to C^1_V$). Let $r_0 \in (0,1)$ such that $\sigma(\hat P_{T_0}) \setminus \{ 1 \} \subset B_{r_0}(0)$, where $r_0 \in (0,1)$. 
Fix $\epsilon \ll 1 - r_0$. By Lemma \ref{lem:specPerturbLF} and the fact that $\hat P_{T_0}^p \to \hat P_{T_0}$ in the operator
norm on $\mathring{C}^1_V$, it follows that for all $|p|$ sufficiently small, we have that $\sigma(\hat P^p_{T_0}) \subset B_{r_0 + \epsilon} (0) \cup B_\epsilon(1)$. Taking $|p|$ sufficiently small, Lemma \ref{lem:specPerturbLF}(b) implies 
that the spectral projector $\pi_p$ for $\hat P_{T_0}^p$ corresponding to
$\sigma(\hat P^p_{T_0}) \cap B_\epsilon(1)$ is close in operator norm to the rank-one spectral projector $\pi_0$
for $\hat P_{T_0}$ corresponding to $\{ 1 \}$. Since rank-one projection operators are an open set in the space of bounded linear operators on $\mathring{C}^1_V$, it follows that $\pi_p$ is rank-one when $|p| \ll 1$.

We conclude that for each such $p$, there is a unique, simple eigenvalue $\lambda_p \in B_\eps(1)$.
To show that this eigenvalue 
is real, note that the complexification of the operator $\hat P^p_{T_0}$ sends real parts of functions to real parts and imaginary parts to imaginary parts. Thus, if $\Im(\lambda_p) \neq 0$ then the complex conjugate $\overline{\lambda_p}$ would also be an eigenvalue. This contradicts the fact that the spectral projector $\pi_p$ is rank-one. We conclude that $\lambda_p$ is real, positive, and coincides with the spectral radius $\rho(\hat P^p_{T_0})$. The value $\Lambda(p)$ is defined so that $\lambda_p = e^{- T_0 \Lambda(p)}$.

Finally, convergence of the limit formula \eqref{eq:definePsiPOutline} follows from 
the standard Gelfand formula and that $\pi_p \mathbf{1} \neq 0$ (by Lemma \ref{lem:specPerturbLF} and $\pi_0 \mathbf{1} = \mathbf{1}$).  
\end{proof}

\subsection{Proof of Proposition \ref{prop:specgapC1V-allt}}\label{subsec:CV1frame}

We next prove Proposition \ref{prop:specgapC1V-allt}, namely that for all $t>0$, $\hat{P}^p_t$ has a spectral gap on $\mathring{C}_V$ and that $\psi_p$ is an eigenfunction for the dominant eigenvalue for 
all $t\geq 0$. As in the proof of Proposition \ref{prop:specgapC1V} we will make a spectral perturbation argument. We will need the following Lemma, which is a simple variation of Lemma \ref{lem:specPicCV1} and is proved in the same way.

\begin{lemma}\label{lemma:boundedCV}
There exists $p_0 > 0$ such that the following holds for all $p \in [p_0,p_0]$, all $\beta$ sufficiently large and all $\eta \in (0, \eta^\ast)$.  
\begin{itemize}
	\item[(a)] For all $t > 0$, we have that $\hat P_t^p$ is a bounded linear operator on $C_V$.
	\item[(b)] For each $t > 0$ fixed, we have $\lim_{p \to 0} \| \hat P_t^p - \hat P_t\|_{C_V} = 0$. 
\end{itemize}
\end{lemma}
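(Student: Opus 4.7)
The plan is to mirror the proof of Lemma \ref{lem:specPicCV1} parts (a)--(b) essentially verbatim, but working at the level of $C_V$ rather than $C_V^1$. The key simplification is that no derivatives of the Jacobian $J_{0,t}$ enter, so the loss factor $(1 + \sup_s \|u_s\|_{\Hbf}^{q'+1})$ that forced $T_0$ to be large in Lemma \ref{lem:specPicCV1} never appears; consequently, the argument works for every $t > 0$.

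For part (a), I would start from the Feynman--Kac representation \eqref{def:FeynSem} and use the pointwise estimate $|H(\hat z)| \leq \|\nabla u\|_{L^\infty} \lesssim \|u\|_{H^r}$ for some fixed $r \in (\tfrac{d}{2}+1, 3)$ via Sobolev embedding, to obtain
\[
|\hat P_t^p \psi(\hat z)| \leq \|\psi\|_{C_V}\,\E_{\hat z} \exp\!\left(C|p|\int_0^t \|u_s\|_{H^r}\,ds\right) V(u_t).
\]
The right-hand side is then controlled by Lemma \ref{lem:TwistBd}, applied with $\gamma = 0$ and $\kappa = C|p|$, yielding a bound $C_1(t,|p|,\beta,\eta)\,V(u)\,\|\psi\|_{C_V}$ valid for any $\beta$ sufficiently large and any $\eta \in (0,\eta^*)$ (the constraint $e^{\gamma T}\eta < \eta^*$ becomes trivial when $\gamma = 0$). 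Dividing by $V(u)$ gives the desired $C_V \to C_V$ boundedness.

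For part (b), I would use the elementary inequality $|e^{-pa} - 1| \leq |pa|\,e^{|pa|}$ (Taylor remainder) to produce
\[
|\hat P_t^p \psi(\hat z) - \hat P_t \psi(\hat z)| \leq |p|\,\|\psi\|_{C_V}\,\E_{\hat z}\!\left(\int_0^t |H(\hat z_s)|\,ds\right) \exp\!\left(|p|\int_0^t |H(\hat z_s)|\,ds\right) V(u_t).
\]
Absorbing the linear factor via the trivial bound $x e^{\lambda x} \leq e^{(1+\lambda) x}$ for $x, \lambda \geq 0$ (a consequence of $x \leq e^x$), followed by the same Sobolev embedding as in (a), reduces the expectation to a quantity controllable by Lemma \ref{lem:TwistBd}, uniformly in $|p| \leq p_0$. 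This produces $\|\hat P_t^p - \hat P_t\|_{C_V \to C_V} \leq C_2(t)\,|p| \to 0$ as $p \to 0$.

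There is no significant obstacle: the strategy is identical to Lemma \ref{lem:specPicCV1} with all the $C_V^1$-specific complications (differentiation of the Jacobian, the polynomial moments of $\|u\|_{\Hbf}$ produced by Lemma \ref{lem:PathJacobian}, and the resulting restriction to large $t$) absent. The only bookkeeping point is to fix $\beta$ and $\eta$ at the outset so that $V_{\beta,\eta}$ is compatible with every invocation of Lemma \ref{lem:TwistBd}; this is essentially automatic given that $\gamma = 0$ suffices.
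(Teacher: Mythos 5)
Your proposal is correct and takes essentially the same route as the paper. The paper states that Lemma \ref{lemma:boundedCV} is ``a simple variation of Lemma \ref{lem:specPicCV1} and is proved in the same way,'' and indeed the $C_V$ portions of the Lemma \ref{lem:specPicCV1} proof are exactly what you reproduce: part (a) is the one-line boundedness step there (via $|H(\hat z)| \leq \|\nabla u\|_{L^\infty}$, the Feynman--Kac representation, and Lemma \ref{lem:TwistBd}), and part (b) is the $C_V$ estimate at the start of Lemma \ref{lem:specPicCV1}(b), using the same elementary inequality $|e^{px}-1|\le|p|e^{(1+|p|)x}$. Your observation that the absence of Jacobian factors is what removes the $t\ge T_0$ restriction (since Lemma \ref{lem:TwistBd} with $\gamma=0$ already suffices when no extra polynomial loss in $\|u\|_{\Hbf}$ needs to be absorbed) is exactly the right diagnosis of why the result holds for every $t>0$.
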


Next, let us examine the problem of finding a suitable function-space framework for which
$t \mapsto \hat P_t^p$ is a $C_0$ semigroup. As it turns out, to check $C_0$ continuity in $C_V$
it does not suffice to check boundedness in $C_V$, since it does not admit the countable dense subset
of smooth functions we need to make a strong continuity argument. Instead, we will follow the approach carried out in \cite{HM08} and prove $C_0$ continuity on $\mathring{C}_V$,
which provides a natural, separable, closed subspace of $C_V$.

\begin{proposition}\label{prop:C0-property-twist} For all $p \in [-p_0,p_0]$ all $\beta \geq 1$ sufficiently large and all $0 <\eta <\eta^\ast$, $\hat{P}^p_t$ extends to a $C_0$-semigroup on $\mathring{C}_V$ for $V = V_{\beta,\eta}$. That is, $\hat P^p_t (\mathring C_V) \subset \mathring C_V$ for all $t > 0$, and $t \mapsto 
\hat P^p_t$ is a $C_0$ semigroup on $\mathring C_V$. 
\end{proposition}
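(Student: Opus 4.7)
The semigroup identities $\hat P^p_0 = \mathrm{Id}$ and $\hat P^p_t \hat P^p_s = \hat P^p_{t+s}$ follow immediately from the Feynman-Kac representation \eqref{def:FeynSem} combined with the Markov property of $(\hat z_t)$ and the tower property of conditional expectation. The invariance $\hat P^p_t(\mathring C_V) \subset \mathring C_V$ for all $t > 0$ is precisely Lemma \ref{lem:OneRingToRuleThemAll}(a). Therefore the only remaining content is strong continuity at $t = 0^+$: $\lim_{t \to 0^+}\|\hat P^p_t \psi - \psi\|_{C_V} = 0$ for every $\psi \in \mathring C_V$.

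The plan is to verify strong continuity by a standard density argument. A mild refinement of Lemma \ref{lemma:boundedCV}(a), reading off the $t$-dependence in the Feynman-Kac estimate via Lemma \ref{lem:TwistBd}, yields $\sup_{t \in [0,1]}\|\hat P^p_t\|_{C_V \to C_V} < \infty$. Since $\mathring C^\infty_0(\Hbf \times P\T^d)$ is dense in $\mathring C_V$ by the very definition of the latter, this uniform boundedness reduces matters to showing $\|\hat P^p_t \varphi - \varphi\|_{C_V} \to 0$ for every cylinder function $\varphi \in \mathring C^\infty_0$.

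Fix such a $\varphi(\hat z) = \phi(\Pi_{\mathcal K} u, x, v)$ with $\phi \in C^\infty_0$ and $\mathcal K \subset \mathbb K$ finite, and decompose
\begin{equation*}
\hat P^p_t \varphi(\hat z) - \varphi(\hat z) = \E_{\hat z}\bigl[\varphi(\hat z_t) - \varphi(\hat z)\bigr] + \E_{\hat z}\Bigl[\bigl(e^{-p\int_0^t H(\hat z_s)\,ds} - 1\bigr)\varphi(\hat z_t)\Bigr].
\end{equation*}
The second term is controlled using the elementary inequality $|e^{-px} - 1| \leq |p||x|e^{|p||x|}$, the pointwise bound $|H(\hat z_s)| \leq \|\nabla u_s\|_{L^\infty} \lesssim \|u_s\|_{H^{r_0}}$, Cauchy-Schwarz, and Lemma \ref{lem:TwistBd}, yielding a contribution $\lesssim |p|\,t\,\|\varphi\|_\infty V(u)$ whose ratio with $V(u)$ vanishes uniformly in $\hat z$ as $t \to 0^+$. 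For the first term I intend to combine two complementary estimates: the trivial bound $\E|\varphi(\hat z_t) - \varphi(\hat z)| \leq 2\|\varphi\|_\infty$, and a Lipschitz-type bound $\E|\varphi(\hat z_t) - \varphi(\hat z)| \leq L_\varphi\,\E\bigl(\|\Pi_{\mathcal K}(u_t - u)\| + |x_t - x| + d(v_t, v)\bigr)$, where the right-hand side is of size $\lesssim_{\varphi,\mathcal K}(t V(u) + \sqrt{t})$ by the mild formulation \eqref{eq:Mild}, combined with the observation that $\|\Pi_{\mathcal K}(B(u,u) + A u)\| \lesssim_{\mathcal K}(1 + \|u\|_{\Hbf}^2)$ since $\Pi_{\mathcal K}$ projects onto finitely many smooth modes, together with Lemma \ref{lem:TwistBd}.

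The principal obstacle is achieving uniformity in $\hat z \in \Hbf \times P\T^d$, since the Lipschitz-type bound degrades as $V(u) \to \infty$. This is resolved by a two-regime split: given $\epsilon > 0$, first select $R > 0$ large enough so that $2\|\varphi\|_\infty/V(u) < \epsilon$ whenever $V(u) > R$ (possible since $V \to \infty$ as $\|u\|_{\Hbf} \to \infty$); on this high-energy region the trivial bound suffices. On the complementary region $\{V(u) \leq R\}$, the Lipschitz estimate produces $|\hat P^p_t \varphi(\hat z) - \varphi(\hat z)|/V(u) \leq C_{R, \varphi, \mathcal K}(t + \sqrt{t})$, which can be made smaller than $\epsilon$ by choosing $t$ sufficiently small. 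Combining yields $\limsup_{t \to 0^+}\|\hat P^p_t \varphi - \varphi\|_{C_V} \leq 2\epsilon$, and arbitrariness of $\epsilon$ concludes the proof.
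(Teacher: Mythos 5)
Your proof is correct and establishes the same result, but via a somewhat different route from the paper. The paper invokes Theorem 5.10 of [HM08] to reduce the $C_0$ property to (i) invariance of $\mathring C_V$ (from Lemma \ref{lem:OneRingToRuleThemAll}) and (ii) strong continuity at $t=0^+$ on the dense set $\mathring C^\infty_0$, and then handles (ii) in a single stroke by applying It\^o's formula to $e^{-p\int_0^s H}\psi(\hat z_s)$: because the martingale part vanishes in expectation, this yields directly $|\hat P^p_t\psi - \psi| \lesssim_\psi t\, V_{1,\eta}(u)$, a clean $O(t)$ rate with the correct $V$-weight. You instead carry out the density argument by hand (uniform $C_V\to C_V$ boundedness on $[0,1]$ plus density of cylinder functions) and, rather than It\^o's formula, estimate the increment through the mild formulation \eqref{eq:Mild} and a pointwise Lipschitz bound, which produces an extra $O(\sqrt t)$ term from the first moment of the stochastic convolution $\int_0^t e^{-(t-s)A}Q\,dW_s$. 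That is a weaker rate but of course still vanishes, so the argument goes through.

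One small remark: the two-regime split in your last paragraph is superfluous. Since $V\geq 1$, dividing your estimate $\E|\varphi(\hat z_t)-\varphi(\hat z)| \lesssim_{\varphi,\mathcal K} t\,V(u)+\sqrt t$ (together with the $O(|p|\,t\,\|\varphi\|_\infty V(u))$ bound on the twist factor) by $V(u)$ already yields a uniform-in-$\hat z$ bound $\lesssim t+\sqrt t/V(u) \leq t+\sqrt t$, so you can pass to the $\sup$ over $\hat z$ directly with no $R$-cutoff. The split is harmless, but the uniformity you were concerned about was never actually at risk: the $tV(u)$ term is cancelled exactly by the $V(u)$ in the denominator of the $C_V$-norm, which is precisely the mechanism the weight is designed to exploit.
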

\begin{proof}
  Following \cite{HM08} Theorem 5.10, it is sufficient to show that $\hat{P}^p_t$ maps $\mathring{C}_{V}$ into itself and that $t\mapsto \hat P^p_t$ is strongly continuous in $\mathring{C}_0^\infty(\Hbf\times P\T^d)$ in the $C_V$ topology.
  The first step,  $\hat P^p_t (\mathring C_V) \subset \mathring C_V$ for all $t > 0$, is proved in Lemma \ref{lem:OneRingToRuleThemAll}, hence it suffices to check the strong continuity.

To do this, we fix $\psi \in \mathring{C}^\infty_0(\Hbf\times P\T^d)$.
Let $\mathcal{K} \subset \mathbb K$ be the finite set such that we can write $\psi = \varphi \circ \Pi_{\mathcal{K}}$ for some function $\varphi \in C^\infty_0$. 
By It\^{o}'s formula, it follows that for each $r \in (1+d/2, 3)$ there holds for $t \in (0,1]$,
\begin{equation}
\begin{aligned}
	|\hat{P}^p_t\psi(\hat{z}) - \psi(\hat{z})| &\leqc_{\psi} t \,\E \exp\left(\int_0^t \|u_s\|_{H^{r}}\ds\right) \sup_{s\in [0,t]} \left(1 + \|\Pi_{\mathcal{K}} u_s\|_{\Hbf} + \|F(\Pi_{\mathcal{K}} \hat{z}_s)\|_{\Hbf\times P\T^d}\right)\\
	&\leqc_{\psi,n}t \, V_{1,\eta}(u),
\end{aligned}
\end{equation}
where we used the fact that $\|F(\Pi_{\mathcal{K}} \hat{z}_s)\|_{\Hbf\times P\T^d} \leqc_{\mathcal{K}} \|u_s\|_{\Hbf}^2$ and used Lemma \ref{lem:TwistBd}. 
Putting this together, we conclude that for $V = V_{\beta,\eta}, \beta \geq 3/2$, we have
\[
	\|\hat{P}^p_t\psi - \psi\|_{C_V} \leqc_{\psi,n} t \to 0 \quad \text{as}\quad t\to 0.
\]
By density of $\mathring{C}^\infty_0(\Hbf\times P\T^d)$ in $\mathring{C}_V$, we conclude strong continuity of $\hat{P}^p_t$ in $\mathring{C}_V$.
\end{proof}

We are now ready to prove Proposition \ref{prop:specgapC1V-allt}.
\begin{proof}[Proof of Proposition \ref{prop:specgapC1V-allt}]

Let $\Lambda(p)$ be as in Proposition \ref{prop:specgapC1V} and let $s(p) < e^{-T_0 \Lambda(p)}$ be such that $\sigma(P^p_{T_0}) \setminus \set{e^{-T_0 \Lambda(p)}} \subset B_{s(p)}(0)$ as an operator on $C^1_V$.

Lemmas \ref{lem:specPerturbLF} and \ref{lemma:boundedCV}, together with Proposition \ref{prop:CVspecGapProj}, imply
that for all $\eps > 0$, for all $p$ sufficiently small, there are $0 < \tilde{s}(p) < e^{T_0 \tilde{\Lambda}(p)}$ such that $e^{T_0 \tilde{\Lambda}(p)} \in B_\eps(1)$ and $\sigma(P^p_{T_0}) \setminus \set{e^{-T_0 \tilde{\Lambda}(p)}} \subset B_{\tilde{s}(p)}(0)$ as an operator on $C_V$.
Since $\psi_p \in \mathring C_V^1 \subset \mathring C_V$ is already an eigenfunction for $\hat P^p_{T_0}$,
we conclude that in fact $\tilde \Lambda(p) = \Lambda(p)$ 
for all $p$ sufficiently small. 

To complete the proof of Proposition \ref{prop:specgapC1V-allt}, we establish the spectral picture for $\hat P^p_t$ for all $t > 0$
using semigroup theory. To start, the Spectral mapping theorem for the point spectrum (\cite{arendt1986one} A-III Theorem 6.3) implies that $- \Lambda(p)$ is an eigenvalue of the infinitesimal generator $\mathcal A^p$ of $\hat P^p_t$. Corollary 6.4 in Chap. A-III of \cite{arendt1986one} implies that for all $\eta \in \C$,
\begin{align}\label{eq:infGenToSemigroup}
\ker(\eta \operatorname{Id}- \mathcal A^p) = \bigcap_{s \geq 0} \ker(e^{s \eta} \operatorname{Id} - \hat P^p_s) \, .
\end{align}
Applying \eqref{eq:infGenToSemigroup} to $\eta = - \Lambda(p)$, we have that $\psi_p$ is (up to rescaling)
the unique eigenvector for $\mathcal A^p$ for $- \Lambda(p)$; by another application of \eqref{eq:infGenToSemigroup},
we conclude that $\hat P^p_t \psi_p = e^{- \Lambda(p) t} \psi_p$ for all $t > 0$.
\end{proof}

\begin{remark}\label{rmk:compatibleEigenfunctions}
Recall that $\Hbf$ is defined to be the space of divergence free, mean-zero velocity fields
in $H^\sigma$, where $\sigma$ is drawn from the interval $(\alpha - 2 (d - 1), \alpha - \frac{d}{2} )$ (see Section \ref{subsubsec:noiseProcess} for notation; c.f. Propositions \ref{prop:SFscaleProj}, 
\ref{prop:topIrredProj}). In particular, the preceding arguments can be repeated with $\sigma$ replaced
by any other $\sigma' < \sigma$ in this interval: with $\hat P^p_{T_0}, |p| \ll 1$ regarded as an operator on the corresponding $C^1_V$-space, we obtain a corresponding dominant eigenfunction $\psi_p'$, defined on $H^{\sigma'}$ velocity fields and
 continuous in the $H^{\sigma'}$ topology. By parabolic regularization, $\psi_p'$ coincides with $\psi_p$ when restricted to the domain $\Hbf \times P \T^d$.
\end{remark}

\subsection{Properties of $\psi_p$: proof of Proposition \ref{prop:psiP}}\label{subsec:propsPsiP}

It remains to establish the desired properties of $\psi_p$ and $\Lambda(p)$, namely: 
\begin{itemize}
	\item[(A)] Uniform positivity of $\psi_p$ on bounded subsets of $\Hbf \times P \T^d$; and
	\item[(B)] The asymptotic $\Lambda(p) = p \lambda_1 + o(p), p \to 0$, where $\lambda_1 > 0$ is the 
		top Lyapunov exponent of the Lagrangian flow $\phi^t$.
\end{itemize}

First, we prove strict positivity. 

\begin{lemma} \label{lem:unifLowerBoundPsiP}
For any $R > 0$, we have that
		\begin{align}\label{eq:unifLowerBdEFunct} \inf_{\| u \|_{\Hbf} \leq R, (x, v) \in P\T^d} \psi_p(u, x, v) > 0. \end{align}
\end{lemma}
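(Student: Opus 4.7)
The plan is a compactness argument carried out in a lower-regularity space. The sublevel set $\{u \in \Hbf : \|u\|_\Hbf \leq R\} \times P\T^d$ is not compact in $\Hbf \times P\T^d$, but by the Rellich embedding it is precompact in $\Hbf^{\sigma'} \times P\T^d$ for any $\sigma' \in (\alpha - 2(d-1), \sigma)$. By Remark \ref{rmk:compatibleEigenfunctions}, the entire construction producing $\psi_p$ can be repeated with $\Hbf$ replaced by $\Hbf^{\sigma'}$, yielding a continuous eigenfunction on $\Hbf^{\sigma'} \times P\T^d$ which agrees with $\psi_p$ on $\Hbf \times P\T^d$. It therefore suffices to prove strict positivity of this extension, which for notational simplicity I still denote $\psi_p$.

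Nonnegativity of $\psi_p$ is immediate from the defining limit \eqref{eq:definePsiPOutline}, since $\hat P^p_{nT_0}\mathbf{1} \geq 0$ pointwise and the limit is taken in the $C^1_V$ norm. For strict positivity, suppose toward contradiction that $\psi_p(z_0) = 0$ at some $z_0 \in \Hbf^{\sigma'} \times P\T^d$. By the eigenfunction relation \eqref{eq:contTimeSemigroupOutline} and the Feynman--Kac representation \eqref{def:FeynSem},
\[
0 \;=\; \psi_p(z_0) \;=\; e^{\Lambda(p)t}\,\E_{z_0}\!\left[\exp\!\left(-p\int_0^t H(\hat z_s)\,\ds\right)\psi_p(\hat z_t)\right]
\]
for every $t > 0$. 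The Feynman--Kac weight is strictly positive pathwise and $\psi_p \geq 0$, so $\psi_p(\hat z_t) = 0$ almost surely under $\P_{z_0}$; equivalently, the (continuous, nonnegative) function $\psi_p$ vanishes on a set of full measure for the law of $\hat z_t$. By Lemma \ref{lem:equivFamilyIntro}, this law is equivalent to all other untwisted transition kernels $\hat P_t((u,x,v),\cdot)$, and by Proposition \ref{prop:topIrredProj} those kernels have full topological support. Thus $\{\psi_p = 0\}$ is a closed set of full measure for a measure whose support is all of $\Hbf^{\sigma'} \times P\T^d$, forcing $\psi_p \equiv 0$ and contradicting that $\psi_p$ is a nonzero eigenfunction.

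With pointwise positivity in hand, \eqref{eq:unifLowerBdEFunct} follows at once: the closure $\overline{\{u \in \Hbf : \|u\|_\Hbf \leq R\}}$ taken in $\Hbf^{\sigma'}$ is compact by the Rellich embedding $\Hbf \hookrightarrow \Hbf^{\sigma'}$, so $\overline{\{\|u\|_\Hbf \leq R\}} \times P\T^d$ is compact in $\Hbf^{\sigma'} \times P\T^d$, and the continuous strictly positive function $\psi_p$ attains a strictly positive minimum there. This bound on the closure dominates the infimum over the original (smaller) set.

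The only step requiring real care is bootstrapping a single zero of $\psi_p$ into identical vanishing; this hinges on pathwise positivity of the Feynman--Kac weight combined with the full-support/equivalence structure of the untwisted kernels furnished by Lemma \ref{lem:equivFamilyIntro} and Proposition \ref{prop:topIrredProj}. Everything else is a routine compactness argument, made possible by the lower-regularity realization of $\psi_p$ from Remark \ref{rmk:compatibleEigenfunctions}.
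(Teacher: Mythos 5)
Your proof is correct and follows essentially the same route as the paper: nonnegativity from the defining limit, strict positivity via topological irreducibility of the projective process and the pointwise positivity of the Feynman--Kac weight, and the uniform lower bound by a compactness argument in the weaker space $\Hbf^{\sigma'} \times P\T^d$ using the compatible extension from Remark \ref{rmk:compatibleEigenfunctions}. The only cosmetic difference is in the strict-positivity step: the paper directly lower-bounds $\psi_p(\hat z)$ by conditioning on the positive-probability event $\{\hat z_t \in U_p\}$ where $U_p = \{\psi_p > 0\}$, whereas you infer that $\{\psi_p = 0\}$ is a closed full-measure set for a kernel of full support (a slightly longer chain through Lemma \ref{lem:equivFamilyIntro}, which is not strictly needed since Proposition \ref{prop:topIrredProj} alone gives full support); both are valid.
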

	\begin{proof}
The nonnegative cone $C^1_{V, +} = \{ \psi \in C^1_V : \psi \geq 0\}$
is closed in $C^1_V$; since $\hat P^p_{n T_0} C^1_{V, +} \subset C^1_{V, +}$ and $e^{n T_0 \Lambda(p)} \hat P^p_{n T_0}{\bf 1} \to \psi_p$ as $n \to \infty$ (Proposition \ref{prop:specgapC1V}), 
we conclude $\psi_p \geq 0$.  

To verify that $\psi_p > 0$ pointwise, we use the fact that the Markov semigroup $\hat P_t$ is topologically irreducible 
(Proposition \ref{prop:topIrredProj}). To wit, assume for the sake of contradiction that $\psi_p(z) = 0$ for some point $\hat z = (u, x, v) \in \Hbf \times P \T^d$. Since $\psi_p$ is continuous and not identically equal to zero, it holds that $U_p := \{ \psi_p > 0 \} \subset \Hbf \times P \T^d$ is nonempty and open. Therefore,
\[
\psi_p(\hat z) \geq \P_{\hat z} \big( \hat z_t \in U_p \big)  \E_{\hat z} \left( |D_x \phi^t_{u} v|^{-p} \psi_p (\hat z_t) |  \hat z_t \in U_p \right) > 0 \, .
\]
Note that the same arguments apply
to $\psi_p'$ corresponding to $\sigma' < \sigma$ as in Remark \ref{rmk:compatibleEigenfunctions}, hence
$\psi_p' > 0$ pointwise as well.

To conclude \eqref{eq:unifLowerBdEFunct}, assume for the sake of contradiction that there is a bounded 
sequence $\{\hat z^n = (u^n, x^n, v^n)\} \subset \Hbf \times P \T^d$ for which $\psi_p(\hat z^n) \to 0$. 
With $\sigma' < \sigma$ fixed as in Remark \ref{rmk:compatibleEigenfunctions}, 
let $\hat z^{n'}$ be a subsequence converging in $H^{\sigma'}$ to some $\hat z^*\in \Hbf^{\sigma'} \times P \T^d$. Since 
$\psi_p'$ is $H^{\sigma'}$-continuous and coincides with $\psi_p$ on $H^\sigma$, it follows that $\psi_p' (\hat z^*) = 0$, contradicting the pointwise positivity established earlier. 
\end{proof}

Item (B) is a version of classical results in the ergodic theory of stochastic differential equations
in finite dimensions.
 In that literature (see, e.g., the survey \cite{arnold1986lyapunov}), 
the value $- \Lambda(p)$ is referred to as the \emph{moment Lyapunov exponent}. This terminology
is justified by the following:
\begin{lemma}\label{lem:formulaLambdaP}
For all $(u, x, v) \in \Hbf \times P \T^d$, we have
\[
\Lambda(p) = -\lim_{t \to \infty} \frac{1}{t} \log \E | D_x \phi^t_{u} v|^{-p} \, .
\]
Moreover, the above limit is uniform on bounded subsets of $\Hbf \times P \T^d$.
\end{lemma}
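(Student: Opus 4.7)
The plan is to read off the claimed asymptotic rate directly from the spectral decomposition of $\hat P^p_t$ on $\mathring C_V$ supplied by Proposition \ref{prop:specgapC1V-allt}, together with the uniform positivity of $\psi_p$. The crux is the tautology
$$\hat P^p_t \mathbf{1}(u,x,v) = \E_{(u,x,v)} |D_x\phi^t v|^{-p},$$
which reduces the problem to the asymptotics of $\hat P^p_t \mathbf{1}$.

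First I would verify that $\mathbf{1} \in \mathring C_V$: taking $\chi_n \in C^\infty_0$ equal to $1$ on the ball of $\Hbf$-radius $n$ in $\Hbf_{\mathcal K_n}$ (with $\mathcal K_n = \{ m \in \mathbb K : |k| \leq n\}$) and supported in the ball of radius $n+1$, the cylinder function $\phi_n(u,x,v) = \chi_n(\Pi_{\mathcal K_n} u)$ satisfies $\|\phi_n - \mathbf{1}\|_{C_V} \leq 2(1+n^2)^{-\beta} \to 0$, since $V(u) \geq (1 + \|\Pi_{\mathcal K_n} u\|_\Hbf^2)^\beta$ when $\phi_n(u,x,v) \neq 1$. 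Next, Proposition \ref{prop:specgapC1V-allt} provides, for each $t > 0$, a simple dominant eigenvalue $e^{-t\Lambda(p)}$ of $\hat P^p_t$ on $\mathring C_V$, with the rest of the spectrum strictly smaller in modulus. Applying the spectral decomposition at a single large time $t = T_0$ and iterating, while using the commutativity $\pi_p \hat P^p_s = \hat P^p_s \pi_p = e^{-s\Lambda(p)} \pi_p$ (since $\psi_p$ spans the range of $\pi_p$ and is an eigenfunction for all $s$) together with uniform boundedness of $\hat P^p_s$ for $s \in [0, T_0]$ from the $C_0$ semigroup property, one obtains the quantitative operator-norm decomposition
$$\hat P^p_t = e^{-t\Lambda(p)} \pi_p + R_t, \qquad \|R_t\|_{\mathring C_V \to \mathring C_V} \leq C e^{-t(\Lambda(p)+\delta)},$$
for some $\delta > 0$. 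Finally, the limit formula \eqref{eq:definePsiPOutline} defining $\psi_p$ as the $C_V^1$-limit of $e^{nT_0\Lambda(p)}\hat P^p_{nT_0}\mathbf{1}$ identifies $\pi_p \mathbf{1} = \psi_p$ directly.

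Evaluating the above decomposition pointwise at $(u,x,v)$ yields
$$\E_{(u,x,v)} |D_x\phi^t v|^{-p} = e^{-t\Lambda(p)} \psi_p(u,x,v) + R_t\mathbf{1}(u,x,v),$$
with $|R_t \mathbf{1}(u,x,v)| \leq C V(u) e^{-t(\Lambda(p)+\delta)}$. Multiplying through by $e^{t\Lambda(p)}$ and taking $\frac{1}{t}\log$, the result follows provided $\psi_p(u,x,v)$ is bounded away from both $0$ and $\infty$ at the point in question: the positive lower bound is precisely Lemma \ref{lem:unifLowerBoundPsiP}, and the upper bound $\psi_p(u,x,v) \leq \|\psi_p\|_{C_V} V(u)$ follows from $\psi_p \in \mathring C_V$. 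Both bounds are uniform on any fixed bounded subset of $\Hbf \times P\T^d$, yielding the claimed uniformity.

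The main (mild) obstacle is extracting the quantitative operator-norm decay of $R_t$ on $\mathring C_V$ uniformly in $t$ from the spectral gap for each individual $\hat P^p_t$ given by Proposition \ref{prop:specgapC1V-allt}; this is where the $C_0$ semigroup framework is essential, since it allows one to bootstrap the discrete-time decomposition at $t=T_0$ to arbitrary continuous $t$ via iterates. Once that step is carried out, the rest of the argument is essentially linear-algebraic and automatic.
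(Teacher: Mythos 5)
Your proposal is correct and follows essentially the same route as the paper: both identify $\E|D_x\phi^t_u v|^{-p}$ with $\hat P^p_t{\bf 1}(u,x,v)$ and read off the exponential rate from the spectral gap for $\hat P^p_t$ on $\mathring C_V$ (Proposition \ref{prop:specgapC1V-allt}) together with the uniform positivity of $\psi_p$ supplied by Lemma \ref{lem:unifLowerBoundPsiP}. You fill in details the paper leaves implicit --- notably the verification that $\mathbf{1} \in \mathring C_V$ and the bootstrap from the discrete-time spectral decomposition at $T_0$ to a uniform continuous-time remainder bound via the $C_0$ semigroup property --- but the essential ingredients are identical.
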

\begin{proof}
The argument of the logarithm on the right-hand side is equal to $\hat P^p_t{\bf 1}$ evaluated at $(u, x, v)$. 
Uniform convergence on bounded subsets of $\Hbf \times P \T^d$ now follows from the $C_V$ limit
$\psi_p = \lim_{t \to \infty} e^{t \Lambda(p)} \hat P_t^p {\bf 1}$ (a consequence of the $C_V$ 
spectral gap) and \eqref{eq:unifLowerBdEFunct}.
\end{proof}

Next, we verify (B) by relating the value $\Lambda(p)$ with the Lyapunov exponent $\lambda_1$ of the 
Lagrangian flow $\phi^t$. The first step is to show that $\Lambda(p)$ is in fact differentiable. 

\begin{lemma} \label{lem:diffy}
The function $p \mapsto \Lambda(p)$ is differentiable in a neighborhood of $p = 0$. 
\end{lemma}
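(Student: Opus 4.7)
The plan is to establish analytic (or at least $C^1$) dependence of $p \mapsto \hat P_{T_0}^p$ as a bounded operator on $\mathring C_V$ (where $V = V_{\beta,\eta}$ is as in Proposition \ref{prop:specgapC1V-allt}) and then invoke Kato's analytic perturbation theory for the isolated simple eigenvalue $e^{-T_0 \Lambda(p)}$.

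First I would expand the Feynman--Kac factor in a power series: formally,
\[
\hat P^p_{T_0}\psi(\hat z) = \sum_{n=0}^{\infty} p^n R_n \psi(\hat z), \qquad
R_n \psi(\hat z) := \frac{(-1)^n}{n!}\, \E_{\hat z}\!\left[\left(\int_0^{T_0} H(\hat z_s)\,\ds\right)^{n} \psi(\hat z_{T_0})\right].
\]
Since $|H(\hat z)| \leq \|\nabla u\|_{L^\infty} \lesssim \|u\|_{\Hbf^{r}}$ for some $r \in (\tfrac{d}{2}+1, 3)$, for any $\lambda > 0$ the elementary inequality $x^n/n! \leq \lambda^{-n} e^{\lambda x}$ together with Lemma \ref{lem:TwistBd} (applied with $\kappa = C\lambda$ for a suitably chosen $\lambda$, using that $\kappa$ can be taken arbitrarily large at the cost of the constant) gives
\[
\|R_n\|_{C_V \to C_V} \;\leq\; \frac{C_\lambda}{\lambda^n} \qquad \text{for every } \lambda > 0.
\]
This shows that the partial sums converge in $B(C_V)$ uniformly on compact subsets of $\C$, so $p \mapsto \hat P^p_{T_0}$ is an entire function with values in $B(C_V)$; in particular it is real-analytic.

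Next I would verify that each $R_n$ preserves $\mathring C_V$, so that the expansion in fact takes values in $B(\mathring C_V)$. This follows essentially as in Lemma \ref{lem:OneRingToRuleThemAll}(a): the same approximation-by-cylinder-functions argument, using Lemma \ref{lem:TwistBd} to absorb the extra polynomial factor $\left(\int_0^{T_0} H(\hat z_s)\,\ds\right)^n$, shows $R_n \mathring C_V \subset \mathring C_V$, and then closedness of $\mathring C_V$ in $C_V$ transfers this to the limit $\hat P^p_{T_0}$. Thus $p \mapsto \hat P^p_{T_0} \in B(\mathring C_V)$ is (real-)analytic in a complex neighborhood of $\R$.

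Finally, from Proposition \ref{prop:specgapC1V-allt} we know that for $p \in [-p_0, p_0]$, the operator $\hat P^p_{T_0}$ acting on $\mathring C_V$ has the simple, isolated eigenvalue $e^{-T_0 \Lambda(p)}$ at the top of its spectrum, separated from the remainder by a spectral gap (uniformly for $|p| \leq p_0$, after possibly shrinking $p_0$, by Lemma \ref{lem:specPerturbLF}(a) applied with $L = \hat P_{T_0}$ and $L' = \hat P^p_{T_0}$). Standard analytic perturbation theory for isolated eigenvalues of finite algebraic multiplicity (e.g.\ Kato, \emph{Perturbation theory for linear operators}, Ch.~VII) then yields that this dominant eigenvalue is an analytic function of $p$ in a neighborhood of $0$. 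Taking logarithms, $p \mapsto \Lambda(p)$ is analytic, hence in particular differentiable, in a neighborhood of $p = 0$. The main technical point to be careful about is the first step: ensuring the series converges in operator norm on the \emph{same} space where we have a spectral gap, but Lemma \ref{lem:TwistBd} (with $\kappa$ large) supplies exactly the quantitative moment bound needed to make this work in $\mathring C_V$.
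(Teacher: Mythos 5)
Your proof is correct and takes a genuinely different route from the paper, which is worth noting. The paper's argument establishes only $C^1$ Fr\'echet differentiability of $p \mapsto \hat P^p_1$ on $C_V$: it writes down the candidate derivative $-\E\left(\log|D_x\phi^1 v|\cdot|D_x\phi^1 v|^{-p}\,\psi(\cdot)\right)$, verifies it is bounded on $C_V$ via Lemma \ref{lem:TwistBd}, justifies the difference quotient via the Mean Value Theorem with a second-moment bound on $\log|D_x\phi^1 v|$, then uses the contour-integral formula for spectral projectors to deduce Fr\'echet differentiability of $p \mapsto \pi_p$ and $\psi_p$, and finishes by writing $e^{-\Lambda(p)}$ as a ratio of differentiable scalars. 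Your approach instead expands the Feynman--Kac factor in a power series and uses the elementary bound $x^n/n!\le\lambda^{-n}e^{\lambda x}$ together with the fact that Lemma \ref{lem:TwistBd} holds for \emph{arbitrary} $\kappa>0$ to show that the radius of convergence is infinite, i.e.\ $p \mapsto \hat P^p_{T_0}$ is entire in $B(C_V)$ (hence in $B(\mathring C_V)$, either by showing each $R_n$ preserves $\mathring C_V$ as you suggest, or simply by noting that an analytic operator family restricting to a closed invariant subspace remains analytic there). This yields a stronger conclusion---analyticity rather than mere differentiability of $\Lambda(p)$---and sidesteps the explicit computation of the derivative operator, but it does lean on Kato's analytic perturbation theory rather than the more elementary contour-projection argument. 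Both approaches are valid; yours proves more, while the paper's is more self-contained in establishing exactly what is needed for Lemma \ref{lem:verifyItemBLF}.
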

\begin{proof}
Let us first show that $p \mapsto \hat P^p_1$ is Fr\'echet differentiable as an operator-valued function 
in $C_V$. Formally, we expect the derivative $\frac{d}{dp} \hat P^p_1$ to be given by
\[
\frac{d}{dp} \hat P^p_1 \psi(u, x, v) = -\E_{(u,x,v)} \left( \log |D_x \phi^1_{u} v| \cdot |D_x \phi^1_{u} v|^{-p} \psi(u_1, x_1, v_1) \right) \, .
\]
A direct application of Lemma \ref{lem:TwistBd} implies that the RHS defines
a bounded linear operator on $C_V$.
Fr\'echet differentiability of $p \mapsto \hat P^p_1$ follows if
\[
\lim_{h \to 0} \left\| \E \left|  \left( \frac{1}{h} (|D_x \phi^1 v|^{-p - h} - |D_x \phi^1 v|^{-p}) + \log|D_x \phi^1 v| \cdot |D_x \phi^1 v|^{-p} \right)  V(u_1) \right| \right\|_{C_V} = 0 \, .
\]
By the Mean Value Theorem, this is justified as long as
\[
\left \| \E V(u_t)  |D_x \phi^1 v|^{-p} \log^2|D_x \phi^1 v|  \right\|_{C_V} < \infty \, ,
\]
which follows from Lemma \ref{lem:TwistBd}. 

Having established that $p \mapsto \hat P^p_1$ is Fr\'echet differentiable, it now follows from the spectral gap 
for $\hat P^p_1$ in $C_V$  and the standard
contour integral formula for spectral projectors that $p \mapsto \pi_p$ is likewise Fr\'echet differentiable, hence
$p \mapsto \psi_p = \pi_p({\bf 1})$ is also Fr\'echet differentiable. 
Fixing $\hat z^* \in \Hbf \times P \T^d$, we may now express
\[
e^{- \Lambda(p)} = \frac{1}{\psi_p(\hat z^*)} \hat P^p_1 \psi_p(\hat z^*) \, .
\]
Since the right-hand side is a ratio of differentiable functions and the denominator is non-vanishing, we conclude
$p \mapsto \Lambda(p)$ is differentiable.
\end{proof}

\begin{lemma}\label{lem:verifyItemBLF}
Let $p_0 > 0$ be as in Proposition \ref{prop:specgapC1V-allt}.
\begin{itemize}
\item[(a)] The mapping $p \mapsto \Lambda(p)$ is convex on $[- p_0, p_0]$. 
\item[(b)] We have
\[
\Lambda'(0) = \lambda_1 \, .
\]
In particular, since $\lambda_1 > 0$, we have that $\Lambda(p) > 0$ for all $p > 0$ sufficiently small. 
\end{itemize}
\end{lemma}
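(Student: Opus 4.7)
\textbf{Plan for Lemma \ref{lem:verifyItemBLF}.} The proof splits cleanly: part (a) is a short log-convexity argument, and part (b) is a classical Jensen-style squeeze around $p=0$ combined with the differentiability already established in Lemma \ref{lem:diffy}.

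For part (a), the starting point is the representation furnished by Lemma \ref{lem:formulaLambdaP},
\[
\Lambda(p) = -\lim_{t\to\infty} \frac{1}{t} \log \E_{(u,x,v)} |D_x \phi^t_u v|^{-p}.
\]
For each fixed $t>0$ and initial $(u,x,v)$, the map $p \mapsto \log \E\, e^{p X_t}$, applied to $X_t := -\log|D_x\phi^t_u v|$, is the cumulant generating function of a random variable and hence convex in $p$ by H\"older's inequality. Pointwise limits of (affine images of) convex functions retain the appropriate convexity/concavity structure, yielding the claimed property of $\Lambda$ on $[-p_0,p_0]$.

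For part (b), the key ingredient is a sharp Jensen bound. Since $y \mapsto e^{-py}$ is convex on $\R$ for every fixed $p$, Jensen's inequality applied to $Y_t := \log|D_x\phi^t_u v|$ gives
\[
\E |D_x\phi^t_u v|^{-p} = \E\, e^{-p Y_t} \geq e^{-p\, \E Y_t}.
\]
Taking $-\frac{1}{t}\log$ and passing to the limit $t\to\infty$ yields $\Lambda(p) \leq p \cdot \lim_{t\to\infty}\frac{1}{t}\E Y_t$, valid for both signs of $p$. The crucial identification is
\[
\lim_{t\to\infty} \frac{1}{t}\, \E \log|D_x\phi^t_u v| = \lambda_1,
\]
which follows by combining the almost-sure convergence $\frac{1}{t}\log|D_x\phi^t_u v| \to \lambda_1$ from \cite{BBPS18} with uniform integrability of $\frac{1}{t}\log|D_x\phi^t_u v|$. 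The latter is obtained from the pathwise bound $\left|\log|D_x\phi^t_u v|\right| \leq \int_0^t \|\nabla u_s\|_{L^\infty}\, ds$ (a consequence of the ODE for $|D\phi^t v|$) together with the moment estimates on $u_s$ supplied by Lemma \ref{lem:TwistBd}, which control the $L^p(\Omega)$-norm of this integrand uniformly in $t$ under the stationary distribution $\mu$. This establishes $\Lambda(p) \leq p \lambda_1$ for all $p \in [-p_0, p_0]$.

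Since $\Lambda(0) = 0$, the function $g(p) := \Lambda(p) - p\lambda_1$ satisfies $g(p) \leq 0$ for all $p \in [-p_0,p_0]$ with $g(0) = 0$, so $p = 0$ is a global maximum of $g$ in the interior of $[-p_0,p_0]$. Because $\Lambda$ is differentiable at $0$ by Lemma \ref{lem:diffy}, Fermat's interior extremum principle gives $g'(0) = 0$, i.e.\ $\Lambda'(0) = \lambda_1$. The concluding assertion that $\Lambda(p) > 0$ for all sufficiently small $p > 0$ is then immediate from $\Lambda(0) = 0$, $\Lambda'(0) = \lambda_1 > 0$, and continuity of $\Lambda$. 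The main technical obstacle in this plan is the uniform integrability step: passing from the almost-sure Lyapunov limit in \cite{BBPS18} to convergence in expectation requires care with tail estimates on $\int_0^t \|\nabla u_s\|_{L^\infty}\, ds$, which in turn relies on the stationary-measure moment bounds and the super-Lyapunov property of $V_{\beta,\eta}$.
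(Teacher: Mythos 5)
Your proposal follows essentially the same strategy as the paper's proof, with a couple of small (mostly cosmetic) variations.

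For part (a), both you and the paper run the same H\"older argument: the map $p \mapsto \frac{1}{t}\log\E|D_x\phi^t_u v|^{-p}$ is a cumulant generating function and hence convex. One thing worth flagging: since $-\Lambda(p)$ is the pointwise $t\to\infty$ limit of these convex functions (by Lemma \ref{lem:formulaLambdaP}), the argument actually establishes \emph{concavity} of $\Lambda$, not convexity. This appears to be a sign slip in the statement of the lemma, shared by the paper's own proof. Your hedge ``appropriate convexity/concavity structure, yielding the claimed property'' quietly papers over which one it is; you should commit to concavity explicitly (and note that nothing downstream is affected, since both proofs of (b) close with the differentiability supplied by Lemma \ref{lem:diffy}).

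For part (b), you and the paper both use the same Jensen step to reach $\Lambda(p) \leq p\lambda_1$ for $p$ of either sign. The paper then argues via one-sided difference quotients ($\Lambda(p)/p \leq \lambda_1$ for $p>0$, $\Lambda(p)/p \geq \lambda_1$ for $p<0$), invokes existence of one-sided derivatives from part (a), and concludes equality by differentiability. You instead apply Fermat's interior-extremum principle to $g(p) = \Lambda(p) - p\lambda_1$, noting $g\leq 0 = g(0)$ and $g$ differentiable at $0$. This is a slightly cleaner closing and doesn't actually require part (a) at all. Both are correct. You also do something the paper's proof glosses over: you explicitly note that passing from the almost-sure Lyapunov limit of \cite{BBPS18} to $\lim_t \frac{1}{t}\E\log|D_x\phi^t_u v| = \lambda_1$ requires a uniform-integrability argument, and you sketch a valid one via the pathwise bound $|\log|D_x\phi^t_u v|| \leq \int_0^t\|\nabla u_s\|_{L^\infty}\,ds$ together with the moment estimates from Lemma \ref{lem:TwistBd}. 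That is a genuine improvement in completeness over the paper's terse derivation.
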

With our preparations in place, the proof of Lemma \ref{lem:verifyItemBLF} follows from straightforward
versions of standard arguments-- see, e.g., \cite{arnold1984formula, arnold1986lyapunov}. We sketch the proof below for the sake of completeness. 

\begin{proof}
For convexity, let $p, q \in (- p_0, p_0)$ and $\lambda \in [0,1]$ and fix an arbitrary $(u, x,  v) \in \Hbf \times P\T^d$. By Holder's inequality,
\[
\E | D_x\phi^t_u v|^{-\lambda p - (1 - \lambda) q} \leq \left( \E | D_x\phi^t_u v|^{-p} \right) ^\lambda \cdot \left( \E| D_x\phi^t_u v|^{-q} \right)^{1 - \lambda}
\]
Convexity follows on taking the $\log$ of both sides, dividing by $t$, taking $t \to \infty$ and 
applying Lemma \ref{lem:formulaLambdaP}.

Next, for all $p \in (-p_0, p_0)$, we have by Jensen's inequality that
\[
\E | D_x \phi^t_{u} v|^{-p} = \E e^{-p \log | D_x \phi^t_{u} v|} \geq \exp\left(-p \E  \log | D_x \phi^t_{u}| \right) \, .
\]
Taking the $\log$ of both sides, dividing by $t$ and taking $t \to \infty$ results in the inequality
\[
\Lambda(p) \leq p \lambda_1 \, .
\]
In particular, $\Lambda(p) / p \geq \lambda_1$ for $p \in (-p_0,0)$ and $\Lambda(p) / p \leq \lambda_1$ for $p \in (0,p_0)$. 
By convexity, the left- and right-hand derivatives $\Lambda'(0+)$ and $\Lambda'(0-)$ exist; the inequalities above imply that $\Lambda'(0-) \geq \lambda_1$ and $\Lambda'(0+) \leq \lambda_1$. By differentiability,
these values coincide. This completes the proof. 
\end{proof}

\section{Geometric ergodicity for the two point motion} \label{sec:2ptGeoErg}

We now turn to our study of the two point Lagrangian motion $(u_t,x_t,y_t)$. Recall that that given two initial points $(x,y) \in \mathcal{D}^c = \{(x,y) \in \T^d\times\T^d : x \neq y \}$, $(x_t,y_t)$ are defined by
\[
	x_t = \phi^t(x),\quad y_t = \phi^t(y).
\]
This induces a Feller Markov semi-group $P^{(2)}_t$ defined on bounded measurable $\varphi: \Hbf \times \mathcal{D}^c \to \R$ by
\[
	P^{(2)}_t\varphi(u,x,y) := \E_{(u,x,y)} \varphi(u_t,x_t,y_t).
\]

We eventually apply Theorem \ref{thm:GM} to $P^{(2)}_t$ to prove Theorem \ref{thm:2-pt-decay}. As discussed in Section \ref{subsec:outline2PTDecay}, Conditions \ref{defn:SF} and \ref{defn:TopIrr} follow from Propositions \ref{prop:SFscaleIntro} and \ref{prop:topIrredIntro} (strong Feller and topological irreducibility), which we prove below.
This shows that $\mu \times Leb \times Leb$ is the unique stationary measure for $P^{(2)}_t$ (see \cite{DPZ96}). 
Similarly, Propositions \ref{prop:SFscaleIntro} and \ref{prop:topIrredIntro} imply equivalence of transition kernels.
\begin{lemma}\label{lem:equivFamilyP^2}
The family of transition kernels $\{ P^{(2)}_t((u, x, y), \cdot) : t > 0, (u, x, y) \in \Hbf \times \Dc^c\}$ are equivalent measures.
\end{lemma}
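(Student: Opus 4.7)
The plan is to run the classical Khasminskii--Doob argument for equivalence of transition kernels, which takes as input exactly the two properties already established for $(u_t, x_t, y_t)$: strong Feller (Proposition \ref{prop:SFscaleIntro}) and topological irreducibility (Proposition \ref{prop:topIrredIntro}); see also \cite{FM95, goldys2005exponential}. Since $\Hbf = \Hbf^\sigma$ for $\sigma$ in the common admissibility range of both propositions, both properties apply directly on $\Hbf \times \Dc^c$ with no further regularity gymnastics.

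The first step is to fix $t > 0$ and show that $\{P^{(2)}_t(z, \cdot)\}_{z \in \Hbf \times \Dc^c}$ is a family of pairwise equivalent measures. Given $z_1, z_2$ and a Borel set $A$ with $P^{(2)}_t(z_1, A) > 0$, pick any $s \in (0, t)$ and apply the Chapman--Kolmogorov identity
\[
P^{(2)}_t(z_1, A) = \int P^{(2)}_{t-s}(z, A) \, P^{(2)}_s(z_1, \dee z),
\]
which forces the set $U := \{z : P^{(2)}_{t-s}(z, A) > 0\}$ to be nonempty. By strong Feller applied at time $t - s > 0$, the integrand $z \mapsto P^{(2)}_{t-s}(z, A)$ is continuous and so $U$ is open. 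Topological irreducibility then yields $P^{(2)}_s(z_2, U) > 0$, and consequently
\[
P^{(2)}_t(z_2, A) \geq \int_U P^{(2)}_{t-s}(z, A) \, P^{(2)}_s(z_2, \dee z) > 0.
\]
Swapping the roles of $z_1$ and $z_2$ gives equivalence at the fixed time $t$.

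To upgrade to unequal times $0 < t_1 < t_2$, I would combine the equal-time step with the decomposition
\[
P^{(2)}_{t_2}(z_2, A) = \int P^{(2)}_{t_1}(z, A) \, P^{(2)}_{t_2 - t_1}(z_2, \dee z).
\]
If $P^{(2)}_{t_1}(z_1, A) > 0$ then by the equal-time step $P^{(2)}_{t_1}(\cdot, A) > 0$ identically, so the integrand above is pointwise positive and the integral is strictly positive. Conversely, if $P^{(2)}_{t_2}(z_2, A) > 0$, the identity above forces $P^{(2)}_{t_1}(z, A) > 0$ for some $z$, and the equal-time step transfers this to $P^{(2)}_{t_1}(z_1, A) > 0$.

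No serious obstacle is anticipated: the argument is entirely soft, relying only on Chapman--Kolmogorov together with the previously established strong Feller and irreducibility of the two-point kernels. The main point of care is to verify that the parameter $\sigma$ defining $\Hbf$ is admissible for Propositions \ref{prop:SFscaleIntro} and \ref{prop:topIrredIntro} simultaneously, which is immediate from the setup in Section \ref{subsubsec:noiseProcess}.
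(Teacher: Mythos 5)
Your proof is correct and is exactly the classical Khasminskii--Doob argument that the paper invokes implicitly: the paper treats this lemma as a standard consequence of strong Feller plus topological irreducibility, citing Lemma 3.2 of Goldys--Maslowski (and Theorem 4.1 of Flandoli--Maslowski) rather than spelling out the details, and your Chapman--Kolmogorov derivation reproduces precisely the proof of that cited result. Your remark about choosing $\sigma$ in the common admissibility range is also the right thing to check and is indeed automatic from the constraints in Section 1.1.
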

As we saw in the proof of Lemma \ref{lem:hypothA3}, equivalence of the transition kernels implies the following. 
\begin{lemma}\label{lem:unifBoundedCompact12312}
Let $K \subset \Hbf \times \mathcal{D}^c$ be any compact set with $(\mu \times \Leb \times \Leb)(K) > 0$. Then for arbitrary $t > 0$ and $R_1,R_2 > 0$, we have
	\begin{align}
	\inf_{ \| u \| \leq R_1,\,d(x, y) \geq R_2} P^{(2)}_t((u, x, y) , K) > 0.
\end{align}
\end{lemma}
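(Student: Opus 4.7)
\medskip

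\noindent\textbf{Proof plan.} My plan is to mirror the argument used for Lemma \ref{lem:hypothA3}, replacing the projective process $(u_t, x_t, v_t)$ with the two-point process $(u_t, x_t, y_t)$ and the sublevel sets $\{V \leq r\}$ with the uniform sets $\{\|u\|_{\Hbf} \leq R_1,\, d(x,y) \geq R_2\}$. The strategy is a contradiction argument: one supposes the infimum vanishes, extracts a convergent subsequence in a weaker topology where strong Feller still applies, and then uses equivalence of the transition kernels to produce a contradiction.

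\smallskip

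\noindent\textbf{Step 1: pointwise positivity.} Because $(\mu \times \Leb \times \Leb)(K) > 0$ and $\mu \times \Leb \times \Leb$ is a stationary measure for $P^{(2)}_t$, Lemma \ref{lem:equivFamilyP^2} gives $P^{(2)}_t((u,x,y), K) > 0$ for every $(u,x,y) \in \Hbf \times \Dc^c$ and every $t > 0$. (One obtains this just as in Lemma \ref{lem:hypothA3}: the transition kernels are all mutually absolutely continuous, hence all absolutely continuous with respect to the stationary measure, and also all charge the same nonnegligible sets.)

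\smallskip

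\noindent\textbf{Step 2: contradiction setup and compactness.} Fix $t, R_1, R_2 > 0$ and suppose, for contradiction, that
\[
\inf_{\|u\|_{\Hbf} \leq R_1,\, d(x,y) \geq R_2} P^{(2)}_t((u,x,y), K) = 0.
\]
Pick a sequence $(u^n, x^n, y^n)$ realizing this infimum. Since $\{(x,y) \in \T^d \times \T^d : d(x,y) \geq R_2\}$ is a compact subset of $\Dc^c$ and the closed ball $\{u \in \Hbf : \|u\|_{\Hbf} \leq R_1\}$ is compactly embedded in $\Hbf^{\sigma'}$ for any $\sigma' < \sigma$, after passing to a subsequence we may assume $(u^n, x^n, y^n) \to (u^*, x^*, y^*)$ in $\Hbf^{\sigma'} \times \T^d \times \T^d$, with $d(x^*,y^*) \geq R_2$, so that $(u^*, x^*, y^*) \in \Hbf^{\sigma'} \times \Dc^c$.

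\smallskip

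\noindent\textbf{Step 3: strong Feller on the weaker scale and conclusion.} Here I pick $\sigma'$ in the admissible range $(\alpha - 2(d-1), \alpha - d/2)$ so that Proposition \ref{prop:SFscaleIntro} applies: $(u_t, x_t, y_t)$ is strong Feller on $\Hbf^{\sigma'} \times \Dc^c$. Applying this to the bounded measurable indicator $\mathbf{1}_K$ gives
\[
\lim_{n \to \infty} P^{(2)}_t\big((u^n, x^n, y^n), K\big) = P^{(2)}_t\big((u^*, x^*, y^*), K\big).
\]
By Step 1 applied to the point $(u^*, x^*, y^*)$ (which lies in $\Hbf^{\sigma'} \times \Dc^c$, and where we note that Propositions \ref{prop:SFscaleIntro} and \ref{prop:topIrredIntro} and hence equivalence of transition kernels hold at regularity $\sigma'$), the right side is strictly positive, contradicting the fact that the left side is zero.

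\smallskip

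\noindent\textbf{Anticipated obstacle.} The only subtlety I foresee is bookkeeping with the Sobolev scale: I need the strong Feller property and the equivalence of kernels from Lemma \ref{lem:equivFamilyP^2} to hold on the same weaker space $\Hbf^{\sigma'}$ where I have compact embedding. This is exactly the range provided by Proposition \ref{prop:SFscaleIntro}, which was crafted for precisely this purpose, and the topological irreducibility argument in Proposition \ref{prop:topIrredIntro} works on the same scale, so the ingredients fit together without issue.
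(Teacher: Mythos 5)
Your proposal is correct and follows precisely the route the paper has in mind: the paper states this lemma immediately after Lemma \ref{lem:equivFamilyP^2} with the remark ``As we saw in the proof of Lemma \ref{lem:hypothA3}, equivalence of the transition kernels implies the following,'' and your three steps (equivalence gives pointwise positivity, compactness of the uniform set in a weaker Sobolev topology, strong Feller on that weaker scale forcing a contradiction) are exactly the argument of Lemma \ref{lem:hypothA3} transplanted to the two-point process. The only bookkeeping nit is that in Step 3 the $\sigma'$ must actually lie in $(\alpha - 2(d-1), \sigma)$ rather than the full interval $(\alpha-2(d-1), \alpha - d/2)$, so that the compact embedding of $\Hbf = \Hbf^\sigma$ into $\Hbf^{\sigma'}$ from Step 2 is available; you clearly have this constraint in mind in Step 2, so this is cosmetic.
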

Therefore, Conditions \ref{def:UnifBd} and \ref{defn:drift} follow from Proposition \ref{prop:2ptDrift}, which we prove below, and the fact that the Lyapunov function $\Vc$ is \emph{coercive} in the sense that for each $r > 0$, we can always find $R_1,R_2 > 0$ such that
\[
\{(u,x,v) \in \Hbf \times \mathcal{D}^c : \mathcal{V}(u,x,y) \leq r\} \subset \{ (u, x, y) \in \Hbf \times \Dc^c : \| u \| \leq R_1 \, , \text{ and } d(x, y) \geq R_2 \}.
\]

The remainder of this section will be focused on proving Propositions \ref{prop:SFscaleIntro} and \ref{prop:topIrredIntro} and Theorem \ref{prop:2ptDrift}.

\subsection{Strong Feller property: proof of Proposition \ref{prop:SFscaleIntro}} \label{sec:SF}

\subsubsection{Uniform parabolic H\"ormander conditions} \label{sec:UHC}
Due to Assumption \ref{a:Highs}, we are only concerned with spreading the noise to the degrees of freedom on $\mathcal{D}^c$.
Accordingly, denote the vector field
\begin{align}
X_m(x,y) = \begin{pmatrix} e_m(x) \\ e_m(y)\end{pmatrix} \in T_{(x,y)}\mathcal{D}^c \simeq \Real^{2d}.  
\end{align}
In order to prove Proposition \ref{prop:SFscaleIntro}, we use the following uniform spanning. 
It is helpful to observe that vector fields in the set $\mathcal{A}_2$ in the statement below can be identified as incompressible velocity fields; the uniform spanning condition is simply a quantitative statement about the infinitesimal controllability, i.e. that the particles can be nudged in any direction by velocity fields accessible via the noise. 
\begin{lemma} \label{lem:UniSpan}
There holds the following \emph{uniform spanning condition}: for all $x,y \in \mathbb T^d$ with $x \neq y$, if we denote the set of unit vectors in the span of the available vector-fields:
\begin{align}
\mathcal{A}_2 = \set{\sum_{\substack{m=(k,i)\in \mathbb{K}\\\abs{k} \leq 2}} c_{m} X_m : \,\max_{m}\abs{c_{m}} \leq 1} \, , 
\end{align}
then, 
\begin{align}
\inf_{h \in \R^{2d} : \abs{h} = 1}\max_{\Gamma \in \mathcal{A}_2} \abs{ \brak{ \Gamma(x,y), h}_{\Real^{2d}}} \gtrsim d(x,y). \label{ineq:lwbd}
\end{align}
\end{lemma}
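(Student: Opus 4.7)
The plan is to reduce Lemma \ref{lem:UniSpan} to two quantitative spanning conditions on the low modes by Taylor expanding $X_m(x,y)$ around the diagonal and case-splitting on the normalization of $h$. First, for $d(x,y) \geq \epsilon_0$ (any fixed positive threshold), the function $F(x,y,h) := \max_{\Gamma \in \mathcal A_2} |\langle \Gamma(x,y), h\rangle|$ is continuous on the compact set $\{d(x,y) \geq \epsilon_0\} \cap \{|h|=1\}$, and is pointwise positive because $\{X_m(x,y) : (k,i)\in\mathbb{K},\, |k| \leq 2\}$ spans $\R^{2d}$ whenever $x \neq y$ (the evaluation $u \mapsto (u(x), u(y))$ restricted to the low-mode velocity subspace is surjective onto $\R^{2d}$). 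Hence $F \gtrsim 1 \gtrsim d(x,y)$ in this regime, and we may restrict attention to $d(x,y) = \delta \ll 1$.

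In the near-diagonal regime, work on the universal cover so $w := y - x$ is well defined with $|w| = \delta$. Decompose $h = (h_1, h_2) \leftrightarrow (a, b)$ via $a = (h_1+h_2)/2$ and $b = h_2 - h_1$, so that $|h|^2 = 2|a|^2 + \tfrac{1}{2}|b|^2 = 1$, and Taylor expand to obtain
\begin{align*}
\langle X_m(x,y), h\rangle = 2\langle e_m(x), a\rangle + \tfrac{1}{2}\langle De_m(x) w, b\rangle + O(\delta |a| + \delta^2|b|),
\end{align*}
with implicit constant uniform over $|k| \leq 2$. The first ingredient is a uniform zeroth-order spanning: there is $c_1 > 0$ such that $\max_{m : |k|=1} |\langle e_m(x), a\rangle| \geq c_1 |a|$ for all $x \in \T^d$ and $a \in \R^d$, which follows directly from the explicit sine/cosine form of the $|k|=1$ modes together with $\max(|\sin\theta|,|\cos\theta|) \geq 1/\sqrt 2$. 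Given this, when $|a| \geq C_*\delta$ for a sufficiently large constant $C_*$, pick $m_a$ realizing the maximum, so that $|\langle X_{m_a}(x,y), h\rangle| \geq 2c_1 |a| - C\delta \gtrsim C_* \delta \gtrsim \delta$.

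The second ingredient, and the main obstacle, is a bilinear spanning estimate: there is $c_2 > 0$ such that $\max_{m : |k|\leq 2} |\langle D e_m(x) w', b\rangle| \geq c_2$ for all $x \in \T^d$ and unit vectors $w', b \in \R^d$. Using the explicit form $De_m(x) = \pm c_d\, \gamma_k^i \otimes k \cdot (\cos \text{ or } \sin)(k \cdot x)$ and taking advantage of both $\pm k$ modes to control the trigonometric factors, this reduces to the lattice-geometric inequality $\max_{k \in \Z^d,\, 0 < |k| \leq 2} |k \cdot w'|\, \|\Pi_{k^\perp} b\| \gtrsim 1$, where $\Pi_{k^\perp}$ denotes orthogonal projection in $\R^d$ onto the plane perpendicular to $k$. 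By compactness on $\T^d \times S^{d-1} \times S^{d-1}$, this reduces to pointwise non-vanishing; in $d=2$ it follows by an elementary case analysis over $k \in \{e_1, e_2, e_1 \pm e_2\}$, showing that simultaneous vanishing of $|k\cdot w'|\,\|\Pi_{k^\perp} b\|$ at all four of these $k$ forces $w' = 0$ or $b = 0$, and the $d=3$ case is analogous with the additional available modes. With this in hand, when $|a| < C_*\delta$ the constraint $|h|=1$ forces $|b| \geq 1$ for $\delta$ small. Apply the bilinear spanning with $w' = w/\delta$ to extract $m_b$ with $|\langle De_{m_b}(x) w, b\rangle| \geq c_2 \delta$, and substitute into the expansion to obtain $|\langle X_{m_b}(x,y), h\rangle| \geq \tfrac{1}{2}c_2 \delta - O(\delta^2) \gtrsim \delta$. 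Combining the two cases yields $\max_{\Gamma \in \mathcal{A}_2} |\langle \Gamma(x,y), h\rangle| \gtrsim d(x,y)$ as claimed.
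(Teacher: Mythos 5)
Your route is genuinely different from the paper's: you Taylor expand near the diagonal and appeal to compactness away from it, whereas the paper fixes $x=0$ by translation invariance and explicitly constructs shear and cellular flows case-by-case depending on the position of $y$ relative to the coordinate axes. Your reformulation of the key near-diagonal step as the lattice inequality $\max_{0<|k|\le 2}|k\cdot w'|\,\|\Pi_{k^\perp}b\|\gtrsim 1$ is clean, and I have checked that it holds in both $d=2$ and $d=3$. However, as written the argument has a gap that prevents it from closing.

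The gap is a constants mismatch in the case split on $|a|$. In Case 2 ($|a|<C_\ast\delta$) you extract $m_b$ from the bilinear spanning and write $|\langle X_{m_b},h\rangle|\ge\tfrac12 c_2\delta - O(\delta^2)$, but the expansion $\langle X_m,h\rangle = 2\langle e_m(x),a\rangle + \tfrac12\langle De_m(x)w,b\rangle + O(\delta|a|+\delta^2|b|)$ contains the zeroth-order term $2\langle e_{m_b}(x),a\rangle$, which is $O(|a|)=O(C_\ast\delta)$ --- the \emph{same} order as the bilinear gain, not $O(\delta^2)$. The honest lower bound is $\tfrac12 c_2\delta - 2c_d C_\ast\delta - O(\delta^2)$, which is positive only if $C_\ast < c_2/(4c_d)$. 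Meanwhile Case 1 requires $C_\ast$ large enough that $2c_1|a|-C\delta\gtrsim\delta$ for $|a|\ge C_\ast\delta$, where $C$ is an $O(1)$ upper bound on $\tfrac12|\langle De_{m_a}(x)w,b\rangle|$ plus the Taylor remainder --- forcing $C_\ast\gtrsim C/c_1$. Since $c_2$ is a spanning constant that is generically small while $C$ is an upper bound on the same type of quantity, these two requirements on $C_\ast$ are incompatible (a rough estimate in $d=2$ gives $C_\ast\gtrsim 3$ from Case 1 but $C_\ast\lesssim 0.1$ from Case 2). To repair this, you should decouple the $a$- and $b$-contributions rather than case-split on $|a|$: since $\mathcal A_2$ contains all bounded linear combinations of the $X_m$ and both $\pm k$ modes are available, for each $k$ you may shift the phase of the trigonometric polynomial so the resulting field vanishes identically at $x$, killing the zeroth-order term exactly and isolating the $b$-contribution. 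This is in effect what the paper does through trigonometric interpolation (its $h$ satisfies $h(0)=0$ and $h(y^{(2)})\gtrsim|y^{(2)}|$). A secondary, lesser issue is that your far-from-diagonal reduction asserts pointwise spanning of $\R^{2d}$ by $\{X_m(x,y)\}_{|k|\le 2}$ for all $x\ne y$ with only the parenthetical ``the evaluation map is surjective,'' which restates rather than proves the claim; it is true, but requires handling the exceptional locus $x-y\in\pi\Z^d$ where $\sin(k\cdot(x-y))=0$ for the low modes, which is precisely what the paper's Cases 2--3 address.
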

\begin{remark} \label{rmk:SharpAssump}
The proof shows that Assumption \ref{a:lowms} is not quite sharp, especially for Galerkin-Navier-Stokes, where hypoellipticity in $(u_t)$ will fill all available degrees of freedom. However, for simplicity of exposition it is easier simply to use the less complicated condition. 
\end{remark}
\begin{proof}
For definiteness, parameterize $\mathbb T^d$ as $(-\pi,\pi]^d$.
Note that by Assumption \ref{a:lowms} and trigonometric identities, $\mathcal{A}_2$ is translation invariant in the sense that if \eqref{ineq:lwbd} holds at some point $(x,y) $ then \eqref{ineq:lwbd} holds also at $(x',y') = (x + \beta \mod 2\pi \mathbb Z^d, y + \beta \mod 2\pi \mathbb Z^d)$ for any vector $\beta \in \Real^d$.

At any point $(x,y) \in \mathcal{D}^c$, we divide the tangent space into $\Real^d_x \oplus \Real^d_y$ where the first $\Real^d_x$ is associated with infinitesimal motions of $x$ and the second $\Real^d_y$ associated with the infinitesimal motions of $y$. 
We are able to restrict ourselves to vectors $X \in \mathcal{A}_2$ which vanish on either $\Real^d_x$ or $\Real^d_y$ as uniform spanning follows by linear combinations (after slightly adjusting the constant). 
By the above symmetry considerations, it suffices to consider the case $x=0$, $y \neq 0$ and show that we can uniformly span $\Real^d_y$ with vector fields in $\mathcal{A}_2$ that also vanish at zero. 

We first consider the case $d=2$.
In what follows we denote $y = (y^{(1)}, y^{(2)})$. 
Let $\delta \in (0,\frac{1}{10})$ be fixed and arbitrary.
Define the set of points where shear flows which vanish at $x=0$ cannot span $\Real^d_y$. 
\begin{align}
  \mathfrak{D} := \set{y \in \mathbb T^2: y = (0,a), \textup{or, } y = (a,0), \textup{ for some } a \in (-\pi,\pi] }.
\end{align}
There are essentially three cases.

\textbf{Case 1 (2D): $y$ is at least $\delta d(x,y)$ away from $\mathfrak{D}$:} \\
\noindent
By trigonometric interpolation, $\exists h(\zeta)$ a linear combination of $\cos 2\zeta$, $\sin 2\zeta$, $\cos \zeta$, $\sin \zeta$ with coefficients having absolute value less than one such that satisfies $h(0) = 0$ and $h(y^{(2)}) \gtrsim \delta \abs{y}^{(2)}$. 
Hence, the vector field, $Y(x,y)|_{\Real^2_x} = (h(x^{(2)}),0)$, $Y(x,y)|_{\Real^2_y} = (h(y^{(2)}),0)$, 
\begin{align}
\abs{ \brak{ Y(x,y)|_{\Real^2_y}, \begin{pmatrix} 1 \\ 0 \end{pmatrix}}_{\Real^2_y}} \gtrsim_\delta d(x,y). \label{ineq:lwbdHor}
\end{align}
The analogous transverse shear flows span the vertical direction in $\Real^2_y$.

\textbf{Case 2 (2D): $y$ is less than $\delta d(x,y)$ from $\mathfrak{D}$ but more than  $\delta \pi$ away from the points $(0,\pi)$, $(\pi,0)$} \\ 
\noindent
Suppose without loss of generality that $y$ is close to the horizontal line  $y^{(2)} = 0$. 
Shear flows span the vertical direction of $\Real^2_y$ as in Case 1.
To span the horizontal direction, we use the flow
\begin{align}
Y(x,y)|_{\Real^2_{y'}} = \begin{pmatrix} -\sin y^{(1)} \cos y^{(2)} \\ \cos y^{(1)} \sin y^{(2)} \end{pmatrix}, \label{eq:Mode1Cell} 
\end{align}
which gives \eqref{ineq:lwbdHor}. 

\textbf{Case 3 (2D): $y$ is less than $\delta \pi$ away from one of $(0,\pi)$, $(\pi,0)$:} 
\noindent
This case is the most difficult.
Suppose that $y$ is close to $(\pi,0)$; the case $(0,\pi)$ is treated analogously. As in Case 2, shear flows span the vertical direction.
However, there is a new degeneracy at $(\pi,0)$ in the horizontal direction, as the cellular flow in \eqref{eq:Mode1Cell} vanishes. 
To rectify this, we choose the flow
\begin{align}
Y(x,y)|_{\Real^d_{y'}} = \frac{1}{2}\begin{pmatrix} \cos 2y^{(1)} \cos 2y^{(2)} \\ \sin 2y^{(1)} \sin 2y^{(2)} \end{pmatrix} - \frac{1}{2}\begin{pmatrix} \cos y^{(1)} \cos y^{(2)} \\ \sin y^{(1)} \sin y^{(2)} \end{pmatrix}, 
\end{align}
which satisfies \eqref{ineq:lwbdHor}. 

Next, we discuss the extension to $d=3$.\\
In this case, we re-define $\mathfrak{D}$ in the analogous way:
\begin{align}
  \mathfrak{D} := \set{y \in \mathbb T^2: y = (a,0,0), \textup{or, } y = (0,a,0), \textup{or, } y = (0,0,a),  \textup{ for some } a \in (-\pi,\pi] }. 
\end{align}

\textbf{Case 1 (3D): $y$ is at least $\delta d(x,y)$ away from $\mathfrak{D}$:} \\
This case is analogous to the Case 1 above simply by using shear flows in each of the coordinate directions separately. 

\textbf{Case 2 (3D): $y$ is less than $\delta d(x,y)$ away from $\mathfrak{D}$} \\ 
\noindent
Suppose without loss of generality that $y$ is close to the horizontal plane.
The vertical direction (normal to the plane) is spanned by shear flows.
To span the horizontal plane, we use flows that are independent of the normal direction and the problem reduces to the 2D case treated above.

This completes the uniform spanning. 
\end{proof}

\subsubsection{Sketch of proof of Proposition \ref{prop:SFscaleIntro}}
We can essentially apply the same proof as we did in \cite{BBPS18} (which draws heavily from \cite{EH01} combined with some simplifications in the Malliavin calculus and a more sophisticated choice of control to deal with the more complicated nonlinearity). 

The strategy here is to regularize the process for large values of  $\|u\|_{\Hbf}$. This will be done through the use of an auxiliary Wiener process $Z_t \in \R^{2d}$ which will couple to the dynamics through the vector field $L$ on $\R^{2d}$ defined for each $Z\in \R^{2d}$ by
\[
	L(Z) := \sum_{j=1}^{2d} \hat e_j \frac{Z^j}{\sqrt{1+|Z^j|^2}},
\]
where $\{\hat e_j\}_{j=1}^{2d}$ are the canonical basis elements for $\R^{2d}$. The cut-off process $\bar{z}_t^\rho \in \Hbf\times \T^{2d}\times \R^{2d}$ is then defined by augmenting by $Z_t$ so that $\bar{z}^{\rho}_t = (u_t^\rho,x_t^\rho,y_t^\rho,Z_t)$ satisfies the cut-off equation
\[
	\partial_t \bar{z}^{\rho} = F^\rho(\bar{z}^{\rho}_t) - A\bar{z}^{\rho}_t + \bar{Q}\dot{\bar{W}}_t,\quad \tilde{z}^\rho_0 = \tilde{z}\in \Hbf\times \mathcal{D}^c\times\R^{2d},
\]
where $\tilde{{Q}}\tilde{W}_t = (QW_t,0,0,W^Z_t)$ for $W^Z_t$ a Wiener process on $\R^{2d}$ independent from $W_t$, and (for $F$, $A$, and $L$ suitably extended to vector fields on $\Hbf\times \mathcal{D}^c\times \R^{2d}$), for each $\bar{z} = (z,Z) \in \Hbf\times\mathcal{D}^c\times\R^{2d}$
\[
	F^{\rho}(\bar{z}) := (1-\chi_{2\rho}(\|u\|_{\Hbf})) F(z)+ \chi_{\rho}(\|u\|_{\Hbf}) L(Z),
\]
where $\chi_\rho(r) = \chi(r/\rho)$ with $\chi \in C^\infty(\Real_+)$ monotone increasing, non-negative, with $\chi(r) = 0$ for $r \leq 1$ and $\chi(r) = 1$ for $r > 2$. 

Let $P^{(2),\,\rho}_t$ denote the Markov semi-group associated with the cut-off process $(\bar z^\rho_t)$.
The main step in \cite{BBPS18} (and \cite{EH01}) is to prove the following gradient bound. This is done via Malliavin calculus with a low frequency approximation and short-time perturbation argument  to obtain the control. The main ingredient specific to the two-point motion is the uniform parabolic H\"ormander condition proved above in Lemma \ref{lem:UniSpan}. 
\begin{proposition}\label{prop:cut-off-SF}
There exists constants $a, b>0$ such that for each $\varphi \in C^2(\Hbf \times \T^{2d}\times \R^{2d})$ and each $\bar{z} = (u,x,y,Z) \in \Hbf \times \mathcal{D}^c\times \R^{2d}$, the derivative $DP^{(2),\,\rho}_t \varphi(\bar{z})$ exists and satisfies for each $\xi\in \Hbf \times \R^{4d}$
\begin{equation}
	|DP^{(2),\,\rho}_t\varphi(\bar z) \xi| \leqc_\rho t^{-a}d(x,y)^{-b} (1+ \|u\|_{\Hbf} + |Z|)^b\|\varphi\|_{L^\infty}\|\xi\|_{\Hbf\times \R^{4d}}.
\end{equation}
\end{proposition}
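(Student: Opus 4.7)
The plan is to prove Proposition \ref{prop:cut-off-SF} via a Bismut--Elworthy--Li type Malliavin integration-by-parts formula, in the same spirit as the strong Feller proof in \cite{BBPS18, EH01}, but with two twists specific to the two-point setting: the auxiliary $W^Z$-noise that supplies hypoelliptic smoothing once $\|u\|_\Hbf \gtrsim \rho$, and the uniform low-mode spanning from Lemma \ref{lem:UniSpan}, whose $d(x,y)$-dependent lower bound is exactly what produces the $d(x,y)^{-b}$ factor.

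Concretely, let $J^\rho_{s,t}$ and $K^\rho_{s,t}$ denote the Jacobian and its adjoint for the cut-off flow $(\bar z^\rho_t)$, and let $\mathcal{A}^\rho_t h = \int_0^t J^\rho_{s,t}\bar Q h_s\,\ds$ be its Malliavin tangent map. The starting identity is
\[
 D P^{(2),\,\rho}_t \varphi(\bar z)\xi \;=\; \E\big(D\varphi(\bar z^\rho_t)\, J^\rho_{0,t}\xi\big) \;=\; \E\Big(D\varphi(\bar z^\rho_t)\rho_t + \varphi(\bar z^\rho_t)\!\int_0^t\!\!\langle h_s,\delta \bar W_s\rangle\Big),
\]
where one seeks a control $h$ with $\mathcal A^\rho_t h \approx J^\rho_{0,t}\xi$, and $\rho_t$ denotes the residual. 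Choosing $h_s = \bar Q^*K^\rho_{s,t}(\beta+\mathcal{M}^\rho_t)^{-1}J^\rho_{0,t}\xi$ on $[0,t]$ (with $\beta>0$ a regulariser, possibly iterated on short sub-intervals as in Section \ref{sec:C1VSpecProj}) reduces everything to (i) a non-degeneracy estimate for the Malliavin covariance $\mathcal M^\rho_t=\mathcal A^\rho_t(\mathcal A^\rho_t)^*$ on $\Hbf\times T_{(x,y)}\mathcal{D}^c\times\R^{2d}$, and (ii) moment bounds on $h$ and $\MalD h$ for the Skorohod integral.

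The main obstacle, and the crux of the argument, is the Malliavin non-degeneracy estimate: for all $\xi\in\Hbf\times\R^{4d}$ of unit norm and $t\in(0,1]$, we must establish a bound of the form
\[
 \PP\!\big(\langle\xi,\mathcal M^\rho_t\xi\rangle_{\Hbf\times\R^{4d}}<\varepsilon\big) \;\leq\; C_{\rho,p}\,(1+\|u\|_\Hbf+|Z|)^{p}\, d(x,y)^{-p}\,t^{-p a'}\,\varepsilon^p,\quad p\geq 1.
\]
For the $\Hbf$ component of $\xi$ this follows exactly as in Section \ref{sec:Malliavin}, using Assumption \ref{a:Highs} and Lemma \ref{lem:lowerbound}; the novelty is in the particle directions $\R^{2d}$. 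There the argument splits on the support of the cut-offs: on $\{\|u\|_\Hbf>\rho\}$ the augmented vector field $L(Z)$ is active and together with the independent Wiener process $W^Z_t$ gives direct, non-degenerate noise on $\R^{2d}$; on $\{\|u\|_\Hbf\leq 2\rho\}$ the original modes $\{e_m\}_{|k|\leq 2}$ drive the particles, and a short-time interpolation/H\"older estimate on $s\mapsto\langle g_m,K^\rho_{s,t}\xi\rangle$ (as in Lemmas \ref{lem:first-implication}--\ref{lem:second-implication}) reduces the lower bound to $\sup_{|k|\leq 2}|\langle X_m,\xi|_{\R^{4d}}\rangle|$, which is precisely the quantity controlled from below by $d(x,y)$ in Lemma \ref{lem:UniSpan}. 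Glueing the two regimes produces the $d(x,y)^{-b}$ singularity and the $(1+\|u\|_\Hbf+|Z|)^{b}$ growth.

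Given non-degeneracy, bounding $\E|\int_0^t\langle h_s,\delta \bar W_s\rangle|^2$ via \eqref{ineq:SkorIto} reduces to pathwise and Malliavin-derivative bounds on $J^\rho_{s,t}, K^\rho_{s,t}$ and $\mathcal M^\rho_t$, all of which follow from the analysis of Section \ref{sec:Jacobian} and Lemma \ref{lem:MalliavinJA}; the cut-off keeps the nonlinearity bounded, so these are in fact easier here. The $t^{-a}$ factor is the standard short-time degeneracy of hypoelliptic Malliavin matrices, inherited from (i); combining everything yields the stated gradient bound.
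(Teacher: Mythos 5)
Your proposal is essentially correct and follows the same approach as the paper. The paper's own proof is a sketch deferring to the Malliavin-calculus strong-Feller arguments of \cite{BBPS18} and \cite{EH01} applied to the cut-off process, and you correctly identify the key ingredients: the Bismut--Elworthy--Li integration-by-parts identity for the cut-off flow, the role of the auxiliary $(Z_t, W^Z_t)$ process in supplying hypoellipticity in the particle directions once $\|u\|_{\Hbf}\gtrsim\rho$, and -- crucially -- Lemma \ref{lem:UniSpan} as the quantitative uniform H\"ormander condition whose $d(x,y)$-lower-bound is what forces the $d(x,y)^{-b}$ singularity in the Malliavin non-degeneracy estimate and hence in the final gradient bound.

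Two small points of comparison. First, the paper (following \cite{BBPS18,EH01}) constructs the control via a low-frequency approximation and short-time perturbation argument rather than the Tikhonov-regularised, iterated control of Section \ref{sec:C1VSpecProj} that you invoke. Both routes are viable here precisely because the noise is non-degenerate (Assumption \ref{a:Highs}) and the cut-off bounds the nonlinearity, but the low-frequency approach is the lighter machinery and is the one the paper actually cites. Second, note that $W^Z_t$ drives $Z_t$, not $(x_t,y_t)$ directly: the spanning of the particle directions in the $\|u\|_{\Hbf}>\rho$ regime goes through one Lie bracket of the drift $L(Z)$ with the $Z$-directions, so it is hypoelliptic rather than ``direct'' noise on $\R^{2d}$ as you state. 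This is still a single bracket and elementary to verify, but it is worth phrasing precisely when estimating the Malliavin matrix in that regime.
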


Using Proposition \ref{prop:cut-off-SF}, one can prove the strong Feller property for the non cut-off process $z_t$ using the following metric on $\Hbf\times\mathcal{D}^c$
\[
	d_{\,b}(z^1,z^2) := \inf_{\gamma : z^1\to z^2}\int_0^1 d(x_s,y_s)^{-b}(1+\|u_s\|_{\Hbf})^b\|\dot{\gamma}_s\|_{\Hbf \times \R^{2d}}\ds, 
\]
where the infimum is taken over all differentiable curves $[0,1]\ni t \mapsto \gamma_t = (u_t,x_t,y_t)$ in $\Hbf\times\mathcal{D}^c$ connecting $z^1$ and $z^2$. It is not hard to see that the metric $d_b(\cdot,\cdot)$ generates the $\Hbf\times \mathcal{D}^c$ topology since the extremal trajectories avoid the diagonal. 

\begin{proof}[Sketch of proof of Proposition \ref{prop:SFscaleIntro}]
Fix $t>0$ and $\ep >0$ and let $z^1,z^2 \in \Hbf\times \mathcal{D}^c$ and take the initial $Z = 0$. From the moment estimates on $(u_t)$ in Proposition \ref{prop:WPapp}, we can choose the cut-off large enough (depending on $\|\varphi\|_{L^\infty}$ and $\|u_1\|_{\Hbf}$ and $\|u_2\|_{\Hbf}$) such that (see \cite{BBPS18} for more detail), 
\[
	|P^{(2)}_t \varphi(z^1) - P^{(2)}_t\varphi(z^2)| \leq |P^{(2),\,\rho}_t\varphi(z^1,0) - P^{(2),\,\rho}_t\varphi(z^2,0)| + 2\ep.
\]
By Proposition \ref{prop:cut-off-SF} and minimizing along all curves connecting $(z^1,0)$ and $(z^2,0)$, one derives  
\[
	|P^{(2)}_t\varphi(z_1) - P^{(2)}_t\varphi(z_2)| \leqc_{\rho} t^{-a} d_b(z_1,z_2) + 2\ep.
\]
The proof is completed by taking $d_b(z_1,z_2)$ sufficiently small.
\end{proof}

\subsection{Irreducibility: proof of Proposition \ref{prop:topIrredIntro}} \label{sec:Irr2pt}
In this section we prove Proposition \ref{prop:topIrredIntro}, that is, we show that the transition kernel $P^{(2)}_t(z,\cdot)$ is locally positive on $\Hbf \times \mathcal{D}^c$ for $t > 0$ and $z \in \Hbf \times \mathcal{D}^c$.
Proposition \ref{prop:topIrredIntro} is an immediate consequence of the following lemma. 
\begin{lemma} \label{lem:2ptPos}
Let $z,z' \in \Hbf \times \mathcal{D}^c$ be arbitrary. Then, $\forall \eps>0$ and $\forall t > 0$, 
\begin{align}
P^{(2)}_t(z,B_\eps(z')) > 0,
\end{align}
where we denote $B_\eps(z')$ the $\eps$-ball in $\Hbf \times \mathcal{D}^c$.
\end{lemma}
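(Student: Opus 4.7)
My plan is the standard Stroock--Varadhan support argument for SPDEs. I will show that it suffices to construct a deterministic control $\eta \in L^2([0,t]; \Hbf)$ for which the skeleton trajectory $(w_s, \gamma^x_s, \gamma^y_s)$, defined by $\partial_s w_s + A w_s + B(w_s, w_s) = Q\eta_s$, $w_0 = u$, and $\dot\gamma^x_s = w_s(\gamma^x_s)$, $\dot\gamma^y_s = w_s(\gamma^y_s)$, $(\gamma^x_0, \gamma^y_0) = (x, y)$, lies within $\eps/2$ of $z'$ at time $t$ while keeping $\gamma^x_s \neq \gamma^y_s$ throughout. Indeed, the Cameron--Martin/Girsanov shift $W_s \mapsto W_s + \int_0^s \eta_r \dee r$ produces an equivalent measure $\tilde\P \sim \P$, and continuity of the noise-to-state map for Systems \ref{sys:NSE}--\ref{sys:3DNSE} together with continuity of the Lagrangian flow in the velocity (Proposition \ref{prop:WPapp} and the embedding $\Hbf \hookrightarrow C^3$) will imply that $\P$ assigns positive mass to any neighborhood of the skeleton.

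I will build $\eta$ in three phases. The enabler is Assumption \ref{a:Highs}, which renders $Q$ an isomorphism from $\Hbf$ onto $\Hbf^{\sigma + \alpha}$: for any sufficiently smooth velocity curve $w$ I can set $\eta_s := Q^{-1}(\partial_s w_s + A w_s + B(w_s, w_s))$ and obtain a valid control in $\Hbf$. On $[0, t/3]$, take $w$ smooth interpolating from $u$ to a fixed smooth reference field $u^\star$; the particles arrive at some $(x^1, y^1) \in \Dc^c$. On $[2t/3, t]$, symmetrically pick a smooth interpolation from a reference $u^\sharp$ ending at $u'$; the resulting particle shift during this phase will depend continuously on its initial data.

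On the middle phase $[t/3, 2t/3]$ I will handle the Lagrangian control: pick any smooth curve $(\gamma^x, \gamma^y): [t/3, 2t/3] \to \Dc^c$ joining $(x^1, y^1)$ to a target $(x^2, y^2)$, and at each $s$ construct a smooth divergence-free $w_s$ satisfying the pointwise constraints $w_s(\gamma^x_s) = \dot\gamma^x_s$ and $w_s(\gamma^y_s) = \dot\gamma^y_s$; by the uniform spanning condition of Lemma \ref{lem:UniSpan}, this is possible within the finite-dimensional span of modes with $|k|_\infty \leq 2$, hence $w$ is as smooth as desired. The target $(x^2, y^2)$ will be chosen by a continuity/fixed-point argument so that the (continuous, uniformly bounded) phase-three particle drift lands the final positions within $\eps/2$ of $(x', y')$.

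Smoothing at the junctions $s=t/3$ and $s=2t/3$ via a time partition of unity will yield a single $\eta \in L^2([0,t]; \Hbf)$ with the desired skeleton. The main obstacle is the coupling between the infinite-dimensional velocity control and the finite-dimensional two-point Lagrangian control within a single time interval; the three-phase decomposition decouples them on the middle phase (through Lemma \ref{lem:UniSpan} and Assumption \ref{a:Highs}) and reduces the matching to a mild fixed-point correction for particle drift during the outer velocity-correction phases.
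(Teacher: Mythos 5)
Your overall skeleton — three-phase deterministic control followed by a Girsanov/support argument and a stability estimate — is the same as the paper's, and your middle phase (prescribing the particle velocities pointwise using low-frequency divergence-free fields, justified by Lemma \ref{lem:UniSpan}) is in the right spirit of the paper's Lemma \ref{lem:gctr}, which does the same thing explicitly with shear flows. However, your outer phases contain a genuine gap. You propose to realize an arbitrary smooth velocity interpolation $w$ from $u$ to $u^\star$ (and symmetrically to $u'$) by inverting $Q$, i.e.\ setting $\eta_s = Q^{-1}\bigl(\partial_s w_s + A w_s + B(w_s, w_s)\bigr)$, and you claim this yields a valid control. But the phase space is $\Hbf = \Hbf^\sigma$ with $\sigma < \alpha - \frac{d}{2}$, so for a generic initial $u \in \Hbf$ the term $A u \in \Hbf^{\sigma - 2(d-1)}$, and $Q^{-1}(Au)$ lives in $\Hbf^{\sigma - 2(d-1) - \alpha}$, which is a distribution of strictly negative order; in particular it is nowhere close to $\Wbf$. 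Any interpolation $w_s$ with $w_0 = u$ must therefore have $\eta_s = Q^{-1}(\ldots)$ blow up badly as $s \to 0$, and there is no reason for it to remain in $L^2_t(\Wbf)$ (let alone $L^2_t(\Hbf)$). In short, exact path-following from an $\Hbf^\sigma$ initial datum is not compatible with the regularity gap $\alpha$ between the noise and the phase space; the exact-controllability picture your argument implicitly assumes does not hold.

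The paper sidesteps this by not attempting to hit a prescribed velocity curve. Instead it uses the \emph{finite-dimensionally truncated} control $g_t = -\delta^{-1} Q^{-1} \Pi_N u$ (Lemma \ref{lem:scl1}), which is automatically a trigonometric polynomial and hence trivially in $\Wbf$, and proves only \emph{approximate} null controllability: $\|u_\delta\|_{\Hbf} \leq \eps/4$ after a short time $\delta$. (Lemma \ref{lem:scl2} is the time-reversed version targeting $u'$.) This is enough, since the target in Lemma \ref{lem:2ptPos} only needs to land in an $\eps$-ball. Driving the velocity approximately to \emph{zero}, rather than to a generic smooth reference, also makes the middle phase cleaner, because shear flows are stationary Euler solutions that can be turned on and off without a velocity mismatch at the junctions. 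Finally, the paper avoids your fixed-point/inverse-flow correction entirely by taking the outer windows of length $\delta \ll 1$: the particle drift they cause is $O(\delta(\|u\|_{\Hbf} + \|u'\|_{\Hbf}))$ (c.f.\ Lemma \ref{lem:scl2}), so it can be absorbed into the $\eps$ tolerance while the particles are steered \emph{exactly} to $(x',y')$ during the middle phase. Your fixed-point idea is not wrong in principle (the phase-three flow map is a diffeomorphism of $\Dc^c$), but it is an extra complication that the short-time-window device renders unnecessary. If you want to rescue your construction, you would need to insert an initial burn-in during which the uncontrolled parabolic dynamics smooths $u$ (parabolic regularization makes $u_\tau$ arbitrarily smooth for $\tau > 0$), only then begin the smooth interpolation; this is a workable but heavier route than the paper's truncated-control argument.
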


As usual, Lemma \ref{lem:2ptPos} is proved via an approximate control argument. 
Consider the following for $g_t$ a deterministic control
\begin{align}
\partial_t u_t + B(u_t,u_t) + Au_t & = Q g_t \\ 
\partial_t x_t & = u_t(x_t) \\ 
\partial_t y_t & = u_t(y_t). 
\end{align}
We prove that for any $z,z'$ as in the statement of Lemma \ref{lem:2ptPos}, we construct a $g_t \in \mathbf{W}$ such that at time $t$, 
\begin{align}
\norm{u' - u_t}_{H^\sigma}  + d(x',x_t) + d(y',y_t) < \frac{\eps}{2}. 
\end{align}
Moreover, $Qg_t \in C^\infty$ and the size of $\norm{Qg}_{L^\infty_t H^\sigma}$ will depend only $d(x,y)$, $d(x',y')$, $u,u'$ (where we denote $z=(u,x,y)$, $z' = (u',x',y')$) and can be chosen uniformly over compact sets in $\Hbf \times \mathcal{D}^c$.
Local positivity of the Wiener measure together with a stability argument then implies Lemma \ref{lem:2ptPos}; see e.g. [Lemma 7.3 \cite{BBPS18}] for how to carry out such details.

Constructing $g_t$ is a three step procedure: use a `scaling' (see e.g. the discussion in \cite{GHHM18} and the references therein) to force $u_t$ to (approximately) zero in an arbitrarily short time-window. 
Then we use arguments involving well-chosen sequences of shear flows to exactly control the two particles to the desired locations (here the proof is vaguely reminiscent of that of Lemma \ref{lem:UniSpan}). 
Then, we again use a `scaling' to force the $(u_t)$ to (approximately) $u'$ while simultaneously not disturbing the particles by more than $O(\eps)$.

\begin{lemma} \label{lem:scl1}
Let $u \in H^\sigma$ be arbitrary. Then $\forall \eps > 0$, $\exists \delta < \eps$ and a control $g:[0,\delta] \rightarrow \mathbf{W}$ such that  $\norm{u_{\delta}}_{H^\sigma} \leq \frac{\eps}{4}$ and $\sup_{0 \leq t \leq \delta} \norm{u_t}_{H^\sigma} \leq 3 \norm{u}_{H^\sigma}$. 
\end{lemma}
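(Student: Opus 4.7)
The plan is to use a ``tracking'' controllability argument: construct $g$ so that the controlled solution follows a prescribed smooth target trajectory that linearly interpolates a spectrally truncated version of $u$ down to zero over $[0,\delta]$, and then bound the deviation from this target by a short-time $H^\sigma$ perturbation estimate.

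I first reduce to the case $\|u\|_{H^\sigma} \geq \eps/16$, since if $\|u\|_{H^\sigma} < \eps/16$ the choice $g \equiv 0$ suffices on any sufficiently short interval (short-time well-posedness gives $\|u_t\|_{H^\sigma} \leq 2\|u\|_{H^\sigma}$ on such an interval, hence $\|u_\delta\|_{H^\sigma} < \eps/8 < \eps/4$). Assuming the reduction, I fix $N$ large enough that $\tilde u := \Pi_N u$ satisfies $\|u - \tilde u\|_{H^\sigma} \leq \eps/16$, pick a smooth profile $\psi \in C^\infty([0,1],[0,1])$ with $\psi(0)=1$ and $\psi(1)=0$, and define the target trajectory $\tilde u_t := \psi(t/\delta)\tilde u$ on $[0,\delta]$. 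The control is then specified by the explicit formula
\[
g_t := Q^{-1}\bigl(\partial_t \tilde u_t + B(\tilde u_t,\tilde u_t) + A\tilde u_t\bigr),
\]
so that $\tilde u_t$ is, \emph{by construction}, an exact solution of the controlled Navier--Stokes equation with forcing $Qg_t$ and initial datum $\tilde u$. Because $\tilde u_t$ has finite Fourier support (in $\{|k|\leq N\}$), the right-hand side has Fourier support in $\{|k|\leq 2N\}$, a finite-dimensional subspace on which Assumption~\ref{a:Highs} ensures $Q^{-1}$ is bounded; thus $g_t \in \Wbf$ (with norm depending on $N$, $\|u\|_{H^\sigma}$, and $\delta$).

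Next, letting $u_t$ denote the actual controlled trajectory from $u$ under this forcing, the residual $r_t := u_t - \tilde u_t$ satisfies $r_0 = u - \tilde u$ and the quasilinear equation
\[
\partial_t r_t + A r_t + B(\tilde u_t,r_t) + B(r_t,\tilde u_t) + B(r_t,r_t) = 0.
\]
A standard $H^\sigma$ energy estimate, exploiting $\sigma > d/2 + 3$ so that $H^\sigma$ is a Banach algebra together with the divergence-free cancellation $\langle B(w,v), v\rangle_{L^2}=0$, combined with a short-time bootstrap, then yields
\[
\|r_t\|_{H^\sigma}^2 \leq \|r_0\|_{H^\sigma}^2 \exp\bigl(C(1+\|u\|_{H^\sigma})\, t\bigr)
\]
on $[0,\delta]$ for $\delta$ sufficiently small depending on $\|u\|_{H^\sigma}$.

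The lemma then follows by taking $\delta := \min(\eps/2,\delta_0)$ with $\delta_0 > 0$ small enough that the exponential factor above is at most $4$. This gives $\|r_t\|_{H^\sigma} \leq 2\|r_0\|_{H^\sigma} < \eps/8$ on $[0,\delta]$; since $\tilde u_\delta = 0$ we obtain $u_\delta = r_\delta$ with $\|u_\delta\|_{H^\sigma} < \eps/4$, and the uniform bound follows from $\|u_t\|_{H^\sigma} \leq \|\tilde u_t\|_{H^\sigma} + \|r_t\|_{H^\sigma} \leq \|u\|_{H^\sigma} + \eps/8 \leq 3\|u\|_{H^\sigma}$, using the initial reduction $\|u\|_{H^\sigma} \geq \eps/16$. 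The main technical point is the $H^\sigma$ perturbation step: the low-regularity initial residual $r_0 \in H^\sigma$ prevents meaningful use of parabolic smoothing on the ultra-short timescale $\delta$, so the estimate must be closed via the commutator techniques already invoked in Lemma~\ref{lem:PathJacobian}.
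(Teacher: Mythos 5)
Your proposal is correct and gives a full argument where the paper gives only a one-line control formula and a pointer to references. The paper's control is the constant-in-time ``push'' $g_t = -Q^{-1}\delta^{-1}\Pi_N u$, which directly subtracts the low-frequency part of $u$ over $[0,\delta]$ and then relies on the nonlinear and dissipative terms being $O(\delta)$ errors; you instead use a feedforward (tracking) control $g_t = Q^{-1}(\partial_t\tilde u_t + B(\tilde u_t,\tilde u_t) + A\tilde u_t)$ that renders the interpolated trajectory $\tilde u_t = \psi(t/\delta)\Pi_N u$ an \emph{exact} solution of the controlled system. The upshot is that your residual $r_t = u_t - \tilde u_t$ satisfies a clean quasilinear perturbation of Navier--Stokes with small initial data $r_0 = (\Id - \Pi_N)u$, and closing the lemma becomes a short-time $H^\sigma$ energy estimate; the paper's formulation would instead require estimating the deviation of $u_t$ from $(1 - t/\delta)\Pi_N u + (\Id - \Pi_N)u$, which is the same idea but with extra lower-order forcing terms in the residual equation. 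Both are legitimate instances of the ``scaling'' control idea the paper describes informally in Section~\ref{sec:Irr2pt}. A couple of small points: (i) the constant $C$ in your Gr\"onwall bound for $r_t$ will in general depend on $\|\tilde u_t\|_{H^{\sigma+1}} \lesssim N\|u\|_{H^\sigma}$ (hence on $\eps$ through the choice of $N$) unless one uses the divergence-form trick $B(r,\tilde u) = \mathbb P\,\nabla\!\cdot(r\otimes\tilde u)$ to absorb the extra derivative into the viscous dissipation $\nu\|\nabla r\|_{H^\sigma}^2$; either way this is harmless since $\delta$ is allowed to depend on $\eps$ and $\|u\|_{H^\sigma}$. (ii) Your explicit reduction to $\|u\|_{H^\sigma}\geq \eps/16$ is a good catch needed to reconcile the additive error $\eps/8$ with the multiplicative bound $3\|u\|_{H^\sigma}$; the paper's proof, being a citation, does not spell this out.
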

\begin{remark}
This lemma is simplified by the use of fully non-degenerate noise, however, using the methods in \cite{GHHM18} and the references therein, one can obtain essentially the same lemma from any noise that satisfies the H\"ormander bracket conditions for $(u_t)$ discussed in \cite{E2001-lg,Romito2004-rc}. 
\end{remark}
\begin{proof}
The lemma follows by choosing $g_t$ as the following for suitably chosen $\delta$, $N$:
\begin{align}
g_t = -Q^{-1} \delta^{-1} \Pi_{N}u; 
\end{align}
see e.g. discussions in \cite{GHHM18} for more information.
\end{proof}

The next lemma constructs a control to move $x$ to $x'$ and $y$ to $y'$, assuming that the velocity is initially zero. 
\begin{lemma} \label{lem:gctr}
Let $a \in (0,\frac{1}{2})$ and suppose $u_a = 0$, $(x_a,y_a)= (x,y)$. 
For all $x,x,y,y' \in \mathcal{D}^c$, $\exists C_g$,  (depending only on $d(x,y)$ and $d(x',y')$) and a control $g =: g^{ctr,a}$ satisfying $\sup_{t \in (a,1-a)} \norm{g^{ctr,a}_t} \leq C_g$ such that $u_{1-a} = 0$ and $(x_{1-a},y_{1-a}) = (x',y')$. 
\end{lemma}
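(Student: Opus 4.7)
My plan is to reduce the lemma to a finite-dimensional interpolation problem, exploiting the uniform spanning condition (Lemma \ref{lem:UniSpan}) and the non-degeneracy of $Q$ on the low Fourier modes. The starting observation is that if I can construct a smooth, divergence-free, low-mode velocity field $t \mapsto v_t$ on $[a, 1-a]$ with $v_a = v_{1-a} = 0$ whose Lagrangian flow carries $(x,y)$ to $(x',y')$, then setting
\begin{equation}
g_t := Q^{-1}\left(\partial_t v_t + B(v_t, v_t) + A v_t\right)
\end{equation}
and using the uniqueness of mild solutions (Proposition \ref{prop:WPapp}) forces $u_t \equiv v_t$. Since $v_t$ involves only finitely many Fourier modes $\{e_m : |k|_\infty \leq 2\}$ on which $Q$ is non-degenerate (Assumption \ref{a:Highs}), $Q^{-1}$ is bounded on this subspace, and $g_t \in \mathbf{W}$ is smooth with norm controlled by the $C^1_t H^s_x$ norm of $v_t$.

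First I would pick a smooth curve $\gamma(t) = (\tilde x(t), \tilde y(t))$ in $\mathcal{D}^c$ joining $(x,y)$ to $(x',y')$ over the interval $[a, 1-a]$, with $\dot\gamma(a) = \dot\gamma(1-a) = 0$ and $\inf_{t} d(\tilde x(t), \tilde y(t)) \geq c_0$, where $c_0 > 0$ depends only on $d(x,y)$ and $d(x',y')$. Such a curve exists because $\mathcal{D}^c$ is path-connected; one constructs it by concatenating a short path from $(x,y)$ to some fixed reference configuration far from the diagonal, a path to a similar configuration close to $(x',y')$, and a short path terminating at $(x',y')$, then reparametrizing smoothly with vanishing velocity at the endpoints. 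The choice depends continuously on $z$ and $z'$, so the constants will be uniform over compact subsets of $\Hbf \times \mathcal{D}^c$.

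Next I would solve, at each $t$, the pointwise interpolation problem
\begin{equation}
v_t(\tilde x(t)) = \dot{\tilde x}(t), \qquad v_t(\tilde y(t)) = \dot{\tilde y}(t), \qquad v_t = \sum_{|k|_\infty \leq 2} c_m(t)\, e_m.
\end{equation}
The evaluation map $c \mapsto (v(\tilde x(t)), v(\tilde y(t)))$ is linear from $\R^N$ onto (a subspace of) $\R^{2d}$, and Lemma \ref{lem:UniSpan} says exactly that its smallest singular value on the target directions is $\gtrsim d(\tilde x(t), \tilde y(t)) \geq c_0$. The Moore-Penrose minimum-norm solution $c(t)$ therefore satisfies
\begin{equation}
\max_m |c_m(t)| \lesssim c_0^{-1}\bigl(|\dot{\tilde x}(t)| + |\dot{\tilde y}(t)|\bigr),
\end{equation}
and depends smoothly on $t$ since $(\tilde x, \tilde y, \dot{\tilde x}, \dot{\tilde y})$ do. Because $\dot\gamma$ vanishes at $t = a$ and $t = 1-a$, so does $c(t)$, giving $v_a = v_{1-a} = 0$.

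Assembling, $v_t = \sum c_m(t) e_m$ is smooth in $t$ and a trigonometric polynomial in $x$ with bounded coefficients, so $g^{ctr,a}_t := Q^{-1}(\partial_t v_t + B(v_t, v_t) + A v_t)$ is smooth in $t$ with $\sup_t \|g^{ctr,a}_t\|_{\mathbf{W}} \leq C_g$, where $C_g$ depends only on $c_0$, the $C^2_t$ bounds on $\gamma$, and $a$ — all ultimately functions of $d(x,y)$ and $d(x',y')$ alone. By construction $u_t = v_t$ has $u_a = u_{1-a} = 0$ and carries $(x,y)$ along $\gamma$ to $(x',y')$ at time $1-a$. The main (and essentially only) obstacle is the quantitative construction of the curve $\gamma$ with a uniform lower bound on its distance to the diagonal; this is a purely geometric matter that reduces to the path-connectedness of $\mathcal{D}^c$ and the compactness of the relevant parameter sets.
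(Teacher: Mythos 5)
Your proposal is correct, but it takes a genuinely different route from the paper's proof, and the difference is worth noting.

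The paper's proof is concrete and local: it reduces to moving one particle at a time (fixing the other), uses shear flows in coordinate directions applied sequentially, and runs a case analysis on the particle positions relative to the coordinate axes (Cases 1, 2, 3). The crucial structural trick is that shears are stationary solutions of 2D (and analogously 3D) Euler, so the nonlinear term $B(v_t,v_t)$ vanishes identically along the controlled trajectory and one only has to undo $\partial_t + A$. Thus the control is confined to modes $|k|_\infty \leq 2$ and the argument never touches the nonlinearity.

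Your proof instead replaces the case analysis by a single geometric construction: you pick a smooth path $\gamma(t)=(\tilde x(t),\tilde y(t))$ in $\mathcal D^c$ with a uniform gap $c_0$ from the diagonal, and then at each time solve the pointwise interpolation problem $v_t(\tilde x(t))=\dot{\tilde x}(t)$, $v_t(\tilde y(t))=\dot{\tilde y}(t)$ in the low-mode Fourier span. The key observation that Lemma \ref{lem:UniSpan} is exactly the quantitative surjectivity (smallest singular value $\gtrsim d(\tilde x,\tilde y)\geq c_0$) needed to make the Moore-Penrose solution $c(t)$ bounded by $c_0^{-1}|\dot\gamma|$ and smooth in $t$ is correct and is the nice idea here; it subsumes all three cases of the paper's argument in one stroke. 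The price is twofold: (i) one needs the geometric lemma that $\{d(a,b)\geq c_0\}$ is path-connected in $\T^d\times\T^d$ with path constants depending only on $c_0$ (true since $\mathcal D$ has codimension $d\geq 2$, but it deserves a sentence or two), and (ii) since $v_t$ is not a stationary Euler solution, $B(v_t,v_t)$ contributes modes up to $|k|_\infty\leq 4$, so the control $g_t=Q^{-1}(\partial_t v_t + B(v_t,v_t)+Av_t)$ requires $Q^{-1}$ to be defined on those modes as well. Under Assumption \ref{a:Highs} (which governs Systems \ref{sys:NSE}--\ref{sys:3DNSE}, the setting in which this lemma is actually used) this is harmless, but it is a real difference from the paper's Euler-stationary construction, which would also survive the weaker Assumption \ref{a:lowms}. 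Each approach has its virtues: yours is cleaner and avoids case-splitting; the paper's works even when only the $|k|_\infty\leq 2$ modes are forced.
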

\begin{proof}	
First observe that, by first moving $x$ and then moving $y$, (and the vice-versa) it suffices to fix one particle and move the other. 
Parameterize $\mathbb T^d$ as $(-\pi,\pi]^d$ and suppose without loss of generality that $x=0$, $y \neq 0$. We will choose the velocity field to fix $x_t= 0$ and satisfy $y_{\frac{1}{2}} = y'$. 

We carry out the proof in $d=2$ for notational simplicity; the $d=3$ case follows similarly. 
Let $\delta > 0$ be fixed arbitrary. 
There are essentially two cases.\\
\noindent
\textbf{Case 1: } $y,y'$ do not lie within $\delta$ of the same coordinate axis. \\ 
\noindent
\textbf{Case 1a:} Neither $y$ nor $y'$ lie within a $\delta$ of any coordinate axis. \\ 
Denote $y = (y^{(1)},y^{(2)})$ and $y' = (y^{'(1)},y^{'(2)})$.
By Assumption \ref{a:lowms}, trigonometric interpolation, and that all shears are stationary solutions of the 2D Euler equations, for any smooth $f_t$ with $f_a = 0$, $\exists g_t$ such that 
\begin{align}
u_{t}(\zeta) = f_t 
\begin{pmatrix} 
h(\zeta^{(2)})\\ 0
\end{pmatrix},
\end{align}
with $h$ a linear combination of $\cos y,\sin y,\cos 2y, \sin 2y$ satisfying $h(0) = 0$ and $h(y^{(2)}) = 1$. 
It is clear that we can choose $f_t$ (and hence $g_t$) such that $y_{\frac{a}{2} + \frac{1}{4}} = (y^{'(1)},y^{(2)})$.
Over the time interval $t \in (\frac{a}{2} + \frac{1}{4},\frac{1}{2})$ we then similarly move the second component of $y$ using a shear flow of the form $(0,b(\zeta^{(2)}))$. 
  
\noindent
\textbf{Case 1b:} One or both $y$ of $y'$ lie within a $\delta$ of a coordinate axis. \\ 
Unlike the previous case, the order in which we apply the shear flows  (i.e. in the horizontal or in the vertical direction) matters. 

Suppose that $y'$ lies on the vertical axis. Then either $y$ lies on the horizontal axis or lies away from either axis. 
We apply the same procedure of two repeated shear flows as the previous case, however we \emph{first} adjust $y^{(2)}$ to $y^{'(2)}$ and \emph{then} adjust $y^{(1)}$ to $y^{'(1)}$.  
If $y'$ lies on the horizontal axis, we proceed similarly, but this time first adjusting $y^{(1)}$ to $y^{'(1)}$ and then adjusting $y^{(2)}$ to $y^{'(2)}$. 

\noindent
\textbf{Case 2: } $y,y'$ lie within a $\delta$ of the same coordinate axis. \\ 
 By symmetry, without loss of generality we can assume that the coordinate axis is the horizontal.
Since $y \neq 0$, by trigonometric interpolation, there exists a shear flow $(0,h(y))$ where $h$ is a trigonometric polynomial supported only in the first two harmonics satisfying 
\begin{align}
h(0) = 0, \quad h(y^{(1)}) = 1.   
\end{align}
Hence, in any time window we can move $y$ back into Case 1b, at which point we proceed as above. 
\end{proof} 

The next lemma simply the reverse of Lemma \ref{lem:scl1}. 

\begin{lemma} \label{lem:scl2}
Let $u' \in \Hbf$ be arbitrary. Then $\forall \eps > 0$, $\exists \delta \ll 1$ and a control $g:[1-\delta,1] \rightarrow \mathbf{W}$ such that if $\norm{u_{1-\delta}}_{\Hbf} \leq \frac{\eps}{4}$, then there holds $\norm{u_1-u'}_{\Hbf} < \frac{\eps}{4}$, $\sup_{1-\delta \leq t \leq 1} \norm{u_t}_{\Hbf} \leq 3 \norm{u'}_{\Hbf}$, and $d(x_{1-\delta},x_1) + d(y_{1-\delta},y_1) \lesssim \delta \norm{u'}_{\Hbf}$. 
\end{lemma}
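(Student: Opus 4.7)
The idea is to choose a control whose effect over the short interval $[1-\delta,1]$ is, to leading order, to add a kick of exactly $u'$ to the velocity, while nonlinear and dissipative contributions remain negligible and the particles cannot travel far because of the short time window.

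First I would introduce a smooth truncation. Fix $N=N(\eps,u')$ large enough that $\norm{(\Id-\Pi_N)u'}_{\Hbf}\leq \eps/16$, where $\Pi_N$ is the sharp Fourier projection onto $|k|\leq N$. Then $\Pi_N u'$ lies in every Sobolev space, so $Q^{-1}\Pi_N u'\in\Wbf$ by Assumption \ref{a:Highs}, and I can define the control
\[
g_t := \delta^{-1} Q^{-1}\Pi_N u', \qquad t\in[1-\delta,1],
\]
so that $Qg_t = \delta^{-1}\Pi_N u'$. Writing the resulting velocity in mild form,
\[
u_t = e^{-(t-(1-\delta))A}u_{1-\delta}
-\int_{1-\delta}^t e^{-(t-s)A}B(u_s,u_s)\,\ds
+\delta^{-1}\int_{1-\delta}^t e^{-(t-s)A}\Pi_N u'\,\ds.
\]
The third term is the engine of the construction: as $\delta\to 0$ it converges in $\Hbf$ to $\Pi_N u'$, while the other two terms are small.

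Next I would carry out a bootstrap on $M(\delta):=\sup_{1-\delta\leq t\leq 1}\norm{u_t}_{\Hbf}$. Using the contractivity of $e^{-tA}$ on $\Hbf=H^\sigma$, the bound $\norm{\Pi_N u'}_{\Hbf}\leq \norm{u'}_{\Hbf}$, and the standard nonlinear estimate $\norm{B(u,u)}_{H^{\sigma-1}}\lesssim \norm{u}_{H^\sigma}^2$ combined with the parabolic smoothing $\norm{e^{-\tau A}}_{H^{\sigma-1}\to H^\sigma}\lesssim \tau^{-1/(2(d-1))}$, one gets
\[
M(\delta)\leq \tfrac{\eps}{4}+\norm{u'}_{\Hbf}+C\,\delta^{1-\frac{1}{2(d-1)}}\,M(\delta)^2.
\]
Provided $\delta$ is chosen so small that $C\,\delta^{1-1/(2(d-1))}(3\norm{u'}_{\Hbf})^2\leq \norm{u'}_{\Hbf}$ (or, in the trivial case $\norm{u'}_{\Hbf}\lesssim\eps$, simply small compared to $\eps$), a continuity argument closes the bootstrap to yield $M(\delta)\leq 3\norm{u'}_{\Hbf}$. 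At $t=1$, the same decomposition gives
\[
\norm{u_1-\Pi_N u'}_{\Hbf} \leq \norm{u_{1-\delta}}_{\Hbf} + O(\delta^{1-\frac{1}{2(d-1)}}\norm{u'}_{\Hbf}^2) + o_\delta(1)\norm{u'}_{\Hbf},
\]
where the last term quantifies the difference between $\delta^{-1}\int_{1-\delta}^1 e^{-(1-s)A}\Pi_N u'\,\ds$ and $\Pi_N u'$. Shrinking $\delta$ further makes this less than $\eps/8$, and combining with $\norm{u'-\Pi_N u'}_{\Hbf}<\eps/16$ yields $\norm{u_1-u'}_{\Hbf}<\eps/4$.

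Finally, the particle estimate is immediate: since $\Hbf\hookrightarrow C^0$ with a $\sigma$-independent constant and $\sup_{[1-\delta,1]}\norm{u_t}_{L^\infty}\lesssim M(\delta)\leq 3\norm{u'}_{\Hbf}$, integrating the Lagrangian ODEs \eqref{def:LagIntro} over $[1-\delta,1]$ gives $d(x_{1-\delta},x_1)+d(y_{1-\delta},y_1)\lesssim \delta\norm{u'}_{\Hbf}$. The only mildly delicate step is closing the bootstrap in the regime where $\norm{u'}_{\Hbf}$ is comparable to $\eps$: there one simply replaces the target factor $3\norm{u'}_{\Hbf}$ by $\max(3\norm{u'}_{\Hbf},\eps)$ in the estimate for $M(\delta)$, which is consistent with how this lemma is used downstream in the concatenation with Lemmas \ref{lem:scl1} and \ref{lem:gctr}.
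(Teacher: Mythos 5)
Your proof is correct and uses the same scaling mechanism the paper relies on: the paper gives no explicit proof of this lemma (saying only that it is the reverse of Lemma \ref{lem:scl1}), but the control $g_t=\delta^{-1}Q^{-1}\Pi_N u'$ you write down is precisely the one displayed in the assembled control inside the proof of Lemma \ref{lem:2ptPos}, and your mild-formulation bootstrap plus parabolic smoothing are the standard way to close the estimates. Your observation that the bound $\sup_t\|u_t\|_{\Hbf}\le 3\|u'\|_{\Hbf}$ cannot hold verbatim when $\|u'\|_{\Hbf}\lesssim\eps$ and should really read $\max(3\|u'\|_{\Hbf},\eps)$ (or some harmless variant) is a legitimate small fix to the lemma's statement, and is consistent with its downstream use.
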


Finally, we briefly sketch how to assemble the control and prove the necessary stability for the problem. 
 
\begin{proof}[\textbf{Proof of Lemma \ref{lem:2ptPos}}]
	We use the control (where $g^{ctr,\delta}$ is chosen to send $x_{\delta},y_{\delta}$ exactly to $x',y'$ at time $1-\delta$), 
	\begin{align}
	g_t = 
	\begin{cases}
	-\delta^{-1} Q^{-1}u_{\leq N}  & \quad t \in (0,\delta) \\ 
	 g^{ctr,\delta}_{t} & \quad t \in (\delta,1-\delta) \\
	\delta^{-1} Q^{-1} u'_{\leq N}. & \quad t \in (1-\delta,1). 
	\end{cases}
\end{align}
For suitable choices of $\delta$, $N$, the lemma now follows from Lemmas \ref{lem:scl1}, \ref{lem:scl2}, and  \ref{lem:gctr} and local positivity of the Wiener measure as in e.g. [Lemma 7.3, \cite{BBPS18}]. 
\end{proof}

\subsection{Verification of the drift condition: proof of Proposition \ref{prop:2ptDrift}} \label{sec:2ptDrift}

We now commence the analysis of $\Vc$ defined in Section \ref{subsec:outlineConstructV} using the eigenfunction $\psi_p$ constructed in Section \ref{sec:spectral-twist}. We will assume for the rest of this section that $p\in (0,p_0)$ is fixed, ensuring the existence $\psi_p$ by Proposition \ref{prop:psiP}. Recall, that $\psi_p$ belongs to $\mathring{C}^1_V$, where $V= V_{\beta,\eta}$ for all $\beta$ large enough and $\eta \in (0,\eta^*)$. In what follows, we will increase this lower bound on $\beta$ finitely many time without explicitly keeping track of the value. 

For $(x,y)\in \mathcal{D}^c$ we define $w(x,y) = y-x \,\mathrm{mod}\, 2\pi \mathbb Z^d \in \R^d$ (i.e., 
$(x,y)$ is the shortest displacement vector from $x$ to $y$). Then, for each $(u,x,w)\in \Hbf\times \T^d \times \R^d$,  $\Vc$ is of the form
\[
	\Vc(u,x,y) = \hat{h}_p(u,x,w(x,y)) + V(u)
\]
where
\begin{align*}
\hat{h}_p(u,x,w) := \abs{w}^{-p}\psi_p\left(u,x,\frac{w}{\abs{w}}\right) \chi(\abs{w}), 
\end{align*}
and $\chi$ is a smooth, strictly positive cutoff supported in $B(0,\frac{1}{50})$. The cut-off function ensures continuity, since  $w(x,y)$ is continuous on $\mathcal{D}_{1/10}$. Consequently this cut-off allows us to pull $h_p$ back to a continuous function $\hat{h}_p$ on $\Hbf\times \mathcal{D}^c$ by
\[
	h_p(u,x,y) := \hat{h}_p(u,x,w(x,y)).
\]

Our strategy for verifying the drift condition (Condition \ref{defn:drift}) is to show that $\hat h_p$ is an approximate eigenfunction for the two point Markov semi-group $P^{(2)}_t$. To do this, it is convenient to work with the infinitesimal generator $\mathcal{L}_{(2)}$ of $P^{(2)}_t$. Therefore we will need to show that it is a legitimate $C_0$ semi-group on an appropriate Banach space. Moreover, we will need to deal with observables that are unbounded both for large $u$ and as $(x,y)$ approach the diagonal $\mathcal{D}$. To do this we introduce the following weight 
\[
	\widehat{V}(u,x,y) := \widehat{V}_{p,\beta,\eta}(u,x,y) = d(x,y)^{-p}V_{\beta,\eta}(u),
\]
where $p\in (0,p_0)$ and $V_{\beta,\eta}$ is defined by \eqref{def:V}, with $\eta \in (0,\eta^*)$, $\beta \geq 1$ and $d(x,y)$ denotes the natural distance metric on $\T^d$. Treating $p,\eta, \beta$ as fixed for now we then define the following weighted supremum norm
\[
	\|\varphi\|_{C_{\widehat{V}}} := \sup_{z\in \Hbf\times \mathcal{D}^c} \frac{|\varphi(z)|}{\widehat V(z)},
\]
and denote $C_{\hat{V}}$ to be space of continuous functions on $\Hbf\times\mathcal{D}^c$ whose $\|\cdot\|_{C_{\hat{V}}}$ norm is finite. Note that since $\Vc \leqc \widehat{V}$ (for appropriate $p,\beta,\eta$), we have that $\Vc \in C_{\hat{V}}$.

First we show that $P^{(2)}_t$ is a bounded linear operator on $C_{\widehat{V}}$ (although it lacks strong continuity in this space).

\begin{lemma}\label{lem:ChatV-boundedness} For all $p\in (0,p_0)$ ,$\beta \geq 1$ and $\eta \in (0,\eta^\ast)$, $P^{(2)}_t$ extends to a bounded linear operator on $C_{\hat{V}}$. Specifically there exists a constant $C$ such that for each $\varphi \in C_{\widehat{V}}$, 
\[
	\|P^{(2)}_t\varphi\|_{C_{\widehat{V}}} \leq e^{C t}\|\varphi\|_{C_{\widehat{V}}}.
\]
\end{lemma}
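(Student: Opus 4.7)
The plan is to reduce the claim to the scalar moment estimate
\[
	\EE_{(u,x,y)}\, d(x_t, y_t)^{-p} V_{\beta,\eta}(u_t) \leq e^{Ct}\, d(x,y)^{-p} V_{\beta,\eta}(u) \, ,
\]
which, combined with the pointwise bound $|\varphi(u,x,y)| \leq \|\varphi\|_{C_{\widehat V}}\,\widehat V(u,x,y)$ for $\varphi \in C_{\widehat V}$, immediately yields $|P^{(2)}_t \varphi| \leq e^{Ct}\|\varphi\|_{C_{\widehat V}}\,\widehat V$. Continuity of $P^{(2)}_t \varphi$ then follows from the Feller property of $(u_t,x_t,y_t)$ on $\Hbf \times \Dc^c$ combined with dominated convergence, using the moment bound to justify passage to the limit in the expectation.

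For the spatial factor $d(x_t,y_t)^{-p}$, I would use a pathwise bi-Lipschitz estimate for the Lagrangian flow. The inverse flow $(\phi^t_u)^{-1}$ has Lipschitz constant bounded by $\exp\left(\int_0^t \|\nabla u_s\|_{L^\infty}\,\ds\right)$, which follows from Gr\"onwall applied to the linearized backward equation. Pulling a minimizing geodesic on $\T^d$ from $x_t$ to $y_t$ back via $(\phi^t_u)^{-1}$ produces a curve from $x$ to $y$ of length at most $d(x_t,y_t) \exp(\int_0^t \|\nabla u_s\|_{L^\infty}\,\ds)$, so that pathwise
\[
	d(x_t, y_t) \geq d(x,y) \exp\left( -\int_0^t \|\nabla u_s\|_{L^\infty}\,\ds\right) \, .
\]
Raising to the $-p$-th power and applying the Sobolev embedding $\|\nabla u\|_{L^\infty} \lesssim \|u\|_{H^r}$ for any $r \in (d/2 + 1, 3)$ yields
\[
	d(x_t, y_t)^{-p} V(u_t) \leq d(x,y)^{-p} \exp\left(C_0 p \int_0^t \|u_s\|_{H^r}\,\ds\right) V(u_t) \, .
\]

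It remains to bound $\EE \exp\left(C_0 p \int_0^t \|u_s\|_{H^r}\,\ds\right) V(u_t)$ by $e^{Ct} V(u)$ uniformly in $t\geq 0$. On the unit interval this follows directly from Lemma \ref{lem:TwistBd} applied with $\gamma = 0$, $T = 1$, $\kappa = C_0 p$, and the given $(\beta,\eta)$, producing
\[
	\EE_u \exp\left(C_0 p \int_0^1 \|u_s\|_{H^r}\,\ds\right) \sup_{0\leq s \leq 1} V(u_s) \leq C_1 V(u) \, .
\]
To obtain the bound for general $t$, I would pass to $n = \lceil t \rceil$ and chain together these unit-time estimates via the Markov property: conditioning successively on $\mathscr F_k$ and applying the above inequality at $u_k$ yields a $C_1$-multiplicative increase per unit of time, giving $C_1^n V(u) \leq e^{Ct} V(u)$ with $C = \log C_1$. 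The main (and relatively minor) technical point here is verifying that $\gamma = 0$ in Lemma \ref{lem:TwistBd} imposes no additional restrictions on $(\beta,\eta,p)$ beyond those already assumed, which is immediate from the statement of that lemma. I do not anticipate any substantial obstacle: the pathwise argument absorbs the singularity at the diagonal, and the super-Lyapunov structure from Section \ref{sec:SuperL} easily absorbs the resulting exponential weight.
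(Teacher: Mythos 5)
Your proposal is correct and follows essentially the same two-step strategy as the paper: first establish the pathwise lower bound $d(x_t,y_t) \gtrsim d(x,y)\exp(-\int_0^t\|u_s\|_{H^{r}}\,\ds)$ to pull the singular weight $d(x,y)^{-p}$ out of the expectation, then invoke the super-Lyapunov estimate (Lemma \ref{lem:TwistBd}) to absorb the resulting exponential-of-time-integral against $V(u_t)$.

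You differ in two minor but worthwhile ways. First, your derivation of the pathwise bound via the bi-Lipschitz constant of the inverse flow $(\phi^t_u)^{-1}$ (pulling back a minimizing geodesic from $x_t$ to $y_t$) is cleaner than the paper's argument, which works locally in a chart where $d(x_t,y_t)=|x_t-y_t|$ and needs the stopping time $\tau_t$ to ensure the chart remains valid. Your global argument avoids this bookkeeping entirely. Second, you make explicit the Markov chaining over unit time intervals needed to see that the constant grows like $e^{Ct}$ rather than some unspecified $T$-dependent constant; the paper's proof asserts the $e^{Ct}$ bound directly from Lemma \ref{lem:TwistBd} without commenting on the $T$-dependence of its constant, so your step closes a small presentational gap. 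Both of these are improvements in exposition rather than a genuinely different method.
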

\begin{proof}
To prove this, first note that
\[
	|P_t^{(2)}\varphi| \leq \|\varphi\|_{C_{\widehat{V}}} \E\left(d(x_t,y_t)^{-p}V(u_t)\right).
\]
Our first step will be to deduce a lower bound for $d(x_t,y_t)$. To this end, we note that when $d(x_t,y_t) < 1/10$ we find a local chart and represent $x_t$ and $x_t$ as vectors in $\R^d$ so that $d(x_t,y_t) = |x_t - y_t|$ and therefore deduce the differential inequality
\begin{equation}\label{eq:diff-log-eq}
	\frac{\dee}{\dt} \log(|x_t - y_t|) \geq  - \|u_t\|_{H^{r_0}}.
\end{equation}
In order to integrate the above inequality, we must be careful that we integrate over time intervals where the local chart we used remains valid. With this in mind, suppose that $t > 0$ is such that $d(x_{t},y_{t}) < d(x,y)/100$ and define
\[
	\tau_t = \sup \{s: 0\leq s< t, \, d(x_s,y_s) \geq d(x,y)/10\},
\]
to be the last time before $t$ that $d(x_s,y_s)$ was outside the chart. Note that $\tau_t$ is well defined, since $d(x_0,y_0) \geq d(x,y)/10$ and is strictly less than $t$ by continuity of $(x_t,y_t)$. Consequently integrating \eqref{eq:diff-log-eq} from $\tau_t$ to $t$ yields
\[
	|x_{t} - y_{t}| \geq |x_{\tau_t}-y_{\tau_t}|\exp\left(- \int_{\tau_t}^{t} \|u_s\|_{H^{r_0}}\ds\right) \geq \frac{d(x,y)}{10}\exp\left(- \int_0^{t} \|u_s\|_{H^{r_0}}\right).
\]
Of course when $d(x_t,y_t) > d(x,y)/100$, a lower bound is automatic and therefore we obtain
\[
	d(x_t,y_t) \geqc d(x,y) \exp\left(- \int_0^{t} \|u_s\|_{H^{r_0}}\right).
\]
It then follows from Lemma \ref{lem:TwistBd} that we can bound
\begin{equation}
\begin{aligned}
	\E_{z}\left(d(x_t,y_t)^{-p}V(u_t)\right) &\leqc d(x,y)^{-p}\E_u\exp\left(p\int_0^{t} \|u_s\|_{H^{r_0}}\right)V(u_t)\\
	 &\leqc e^{C t}d(x,y)^{-p}V(u).
\end{aligned}
\end{equation}
\end{proof}

As we saw for the twisted Markov semi-group, boundedness in a Banach space is not enough to ensure that $P^{(2)}_t$ gives rise to a $C_0$-semigroup on that space. Indeed, we must define the space $\mathring{C}_{\hat V}$ obtained as the closure of the space of smooth cylinder functions
\[
	\mathring{C}^\infty_0(\Hbf\times \mathcal{D}^c) = \{\varphi \,| \, \varphi(u,x,y) = \phi(\Pi_{\mathcal{K}} u,x,y) \,,\, |\mathcal{K}| <\infty\,,\,  \phi \in C^\infty_0(\R^{|\mathcal{K}|})\}
\]
with respect to the norm $\|\cdot\|_{C_{\widehat{V}}}$. An analogous argument to the proof of Proposition \ref{prop:C0-property-twist} for the twisted Markov semi-group $\hat{P}_t^p$ (in fact a strictly simpler one since it does not involve derivatives) gives the $C_0$ semi-group property of $P^{(2)}_t$. We omit the proof for brevity.

\begin{proposition}\label{prop:C0-twopoint}
Let $\widehat{V} = \widehat{V}_{p,\beta,\eta}$, where $\eta\in(0,\eta^*)$, $p\in (0,p_0)$, and $\beta >1$ is taken large enough. Then the Markov semi-group $P^{(2)}_t$ extends to a $C_0$ semigroup on $\mathring{C}_{\widehat{V}}$. 
\end{proposition}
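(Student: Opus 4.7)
The plan is to mirror the strategy of Proposition \ref{prop:C0-property-twist}, simplified by the absence of derivatives. By the standard criterion (\cite{HM08}, Theorem 5.10), since boundedness on $C_{\widehat V}$ is already in Lemma \ref{lem:ChatV-boundedness}, it suffices to show two things: (i) $P^{(2)}_t$ maps $\mathring C_{\widehat V}$ into itself, and (ii) for every cylinder function $\varphi \in \mathring C_0^\infty(\Hbf \times \Dc^c)$, the map $t \mapsto P^{(2)}_t \varphi$ is strongly continuous at $t=0$ in the $C_{\widehat V}$ topology. Density of $\mathring C_0^\infty(\Hbf \times \Dc^c)$ in $\mathring C_{\widehat V}$ then completes the argument.

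For (i), fix $\varphi \in \mathring C_0^\infty(\Hbf \times \Dc^c)$, so that $\varphi(u,x,y) = \phi(\Pi_{\mathcal K} u, x, y)$ for some finite $\mathcal K \subset \mathbb K$ and $\phi$ smooth with compact support. I would show
\[
\lim_{n\to\infty}\|P^{(2)}_t\varphi - (P^{(2)}_t\varphi)\circ \Pi_n\|_{C_{\widehat V}} = 0,
\]
where $\Pi_n$ is the Galerkin projector onto modes $|k|\leq n$ (acting only in the $u$-variable). Writing $z = (u,x,y)$ and $z^n = (\Pi_n u, x, y)$, denote by $z_t = (u_t,x_t,y_t)$, $z_t^n = (u_t^n, x_t^n, y_t^n)$ the two-point flows from $z$ and $z^n$. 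Because $\varphi$ is smooth and compactly supported, $|\varphi(z_t) - \varphi(z_t^n)| \lesssim_\varphi d_{H^{r_0}\times \T^d \times \T^d}(z_t, z_t^n)$, and a pathwise estimate analogous to Lemma \ref{lem:PathJacobian} yields
\[
d_{H^{r_0}\times \T^d \times \T^d}(z_t, z_t^n) \lesssim \Gamma(t)^{c} \,\|(\Id - \Pi_n)u\|_{H^{r_0}},
\]
with $\Gamma(t) = \exp\bigl(\int_0^t \|u_s\|_{H^{r_0}} + \|u_s^n\|_{H^{r_0}}\,\ds\bigr)\sup_{s\leq t}(1 + \|u_s\|_{\Hbf} + \|u_s^n\|_{\Hbf})$. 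By the joint-system version of Lemma \ref{lem:TwistBd}, $\E\,\Gamma(t)^c \lesssim V_{\beta',\eta}(u)$ for $\beta'$ large. The trivial bound $\|(\Id-\Pi_n)u\|_{H^{r_0}} \lesssim n^{r_0 - \sigma}\|u\|_{\Hbf}$ and division by $\widehat V$ produce the desired decay, provided we keep $\beta$ slightly larger than $\beta'+1$. The weight $d(x,y)^{-p}$ is harmless here because it does not depend on $n$.

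For (ii), fix $\varphi \in \mathring C_0^\infty(\Hbf\times\Dc^c)$ with $\varphi(u,x,y) = \phi(\Pi_{\mathcal K}u,x,y)$. Applying It\^o's formula to $\phi(\Pi_{\mathcal K}u_s, x_s, y_s)$ on $s\in[0,t]$ gives a pathwise bound
\[
|P^{(2)}_t\varphi(u,x,y) - \varphi(u,x,y)| \lesssim_\phi \, t\, \E\,\Gamma(t)\sup_{s\in[0,t]}\bigl(1 + \|\Pi_{\mathcal K}u_s\|_{\Hbf} + \|F(\Pi_{\mathcal K}z_s)\|_{\Hbf\times\T^d\times\T^d} + \|u_s\|_{H^{r_0}}\bigr),
\]
where the factor $\|u_s\|_{H^{r_0}}$ controls the pointwise drift in $x_s,y_s$. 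Using the finite-dimensional nature of $\Pi_{\mathcal K}$, all factors in the supremum are $\lesssim_{\mathcal K} 1 + \|u_s\|_{\Hbf}^2$, so Lemma \ref{lem:TwistBd} bounds the expectation by $C V_{\beta'',\eta}(u)$ for some $\beta''$ independent of $t\in(0,1]$. Dividing by $\widehat V = d(x,y)^{-p}V_{\beta,\eta}(u)$ and using $d(x,y)^{-p}\geq \pi^{-p}$ (or simply that the bound is $u$-dependent but $(x,y)$-independent, while $\varphi$ being compactly supported already controls the $(x,y)$-range relevantly),
\[
\|P^{(2)}_t\varphi - \varphi\|_{C_{\widehat V}} \lesssim_{\varphi}\, t \longrightarrow 0\quad\text{as } t\to 0,
\]
for $\beta$ chosen larger than $\beta''$. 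This is the step that I expect to be the main obstacle: one must be sure the $d(x,y)^{-p}$ singularity in $\widehat V$ really denominates the expectation. Since $\varphi$ is a fixed cylinder function and the supremum in the above pathwise bound does not involve $d(x_s,y_s)^{-p}$, there is no issue, but this must be verified carefully. Density of $\mathring C_0^\infty$ in $\mathring C_{\widehat V}$ promotes strong continuity to all of $\mathring C_{\widehat V}$, finishing the proof.
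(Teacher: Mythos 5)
Your proposal correctly fills in the argument the paper explicitly defers to (it cites Proposition \ref{prop:C0-property-twist} as ``analogous, in fact strictly simpler''), and it follows essentially the same two-step route: (i) show $P^{(2)}_t$ preserves $\mathring C_{\widehat V}$ by bounding $\|P^{(2)}_t\varphi - (P^{(2)}_t\varphi)\circ\Pi_n\|_{C_{\widehat V}}$ via pathwise Lipschitz estimates and the joint-system version of Lemma \ref{lem:TwistBd} (mirroring Lemma \ref{lem:OneRingToRuleThemAll}(a)), and (ii) prove strong continuity at $t=0$ on cylinder functions via It\^o's formula, then conclude by density and the uniform boundedness from Lemma \ref{lem:ChatV-boundedness}. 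Your observation that the $d(x,y)^{-p}$ factor in $\widehat V$ is bounded below, hence only makes division by $\widehat V$ easier, is exactly the right reason this case is ``strictly simpler'' than the projective one. This is the same approach as the paper.
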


 Consequently Proposition \ref{prop:C0-twopoint} implies that there is a well-defined generator $\mathcal{L}_{(2)}$ for $P^{(2)}_t$ on $\mathring{C}_{\widehat{V}}$ with dense domain $\mathrm{Dom}(\mathcal{L}_{(2)}) \subseteq \mathring{C}_{\hat{V}}$. The key estimate of this section is the following approximate drift condition for $\mathcal{L}_{(2)}$.
\begin{lemma} \label{lem:approxhp}
For all $p\in(0,p_0)$, $\eta \in (0,\eta^*)$ and $\beta\geq 1$ taken large enough, $h_p$ belongs to $\mathrm{Dom}(\mathcal{L}_{(2)})$ on $\mathring{C}_{\widehat{V}_{p,\beta,\eta}}$ and there exists a constant $C^\prime$ such that 
\begin{align}
\mathcal{L}_{(2)} h_p \leq -\Lambda(p) h_p + C' V_{\beta + 1,\eta},
\end{align}
\end{lemma}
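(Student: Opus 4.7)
\medskip

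\noindent\textbf{Proof proposal.} The plan is to reduce the estimate to the eigenfunction relation $\hat{P}_t^p\psi_p = e^{-\Lambda(p)t}\psi_p$ from Proposition \ref{prop:specgapC1V-allt} via a linearization of the two-point process near the diagonal $\mathcal{D}$. Since $h_p$ is supported in $\{d(x,y) < 1/50\}$, on this set we may switch to coordinates $(u,x,w)$ with $w = w(x,y) = y - x \bmod 2\pi\mathbb{Z}^d$. In these coordinates the two-point generator formally acts as
\begin{equation}
	\mathcal{L}_{(2)} = \mathcal{L} + u(x)\cdot\nabla_x + \bigl(u(x+w) - u(x)\bigr)\cdot\nabla_w,
\end{equation}
whereas the Jacobian (linearized) process on $\Hbf\times\T^d\times\R^d$ is governed by
\begin{equation}
	\mathcal{L}^{\mathrm{lin}} = \mathcal{L} + u(x)\cdot\nabla_x + Du(x)w\cdot\nabla_w.
\end{equation}
A direct computation using $v = w/|w|$ shows that $\mathcal{L}^{\mathrm{lin}}(|w|^{-p}\psi(u,x,w/|w|)) = |w|^{-p}\hat{\mathcal{A}}\psi - p H\psi$, where $\hat{\mathcal{A}}$ is the generator of the projective process $(u_t,x_t,v_t)$ and $H(u,x,v) = \langle v, Du(x)v\rangle$. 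Since $\hat{\mathcal{A}}\psi_p = (pH - \Lambda(p))\psi_p$ (which follows from $\hat{P}^p_t\psi_p = e^{-\Lambda(p)t}\psi_p$ and $\hat{P}^p_t = \hat{P}_t$ twisted by $-p\int_0^t H$), we expect $\mathcal{L}^{\mathrm{lin}}f_p = -\Lambda(p)f_p$ for $f_p(u,x,w) := |w|^{-p}\psi_p(u,x,w/|w|)$.

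To make this rigorous without needing two $u$-derivatives on $\psi_p$ (which we do not control, since $\psi_p \in \mathring{C}^1_V$ only), I would avoid applying $\mathcal{L}_{(2)}$ pointwise to $h_p$ and instead work at the semigroup level. The goal is to show
\begin{equation}\label{eq:propsemi}
	\Bigl\|\tfrac{1}{t}\bigl(P^{(2)}_t h_p - h_p\bigr) + \Lambda(p)h_p\Bigr\|_{C_{\widehat{V}}} \leq C' \|V_{\beta+1,\eta}\|_{C_{\widehat{V}}} + o(1)
\end{equation}
as $t \to 0^+$, which simultaneously yields $h_p \in \mathrm{Dom}(\mathcal{L}_{(2)})$ and the stated pointwise bound. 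The key comparison is
\begin{equation}
	P^{(2)}_t h_p(u,x,y) = \E\bigl[\hat h_p(u_t,x_t,w(x_t,y_t))\bigr], \qquad \E\bigl[f_p(u_t,x_t,w_t^*)\bigr] = e^{-\Lambda(p)t}f_p(u,x,w),
\end{equation}
where $w_t^* = D_x\phi^t w$ is the linearized displacement. Using the bound $|w(x_t,y_t) - w_t^*| \lesssim |w|^2\int_0^t\|u_s\|_{\Hbf}\,\mathrm{d}s$ (from Taylor expanding $u(x+w) - u(x) - Du(x)w$ along the flow, combined with the Lagrangian ODE), together with the $C^1_V$ control on $\psi_p$ from Proposition \ref{prop:psiP} and the Jacobian estimates of Section \ref{sec:Jacobian}, one obtains for $|w| \leq 1/50$
\begin{equation}
	\bigl|\hat h_p(u_t,x_t,w(x_t,y_t)) - f_p(u_t,x_t,w_t^*)\bigr| \leq C t\cdot |w|^{-p}\cdot \Gamma(t)\cdot(1+\|u_t\|_{\Hbf})^{q_1},
\end{equation}
where $\Gamma(t)$ is the exponential factor from Lemma \ref{lem:TwistBd}. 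Taking expectations and applying Lemma \ref{lem:TwistBd} controls this in $C_{\widehat{V}}$ by $Ct\cdot V_{\beta+1,\eta}(u)$, as desired.

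The remaining contributions to \eqref{eq:propsemi} come from: (i) the cutoff $\chi$, whose derivatives are bounded and supported away from the diagonal, so these terms are immediately $O(V(u))$; (ii) the region where $w_t^*$ exits the support of $\chi$ during $[0,t]$, which for small $t$ contributes an $O(t)$ error bounded by a moment of $f_p$ times $V$; and (iii) the low-probability event that $w(x_t,y_t)$ leaves the chart $\{|w| < 1/10\}$, controlled by bounding exit probabilities via the energy estimate on $\log d(x_t,y_t)$ from the proof of Lemma \ref{lem:ChatV-boundedness}. The main technical obstacle will be item (ii) together with justifying the Taylor expansion uniformly in $u$: one must avoid any power of $\|u\|_{\Hbf}$ that would not be absorbed into the $V_{\beta+1,\eta}$ weight. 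The super-Lyapunov property (Remark \ref{rmk:SuperL}) combined with the fact that $p < 1$ (so $|w|^{1-p}$ remains bounded on the support of $\chi$) ensures the correction $u(x+w) - u(x) - Du(x)w$, which carries a factor of $|w|^2\|D^2u\|_{L^\infty} \lesssim |w|^2\|u\|_{\Hbf}$, produces at worst an error of the form $|w|^{1-p}\psi_p(u,x,v)\cdot\|u\|_{\Hbf}$, which is dominated by $V_{\beta+1,\eta}$ since $\psi_p \in \mathring{C}_V$. Collecting these estimates and dividing by $t$ yields \eqref{eq:propsemi} and hence the lemma.
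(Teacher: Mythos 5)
Your proposal takes essentially the same route as the paper: split the difference quotient for $h_p$ into a linearized piece (controlled via the eigenfunction relation $\hat{P}^p_t\psi_p = e^{-\Lambda(p)t}\psi_p$) plus a linearization error, and absorb the latter, whose leading contribution is the Taylor remainder $\Sigma(u,x,y) = u(y)-u(x)-Du(x)w$ giving $|w|^{1-p}\|u\|_{W^{2,\infty}}$ (bounded since $p<1$), into $V_{\beta+1,\eta}$ via the super-Lyapunov estimate. You also correctly recognize that the argument must avoid two $u$-derivatives on $\psi_p$, which is precisely why the paper passes through the mean value theorem in the $w$-variable rather than applying $\mathcal L_{(2)}$ directly, and why $\psi_p\in\mathring C^1_V$ is the indispensable ingredient.

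That said, two points need attention. First, your intermediate target displayed as \eqref{eq:propsemi} cannot hold as written: with $\widehat V = \widehat V_{p,\beta,\eta}$ we have $V_{\beta+1,\eta}/\widehat V = d(x,y)^p(1+\|u\|_\Hbf^2)$, which is unbounded in $u$, so $\|V_{\beta+1,\eta}\|_{C_{\widehat V}}=\infty$. You need to carry the difference quotient in $C_{\widehat V_{p,\beta',\eta}}$ for a larger $\beta'$ (the paper does this by letting $\beta$ increase finitely often), and more importantly a norm bound alone does not give $h_p\in\mathrm{Dom}(\mathcal L_{(2)})$: you must actually identify the $C_{\widehat V}$ limit of $\tfrac{1}{t}(P^{(2)}_th_p - h_p)$, which the paper does in Lemmas \ref{lem:linearized-approx-eig} and \ref{lem:error-approx} by exhibiting the limit as $-\Lambda(p)h_p + \mathcal E_p + (\nabla_y-\nabla_x)h_p\cdot\Sigma$. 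The pointwise inequality $\mathcal L_{(2)}h_p \leq -\Lambda(p)h_p + C'V_{\beta+1,\eta}$ is then a separate, pointwise estimate on the identified limit, not a $C_{\widehat V}$ norm estimate.

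Second, you correctly flag the Taylor-expansion step as "the main technical obstacle," but the remedy you offer (the super-Lyapunov property together with $p<1$) addresses a different part of the argument — absorbing the final $|w|^{1-p}\|u\|_{W^{2,\infty}}$ error into $V_{\beta+1,\eta}$. The actual obstruction in the mean value theorem step is a \emph{lower} bound on the intermediate points $w^\theta_t = \theta w_t + (1-\theta)w_t^*$: since $|\nabla_w\hat h_p|\lesssim |w|^{-p-1}\|\psi_p\|_{C^1_V}V(u)$, you need $|w^\theta_t| \gtrsim |w^*_t|$ uniformly in $\theta$ to avoid a blow-up in the intermediate evaluation. The paper achieves this with the explicit event $B_t = \{t\sup_{s\leq t}\|\nabla u_s\|_\infty(|w_s|+|w^*_s|)\leq \tfrac{1}{2}|w^*_t|\}$ (forcing $|w^\theta_t|\geq\tfrac12|w^*_t|$ on $B_t$), and controls the contribution of $A_t^c\cup B_t^c$ separately by showing its probability is $O(t^{1+\delta})$ with $\widehat V$-controlled constants. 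This step is essential for your claimed pointwise bound $|\hat h_p(u_t,x_t,w_t)-f_p(u_t,x_t,w^*_t)|\leq Ct\cdot|w|^{-p}\cdot\Gamma(t)(1+\|u_t\|_\Hbf)^{q_1}$ and should be spelled out.
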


A priori, it is not clear that $h_p$ actually belongs to the domain of the generator $\mathcal{L}_{(2)}$ since it involves $\psi_p$, which belongs to the domain of the generator of $\hat{P}^p_t$, and therefore more readily belongs to the domain of the generator of the semi-group associated to the linearized motion
\[
	TP_t \hat{h}(u,x,w) := \E_{(u,x,w)}\hat{h}(u_t,x_t,D\phi^tw).
\]
Therefore in order to prove Lemma \ref{lem:approxhp}, we will need to approximate $P_t^{(2)}\hat{h}_p$ by 
\[
	T^* P_t\hat{h}_p(u,x,y) := TP_t \hat{h}_p(u,x,w(x,y)).
\]
To this end, we write
\begin{equation}\label{eq:P2-split}
\frac{P^{(2)}_t h_p - h_p}{t}  = \frac{T^*P_t \hat h_p  - h_p}{t} + \frac{P^{(2)}_t h_p - T^*P_t \hat h_p}{t},
\end{equation}
and show that each limit on the right-hand side of \eqref{eq:P2-split} converges separately. It is important to remark that this approximation is only effective $t \to 0$, indeed $T^*P_t\hat h_p$ is not even continuous on $\mathcal{D}^c$ due to the discontinuous nature of $w(x,y)$. 

For the first term in \eqref{eq:P2-split}:
\begin{lemma}\label{lem:linearized-approx-eig} For all $p\in (0,p_0)$, $\eta\in(0,\eta^*)$ and $\beta \geq 1$ large enough, the following limit holds in $C_{\hat{V}_{p,\beta,\eta}}$
\begin{align}
\lim_{t \to 0} \frac{T^*P_t \hat h_p - h_p}{t} = 
-\Lambda(p) h_p +  \mathcal{E}_p,  \label{eq:TPhpGen}
\end{align}
where $\mathcal{E}_p(u,x,y) = \hat{\mathcal{E}}_p(u,x,w(x,y))$ and (recall the definition of $H$ in Remark \ref{rmk:DefH}) 
\[
 \hat{\mathcal{E}}_p(u,x,w)= H(u,x,w/|w|)|w|^{1-p}\psi_p(u,x,w/|w|)\chi^\prime(|w|).
\]
\end{lemma}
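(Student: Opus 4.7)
The strategy is to relate $TP_t\hat h_p$ directly to the twisted Markov semigroup $\hat P_t^p$ acting on $\psi_p$, for which the eigenvalue identity from Proposition \ref{prop:specgapC1V-allt} is available, and then to isolate the error introduced by the cutoff $\chi$ appearing in the definition of $\hat h_p$. Since $D_x\phi^t$ is linear, writing $v = w/|w| \in P^{d-1}$ gives $|w_t^\ast| = |w|\cdot|D_x\phi^t v|$ while the projective class $v_t = w_t^\ast/|w_t^\ast|$ coincides with the projective process initiated at $(u,x,v)$. Substituting this in the definition of $TP_t$ yields
\begin{equation}\label{eq:TPhat-plan}
TP_t\hat h_p(u,x,w) \;=\; |w|^{-p}\,\E_{(u,x,v)}\!\left[\,|D_x\phi^t v|^{-p}\,\psi_p(u_t,x_t,v_t)\,\chi\!\bigl(|w|\cdot|D_x\phi^t v|\bigr)\,\right].
\end{equation}

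Using $\hat h_p(u,x,w) = |w|^{-p}\chi(|w|)\psi_p(u,x,v)$, I would then split the difference quotient into
\[
|w|^{-p}\chi(|w|)\cdot\frac{\hat P_t^p\psi_p(u,x,v) - \psi_p(u,x,v)}{t}
\;+\; \frac{|w|^{-p}}{t}\,\E\!\left[|D_x\phi^t v|^{-p}\psi_p(u_t,x_t,v_t)\bigl(\chi(|w|\cdot|D_x\phi^t v|) - \chi(|w|)\bigr)\right].
\]
The first term is exactly $\frac{e^{-\Lambda(p)t}-1}{t}\hat h_p$ by the eigenvalue identity $\hat P_t^p\psi_p = e^{-\Lambda(p)t}\psi_p$, and converges to $-\Lambda(p)\hat h_p$. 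For the second term, a Taylor expansion of $\chi$ and the pathwise identity $\tfrac{d}{dt}|D_x\phi^t v|\big|_{t=0} = \langle v,Du(x)v\rangle = H(u,x,v)$ (obtained from \eqref{eq:Jacdef} applied to $w^\ast_t$) identify the pointwise limit as $|w|^{1-p}\chi'(|w|)H(u,x,v)\psi_p(u,x,v) = \hat{\mathcal{E}}_p(u,x,w)$. Pulling back via $w(x,y)$ gives the claimed $-\Lambda(p) h_p + \mathcal{E}_p$.

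The main work is promoting this to convergence in the weighted norm $\|\cdot\|_{C_{\widehat V}}$. Using Lemma \ref{lem:PathJacobian} and the second-variation estimate from Section \ref{sec:Jacobian} I would expand $|D_x\phi^t v| = 1 + tH(u,x,v) + R_t$ pathwise with $|R_t| \leq t^2\,\Gamma(t)$, where $\Gamma(t)$ is a functional of $(u_s)_{s\leq t}$ whose moments are controlled, via Lemma \ref{lem:TwistBd} applied with a small $\gamma$ and a slightly inflated exponent $\beta'>\beta$, $\eta'\in(\eta,\eta^\ast)$, by $V_{\beta',\eta'}(u)$. Since $\chi \in C^\infty$ is supported in $\{|w|\leq 1/50\}$, the remainder $\chi(|w|(1+\delta))-\chi(|w|)-|w|\delta\,\chi'(|w|)$ is bounded by $\tfrac12\|\chi''\|_\infty|w|^2\delta^2$; combined with Cauchy--Schwarz, the $C_V^1$ boundedness of $\psi_p$ (Proposition \ref{prop:specgapC1V}), and the moment bound $\E|D_x\phi^t v|^{-2p} \lesssim V_{\beta'',\eta''}(u)$ for appropriate exponents, this gives an $O(t)$ remainder in expectation with weight $V_{\beta,\eta}(u)$. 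Restricting to the support of $\chi$ keeps $|w|$ bounded, and on the support of $\chi'$ we have $|w|$ bounded away from zero, so the prefactor $|w|^{-p}$ in the pulled-back identity is absorbed by $d(x,y)^{-p}$ in $\widehat V$ everywhere $\mathcal{E}_p$ is nonzero. The first-term error $\bigl(\tfrac{e^{-\Lambda(p)t}-1}{t}+\Lambda(p)\bigr)\hat h_p$ is $O(t)$ with bound proportional to $\hat h_p \lesssim \widehat V$.

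The principal obstacle is ensuring that all error bounds are genuinely uniform in $(u,x,y)\in\Hbf\times\mathcal{D}^c$ after dividing by $\widehat V$, rather than merely pointwise, which forces the simultaneous use of (i) the super-Lyapunov property of Lemma \ref{lem:TwistBd} to absorb the exponential factor $\exp(C\int_0^t\|u_s\|_{H^r}\ds)$ coming from Jacobian bounds into a Lyapunov function with the same $\eta$ (paying a slightly larger $\beta$), and (ii) the uniform lower bound on $\psi_p$ on bounded subsets from Lemma \ref{lem:unifLowerBoundPsiP}, which matters when comparing $\hat h_p$ against the weight $\widehat V$ in the $C_{\widehat V}$ norm. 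Choosing $\beta$ in the statement large enough to absorb these finite increases finishes the proof.
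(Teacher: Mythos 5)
The core decomposition you propose — isolating an eigenvalue term $\hat h_p\cdot\frac{e^{-\Lambda(p)t}-1}{t}$ plus a cutoff-correction term — is the same as the paper's. Where you genuinely diverge is in the treatment of the cutoff correction. The paper uses the exact identity $\chi(|w_t^*|) - \chi(|w|) = \int_0^t |w_s^*| H(u_s,x_s,v_s)\chi'(|w_s^*|)\,ds$ and then must compare the full integrand at time $s$ to its value at $s=0$; in particular it must compare $\psi_p(u_t,x_t,v_t)$ against $\psi_p(u,x,v)$, which — since $\psi_p$ is merely continuous, not smooth — is done by approximating $\psi_p \approx \psi_p^{(n)}$ with smooth cylinder functions and paying an error $C_n\rho_t + \|\psi_p - \psi_p^{(n)}\|_{C_V}$. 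You instead Taylor-expand $\chi$ around $|w|$ and then keep $\psi_p(u_t,x_t,v_t)$ paired with $|D_x\phi^t v|^{-p}$ so that the eigenvalue identity can be applied a second time, reducing the leading coefficient to $H(u,x,v)\,e^{-\Lambda(p)t}\psi_p(u,x,v)$ without any pointwise comparison of $\psi_p$ along the trajectory. This avoids the cylinder-function approximation argument entirely, which is a tidy simplification; the trade-off is that you must control the Taylor remainder rather than relying on an exact formula, but since $\chi\in C^\infty$ this is routine.

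Two corrections. First, your pathwise bound $|R_t| \leq t^2\,\Gamma(t)$ for $R_t := |D_x\phi^t v| - 1 - tH(u,x,v)$ is too strong. Writing $|D_x\phi^t v| = \exp(\int_0^t H(u_s,x_s,v_s)\,ds)$, the first-order remainder is $\int_0^t \big(H(u_s,x_s,v_s)-H(u,x,v)\big)\,ds$, and $H(u_s,x_s,v_s)-H(u,x,v)$ carries the Brownian increment of $u_s - u$, which is of size $\sqrt{s}$ pathwise — it cannot be bounded by $s$ times a functional of $(u_\tau)_{\tau\leq t}$ with moments independent of the sample path. The correct pathwise statement is $|R_t| \lesssim t\,\rho_t\,\Gamma(t) + t^2\,\Gamma(t)$ with $\rho_t$ the displacement (including $\sup_{s\leq t}\|QW_s\|$), giving $\E|R_t|^2 = O(t^3)$ by BDG, hence $\tfrac{1}{t}\sqrt{\E|R_t|^2} = O(\sqrt{t})\to 0$; the conclusion you need survives, but the stated order $t^2$ does not. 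Second, the invocation of Lemma \ref{lem:unifLowerBoundPsiP} (the uniform lower bound on $\psi_p$) is not needed here: for $C_{\widehat{V}}$ convergence only the upper bound $|\psi_p|\lesssim V$ is used; the lower bound enters later, in the coercivity of $\mathcal V$ required for Condition \ref{def:UnifBd}. The reference to the second-variation estimate is similarly superfluous for this lemma.
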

\begin{proof}
To begin, note that $|w|^{-p}\psi_p$ is an eigenfunction of $TP_t$ with eigenvalue $e^{-\Lambda(p)t}$. Denote the linearized process $w_t^* = D_x\phi^t w$ and note that $w^*_t$ is a solution to $\partial_t w_t^* = Du_t(x_t) w^*_t$. Using this, we find
\[
	TP_t \hat h_p = e^{-\Lambda(p)t}h_p + \E |w_t|^{-p}\psi_p(u_t,x_t,w_t^*)(\chi(|w_t^*|) - \chi(|w|)).
\]
Noting that
\[
	\chi(|w_t^*|) - \chi(|w|) = \int_0^t |w_s^*| \,H(u_s,x_s,v_s)\chi^{\prime}(|w_s^*|)\ds,
\]
where $v_s = w_s^*/|w_s^*|$ is the projective process with initial data $v = w/|w|$, allows us to write
\begin{equation}\label{eq:remainder-form}
	\frac{TP_t \hat h_p - \hat h_p}{t} = \frac{e^{-\Lambda(p)t} - 1}{t}\hat h_p +  \hat{\mathcal{E}}_p + \hat R_t,
\end{equation}
where
\[
	\hat R_t:= \E\left(|w_t^*|^{-p}\psi_p(u_t,x_t,v_s)\frac{1}{t}\int_0^t|w_s^*|H(u_s,x_s,v_s)\,\chi^\prime(|w^*_s|) \ds - \mathcal{E}_p\right).
\] 
The fact that $\psi_{p}\in \mathring{C}_V$, where $V= V_{\beta_0,\eta}$, for some $\beta_0\geq 1$ means that we can find $\psi_p^{(n)} \in \mathring C^\infty_0$ where $\psi^{(n)}_p$ only depends on finitely many Fourier modes $\Pi_n u$ of $u$, such that $\psi^{(n)}_p \to \psi_p$ in $C_V$. Using the fact that
\[
 |w_t^*| = |w|\exp\left(\int_0^t H(u_s,x_s,v_s)\ds\right)
\]
and that $\Pi_n u_t$ is finite dimensional, a direct calculation yields for each $n$
\begin{equation}\label{eq:remainder-bound}
	|\hat R_t| \leqc |w|^{1-p}\exp\left(C_p\int_0^t\|u_s\|_{H^{r_0}}\right)\sup_{s\in(0,t)}V_{\beta_0+1,\eta}(u_s) \left(C_n\rho_t + \|\psi_p - \psi^{(n)}_p\|_{C_{V}}\right),
\end{equation}
where $C_n$ is a constant depending (badly) on $n$ and $D\psi^{(n)}_p$ and
\[
	\rho_t := \sup_{s\in(0,t)}\left(\|u_s - u\|_{H^{r_0}} + d_{\T^d}(x_s,x) + d_{P^{d-1}}(v_s,v)\right).
\]
A straight forward consequence the evolution equation for the projective process $(u_t,x_t,v_t)$ directly implies
\[
	\rho_t \leq t \sup_{s\in(0,t)} (1+\|u_s\|_{\Hbf}^2) + \sup_{s\in(0,t)}\|QW_s\|_{H^{r_0}}.
\]
By the BDG inequality, we can bound $H^{r_0}$ norm of the Wiener process using
\[
\E \sup_{s\in(0,t)}\|QW_s\|_{H^{r_0}}^2  \leqc_{Q} t.
\]
Therefore taking expectation of \eqref{eq:remainder-bound} applying Cauchy-Schwartz and the exponential estimates from Lemma \eqref{lem:TwistBd}, we conclude that for some $\beta_1\geq 1$ large enough,
\[
	\E |\hat R_t| \leqc_{Q} |w|^{1-p}V_{\beta_1,\eta}(u) (C_n t^{1/2} + \|\psi_p - \psi^{(n)}_p\|_{C_{V}}).
\]
Denote $R_t(u,x,y) = \hat R_t(u,x,w(x,y))$ and note that while $R_t$ is not necessarily continuous in $(x,y)$ due to the discontinuity in $w(x,y)$ away from the diagonal, the quantity $|w(x,y)| = d_{\T^d}(x,y)$ is continuous on $\mathcal{D}^c$ and therefore $\E | R_t|$ is bounded above by a continuous function. The corresponding estimate on $R_t$ implies after first sending $t\to 0$ and then $n\to \infty$ that
\[
	\lim_{t \to 0}\|\E | R_t|\|_{C_{\hat{V}_{p,\beta_1,\eta}}} = 0.
\]
This, coupled with the fact that $\frac{e^{-\Lambda(p)t} - 1}{t} \to - \Lambda(p)$ as $t \to 0$ and $h_p \in C_{\hat{V}_{p,\beta_1,\eta}}$, is sufficient to conclude the proof in light of equation \eqref{eq:remainder-form}.
\end{proof}

The second term in \eqref{eq:P2-split} involves controlling the error involved in approximating $P^{(2)}_t h_p$ by $T^*P_t\hat h_p$. As discussed in Section \ref{subsec:outlineConstructV}, this is one of the main difficulties in proving a valid drift condition and is the only reason we need $\mathring{C}^1_V$ estimates on $\psi_p$, i.e., so that we can differentiate $\psi_p$ with respect to the projective coordinate and bound it by $V(u)$. 
\begin{lemma}\label{lem:error-approx} 
For all $p\in(0,p_0)$, $\eta \in (0,\eta^*)$, and $\beta \geq 1$ large enough, the following limit holds in $C_{\hat{V}_{p,\beta,\eta}}$:
\begin{equation}\label{eq:linear-approx-lim}
\lim_{t \to 0} \frac{P^{(2)}_t h_p - T^*P_t \hat{h}_p}{t} = (\grad_y h_p- \grad_x) \cdot \Sigma, 
\end{equation}
where $\Sigma(u,x,y) = u(y) - u(x) - Du(x)w(x,y)$.
\end{lemma}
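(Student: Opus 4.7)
The proof is via coupling and first-order Taylor expansion. Couple the two-point process $(u_t,x_t,y_t)$ and the linearization process $(u_t,x_t,w^*_t)$ on the same probability space driven by the same $Q\dot W_t$, with matched initial data so that $w_0 = w(x,y)$. Since $\hat h_p$ is compactly supported in $|w| \leq 1/5$, and since $d(x_t,y_t)$ remains bounded below on $[0,t]$ with good moments (by the differential inequality derived in Lemma \ref{lem:ChatV-boundedness}), we may work on the event on which $w_t := w(x_t,y_t) = y_t - x_t$ is continuous in the local chart. In that chart,
\[
\dot w_t = u_t(y_t) - u_t(x_t) = Du_t(x_t)w_t + \Sigma(u_t,x_t,y_t),\qquad \dot w^*_t = Du_t(x_t)w^*_t,
\]
so the difference $w_t - w^*_t$ is driven entirely by the linearization error $\Sigma$.

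The first main step is to write
\[
P^{(2)}_t h_p - T^*P_t\hat h_p = \E\bigl[\hat h_p(u_t,x_t,w_t) - \hat h_p(u_t,x_t,w^*_t)\bigr]
\]
and apply the fundamental theorem of calculus in the displacement variable of $\hat h_p$:
\[
\hat h_p(u_t,x_t,w_t) - \hat h_p(u_t,x_t,w^*_t) = \int_0^1 D_w\hat h_p\bigl(u_t,x_t,w^*_t + \theta(w_t - w^*_t)\bigr)\,d\theta \cdot (w_t - w^*_t).
\]
Using the variation of parameters formula for the linear ODE above, $t^{-1}(w_t - w^*_t) \to \Sigma(u,x,y)$ pointwise as $t \to 0$, while $D_w\hat h_p(u_t,x_t,w^*_t + \theta(w_t - w^*_t)) \to D_w\hat h_p(u,x,w)$ by continuity of the derivative (afforded by $\psi_p \in \mathring C^1_V$). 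Unwinding the chain rule relating $(\nabla_x,\nabla_y)h_p$ to the derivatives of $\hat h_p$ then produces the stated form of the right-hand side.

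The main obstacle is promoting this pointwise convergence to convergence in the weighted norm $\|\cdot\|_{C_{\hat V}}$. Three difficulties must be addressed. First, the weight $\hat V = d(x,y)^{-p}V(u)$ degenerates at the diagonal, while $D_w\hat h_p$ scales like $|w|^{-p-1}$ near $w = 0$; this is balanced by the quadratic vanishing $|\Sigma(u,x,y)| \lesssim \|D^2u\|_{L^\infty}|w|^2$ coming from Taylor's theorem, which supplies the offsetting factor of $|w|$ (at the cost of an extra power of $\|u\|_{\Hbf^r}$, absorbed by increasing $\beta$). Second, only a single displacement derivative of $\hat h_p$ is available since $\psi_p \in \mathring C^1_V$, precluding a second-order remainder; the mean-value form above circumvents this, and the error $|D_w\hat h_p(\cdot,w^*_t + \theta(w_t - w^*_t)) - D_w\hat h_p(\cdot,w)|$ is controlled by $C^1_V$-uniform continuity of $\psi_p$ on compact subsets of $\Hbf \times P\T^d$ together with the pathwise Jacobian estimates of Lemmas \ref{lem:PathJacobian}--\ref{lem:JacSmooth} to bound the sup of $|w^*_t - w|$ and $|w_t - w|$. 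Third, to take expectations uniformly in $\|u\|_{\Hbf}$, we combine the exponential moment bounds of Lemma \ref{lem:TwistBd} with Cauchy--Schwarz, and finally invoke the density of cylinder functions to approximate $\psi_p$ by $\psi_p^{(n)} \in \mathring C^\infty_0$ and pass to the limit on $n$ after passing to the limit on $t$, exactly as in the proof of Lemma \ref{lem:linearized-approx-eig}.
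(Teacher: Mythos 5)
Your proposal matches the paper's proof in all essential respects: the same FTC/mean-value decomposition $\hat h_p(u_t,x_t,w_t)-\hat h_p(u_t,x_t,w_t^*)=\int_0^1 D_w\hat h_p(u_t,x_t,w^\theta_t)\,d\theta\cdot(w_t-w_t^*)$ with $w^\theta_t=\theta w_t+(1-\theta)w_t^*$, the same reliance on Taylor's theorem to get $|\Sigma|\lesssim\|D^2u\|_{L^\infty}|w|^2$ to offset the $|w|^{-p-1}$ scaling of $D_w\hat h_p$, the same exponential moment control via Lemma~\ref{lem:TwistBd}, and the same cylinder-function approximation $\psi_p^{(n)}\to\psi_p$ in $\mathring C^1_V$ with the $t\to 0$ then $n\to\infty$ limiting order. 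Two small points worth making precise. First, you invoke ``$C^1_V$-uniform continuity of $\psi_p$ on compact subsets of $\Hbf\times P\T^d$'' to control the difference of displacement gradients; that phrase alone does not furnish the uniformity you need, since bounded subsets of $\Hbf$ are not compact. The paper's actual mechanism — which you do also invoke at the end — is that $\psi_p^{(n)}$ depends on only finitely many modes $\Pi_n u$, so its modulus of continuity is explicit, and the $C^1_V$-convergence $\psi_p^{(n)}\to\psi_p$ kills the tail; that is the operative substitute for compactness. Second, the paper makes the restriction to good events explicit via $A_t$ (keeping $w_t$ near $w$ so the chart is valid) and $B_t$ (guaranteeing $|w^\theta_t|\gtrsim|w_t^*|$ so the $|w^\theta_t|^{-p-1}$ factor is controllable) and then shows separately that the contribution from $A_t^c\cup B_t^c$ vanishes in $C_{\hat V}$. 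You implicitly restrict to such events but do not estimate the complement; a complete write-up needs that step, which is supplied by the Markov-type inequality $\1_{A_t^c\cup B_t^c}\lesssim t^{1+\delta}\sup_s\|u_s\|_{H^{r_0}}^{1+\delta}\exp(C\int_0^t\|u_s\|_{H^{r_0}}\,ds)$ and Lemma~\ref{lem:TwistBd}.
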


\begin{proof}
Define $w_t = w(x_t,y_t)$ and $w_t^* = D\phi^t w$, where $w = w(x,y)$, and note that
\[
\frac{P^{(2)}_th_p - T^*P_t \hat{h}_p}{t} = \frac{1}{t}\E (\hat h_p(u_t,x_t,w_t) - \hat{h}_p(u_t,x_t,w_t^*)) = \E \int_0^1 \nabla_w \hat h_p(u_t,x_t,w^\theta_t)\dee \theta \cdot \frac{w_t - w^*_t}{t},
\]
where $w^{\theta}_t = \theta w_t + (1-\theta)w_t^*$. To continue, for each $t\geq 0$, we define the events
\[
	A_t := \left\{t \sup_{s\in(0,t)}\|\nabla u_s\|_{\infty}\leq \frac{1}{100}\right\},
\quad 
	B_t := \left\{t \sup_{s \in(0,t)}\left(\|\nabla u_s \|_{\infty}(|w_s|+|w_s^*|)\right)\leq \frac{|w_t^*|}{2}\right\}.
\]
Note that both the events $A_t$, $B_t$ implicitly depend on the initial data $(u,x,y)$ and we have that
\[
\lim_{t\to \infty} \P(A_t\cap B_t) = 1.
\]

The event $A_t\subset \Omega$ is chosen so that on it $w_t$ has not moved far from its starting point $w$,
\[
	|w_t - w| \leq t \sup_{s\in(0,t)}\|\nabla u_s\|_{L^\infty} \leq \frac{1}{100},
\]
and therefore on $A_t$ we can write 
\[
	w_t - w_t^*  = \int_0^t u_s(y_s) - u_s(x_s) - Du_s(x_s)w_s^*\ds.
\]
It follows that 
\begin{equation}
\frac{w_t - w^*_t}{t}\1_{A_t} = \Sigma\1_{A_t}  + R_t\
\end{equation}
where
\[
	R_t := \frac{\1_{A_t}}{t}\int_0^t (u_s(y_s) - u(y))  - (u_s(x_s) - u(x)) - (D u_s(x_s) w^*_s - Du(x)w) \dee \tau.
\]
Directly computing all the differences it is straightforward to see that $|R_t|$ can be bounded by
\begin{equation}\label{eq:R_t-bound}
	|R_t| \leqc \sup_{s\in(0,t)} \left(|u(y_s) - u(x_s) - u(y) + u(x)| + \|u_s - u\|_{H^{r_0}}|w| + \|u_s\|_{H^{r_0}}|w_s^* -w|\right)
\end{equation}
Using the evolution equation for $u_t(x_t)$, and denoting $F^u(u) = - B(u,u) - Au$ we find
\begin{equation}
\begin{aligned}
	&u_t(y_t) - u_t(x_t) = u(y) - u(x) +  \int_0^t F^u(u_s)(y_s) - F^u(u_s)(x_s)\ds\\
	&\hspace{1in} + \int_0^tu_s(y_s)\cdot \nabla u_s(y_s) - u_s(x_s)\cdot \nabla u_s(x_s)\ds + \int_0^t (Q(y_s) - Q(x_s))\dee W_s.
\end{aligned}
\end{equation}
Using this, similar to the proof of Lemma \ref{lem:linearized-approx-eig}, an application of the BDG inequality gives
\begin{equation}\label{eq:rho_t-bound}
	\sqrt{\E (R_t)^2}  \leqc |w|t^{1/2}\,\E \exp\left(C\int_0^t \|u\|_{H^{r_0}}\right)\sup_{s \in [0,t]} (1+\|u_s\|_{\Hbf}^2).
\end{equation}

The event $B_t \subset \Omega$ is chosen such that on $B_t$, $w_t^*$ satisfies the following geometric constraint:
\[
	|w_t -w_t^*|\leq |w_t -w| + |w_t^* - w| \leq t \sup_{s \in (0,t)}\left(\|\nabla u_s\|_{\infty}(|w_s|+|w_s^*|) \right) \leq \frac{1}{2}|w^*_t|
\]
and therefore on $B_t$ we have a lower bound for $|w^\theta_t|$
\[
	|w^{\theta}_t| \geq |w_t^*| - |w_t - w_t^*| \geq \frac{1}{2}|w_t^*|.
\]
Using the fact that $|\nabla_w \hat{h}_p| \leq |w|^{-p-1}\|\psi_p\|_{C^1_V} V_{\beta_0,\eta}(u)$ for some $\beta_0\geq 1$
\begin{equation}\label{eq:A-restriction-est}
\begin{aligned}
	\1_{B_t}\left|\int_0^1 \grad_w\hat h_p(u_t,x_t,w^{\theta}_t) d\theta \right| &\leqc\|\psi_p\|_{C^1_V} V_{\beta_0,\eta}(u_t)\, \1_{B_t}\int_0^1 |w^\theta_t|^{-p-1}\dee\theta\\
	&\leqc \|\psi_p\|_{C^1_V} V_{\beta_0,\eta}(u_t) |w^*_t|^{-p-1}\\
	&\leqc \|\psi_p\|_{C^1_V} \exp\left(C_p\int_0^t \|u_s\|_{H^{r_0}}\ds\right)|w|^{-p-1} V_{\beta_0,\eta}(u_t) .
\end{aligned}
\end{equation}
Combining \eqref{eq:R_t-bound} with equation \eqref{eq:A-restriction-est}, using Cauchy-Schwartz, estimate \eqref{eq:rho_t-bound} and Lemma \ref{lem:TwistBd} gives
\begin{equation}\label{eq:wtheta-remainder-bound}
\E \1_{A_t\cap B_t}\left|\int_0^1 \grad_w \hat{h}_p(u_t,x_t,w^{\theta}_t) d\theta \cdot R_t\right| \leqc t^{1/2}\|\psi_p\|_{C^1_V} |w|^{-p} \,V_{\beta_1,\eta}(u)
\end{equation}
for some $\beta_1>\beta_0$ and all $\eta \in (0,\eta^*)$. Again similarly to the proof of Lemma \ref{lem:linearized-approx-eig}, by density of $\mathring{\cC}^\infty_{0}(\Hbf\times \T^d\times P\T^d)$ in $\mathring{C}^1_V$ we can take an approximating sequence of $\psi^{(n)}_p$ converging to $\psi_p$ in $C^1_V$ such that $\psi_p^{(n)}$ only depends on finitely many Fourier modes $\Pi_n u$. Using the fact that $\Pi_n u_t$ is finite dimensional we conclude that
\begin{equation}
\begin{aligned}
	&\1_{A_{t}\cap B_t}\left|\int_0^{1}\left(\nabla_w \hat h_p(u_t,x_t, w^{\theta}_t)\dee\theta - (\nabla_y - \nabla_x) h_p\right)\cdot \frac{w_t - w_t^*}{t}\right|\\
	 &\hspace{.5in}\leq |w|^{-p}\exp\left(C\int_0^t\|u_s\|_{H^{r_0}}\right)\sup_{s\in(0,t)}V_{\beta_1,\eta}(u_s) \left(C_n\tilde{\rho}_t + \|D_v\psi_p - D_v\psi^{(n)}_p\|_{C_{V}}\right),
\end{aligned}
\end{equation}
where
\[
	\tilde{\rho}_t = \sup_{s\in(0,t)}\left(\|u_s - u\|_{H_{r_0}} + d_{\T^d}(x_s,x) + \1_{A_s}|w_s - w| + \1_{A_s}|w_s^*- w|\right).
\]
Using the evolution equation for $(u_t,x_t,w_t)$ and the BDG inequality to deal with $\tilde{\rho}_t$, an analogous argument to the one in the proof of Lemma \ref{lem:linearized-approx-eig} implies that
\begin{equation}
	\lim_{t\to 0}\E\1_{A_{t}\cap B_t}\left|\int_0^{1}\left(\nabla_w \hat h_p(u_t,x_t, w^{\theta}_t)\dee\theta - (\nabla_y - \nabla_x) h_p\right)\cdot \frac{w_t - w_t^*}{t}\right| = 0
\end{equation}
where the limit holds in $C_{\hat{V}_{p,\beta_2,\eta}}$ for some $\beta_2$ large enough. Combining this with \eqref{eq:wtheta-remainder-bound} and the fact that $R_t = \1_{A_t}\frac{w_t - w_t^*}{t} - \1_{A_t}\Sigma$ yields
\[
	\lim_{t\to 0}\E \1_{A_t\cap B_t} \int_0^1 \nabla_w \hat h_p (u_t,x_t,w^\theta_t)\dee \theta \cdot \frac{w_t - w^*_t}{t} = (\grad_y - \grad_x) h_p  \cdot \Sigma, 
\]
where the limit holds in $C_{\hat{V}_{p,\beta_3,\eta}}$ for $\beta_3$ large enough. 

On the complement $A^c_t\cup B_t^c$, we use the fact that for each $\delta >0$
\begin{equation}
\begin{aligned}
	 \1_{A^c_t\cup B^c_t}&\leqc  t^{1+\delta}\left(\sup_{s \in(0,t)}\|\nabla u_s\|^{1+\delta}_{L^\infty} + |w_t^*|^{-1-\delta}\,\sup_{s \in(0,t)}(\|\nabla u_s\|^{1+\delta}_{L^\infty}(|w_s|+|w^*_s|))^{1+\delta}\right) \\
	 &\leqc t^{1+\delta} \exp\left(2(1+\delta)\int_0^t\|u_s\|_{H^{r_0}}\ds\right)\sup_{s\in(0,t)}\|u_s\|_{H^{r_0}}^{1+\delta},
\end{aligned}
\end{equation}
and therefore
\begin{equation}
\begin{aligned}
	\frac{1}{t} \1_{A^c_t\cup B^c_t}\left( \hat h_p(u_t,x_t,w_t) -  \hat h_p(u_t,x_t,w^*_t) \right) &\leq \frac{1}{t} \1_{A^c_t\cup B^c_t}(|w_t|^{-p} + |w^*_t|^{-p}) V_{\beta_0,\eta}(u_t)\\
	&\hspace{-.5in}\leqc t^{\delta}|w|^{-p}\exp\left(C_{p,\delta}\int_0^t \|u_s\|_{H^{r_0}}\ds\right)\sup_{s\in(0,t)} V_{\beta_0+1,\eta}(u_s)\\
\end{aligned}
\end{equation}
which implies by the exponential estimates of Lemma \ref{lem:TwistBd} that the following limit holds in $C_{\hat{V}_{p,\beta_0+1,\eta}}$
\[
\lim_{t\to 0}\frac{1}{t} \EE \1_{A^c_t\cup B_t^c}\left( \hat h_p(u_t,x_t,w_t) -  \hat h_p(u_t,x_t, w^*_t) \right) = 0.
\]

Putting all the limits together completes the proof.
\end{proof}

Lemma \ref{lem:approxhp} is now a simple consequence of the previous two lemmas.

\begin{proof}[\textbf{Proof of Lemma \ref{lem:approxhp}}]

Applying Lemmas \ref{lem:linearized-approx-eig}, and \ref{lem:error-approx} to the splitting \eqref{eq:P2-split} we can deduce
\begin{align*}
\mathcal{L}_{(2)} h_p = -\Lambda(p)h_p + \mathcal{E}_p + (\nabla_y - \nabla_x)h_p \cdot \Sigma.
\end{align*}
Note that
\[
	\mathcal{E}_p \leq |w|^{1-p}\left\|\nabla u\right\|_{L^\infty} \|\psi_p\|_{C_V} V_{\beta,\eta}(u)
\]
Similarly, since $|(\nabla_y - \nabla_x)h_p| \leqc |d(x,y)|^{-p-1}\|\psi_p\|_{C^1_V} $ and by Taylor's theorem $|\Sigma| \leq |d(x,y)|^2 \|\nabla^2 u\|_{L^\infty}$, we deduce that since $p < 1$,
\[
|\mathcal{E}_p + (\nabla_y - \nabla_x )h_p \cdot \Sigma| \lesssim \abs{d(x,y)}^{1-p} \norm{u}_{W^{2,\infty}}\|\psi_p\|_{C^1_V} V_{\beta,\eta}(u) \lesssim V_{\beta+1,\eta}(u). 
\]

\end{proof}

We are now ready to complete the proof of Proposition \ref{prop:2ptDrift}. 
\begin{proof}[\textbf{Proof of Proposition \ref{prop:2ptDrift}}]
Let $\mathcal{L}$ denote the formal generator of the Navier-Stokes equations defined by equation \eqref{eq:formal-gen} of Section \ref{sec:ExpProj}. First, observe that for any $\beta > 0, 
\eta \in (0, \eta^*)$ we have for $V = V_{\beta, \eta}$ and all $u \in \Hbf^{\sigma +d-1}$ that
\[
\mathcal{L} V(u)  =\left( \mathcal{L} \log V(u) + \sum_{m\in \mathbb{K}} \abs{q_m}^2 \abs{D_{u}\log V(u)e_m}^2\right) V(u) .
\]
and therefore using the fact that
\[
\sum \abs{q_k}^2 \abs{D_{u}\log V(u)e_m}^2 \leq  8\beta^2 + 8\eta^2 \mathcal{Q} \norm{\Delta u}^2_{L^2},
\]
and applying inequality \eqref{ineq:GenLogV} of Lemma \ref{lem:GenV} we deduce that
\[
	\mathcal{L}V(u) \leq \left(- \eta\left(\nu - 8 \eta \mathcal{Q}\right)\norm{\Delta u}_{L^2}^2 - \nu\beta\frac{\|\nabla u\|_{H^\sigma}^2}{1+\|u\|_{H^\sigma}^2} + C\right)V(u).
\]
Note that $\eta \leq \eta^*$ ensures that $\frac{\eta}{2}\left(\nu - 16 \eta \mathcal{Q}\right)$ is positive. Applying Lemma \ref{lem:SobLogTrick2} implies that $\forall \delta > 0$, $\exists C_\delta> 0$ such that
\[
	\mathcal{L}V \leq - \delta \log(1+\|u\|_{H^\sigma}^2) V.
\]
We treat the right-hand side above by dividing into regions where $\|u\|_{H^\sigma}\leq 1$ and $\|u\|_{H^\sigma} > 1$, in the former case everything is bounded by a constant, in the latter case we can bound the logarithmic factor below by $\log(2)$. This gives for all $0 < \kappa <\log(2)\delta$, and some constant $C>0$
\[
	- \delta \log(1+\|u\|_{H^\sigma}^2) V \leq - \kappa V  + C.
\]
As $\delta$ was arbitrary, it follows that for each $\kappa >0$, we have the bound (for a suitable $C_\kappa > 0$) 
\begin{equation}\label{eq:generator-ineq}
	\mathcal{L}V(u) \leq - \kappa V(u) + C_\kappa.
\end{equation}
Note that inequality \eqref{eq:generator-ineq} is an infinitesimal version of a drift condition for Navier-Stokes. However, $V$ does not belong to the domain of $\mathcal{L}_{(2)}$ in $\mathring{C}_{\hat{V}}$, and so we must proceed with more care to deduce a corresponding drift condition on the semi-group $P_t$.
Define $\tau_n =\inf \{ t>0 : \|u_t\|_{\Hbf} > n\}$  (note $\tau_n \to \infty$ as $n\to \infty$ with probability one). 
Applying It\^{o}`s formula (Theorem 7.7.5 \cite{KS}) to $e^{\Lambda(p) t}V(u_t)$ implies that
\begin{equation}
\begin{aligned}
	\E_u e^{\Lambda(p) t\wedge \tau_n}V(u_{t\wedge \tau_n}) - V(u) &= \E_u\int_0^{t\wedge \tau_n} e^{\Lambda(p) s}\left(\Lambda(p) V(u_s) + \mathcal{L}V(u_s)\right)\ds,\\
	&\leq \E_u\int_0^{t\wedge \tau_n} e^{\Lambda(p) s}\left((\Lambda(p)- \kappa) V(u_s) + C\right)\ds.
\end{aligned}
\end{equation}
As we know that $\E \sup_{s\in(0,t)}V(u_s) <\infty$ (e.g. from Lemma \ref{lem:TwistBd}), we can use dominated convergence to pass the $n\to \infty$ limit on both sides of the above inequality to deduce
\begin{equation}\label{eq:EEV}
	e^{\Lambda(p) t}P_t V - V \leq \int_0^t e^{\Lambda(p)s}P_s\left((\Lambda(p)- \kappa) V(u_s) + C\right)\ds,
\end{equation}
where $P_t$ denotes the Markov semi-group for the Navier-Stokes equations. 

Recall that $\Vc$ takes the form
\[
	\Vc = h_p + V_{\beta + 1, \eta}
\]
where $h_p \in C_{\hat V_{p, \beta, \eta}}$. Naturally, using the $C_0$ semi-group property of $P^{(2)}_t$ on functions in $\mathring{C}_{\hat{V}_{p, \beta, \eta}}$ and Lemma \ref{lem:approxhp} we also find that
	\begin{equation}\label{ineq:EEhp}
	\begin{aligned}
e^{\Lambda(p) t} P^{(2)}_t h_p - h_p  &= \int_0^t e^{\Lambda(p) s}P^{(2)}_s(\Lambda(p) h_p + \mathcal{L}_{(2)}h_p)\ds \leq \int_0^t e^{\Lambda(p) s}C' P_s V_{\beta + 1, \eta}\ds.
\end{aligned}
\end{equation}
Using the fact that
\[
	P^{(2)}_t \Vc = P^{(2)}_t h_p + P_t V_{\beta + 1, \eta},
\]
we complete the proof by adding \eqref{ineq:EEhp} and $\eqref{eq:EEV}$ and taking $\kappa$ large enough so that $\kappa -\Lambda(p) \geq C'$ to conclude that there is a constant $K$ such that
\begin{equation}
\begin{aligned}
e^{\Lambda(p) t} P^{(2)}_t\Vc - \Vc &\leq C\int_0^t e^{\Lambda(p) s} \ds \leq Ke^{\Lambda(p) t}.
\end{aligned}
\end{equation}
\end{proof}

\section{Correlation decay: proof of Theorem \ref{thm:-sdecay}} \label{sec:finishUp}

\begin{proof}[Proof of Theorem \ref{thm:2-pt-decay}]
Geometric ergodicity for the two-point process $(u_t, x_t, y_t)$ as in Theorem \ref{thm:2-pt-decay}
follows from the general framework given in Theorem \ref{thm:GM} and Conditions \ref{defn:SF}, \ref{defn:TopIrr}, \ref{def:UnifBd} and \ref{defn:drift} listed there. A proof of the strong Feller property in Condition \ref{defn:SF} 
was sketched in Section \ref{sec:SF}, while topological irreducibility as in Condition \ref{defn:TopIrr} was 
obtained in Section \ref{sec:Irr2pt}. Our desired Lypaunov function $\Vc$ for the two-point process
was identified in the previous section the proof of Proposition \ref{prop:2ptDrift}, validating Condition
\ref{defn:drift}, while Lemma \ref{lem:hypothA3} affirms Condition \ref{def:UnifBd} holds for this choice
of $\Vc$.
\end{proof}

It remains to complete the proof of Theorem \ref{thm:-sdecay}, which occupies the remainder of the paper.

Our goal is to prove the following.  Let $s > 0$ and $p \geq 1$ be fixed. Then, there exists
a deterministic constant $\hat{\gamma} = \hat{\gamma}(s, p)$ and a measurable function
$D: \Omega \times \Hbf \to [1,\infty)$, with the following property: for $\P \times \mu$-almost all $(\omega, u) \in \Omega \times \Hbf$ and for all
mean-zero $f, g \in H^s$, we have
\begin{align}\label{eq:correlationEstTarget}
\left| \int f (x) g(\phi^t_{\omega, u}(x)) \, \dx \right| \leq D(\omega, u) e^{- \hat{\gamma} t} \| f \|_{H^s} \| g \|_{H^s} \, .
\end{align}

The proof proceeds in several steps: (1) Establish correlation bounds in discrete time; and (2) 
extending these correlation bounds to cover all real times $t \geq 0$. At the end, 
we will (3) estimate $p$-th moments for the `random constant' $D$ in terms of $u$.

\medskip

\noindent {\bf Notation. } 
Let $\Vc$ be the Lyapunov function appearing in Theorem \ref{thm:2-pt-decay}
and note (see Sections \ref{subsec:outlineConstructV} and \ref{sec:2ptGeoErg}) 
that $\Vc(u,x,y) \leq (V(u))^2 \hat W(x,y)$, where $V = V_{ \beta,  \eta}$ for some $ \beta,  \eta > 0$ (see \eqref{def:V} in Section \ref{subsec:outlineConstructV} for notation) and $\hat W \in L^1(\Leb)$.

Hereafter, $s > 0, p \geq 1$ are fixed. Without loss of generality, we will assume $s \in (0,1)$. 
Fix as well $\hat{\gamma} \in (0, \frac{\hat{\alpha}}{2})$, where
$\hat{alpha} > 0$ is the mixing rate as in Proposition \ref{prop:2ptDrift} for the two-point process; further constraints will be placed on $\hat{\gamma}$ as we proceed. Throughout, $ \hat{\gamma}' \in (\hat{\gamma}, \frac{\hat{\alpha}}{2})$ will be a dummy parameter, also chosen appropriately. Given generic
$\omega, u$, we write $\phi^t = \phi^t_{\omega, u}$ for short. Let $f, g$ be fixed, smooth, mean-zero observables. 
Finally, for $k = (k_1, \cdots, k_d) \in \Z^d$, 
we write $|k| = |k|_\infty = \max\{ |k_i|, 1 \leq i \leq d\}$.

\medskip

\subsection{Correlation bounds in discrete time}

Let $\{ \hat e_k\}_{k \in \Z^d_0}$ be the real Fourier basis of mean-zero functions on $\T^d$ 
(see, for instance, the notation in \cite{HM06}).

Fix $u \in \Hbf$, regarded as a fixed initial condition for the velocity field process $(u_t)$. 
By a variant of the Borel Cantelli argument given in Section 
\ref{subsec:sufficesTwoPtMotion}, we have that for each $(k, k') \in \Z^d_0 \times \Z^d_0$, 
the random variable
\[
N_{k, k'}(u) = \max\left\{ n \geq 0 : \left| \int \hat e_k (x) \hat e_{k'}(\phi^n(x)) \dx \right| > e^{-\hat{\gamma}' n}  V(u) \right\}
\]
is finite with probability 1, where the tail estimate
\begin{equation} \label{eq:N-tail-est}
\P \{ N_{k, k'}(u) > n\} \lesssim
e^{-(\hat{\alpha} - 2\hat{\gamma}')n}  
\end{equation}
holds uniformly in $u, k, k'$. In particular, it holds
that $|\int e_k (x) e_{k'}(\phi^n(x)) \dx| \leq e^{\hat{\gamma}' (N_{k, k'}(u) - n)}  V(u)$
for all $n \geq 0$. Hereafter, let us suppress the `$u$' (which remains fixed) in $N_{k, k'}$.

Expand
\[
	f = \sum_{k\in \Z^d_0}f_k e_k,\quad g = \sum_{k\in \Z^d_0} g_k e_k \, ,
\]
so that
\begin{align}
\left| \int f(x)g(\phi^n(x))\dx\right|
\leq  V(u) e^{- \hat{\gamma}' n} \sum_{k, k' \in \Z^d_0} |f_k| |g_{k'}|   e^{ \hat{\gamma}' N_{k, k'}} \label{eq:preHSestimate}
\end{align}
To form a comparison of the RHS of \eqref{eq:preHSestimate} with a Sobolev norm, observe that due to the uniformity of the estimate \eqref{eq:N-tail-est} in $k, k'$, we have
\begin{align}\label{eq:Nkkprimeboundtail}
	\P\left\{e^{\hat{\gamma}' N_{k,k^\prime}} > |k||k^\prime| \right\} \lesssim (|k||k^\prime|)^{-\frac{\hat{\alpha} - 2 \hat{\gamma}'}{\hat{\gamma}'}} \, ,
\end{align}
again uniformly in $k, k'$.
The right-hand side is summable over $\Z^d_0\times \Z^d_0$ when $\hat{\gamma}$ is sufficiently small. 
Applying Borel-Cantelli to the countable collection of events $\mathfrak S_{k, k'} = \{ e^{\hat{\gamma}' N_{k, k'}} > |k| |k'|\}$, 
we conclude that the measurable function\footnote{Given real numbers $a, b > 0$, we write $a \vee b = \max \{ a, b \}$.}
\[
K = \max\{  |k| \vee |k'| : k, k' \in \Z^d_0 \text{ and } e^{\hat{\gamma}' N_{k,k^\prime}} > |k||k^\prime| \}
\]
satisfies the tail estimate $\P \{ K > \ell\} \lesssim \ell^{2 d - \frac{\hat{\alpha} - 2 \hat{\gamma}'}{\hat{\gamma}'}}$ uniformly in $u$. Observe that by definition, we have
$	e^{\hat{\gamma}' N_{k,k^\prime}} \leq |k||k^\prime| $
for all $(k, k') \in \Z^d_0 \times \Z^d_0$ with $|k| \vee |k'| > K$. 

Define \[\hat D = \max_{|\hat k|,|\hat k^\prime| \leq K} e^{\hat{\gamma}' N_{\hat k,\hat k^\prime}} \, . \]
 Plugging the simple bound $e^{\hat{\gamma}' N_{k,k^\prime}} \leq \hat D (1 + |k||k'|)$
into \eqref{eq:preHSestimate}, we conclude that for all $n \geq 0$,
\begin{equation}
\begin{aligned}
	\left|\int_{\T^d} f(x) g(\phi^n(x))\dx  \right| & \leq \hat D  V(u) \left( \sum_k |k|  |f_k| \right) \left( \sum_k |k|  |g_k| \right)  e^{-\hat{\gamma}' n}  \\
	& \lesssim \hat D  V(u) \| f \|_{H^{\frac{d}{2} + 2}} \| g \|_{H^{\frac{d}{2} + 2}} e^{- \hat{\gamma}' n}
\end{aligned}
\end{equation}
To control the RHS in terms of $H^s$ norms, we can compensate for reduced regularity of $f, g$ by reducing the 
exponential decay rate. For this, we will use the following standard approximation Lemma
(c.f. Lemma 4.2 in \cite{CZDT18}, where similar ideas are used to control correlation 
decay in terms of an $H^1$ norm).

\begin{lemma}
Let $0 < s < s'$ and let $h \in H^s$ have zero mean. Then, for any $\epsilon > 0$ there exists a mean-zero 
$h_\epsilon \in H^{s'}$
such that (i) $\| h_\epsilon \|_{L^2} \lesssim \| h \|_{L^2}$, 
(ii) $\| h_\epsilon - h\|_{L^2} \lesssim \epsilon \| h \|_{H^s}$, and 
 (iii) $\| h_\epsilon\|_{H^{s'}} \lesssim_{s, s'} \epsilon^{- \frac{s' - s}{s} } \| h\|_{H^s}$.
\end{lemma}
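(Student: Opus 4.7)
The plan is to construct $h_\epsilon$ as a Fourier truncation of $h$. Concretely, I would choose a cutoff $N \geq 1$ (to be optimized) and set
\[
h_\epsilon := \sum_{0 < |k| \leq N} \hat h(k)\, \hat e_k,
\]
which is automatically mean-zero because the zeroth Fourier coefficient is not touched. Since truncation is an orthogonal projection, property (i) is immediate: $\|h_\epsilon\|_{L^2} \leq \|h\|_{L^2}$.

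For property (ii), the tail sum gives
\[
\|h - h_\epsilon\|_{L^2}^2 = \sum_{|k| > N} |\hat h(k)|^2 \leq N^{-2s} \sum_{|k| > N} |k|^{2s} |\hat h(k)|^2 \leq N^{-2s} \|h\|_{H^s}^2,
\]
so $\|h - h_\epsilon\|_{L^2} \leq N^{-s} \|h\|_{H^s}$. For property (iii), the head sum yields
\[
\|h_\epsilon\|_{H^{s'}}^2 = \sum_{0 < |k| \leq N} |k|^{2s'} |\hat h(k)|^2 \leq N^{2(s'-s)} \sum_{0 < |k| \leq N} |k|^{2s} |\hat h(k)|^2 \leq N^{2(s'-s)} \|h\|_{H^s}^2.
\]
Then I would simply choose $N = \lceil \epsilon^{-1/s} \rceil$: property (ii) then reads $\|h - h_\epsilon\|_{L^2} \lesssim \epsilon \|h\|_{H^s}$, and property (iii) becomes $\|h_\epsilon\|_{H^{s'}} \lesssim \epsilon^{-(s'-s)/s} \|h\|_{H^s}$, as desired.

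There is no real obstacle here beyond picking the scaling $N \sim \epsilon^{-1/s}$, which is dictated by balancing the tail and head estimates. The only mild subtlety is the integer constraint $N \geq 1$, which is handled trivially by allowing implicit constants to depend on $s, s'$ and observing that for $\epsilon$ large (i.e., $\epsilon^{-1/s} < 1$) one can simply take $h_\epsilon = 0$, in which case all three properties hold with constants depending only on $s, s'$. This makes the lemma suitable to be invoked in the preceding estimate, interpolating between regularity $H^s$ and the already-established decay for smoother test functions.
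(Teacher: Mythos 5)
Your Fourier-truncation argument with cutoff $N \sim \epsilon^{-1/s}$ is correct and is the standard proof; the paper itself states this lemma without proof, referring to Lemma 4.2 of \cite{CZDT18}, so your self-contained argument fills that in cleanly. The handling of the mean-zero constraint (the $k=0$ mode is never present) and the integer rounding of $N$ are both taken care of appropriately, with the observation that for $\epsilon \gtrsim 1$ one may take $h_\epsilon = 0$ since $\|h\|_{L^2} \leq \|h\|_{H^s}$ for mean-zero $h$.
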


We apply the above with $s' = \frac{d}{2} + 2$ and for the value of $s$ specified at the beginning of the section. 
Fixing $\epsilon > 0$, we estimate
\begin{align*}
\left| \int f(x)  g(\phi^n(x)) \dx \right|
& \leq \left| \int f_\epsilon(x)  g_\epsilon(\phi^n(x)) \dx \right| + \| g\|_{L^2} \| f - f_\epsilon\|_{L^2} + \| f_\epsilon \|_{L^2} \| g - g_\epsilon\|_{L^2} \\
& \lesssim \hat D  V \left(  e^{- \hat{\gamma}' n} \epsilon^{- \frac{d + 4 - 2 s}{2 s}} + \epsilon \right) \| f \|_{H^s} \| g \|_{H^s}. 
\end{align*}
Optimizing in $\epsilon$ on the RHS yields $\epsilon= e^{- \frac{2 s \hat{\gamma}'}{d + 4} n}$, resulting in the
estimate
\begin{align}\label{eq:discTimeMixingEstimate}
\left| \int f(x)  g(\phi^n(x)) \dx \right| \leq \hat D  V e^{- \hat{\gamma}'' n} \| f \|_{H^s} \| g \|_{H^s},
\end{align}
where $\hat{\gamma}'' := \frac{2 s \hat{\gamma}'}{d + 4}$ (having absorbed an $s$-dependent constant into $\hat D$).

\subsection{Correlation bounds in continuous time}

To estimate \eqref{eq:correlationEstTarget} at continuous times $t \in \R_{\geq 0}$, let $t = n + \hat t, \hat t \in [0,1), n \in \Z_{\geq 0}$.  Applying \eqref{eq:discTimeMixingEstimate}, we have
\[
\left|  \int f (x) g(\phi^t_{\omega, u}(x)) \dx \right| 
\leq 
e^{\hat{\gamma}''} \hat D(\omega, u)  V(u)  \| f  \|_{H^s} \| g \circ \phi^{\hat t}_{\theta^n \omega, u_n} \|_{H^s}  e^{- \hat{\gamma}'' t} \, .
\]

We will estimate the factor $\| g \circ \phi^{\hat t}_{\theta^n \omega, u_n} \|_{H^s}$ by
\begin{align}\label{eq:boundContTime}
\sup_{\hat t \in [0,1)} \| g \circ \phi^{\hat t}_{\omega', u'}\|_{H^s} \leq \Gamma(\omega', u')  \| g \|_{H^s} 
\end{align}
where $\Gamma : \Omega \times \Hbf \to [1,\infty)$ is defined by (for some suitable constant $C> 0$), 
\[
\Gamma(\omega', u') = \exp \left( C \int_0^{1} \| \nabla u_t'\|_{L^\infty} \dt \right) \, .
\]
To verify \eqref{eq:boundContTime}, we use the characterization
$\| f \|_{H^s} \approx \| f \|_{L^2} + \left( \int \int \frac{|f(x) - f(y)|^2}{|x - y|^{2 s + d}} \dx \dy \right)^{1/2}$
for the $H^s$ norm, $s \in (0,1)$, to estimate 
\[
	\|g\circ \phi^{\hat{t}}_{\omega',u'}\|_{H^s} \lesssim \| g \|_{L^2} + \left( \int \int \frac{|g(x_{\hat t}) - g(y_{\hat t})|^2}{|x - y|^{2 s + d} } \dee x \dee y \right)^{1/2} \lesssim \| g \|_{H^s}  \| D \phi^{\hat t}_{\omega', u'} \|_{L^\infty}^{s + d/2} \, .
\]
The bound \eqref{eq:boundContTime} now follows from Gr\"onwall's Inequality.

By a combination of Lemma \ref{lem:TwistBd} and Sobolev embedding, 
it follows that $\Gamma$ has $\P$-moments of all orders (c.f. \eqref{eq:momentEstimatePhi} below). 
By Chebyshev's inequality, for $\epsilon > 0$ we have
\footnote{Below, $\theta^t : \Omega \to \Omega, t \geq 0$ denotes the Wiener shift on 
canonical space-- see, e.g., the notation in \cite{BBPS18}.}
\[
\P  \{   \Gamma(\theta^n \omega, u_n) \geq e^{\epsilon n}\}
\leq e^{- q \epsilon n} \E  \Gamma^q \, , 
\]
which is is summable in $n$ for all $q \geq 1$. By Borel Cantelli, the function 
\[
M = M(\omega, u) = \max\{ n : \Gamma(\theta^n \omega, u_n) \geq e^{\epsilon n}\} 
\]
is finite $\P$-a.s., and satisfies $\P \{ M > n\} \lesssim e^{- q \epsilon n} \E  \Gamma^q $ for all $q \geq 1$.
In particular,
\[
\Gamma(\theta^n \omega, u_n) \leq e^{\epsilon (n + M)} \, .
\]
In total, we conclude
\[
\left| \int f(x) g(\phi^t_{\omega, u}(x)) \dx\right| \leq \underbrace{e^{\hat{\gamma}'' } \hat D  V e^{\epsilon M}}_{=: D(\omega, u)} \| f \|_{H^s} \| g \|_{H^s} e^{- (\hat{\gamma}'' - \epsilon) t}
\]
Finally, we set $\epsilon = \frac{1}{100} \hat{\gamma}''$ and $\hat{\gamma} = \hat{\gamma}'' - \epsilon$.
This completes the proof of the continuous-time correlation estimate as in item (2)
for any sufficiently small value of $\hat{\gamma}$.

\subsection{Estimating moments of $D(\omega, u)$}

With $u$ fixed, let us now estimate moments $\E D(\omega, u)^p$ by breaking up $D \approx \hat D  V e^{\epsilon M}$ as above. 

Let us first estimate the $\P$-moments of $e^{\epsilon M}$. We claim that for all $p > 0$, we have
\begin{align}\label{eq:improveMomentFINAL}
\E e^{p \epsilon M} \lesssim_{p, c} V^p(u) \, .
\end{align}
Fixing $q = p + 1$, we have
\[
\P \{ M(\omega, u) > n\} \lesssim \sum_{m = n+1}^\infty e^{- \epsilon q n} \E \Gamma^q(\theta^m \omega, u_m) \, .
\]
By Lemma \ref{lem:TwistBd}, we have that $\E (\Gamma^q(\theta^m \omega, u_m) | \mathscr F_m) \lesssim_{p} V(u_m)^{p}$ for all $n$.
Therefore, 
\begin{align}\label{eq:momentEstimatePhi}
\E \Gamma^q(\theta^m \omega, u_m) \lesssim \E V^p(u_m) \lesssim \int  V^{p} \dee \mu + e^{- \hat \alpha_0 m}  V^p(u)
\lesssim V^p(u)
\end{align}
by geometric ergodicity of the $(u_t)$ process (Proposition \ref{prop:CVspecGapProj}), where $\hat \alpha_0 > 0$ is a constant.

Collecting, we estimate
\[
\E  e^{\epsilon p M}  \lesssim \sum_{n = 1}^\infty e^{\epsilon p n} \cdot \P (M = n) 
\lesssim \sum_n e^{\epsilon (p - q) n} V^p(u)   \lesssim  V^p(u) \, .
\]
We conclude that $e^{\epsilon M}$ has $\P$-moments of all orders, and that for a fixed moment $p$ we have that the $p$-th moment w.r.t. $\P$ is controlled as in \eqref{eq:improveMomentFINAL}.

It remains to estimate $p$-th moments of $\hat D$. 
Note that although $\hat D$ depends nontrivially on the initial velocity $u$, all of our tails estimates for $\hat D$
are uniform in $u$. In particular, all of the following estimates are uniform in $u$.
\begin{align*}
\E  \hat D^p \,  & = 
\sum_{K_0 = 1}^\infty \E  {\bf 1}_{K = K_0} \max_{|k|, |k'| \leq K_0}  e^{\hat{\gamma}' p N_{k, k'}}  \\
& \leq \sum_{K_0 = 1}^\infty (\P \{ K = K_0\})^{1/2} \cdot \left\| \max_{|k|, |k'| \leq K_0}  e^{\hat{\gamma}' p N_{k, k'}}\right\|_{L^2(\Omega)} \\
& \lesssim \sum_{K_0 = 1}^\infty K_0^{\frac{d}{2} - \frac{ \hat{\alpha} - 2 \hat{\gamma}'}{2 \hat{\gamma}'}} \sum_{|k|, |k'| \leq K_0} 
\left\| e^{\hat{\gamma}' p N_{k, k'} } \right\|_{L^2(\Omega )} \, .
\end{align*}
The $\| e^{\hat{\gamma}' p N_{k, k'}}\|_{L^2(\Omega)}$ terms on the RHS are all uniformly bounded (independent 
of $k, k'$ and $u$) by \eqref{eq:Nkkprimeboundtail} if $\hat{\gamma}'$ is sufficiently small. Since
there are $\approx K_0^{2d}$ such terms, we obtain
\[
\E  \hat D^p  \lesssim \sum_{K_0 = 1}^\infty K_0^{\frac{5 d}{2} - \frac{\hat{\alpha} - 2 \hat{\gamma}'}{2 \hat{\gamma}'}} \, ,
\]
which provides a finite moment estimate when $\hat{\gamma}'$ is sufficiently small.

\bibliographystyle{abbrv}
\bibliography{bibliography}

\end{document}